\newcommand{\class}{\mathsf}
\newcommand{\logic}{\mathcal}
\newcommand{\alg}{\mathbf}
\newcommand{\matr}{\mathbb}
\newcommand{\filter}{\mathcal}
\newcommand{\assign}{:=}
\newcommand{\napprox}{\not\approx}
\newcommand{\set}[2]{\{ #1 \mid #2 \}}
\newcommand{\pair}[2]{\langle #1, #2 \rangle}
\newcommand{\dmneg}{{-}}
\newcommand{\dual}{\partial}
\newcommand{\True}{\mathsf{t}}
\newcommand{\False}{\mathsf{f}}
\newcommand{\logleq}{\leq}
\newcommand{\loggeq}{\geq}
\newcommand{\nlogleq}{\nleq}
\newcommand{\notto}{\not\to}
\newcommand{\unit}{\eta}
\newcommand{\counit}{\varepsilon}
\newcommand{\sigmapushp}{\sigma_{p}}
\newcommand{\sigmapopp}{\tau_{p}}
\DeclareMathOperator{\Alg}{Alg}
\DeclareMathOperator{\Ker}{Ker}
\DeclareMathOperator{\Log}{Log}
\DeclareMathOperator{\Mod}{Mod}
\DeclareMathOperator{\Exp}{Exp}
\DeclareMathOperator{\Ext}{Ext}
\newcommand{\Leibniz}[2]{\Omega^{#1} (#2)}
\newcommand{\CL}{\logic{CL}}
\newcommand{\K}{\logic{K}}
\newcommand{\LP}{\logic{LP}}
\newcommand{\KO}{\logic{KO}}
\newcommand{\BD}{\logic{BD}}
\newcommand{\ETL}{\logic{ETL}}
\newcommand{\ECQ}{\logic{ECQ}}
\newcommand{\Kminus}{\K_{-}}
\newcommand{\KOminus}{\KO_{-}}
\newcommand{\ETLplus}{\logic{EDS}}
\newcommand{\mc}[1]{#1_{\mathrm{mc}}}
\newcommand{\CLmc}{\mc{\CL}}
\newcommand{\Kmc}{\mc{\K}}
\newcommand{\LPmc}{\mc{\LP}}
\newcommand{\KOmc}{\mc{\KO}}
\newcommand{\BDmc}{\mc{\BD}}
\newcommand{\Btwo}{\alg{B_{2}}}
\newcommand{\Kthree}{\alg{K_{3}}}
\newcommand{\DMfour}{\alg{DM_{4}}}
\newcommand{\CLmatrix}{\matr{B}_{\alg{2}}}
\newcommand{\Kmatrix}{\matr{K}_{\alg{3}}}
\newcommand{\LPmatrix}{\matr{P}_{\alg{3}}}
\newcommand{\BDmatrix}{\matr{BD}_{\alg{4}}}
\newcommand{\ETLmatrix}{\matr{ETL}_{\alg{4}}}
\newcommand{\Kminusmatrix}{\matr{ETL}_{\alg{8}}}
\newcommand{\AlgH}{\mathbb{H}}
\newcommand{\AlgInvH}{\mathbb{H}^{-1}}
\newcommand{\AlgHS}{\mathbb{H}_{\mathrm{S}}}
\newcommand{\AlgInvHS}{\mathbb{H}^{-1}_{\mathrm{S}}}
\newcommand{\AlgI}{\mathbb{I}}
\newcommand{\AlgS}{\mathbb{S}}
\newcommand{\AlgP}{\mathbb{P}}
\newcommand{\AlgPu}{\mathbb{P}_{\mathrm{U}}}
\newcommand{\AlgPuStar}{\mathbb{P_{\mathrm{U}}^{*}}}
\newcommand{\boldmu}{\boldsymbol{\mu}}
\newcommand{\boldgamma}{\boldsymbol{\gamma}}
\newtheorem{theorem}{Theorem}[section]
\newtheorem{proposition}[theorem]{Proposition}
\newtheorem{lemma}[theorem]{Lemma}
\newtheorem{fact}[theorem]{Fact}
\newtheorem{corollary}[theorem]{Corollary} 
\theoremstyle{definition}
\newtheorem{definition}[theorem]{Definition}
\author{Adam \texorpdfstring{P\v{r}enosil}{Prenosil}}
\title{The lattice of super-Belnap logics}
\address{Institute of Computer Science, Czech Academy of Sciences, Czechia}
\email{adam.prenosil@gmail.com}
\keywords{Belnap--Dunn logic, Kleene logic, Logic of Paradox, four-valued logic, paraconsistent logic, abstract algebraic logic}
\thanks{The author acknowledges the support of the project P202/12/G061 of the Czech Science Foundation. The author is grateful to Umberto Rivieccio for sharing his research notes, to Petr Cintula and Carles Noguera for a helpful discussion about non-finitary logics, and to the three anonymous referees for their useful comments.}
\begin{document}

\begin{abstract}
  We study the lattice of extensions of four-valued Belnap--Dunn logic, called super-Belnap logics by analogy with superintuitionistic logics. We describe the global structure of this lattice by splitting it into several subintervals, and prove some new completeness theorems for super-Belnap logics. The crucial technical tool for this purpose will be the so-called antiaxiomatic (or explosive) part operator. The antiaxiomatic (or explosive) extensions of Belnap--Dunn logic turn out to be of particular interest owing to their connection to graph theory: the lattice of finitary antiaxiomatic extensions of Belnap--Dunn logic is iso\-morphic to the lattice of upsets in the homomorphism order on finite graphs (with loops allowed). In particular, there is a continuum of finitary super-Belnap logics. Moreover, a non-finitary super-Belnap logic can be constructed with the help of this isomorphism. As algebraic corollaries we obtain the existence of a continuum of antivarieties of De \mbox{Morgan} algebras and the existence of a prevariety of De Morgan algebras which is not a quasivariety.
\end{abstract}

\maketitle

\section{Introduction}

  The present paper is an attempt to map out the landscape of extensions of the four-valued logic introduced in the 1960's and 1970's by Dunn~\cite{dunn66,dunn69,dunn76} as the so-called first-degree fragment of the logic of \mbox{entailment} of Anderson and Belnap~\cite{anderson+belnap75} and later proposed by Belnap \cite{belnap77a,belnap77b} as a logic which a computer could use to handle inconsistent and incomplete information. This logic will be called Belnap--Dunn logic here and denoted~$\BD$. It is also known as the logic of first-degree entailment (FDE). For a more complete account of the origins of this logic, see \cite{dunn10,dunn16}.

  Belnap--Dunn logic has attracted considerable attention from researchers in logic and computer science since its introduction in the 1970's. However, few of its non-classical extensions have been investigated in detail. Most prominent among these are Kleene's strong three-valued logic $\K$~\cite{kleene38,kleene52} and Priest's Logic of Paradox $\LP$~\cite{priest79}. These two logics have been widely used by philosophers who accept truth gaps or truth gluts in their accounts of truth. The intersection of these two logics, which we call Kleene's logic of order and denote $\KO$, was occasionally studied as well. It is mentioned by Makinson~\cite{makinson73}, who calls it Kalman implication, and identified by Dunn~\cite{dunn76b} as the first-degree fragment of the relevance logic R-Mingle. More recently, Exactly True Logic was introduced and studied by Pietz \& Rivieccio~\cite{pietz+rivieccio13}. This seems to exhaust the list of non-classical super-Belnap logics which have been studied in any detail.

  The idea of studying extensions of $\BD$ as a family of logics in its own right was first proposed by Rivieccio~\cite{rivieccio12}, who called such extensions \emph{super-Belnap logics} by analogy with super-intuitionistic logics. Among other things, Rivieccio proved that there are infinitely many super-Belnap logics. We take up his proposal and study the structure of the lattice of super-Belnap logics.

  Unlike in the case of intuitionistic logic, where the axiomatic extensions are the main objects of interest, here it is the antiaxiomatic extensions (extensions by rules stating that a certain set of formulas is inconsistent) which are of interest. Indeed, $\BD$ has only one non-trivial proper axiomatic extension, namely~$\LP$, while it turns out that it has a continuum of finitary antiaxiomatic extensions. Before engaging in the study of super-Belnap logics, we therefore establish some basic facts about antiaxiomatic (or explosive) extensions (Section~\ref{sec: explosive}).

  In particular, the antiaxiomatic (or explosive) part of a logic turns out to be a useful tool in this context. The explosive part $\Exp_{\logic{B}} \logic{L}$ of an extension $\logic{L}$ of a base logic~$\logic{B}$ is the strongest antiaxiomatic extension of $\logic{B}$ which lies below~$\logic{L}$. The~logic determined by a product of matrices $\prod_{i \in I} \matr{A}_{i}$ can then be computed from the logics determined by the matrices $\matr{A}_{i}$ and their explosive parts.

  Computing the explosive parts of known super-Belnap logics will enable us, after reviewing their basic properties (Section~\ref{sec: known super-belnap}), to prove some new completeness theorems for super-Belnap logics (Section~\ref{sec: completeness}). For example, the logic $\ECQ$ which extends $\BD$ by the rule of \emph{ex contradictione quodlibet} $p, \dmneg p \vdash q$ is precisely the explosive part of the Exactly True Logic~$\ETL$ of Pietz \& Rivieccio~\cite{pietz+rivieccio13} which extends $\BD$ by the rule of disjunctive syllogism $p, \dmneg p \vee q \vdash q$. (The rule of \emph{ex contradictione quodlibet} $p, \dmneg p \vdash q$ is an example of an antiaxiomatic rule: it~states that the set of formulas $\{ p, \dmneg p \}$ is inconsistent.) As~a consequence, we obtain a completeness theorem for $\ECQ$.

  We then describe the large-scale structure of the lattice of extensions of $\BD$ (Section~\ref{sec: super-belnap lattice}). It has a smallest proper extension $\LP \cap \ECQ$, as well as a largest non-trivial extension, namely classical logic $\CL$. The interval $[\LP \cap \ECQ, \CL]$ decomposes into three disjoint intervals: $[\LP \cap \ECQ, \LP]$, $[\ECQ, \LP \vee \ECQ]$, and $[\ETL, \CL]$. This~last interval moreover has the structure $\ETL < [\ETL_{2}, \Kminus] < \K < \CL$, where $\ETL_{2}$ is the extension of $\ETL$ by the rule $(p \wedge \dmneg p) \vee (q \wedge \dmneg q) \vdash r$ and $\Kminus$ extends $\ETL$ by the rules $(p_1 \wedge \dmneg p_1) \vee \dots \vee (p_n \wedge \dmneg p_n) \vee q, \dmneg q \vee r \vdash r$ for each $n \in \omega$. We then determine which super-Belnap logics enjoy various metalogical properties such as structural completeness or the proof by cases property.

  While lattices of logics have long been studied, especially in the context of modal, super-intuitionistic, and substructural logics~\cite{blok80,bezhanishvili06,glimpse07}, the present study differs from most of these investigations in two respects. Firstly, we consider all extensions of $\BD$ rather than only axiomatic extensions. Secondly, the link between logic and algebra is too weak in the realm of super-Belnap logics to permit a straightforward application of algebraic techniques. In the case of super-intuitionistic logics, there is a straightforward correspondence between axiomatic extensions of intuitionistic logic and varieties of Heyting algebras. In~contrast, there is no such straightforward bijective correspondence between super-Belnap logics and quasivarieties of De~Morgan algebras, which form the algebraic counter\-part of Belnap--Dunn logic in the sense of \cite{font16,font+jansana09}. 

  In~technical terms, intuitionistic logic is algebraizable (its consequence relation is equivalent, in a suitable sense, to equational consequence in Heyting algebras), while Belnap--Dunn logic fails to satisfy even the much weaker property of being proto\-algebraic (it lacks an implication satisfying the axiom of Reflexivity and the rule of Modus Ponens). The present investigation therefore also has some value as a contribution to the study of lattices of non-protoalgebraic logics.

  The above results, it turns out, do not substantially depend on whether the truth and falsity constants $\True$ and $\False$ are taken to be part of the signature of the logic (Section~\ref{sec: different frameworks}). While Belnap--Dunn logic has typically been studied without these constants, their inclusion changes the picture only marginally. On the other hand, moving to a multiple-conclusion setting changes the picture dramatically: the multiple-conclusion form of $\BD$ only has finitely many extensions, namely the multiple-conclusion forms of $\BD$, $\KO$, $\K$, $\LP$, and $\CL$. This is because the move to the multiple-conclusion setting amounts to forcing the proof by cases property: $\Gamma, \varphi \vee \psi \vdash \chi$ holds if and only if $\Gamma, \varphi \vdash \chi$ and $\Gamma, \psi \vdash \chi$ hold. To~go beyond these well-studied extensions of~$\BD$, one must be ready to abandon this~property.

  The second half of the paper is devoted to working out the relationship between super-Belnap logics and finite graphs (we allow for loops). Each finite reduced model of $\BD$ in the sense of abstract algebraic logic is determined up to isomorphism by a pair of graphs and a non-negative integer (Section~\ref{sec: graph duality}). Even better, each finite reduced model of Exactly True Logic $\ETL$ is determined up to logical equivalence by a single graph and a single bit $k \in \{ 0, 1 \}$. Ultimately, this follows from the duality theory for De Morgan algebras~\cite{cornish+fowler77}. As a consequence, we obtain certain graph-theoretic completeness theorems (Section~\ref{sec: graph completeness}).

  A somewhat unexpected connection between explosive super-Belnap logics and the homo\-morphism order on finite graphs now arises (Section~\ref{sec: hom order}): the lattice of finitary antiaxiomatic extensions of $\BD$ is dually isomorphic to the lattice of upsets in the homomorphism order on finite graphs. It immediately follows that there is a continuum of finitary anti\-axiomatic extensions of $\BD$ (and consequently a continuum of antivarieties of De~Morgan algebras), improving on the result of Rivieccio that there are infinitely many finitary super-Belnap logics. Moreover, we can use the countable universality of the homomorphism order on graphs to construct a non-finitary super-Belnap logic. We can also use this graph-theoretic connection to prove that the super-Belnap logics $\ECQ_{n}$ and $\ETL_{n}$, defined as extensions of $\BD$ and $\ETL$ by the rule $(p_1 \wedge \dmneg p_1) \vee \dots \vee (p_n \wedge \dmneg p_n) \vdash q$, are not complete with respect to any finite set of finite matrices for $n \geq 2$.

 Finally (Section~\ref{sec: graphs to logics}), we describe the lattice of all finitary extensions of $\ETL$ in terms of graphs. In particular, its interval $[\ETL, \ETL_{\omega}]$ is isomorphic to the lattice of classes of non-empty graphs without loops closed under homomorphic images, disjoint unions, and contracting isolated edges. A description of the full lattice of finitary super-Belnap logics in terms of classes of triples $\langle G, H, k \rangle$, where $G$ and $H$ are graphs and $k \in \{ 0, 1 \}$, is possible but rather cumbersome.

  The bulk of this paper is based on the author's thesis~\cite{prenosil18thesis}. Some of the results proved here, including the fact that $\Kminus$ is a lower cover of $\K$ and $\ETL_{2}$ is an upper cover of $\ETL$, were already established by Rivieccio in his unpublished research notes~\cite{rivieccio11}, which he kindly shared with the present author. Some of the research presented here was also summarized in~\cite{albuquerque+prenosil+rivieccio17}.

\section{Logical preliminaries}
\label{sec: preliminaries}

  This preliminary section introduces the basic notions of abstract algebraic logic which will be used throughout the paper. For a more thorough introduction to the field, the reader may consult the textbook~\cite{font16}, the monographs \cite{czelakowski01} and \cite{wojcicki88}, or the survey paper \cite{font+jansana+pigozzi03}. Towards the end of the section, we also recall some universal algebraic preliminaries.

  The \emph{signature} of a logic is given by an infinite set of propositional variables (also called atoms) and a set of connectives of finite arities. The algebra of formulas is then the absolutely free algebra generated by these variables. Less abstractly, the set of formulas is obtained by closing the set of atoms under the given connectives in the obvious way. Atoms will be denoted by $p$, $q$, $r$, formulas by $\varphi$, $\psi$, $\chi$, and sets of formulas by $\Gamma$, $\Delta$. A \emph{substitution} is an endomorphism of the algebra of formulas. Equivalently, substitutions may be viewed as mappings which assign a formula to each atom. Each such mapping then naturally extends to a function $\sigma$ which assigns to each formula $\varphi$ its substitution instance $\sigma(\varphi)$. Let us consider a certain fixed signature in the following definitions.

  A \emph{rule} is a pair consisting of a set of formulas $\Gamma$ and a formula $\varphi$, written as $\Gamma \vdash \varphi$. A \emph{logic}~$\logic{L}$ is a set of rules which satisfies the following conditions:
\begin{itemize}
\item $\varphi \vdash_{\logic{L}} \varphi$ (reflexivity),
\item $\text{if } \Gamma \vdash_{\logic{L}} \varphi \text{, then } \Gamma, \Delta \vdash_{\logic{L}} \varphi$ (monotonicity),
\item $\text{if } \Gamma \vdash_{\logic{L}} \delta \text{ for all } \delta \in \Delta \text{ and } \Delta \vdash_{\logic{L}} \varphi \text{, then } \Gamma \vdash_{\logic{L}} \varphi$ (cut),
\item $\text{if }\Gamma \vdash_{\logic{L}} \varphi \text{, then } \sigma[\Gamma] \vdash_{\logic{L}} \sigma (\varphi) \text{ for each substitution } \sigma$ (structurality),
\end{itemize}
  where $\Gamma \vdash_{\logic{L}} \varphi$ means that the rule $\Gamma \vdash \varphi$ belongs to (holds in, is valid in) the logic $\logic{L}$. If $\Gamma \vdash_{\logic{L}} \varphi$ implies that $\Gamma' \vdash_{\logic{L}} \varphi$ for some finite set of formulas $\Gamma' \subseteq \Gamma$, then $\logic{L}$ is called \emph{finitary}. The \emph{finitary part} of $\logic{L}$ is the finitary logic where $\Gamma \vdash \varphi$ holds if and only if there is some finite set of formulas $\Gamma' \subseteq \Gamma$ such that $\Gamma' \vdash_{\logic{L}} \varphi$. Rules of the form $\emptyset \vdash \varphi$ are called \emph{axiomatic}. A formula~$\varphi$ is a \emph{theorem} of~$\logic{L}$ if $\emptyset \vdash_{\logic{L}} \varphi$. The \emph{trivial logic} is the logic where $\Gamma \vdash \varphi$ holds for each $\Gamma$ and $\varphi$.

  A logic $\logic{L}$ is called an \emph{extension} of a logic $\logic{B}$ (in the same signature), sym\-bolically $\logic{B} \logleq \logic{L}$, if each rule valid in $\logic{B}$ also holds in $\logic{L}$. The extensions of $\logic{B}$ form a complete lattice denoted $\Ext \logic{B}$. The finitary extensions of a finitary logic $\logic{B}$ form an algebraic lattice denoted $\Ext_{\omega} \logic{B}$. If $\logic{L}_{1} \logleq \logic{L}_{2}$, the interval of $\Ext \logic{L}_{1}$ (or $\Ext_{\omega} \logic{L}_{1}$, depending on context) between $\logic{L}_{1}$ and $\logic{L}_{2}$ will be denoted $[\logic{L}_{1}, \logic{L}_{2}]$.

  A logic $\logic{L}$ is \emph{axiomatized} by a set of rules $\rho$ (relative to some logic $\logic{B}$) if it is the least logic which validates each rule of $\rho$ (and extends $\logic{B}$). We also say that $\logic{L}$ is the extension of $\logic{B}$ by the set of rules $\rho$. It is \emph{finitely axiomatizable} (relative to $\logic{B}$) if it is axiomatized (relative to $\logic{B}$) by a finite set of rules. If $\logic{L}_{1}$ and $\logic{L}_{2}$ are extensions of $\logic{B}$ by the sets of rules $\rho_{1}$ and $\rho_{2}$ respectively, then their join $\logic{L}_{1} \vee \logic{L}_{2}$ in $\Ext \logic{B}$ is axiomatized by~$\rho_{1} \cup \rho_{2}$.

  The above notion of axiomatization may be given a more proof-theoretic inter\-pretation. By a \emph{proof} of $\varphi$ from $\Gamma$ using the rules $\rho$, we mean a well-founded tree (i.e.\ a tree with no infinite branches) where the root is labelled by $\varphi$, each terminal node is labelled either by some $\gamma \in \Gamma$ or by a substitution instance of the conclusion of an axiomatic rule in $\rho$, and each non-terminal node is labelled by a formula obtained from the labels of its parents by a substitution instance of a rule in $\rho$. Saying that a logic $\logic{L}$ is axiomatized by $\rho$ is then equivalent to saying that $\Gamma \vdash_{\logic{L}} \varphi$ if and only if $\varphi$ has a proof from $\Gamma$ using the rules $\rho$.

  The models of consequence relations are \emph{(logical) matrices}. A matrix $\matr{A} = \pair{\alg{A}}{F}$ consists of an algebra $\alg{A}$ and a set $F \subseteq \alg{A}$ of \emph{designated values}. A~matrix is \emph{trivial} if $F = \alg{A}$, and it is \emph{almost trivial} if $F = \emptyset$. A \emph{valuation} on $\matr{A}$ is a homo\-morphism from the algebra of formulas into~$\alg{A}$. A rule $\Gamma \vdash \varphi$ is \emph{valid} in~$\matr{A}$ if $v[\Gamma] \subseteq F$ implies $v(\varphi) \in F$ for each valuation $v$ on $\matr{A}$. Each matrix $\matr{A}$ thus determines a logic $\Log \matr{A}$ such that $\Gamma \vdash \varphi$ in $\Log \matr{A}$ if and only if $\Gamma \vdash \varphi$ is valid in $\matr{A}$. If $\class{K}$ is a class of matrices in the given signature, then $\Log \class{K}$ will denote the logic $\bigcap_{\alg{A} \in \class{K}} \Log \alg{A}$. The finitary part of $\Log \class{K}$ will be denoted $\Log_{\omega} \class{K}$.

  A logic $\logic{L}$ is \emph{complete} with respect to a class of matrices $\class{K}$ if $\logic{L} = \Log \class{K}$. Likewise, a finitary logic $\logic{L}$ is \emph{complete as a finitary logic} (or \emph{$\omega$-complete}) with respect to a class of matrices $\class{K}$ if $\logic{L} = \Log_{\omega} \class{K}$, i.e.\ if a finitary rule holds in $\logic{L}$ if and only if it holds in each matrix in $\class{K}$. The two notions coincide if $\class{K}$ is a finite class of finite matrices: in that case $\Log \class{K}$ is always finitary.

  A matrix $\matr{A}$ is a \emph{model} of a logic $\logic{L}$ if each rule of $\logic{L}$ is valid in $\matr{A}$, i.e.\ if $\logic{L} \logleq \Log \matr{A}$. The class of all models of $\logic{L}$ is denoted $\Mod \logic{L}$. Each logic $\logic{L}$ is the logic determined by the class of its models: $\logic{L} = \Log \Mod \logic{L}$. However, $\Mod \logic{L}$ is usually too broad a class to give us a good grip on the properties of $\logic{L}$. Models of a particular kind, called reduced models, will be needed.

  We call a congruence $\theta$ of an algebra $\alg{A}$ \emph{compatible} with $F \subseteq \alg{A}$ if
\begin{align*}
  a \in F \text{ and } \pair{a}{b} \in \theta \text{ implies } b \in F.
\end{align*}
  If $\theta$ is compatible with $F$, we define $F / \theta \assign \set{a / \theta}{a \in F}$, where $a / \theta$ denotes the equivalence class of $a$ with respect to $\theta$. For each $F \subseteq \alg{A}$ there is a largest congruence of $\alg{A}$ compatible with $F$, called the \emph{Leibniz congruence} of $F$ and denoted $\Leibniz{\alg{A}}{F}$. The~Leibniz congruence of a matrix $\matr{A} = \pair{\alg{A}}{F}$ is $\Leibniz{\alg{A}}{F}$ and the \emph{Leibniz reduct} of $\matr{A}$ is the matrix $\matr{A}^{*} \assign \pair{\alg{A} / \Leibniz{\alg{A}}{F}}{ F / \Leibniz{\alg{A}}{F}}$.

  A matrix is called \emph{reduced} if its Leibniz congruence is the identity relation. The Leibniz reduct of $\matr{A}$ is always a reduced matrix. The class of all reduced models of a logic $\logic{L}$ will be denoted $\Mod^{*} \logic{L}$. Crucially, each matrix is \emph{logically equivalent} to (i.e.\ yields the same logic as) its Leibniz reduct. Each logic $\logic{L}$ is thus determined by the class of its reduced models: $\logic{L} = \Log \Mod^{*} \logic{L}$.

  Each matrix is a structure in the sense of model theory, therefore we may define submatrices and products and ultraproducts of matrices in the usual model-theoretic way. More explicitly, consider the matrices $\matr{A} = \pair{\alg{A}}{F}$, $\matr{B} = \pair{\alg{B}}{G}$, and $\matr{A}_{i} = \pair{\alg{A}_{i}}{F_{i}}$ for $i \in I$. Then $\matr{A}$ is a \emph{submatrix} of $\matr{B}$, symbolically $\matr{A} \leq \matr{B}$, if $\alg{A}$ is a subalgebra of $\alg{B}$, symbolically $\alg{A} \leq \alg{B}$, and $F = G \cap \alg{A}$. The matrix $\matr{A}$ is the \emph{direct product} of the matrices $\matr{A}_{i}$ if $\alg{A} = \prod_{i \in I} \alg{A}_{i}$ and $F = \prod_{i \in I} F_{i}$. Given two classes of matrices $\class{K}_{1}$ and $\class{K}_{2}$, the class of all matrices $\matr{A}_{1} \times \matr{A}_{2}$ such that $\matr{A}_{1} \in \class{K}_{1}$ and $\matr{A}_{2} \in \class{K}_{2}$ will be denoted $\class{K}_{1} \times \class{K}_{2}$, with $\matr{A} \times \class{K} \assign \{ \matr{A} \} \times \class{K}$.

  A \emph{matrix homomorphism} $h\colon \matr{A} \rightarrow \matr{B}$ is an algebraic homomorphism $h\colon \alg{A} \to \alg{B}$ such that $h[F] \subseteq G$. It is \emph{strict} if in fact $F = h^{-1}[G]$. If the homomorphism~$h$ is surjective (and strict), we call $\matr{B}$ a (strict) homomorphic image of the \mbox{matrix}~$\matr{A}$, and we call $\matr{A}$ a (strict) homomorphic preimage of~$\matr{B}$.

  If $\class{K}$ is a class of matrices in the given signature, the classes of all homo\-morphic preimages, strict homomorphic images, strict homo\-morphic preimages, sub\-matrices, products, and ultra\-products of \mbox{matrices} in $\class{K}$ will respectively be denoted $\AlgInvH (\class{K})$, $\AlgHS (\class{K})$, $\AlgInvHS (\class{K})$, $\AlgS (\class{K})$, $\AlgP (\class{K})$, and~$\AlgPu (\class{K})$.

  The~class $\Mod \logic{L}$ is always closed under submatrices, products of matrices, strict homomorphic images, and strict homomorphic preimages. Conversely, the following analogue of the $\AlgI \AlgS \AlgP$ theorem for quasi\-varieties characterizes those classes of matrices which arise as $\Mod \logic{L}$ for some finitary logic $\logic{L}$. The~theorem is due to Czelakowski~\cite{czelakowski80} for languages with countably many connectives and to Dellunde \& Jansana~\cite{dellunde+jansana96} for arbitrary languages.

\begin{theorem} \label{thm: equality-free quasivarieties}
  $\Mod \Log_{\omega} \class{K} = \AlgInvHS \AlgHS \AlgS \AlgP \AlgPu (\class{K})$ for any class of matrices $\class{K}$.
\end{theorem}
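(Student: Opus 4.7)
The plan is to prove the two inclusions separately, the non-trivial direction being $\Mod \Log_{\omega} \class{K} \subseteq \AlgInvHS \AlgHS \AlgS \AlgP \AlgPu (\class{K})$.

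For the easy direction, $\AlgInvHS \AlgHS \AlgS \AlgP \AlgPu (\class{K}) \subseteq \Mod \Log_{\omega} \class{K}$, I would argue that $\Mod \Log_{\omega} \class{K}$ is closed under each of the six operators. Closure under $\AlgS$, $\AlgP$, $\AlgHS$, and $\AlgInvHS$ has already been noted in the text for $\Mod \logic{L}$ with any logic $\logic{L}$, since strict matrix morphisms reflect and preserve the satisfaction of arbitrary rules. For $\AlgPu$ I would invoke a \L os-style preservation argument: a finitary rule $\Gamma \vdash \varphi$ holds in an ultraproduct $\prod_{\filter{U}} \matr{A}_{i}$ iff it holds in $\matr{A}_{i}$ for $\filter{U}$-almost all $i$. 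Finiteness of $\Gamma$ is essential, because otherwise checking that a valuation sends all of $\Gamma$ into the designated set would require an infinite intersection of sets in $\filter{U}$, which need not belong to $\filter{U}$.

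For the hard direction, fix $\matr{M} = \pair{\alg{M}}{D} \in \Mod \Log_{\omega} \class{K}$; the task is to reconstruct $\matr{M}$ from $\class{K}$ through the operators. I would proceed via a diagram-style construction: introduce a variable $p_{a}$ for each $a \in \alg{M}$, so the canonical valuation $v_{\mathrm{id}}(p_{a}) \assign a$ maps the formula algebra onto $\alg{A}$. For every pair $(\Gamma, a)$ with $\Gamma \subseteq D$ and $a \in \alg{M} \setminus D$ (read as formulas under this identification), the rule $\Gamma \vdash a$ fails in $\matr{M}$, hence fails in $\Log_{\omega} \class{K}$; by finitariness this is witnessed by some finite $\Gamma_{0} \subseteq \Gamma$, and hence by a valuation into some $\matr{A}_{(\Gamma, a)} \in \class{K}$ separating $\Gamma_{0}$ from $\{a\}$. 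Forming the product of all these separating maps gives a homomorphism from the formula algebra into an element of $\AlgP \AlgPu (\class{K})$; taking the image ($\AlgS$), quotienting by the Leibniz congruence ($\AlgHS$), and then recognizing $\matr{M}$ as a strict homomorphic preimage of the resulting reduced matrix ($\AlgInvHS$) should complete the construction.

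The main obstacle is handling the possibly infinite premise sets faithfully. The separating matrices produced above reflect only finite fragments of each failing rule, so a single product of them need not be a model of the rules of $\matr{M}$ that involve genuinely infinite premise sets. This is precisely the role of $\AlgPu$: I would first upgrade the family $\{\matr{A}_{(\Gamma, a)}\}$ by passing to suitable ultraproducts indexed by the directed set of finite subsets of $\Gamma$, so that \emph{infinite} failures in $\matr{M}$ are nevertheless witnessed in $\AlgPu (\class{K})$. A secondary subtlety, needed to make the final $\AlgInvHS$ step legitimate, is to arrange the separating maps so that $v_{\mathrm{id}}(p_{b}) \in D$ holds if and only if the corresponding coordinate lies in the designated set of every component where separation is demanded; with this choice the induced map from $\matr{M}$ into the constructed product matrix is automatically strict. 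The countable-signature case is Czelakowski's~\cite{czelakowski80}, and the extra work needed for arbitrary signatures is Dellunde and Jansana's~\cite{dellunde+jansana96}.
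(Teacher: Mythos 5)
A preliminary remark: the paper does not prove this theorem at all --- it is imported as a known result of Czelakowski (countable languages) and Dellunde--Jansana (arbitrary languages) --- so there is no in-paper argument to measure your sketch against. Judged on its own terms, your architecture is the standard one for this characterization: the easy inclusion via closure of $\Mod \Log_{\omega} \class{K}$ under the operators involved (with \L{}o\'{s}'s theorem handling $\AlgPu$; you are right that finitarity is exactly what makes a rule a first-order universal Horn sentence there), and the hard inclusion via a diagram construction, ultraproducts over the directed set of finite subsets of the premise set, a product of the resulting separating matrices, and then image, Leibniz quotient, and strict preimage.

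There is, however, a genuine gap in the separation step, plus two smaller points. (i) You separate only at the level of variables: for each $a \notin D$ you demand valuations designating the $p_{d}$ with $d \in D$ but not $p_{a}$. That is not enough. The final $\AlgInvHS$ step needs the product valuation $w$ from the algebra of formulas $\alg{Fm}$ into $\matr{B} \in \AlgP \AlgPu(\class{K})$ to be strict as a map from the Lindenbaum matrix $\pair{\alg{Fm}}{T}$, where $T \assign v_{\mathrm{id}}^{-1}[D]$ is the set of \emph{all} formulas, compound ones included, that evaluate into $D$: only then is $\matr{C}^{*} \cong \pair{\alg{Fm}}{T}^{*}$ for the image $\matr{C}$ of $w$, and only then does $\Ker v_{\mathrm{id}} \subseteq \Leibniz{\alg{Fm}}{T}$ allow the map $\alg{Fm} \to \matr{C}^{*}$ to factor through $v_{\mathrm{id}}$, yielding the required strict surjection $\matr{M} \to \matr{C}^{*}$. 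Controlling the variables does not control where $w$ sends compound formulas, so as written $w^{-1}$ of the designated set need not equal $T$ and the factorization breaks; the fix is to refute the rules $T \vdash \varphi$ for every formula $\varphi \notin T$. (ii) Your phrase ``the induced map from $\matr{M}$ into the constructed product matrix'' points the arrow the wrong way: in general there is no homomorphism from $\matr{M}$ to the product, since the kernel of $w$ need not contain that of $v_{\mathrm{id}}$; the strict map into the product goes from the formula matrix, and $\matr{M}$ only maps \emph{onto} the Leibniz reduct of its image. (iii) ``A variable $p_{a}$ for each $a \in \alg{M}$'' enlarges the variable set whenever $\matr{M}$ is large; showing that this is harmless (or reducing to suitably generated submatrices) is where much of the real work in the cited papers lies --- it is a cardinality issue about the model, not only about the signature, and your closing sentence underestimates it.
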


  In particular, the map $\logic{L} \mapsto \Mod \logic{L}$ is an isomorphism between the lattice $\Ext_{\omega} \logic{B}$ of finitary extensions of a finitary logic $\logic{B}$ and the lattice of classes of models of $\logic{B}$ closed under the appropriate constructions. If the class $\Alg^{*} \logic{B}$ of algebraic reducts of reduced models of $\logic{B}$ moreover generates a locally finite variety, then the following theorem states that this map yields an isomorphism between the lattice of finitary extensions of $\logic{B}$ and a certain lattice of classes of finite reduced models of $\logic{B}$. This theorem is merely the matrix version of a theorem of Gr\"{a}tzer \& Quackenbush~\cite[Theorem~2.3]{gratzer+quackenbush10}. Here $\AlgS^{*} (\class{K})$ and $\AlgP_{\omega}^{*} (\class{K})$ denote respectively the class of all Leibniz reducts of submatrices of matrices in~$\class{K}$ and the class of finite products of matrices in~$\class{K}$. $\Mod_{\omega}^{*} \logic{L}$ denotes the class of all finite reduced models of $\logic{L}$.

\begin{theorem} \label{thm: gratzer-quackenbush}
  Let $\logic{B}$ be a finitary logic such that $\Alg^{*} \logic{B}$ generates a locally finite variety. Then $\Ext_{\omega} \logic{B}$ is dually isomorphic to the lattice of subclasses of $\Mod_{\omega}^{*} \logic{B}$ closed under $\AlgS^{*}$ and $\AlgP_{\omega}^{*}$ via the maps $\logic{L} \mapsto \Mod_{\omega}^{*} \logic{L}$ and $\class{K} \mapsto \Log_{\omega} \class{K}$.
\end{theorem}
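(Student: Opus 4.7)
The plan is to show that the two maps in the statement are mutually inverse order-reversing bijections. Order-reversal is immediate. Closure of $\Mod_{\omega}^{*} \logic{L}$ under $\AlgS^{*}$ follows from the fact that the Leibniz reduct of a submatrix of a model of $\logic{L}$ is again a reduced model of $\logic{L}$; closure under $\AlgP_{\omega}^{*}$ is a consequence of closure of $\Mod \logic{L}$ under direct products, the preservation of the reduced property under direct products (the Leibniz congruence of a product being the product of Leibniz congruences), and the preservation of finiteness under finite products. After these observations, two equalities remain to be verified: $\logic{L} = \Log_{\omega} \Mod_{\omega}^{*} \logic{L}$ for every finitary extension $\logic{L}$ of $\logic{B}$, and $\class{K} = \Mod_{\omega}^{*} \Log_{\omega} \class{K}$ for every appropriately closed class $\class{K}$.

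For the first equality, the inclusion $\leq$ is trivial; for $\geq$, the task is to show that every finitary rule $\Gamma \vdash \varphi$ failing in $\logic{L}$ already fails in some finite reduced model of $\logic{L}$. Given a falsifying valuation $v$ on some model $\matr{A} \in \Mod \logic{L}$, the rule mentions only finitely many variables $p_{1}, \ldots, p_{n}$, so one passes to the submatrix $\matr{B} \leq \matr{A}$ generated by $v(p_{1}), \ldots, v(p_{n})$ and then to its Leibniz reduct $\matr{B}^{*}$, which is a reduced model of $\logic{L}$ in which the rule still fails. This is precisely where the local finiteness hypothesis enters: the algebra of $\matr{B}^{*}$ lies in $\Alg^{*} \logic{L} \subseteq \Alg^{*} \logic{B}$ and is generated by at most $n$ elements, hence is finite. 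Therefore $\matr{B}^{*} \in \Mod_{\omega}^{*} \logic{L}$.

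For the second equality, $\class{K} \subseteq \Mod_{\omega}^{*} \Log_{\omega} \class{K}$ is obvious. For the converse, let $\matr{A}$ be a finite reduced model of $\Log_{\omega} \class{K}$ with elements $a_{1}, \ldots, a_{n}$. By Theorem~\ref{thm: equality-free quasivarieties}, $\matr{A} \in \AlgInvHS \AlgHS \AlgS \AlgP \AlgPu (\class{K})$. I expect the main obstacle to be eliminating the non-finitary operators $\AlgP$ and $\AlgPu$ to conclude $\matr{A} \in \AlgS^{*} \AlgP_{\omega}^{*} (\class{K})$. The idea is to exploit the finiteness of $\matr{A}$: since the algebra of $\matr{A}$ is generated by $a_{1}, \ldots, a_{n}$ with finitely many designated-status patterns to witness, and since $\matr{A}$ is reduced so each pair of distinct elements is separated by a unary polynomial relative to the filter, only finitely many failures of finitary rules of the form $\Gamma \vdash \varphi$ at the canonical valuation $p_{i} \mapsto a_{i}$ need to be accounted for. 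Each such failure, not being a rule of $\Log_{\omega} \class{K}$, is witnessed by some $\matr{B}_{j} \in \class{K}$ together with a valuation $v_{j}$, and the finitely many chosen $\matr{B}_{1}, \ldots, \matr{B}_{k}$ suffice to separate, via the combined valuation $p_{i} \mapsto (v_{1}(p_{i}), \ldots, v_{k}(p_{i}))$, every designated value from every non-designated one and every pair of distinct elements of $\matr{A}$. Consequently $\matr{A}$ is realized as the Leibniz reduct of the submatrix of $\matr{B}_{1} \times \cdots \times \matr{B}_{k}$ generated by these tuples, so $\matr{A} \in \AlgS^{*} \AlgP_{\omega}^{*} (\class{K}) = \class{K}$.
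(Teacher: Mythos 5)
Your first half is fine and is essentially what the paper does in one line: a finitary rule failing in $\logic{L}$ fails in some model, hence in the finitely generated submatrix determined by the variables of the rule, hence in its Leibniz reduct, which is a finite reduced model because that reduct is $n$-generated and lies in the locally finite variety generated by $\Alg^{*} \logic{B}$. (One small caveat: your justification of closure under $\AlgP_{\omega}^{*}$ via ``the Leibniz congruence of a product is the product of the Leibniz congruences'' is false in general for non-protoalgebraic logics --- a product of reduced matrices need not be reduced. Closure under $\AlgP_{\omega}^{*}$, read as Leibniz reducts of finite products, needs no such claim.)

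The real problem is in the inclusion $\Mod_{\omega}^{*} \Log_{\omega} \class{K} \subseteq \class{K}$. Your diagonal construction is a legitimate alternative to the paper's route (which eliminates $\AlgPu$ and then $\AlgP$ from the decomposition given by Theorem~\ref{thm: equality-free quasivarieties}, using $n$-generation and local finiteness), but as written it has a gap at the sentence ``only finitely many failures of finitary rules \dots need to be accounted for.'' The rules you actually need are of the form $\Gamma_{0} \vdash \psi$, where $\Gamma_{0}$ is the set of \emph{all} formulas in $p_{1}, \dots, p_{n}$ taking designated values under the canonical valuation; this set is infinite, so these rules are not finitary, and their failure in $\matr{A}$ does not contradict $\matr{A} \in \Mod \Log_{\omega} \class{K}$. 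If you instead shrink $\Gamma_{0}$ to an arbitrary finite subset, the witnessing valuations $v_{j}$ on the $\matr{B}_{j}$ are only guaranteed to designate that finite subset; the submatrix of $\matr{B}_{1} \times \dots \times \matr{B}_{k}$ generated by the combined tuples then need not be finite and, more importantly, the natural map onto $\matr{A}$ need not be strict (a formula designated in $\matr{A}$ may come out undesignated in some coordinate), so $\matr{A}$ is not obtained as a Leibniz reduct. The missing ingredient is precisely the local finiteness hypothesis, which you invoke only in the first half: since the algebras of the matrices in $\class{K}$ lie in the locally finite variety generated by $\Alg^{*} \logic{B}$, there are only finitely many formulas in $p_{1}, \dots, p_{n}$ up to equivalence in that variety, so $\Gamma_{0}$ and the set of undesignated formulas can each be replaced by finitely many representatives whose behaviour under the $v_{j}$ controls the behaviour of \emph{all} $n$-variable formulas. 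With this inserted, your separation argument via the reducedness of $\matr{A}$ does yield a strict surjection from a finite submatrix of a finite product onto $\matr{A}$, and the proof closes.
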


\begin{proof}
  The two maps are isotone, and $\logic{L} = \Log_{\omega} \Mod_{\omega}^{*} \logic{L}$ for each $\logic{L}$ in $\Ext_{\omega} \logic{B}$ by the finitarity of $\logic{L}$ and the local finiteness of $\Alg^{*} \logic{B}$. Moreover, $\class{K} \subseteq \Mod_{\omega}^{*} \Log_{\omega} \class{K}$ for each class $\class{K} \subseteq \Mod_{\omega}^{*} \logic{B}$. It therefore remains to prove that $\Mod_{\omega}^{*} \Log_{\omega} \class{K} \subseteq \AlgS^{*} \AlgP^{*} (\class{K})$.

  Let $\matr{A}$ be a finite reduced model of $\Log_{\omega} \class{K}$ of cardinality $n$ with $\class{K} \subseteq \Mod_{\omega}^{*} \logic{B}$. Then $\matr{A} \in \AlgHS \AlgS \AlgP \AlgPu (\class{K})$ by Theorem~\ref{thm: equality-free quasivarieties}. There is an $n$-generated (hence finite) matrix $\matr{B} \in \AlgS \AlgP \AlgPu (\class{K})$ such that $\matr{A} \in \AlgHS (\matr{B})$. The~condition $\matr{B} \in \AlgS \AlgP \AlgPu (\class{K})$ implies that there are finitely many $\matr{C}_{i} \in \AlgPu (\class{K})$ for $i \in I$ with strict homomorphisms $h_{i}\colon \matr{B} \rightarrow \matr{C}_{i}$ such that $\bigcap \set{\Ker h_{i}}{i \in I} = \Delta_{\alg{B}}$, where $\Delta_{\alg{B}}$ is the identity relation on $\alg{B}$ and $\Ker h_{i}$ is the kernel of the homomorphism $h_{i}$. It follows that there are $n$-generated (hence finite) matrices $\matr{D}_{i} \leq \matr{C}_{i}$ such that $\matr{D}_{i}$ is the range of~$h_{i}$. Each such matrix $\matr{D}_{i}$ is an $n$-generated submatrix of an ultra\-product of matrices in $\class{K}$, and therefore embeds into an ultr\-aproduct of $n$-generated sub\-matrices of matrices in $\class{K} \subseteq \Alg^{*} \class{K}$. Since $\Alg^{*} \logic{L}$ generates a locally finite variety, there are only finitely many such $n$-generated submatrices. The matrices $\matr{D}_{i}$ are thus submatrices of matrices in $\class{K}$, i.e.\ $\matr{D}_{i} \in \AlgS (\class{K})$ for $i \in I$. Therefore $\matr{A} \in \AlgHS \AlgS \AlgP_{\omega} \AlgS (\class{K}) \subseteq \AlgHS \AlgS \AlgP_{\omega} (\class{K})$, where $\AlgP_{\omega}$ stands for finite products. Since the matrix $\matr{A}$ is reduced, we in fact have $\matr{A} \in \AlgS^{*} \AlgP_{\omega} (\class{K}) \subseteq \AlgS^{*} \AlgP_{\omega}^{*} (\class{K})$.
\end{proof}

  Finally, we recall some basic notions of universal algebra. The reader may consult the textbook \cite{burris+sankappanavar81} for an intro\-duction to universal algebra and the monograph~\cite{gorbunov98} for an introduction to the study of quasivarieties and antivarieties.

  If $\class{K}$ is a class of algebras in a given signature, then $\AlgInvH (\class{K})$, $\AlgS (\class{K})$, $\AlgP (\class{K})$, and $\AlgPu (\class{K})$ denote the algebraic analogues of the corresponding matrix constructions. We use $\AlgH (\class{K})$ to denote the class of all homomorphic images of algebras in $\class{K}$ and we use $\AlgPuStar(\class{K})$ to denote the class of all ultraproducts of \emph{non-empty} families of algebras in $\class{K}$. An \emph{equation} in a given signature is a formula of the form $t \approx u$, where $t$ and~$u$ are terms in the given signature and $\approx$ is the equality predicate. A \emph{quasiequation} has the form $t_{1} \approx u_{1} ~ \& ~ \dots ~ \& ~ t_{n} \approx u_{n} \implies t \approx u$. Finally, a \emph{negative clause} has the form $t_{1} \napprox u_{1} \vee \dots \vee t_{n} \napprox u_{n}$.

  A~\emph{variety (quasivariety, antivariety)} is a class of algebras in a given signature axiomatized by a set of universally quantified equations (quasi\-equations, negative clauses). The variety (quasivariety, antivariety) \emph{generated} by $\class{K}$ is the smallest variety (quasivariety, antivariety) which contains $\class{K}$.

\newcommand{\auxcit}{\cite[Theorem 2.1.12, Corollary 2.3.4, and Theorem 2.3.11]{gorbunov98}}

\begin{theorem}[\auxcit] \label{thm: generating antivarieties}
  ~
\begin{enumerate}
\item The variety generated by $\class{K}$ is $\AlgH \AlgS \AlgP (\class{K})$.
\item The quasi\-variety generated by $\class{K}$ is $\AlgI \AlgS \AlgP \AlgPu (\class{K})$.
\item The antivariety generated by $\class{K}$ is $\AlgInvH \AlgS \AlgPuStar (\class{K})$.
\end{enumerate}
\end{theorem}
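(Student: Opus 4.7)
The plan is to treat the three parts in parallel, following the same blueprint: first, check that every sentence of the appropriate syntactic form (equation, quasi-equation, negative clause) holding on $\class{K}$ is preserved under each operator appearing on the right-hand side, yielding the inclusion $\supseteq$; second, produce a representation of an arbitrary member $A$ of the universal class as the result of the stated operators applied to $\class{K}$, yielding $\subseteq$. For (1) this is the classical route to Birkhoff's HSP theorem: I would prove the operator inclusions $\AlgS \AlgH \subseteq \AlgH \AlgS$, $\AlgP \AlgH \subseteq \AlgH \AlgP$, and $\AlgP \AlgS \subseteq \AlgS \AlgP$ to see that $\AlgH \AlgS \AlgP (\class{K})$ is closed under all three operators, and for the converse realize any $A$ satisfying the equations valid on $\class{K}$ as a homomorphic image of the free algebra $F_{V(\class{K})}(|A|)$, which is itself a subdirect product of members of $\class{K}$. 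For (2) the analogous argument uses a Mal'cev-style diagram construction: for each $a \neq b$ in an algebra $A$ satisfying all quasi-equations of $\class{K}$, the positive diagram of $A$ together with the inequation $a \napprox b$ is finitely satisfiable in $\class{K}$, hence by compactness realised in some ultraproduct of members of $\class{K}$; assembling these realisations separates the elements of $A$ and exhibits $A$ as a subdirect product, placing it in $\AlgI \AlgS \AlgP \AlgPu (\class{K})$.

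Part (3) is where I expect the genuine work. The $\supseteq$ direction requires only routine preservation checks: any homomorphism $h \colon A \to B$ pulls back a distinctness in $B$ to a distinctness in $A$, so negative clauses are preserved under $\AlgInvH$; being universal sentences, they descend to subalgebras under $\AlgS$; and by the preservation theorem for first-order sentences under ultraproducts they are preserved under $\AlgPuStar$. The restriction to non-empty families is essential, since an empty ultraproduct would collapse to a trivial one-element algebra satisfying every equation and thus falsifying every non-trivial negative clause. For the converse, given $A$ in the antivariety of $\class{K}$, I would consider the positive atomic diagram $T$ of $A$ in the language expanded by a fresh constant $c_a$ for each $a \in A$. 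A finite subset of $T$ is a finite conjunction of equations $t_i(\bar c) \approx u_i(\bar c)$; the negation of this conjunction is a negative clause which, if it held in $\class{K}$, would by hypothesis hold in $A$ and so contradict the choice of $T$. Thus each finite subset of $T$ is realised in some member of $\class{K}$, and compactness (via ultraproducts) then realises $T$ itself in some $B^* \in \AlgPuStar (\class{K})$; that realisation is precisely a homomorphism $A \to B^*$, placing $A$ in $\AlgInvH \AlgPuStar (\class{K}) \subseteq \AlgInvH \AlgS \AlgPuStar (\class{K})$.

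The main subtlety throughout is the bookkeeping around the exact operators — in particular, in (3), why $\AlgPuStar$ and not $\AlgPu$ is the correct ultrafilter operator, and why in the converse direction the submatrix operator $\AlgS$ turns out to be redundant (the compactness argument lands directly in $\AlgInvH \AlgPuStar (\class{K})$) yet is indispensable on the other side for closure. I would extract these precise details from the Gorbunov monograph cited in the statement, trusting the three-part template above to guide which clauses of the argument to apply in each case.
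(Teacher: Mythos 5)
The paper offers no proof of this theorem: it is imported verbatim from Gorbunov's monograph with a precise citation, so there is no in-paper argument to compare yours against. Your three-part blueprint is the standard one (Birkhoff for (1), the Mal'cev-style positive-diagram-plus-compactness argument for (2), and the analogous diagram argument against negative clauses for (3)), and as a sketch it is essentially sound, including the observation that $\AlgPuStar$ rather than $\AlgPu$ is forced in (3) because an empty ultraproduct is trivial and falsifies every negative clause.

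One genuine slip in your bookkeeping on part (3), which your own phrasing already betrays: you say $\AlgS$ is redundant in the converse direction (the compactness argument ``lands directly in $\AlgInvH \AlgPuStar (\class{K})$'') \emph{and} that it is indispensable for closure on the other side. These two claims are incompatible --- if every $A$ in the antivariety landed in $\AlgInvH \AlgPuStar (\class{K})$, that class would already be the antivariety and hence closed under subalgebras, so $\AlgS$ could not be doing indispensable work anywhere. The resolution is that under this paper's convention a homomorphic preimage is the domain of a \emph{surjective} homomorphism (the algebraic operators are declared to be the analogues of the matrix ones, and for matrices preimages are defined via surjections). The homomorphism $A \to B^{*}$ obtained by realising the positive diagram of $A$ in the ultraproduct $B^{*}$ has no reason to be onto, so you must factor it through its image, a subalgebra of $B^{*}$; this puts $A$ in $\AlgInvH \AlgS \AlgPuStar (\class{K})$ and not, in general, in $\AlgInvH \AlgPuStar (\class{K})$. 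That is exactly where $\AlgS$ earns its place on the converse side as well. With that corrected, your outline matches the cited proofs.
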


\section{Explosive extensions}
\label{sec: explosive}

  In our study of the lattice of super-Belnap logics, the extensions of Belnap--Dunn logic by an antiaxiomatic (or explosive) rule will play an important role. An antiaxiomatic rule is, roughly speaking, a rule which states that a certain set of propositions is inconsistent. In this section, we study such antiaxiomatic extensions of a given base logic in full generality.

\begin{definition}
  A set of formulas $\Gamma$ is an antitheorem of the logic $\logic{L}$, symbolically $\Gamma \vdash_{\logic{L}} \emptyset$, if no valuation on a non-trivial model of $\logic{L}$ designates all of~$\Gamma$.
\end{definition}

\begin{fact}
  Let $p$ be a variable which does not occur in $\Gamma$. Then $\Gamma$ is an anti\-theorem of $\logic{L}$ if and only if $\Gamma \vdash_{\logic{L}} p$.
\end{fact}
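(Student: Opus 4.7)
The plan is to prove the two implications separately, using in each case the definitional characterization of validity in a matrix model.

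For the forward direction, I would assume that $\Gamma$ is an antitheorem and show that $\Gamma \vdash_{\logic{L}} p$ holds in every model of $\logic{L}$. Since $\logic{L} = \Log \Mod \logic{L}$, it suffices to check that the rule is valid in each $\matr{A} = \pair{\alg{A}}{F} \in \Mod \logic{L}$. If $\matr{A}$ is trivial, then $F = \alg{A}$, so $v(p) \in F$ automatically for every valuation $v$; and if $\matr{A}$ is non-trivial, then by the antitheorem hypothesis there is no valuation $v$ with $v[\Gamma] \subseteq F$, so validity of $\Gamma \vdash p$ holds vacuously. No use of the assumption $p \notin \Gamma$ is needed here.

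For the converse, I would argue contrapositively. Assume $\Gamma$ is not an antitheorem: there is some non-trivial model $\matr{A} = \pair{\alg{A}}{F}$ of $\logic{L}$ and a valuation $v$ with $v[\Gamma] \subseteq F$. Non-triviality gives an element $a \in \alg{A} \setminus F$. The only delicate point is to turn this into a refuting valuation for $\Gamma \vdash p$, and this is precisely where the hypothesis that $p$ does not occur in $\Gamma$ is used: I would define $v'$ by $v'(q) \assign v(q)$ for every variable $q \neq p$, and $v'(p) \assign a$. Because $p$ does not occur in any formula of $\Gamma$, an easy induction on formula complexity shows $v'(\gamma) = v(\gamma)$ for each $\gamma \in \Gamma$, so $v'[\Gamma] \subseteq F$ while $v'(p) = a \notin F$. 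Hence $\Gamma \nvdash_{\logic{L}} p$.

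There is no genuine obstacle here; the only subtlety worth flagging is that the hypothesis $p \notin \Gamma$ is essential only in the backward direction, and it enters via the standard fact that the value of a formula under a valuation depends only on the values assigned to its variables. The forward direction is a purely formal consequence of the trivial-matrix case being handled separately from the non-trivial case in the definition of antitheorem.
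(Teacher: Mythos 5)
Your proof is correct, and it is essentially the argument the paper has in mind: the paper states this Fact without proof, but its proof of the immediately following generalization (the version with the renaming substitutions $\sigmapushp$, $\sigmapopp$) uses exactly your key move of modifying a designating valuation at the fresh variable $p$ to take an undesignated value. Your observation that the hypothesis $p \notin \Gamma$ is needed only for the backward direction is also accurate.
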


  If all the variables of $\logic{L}$ occur in $\Gamma$, one has to resort to renaming the variables. Pick a variable $p$ and substitutions $\sigmapushp$ and $\sigmapopp$ such that $(\sigmapopp \circ \sigmapushp) (\varphi) =\varphi$ for each formula~$\varphi$, $\sigmapopp(p) = p$, and moreover $p$ does not occur in $\sigmapushp(\varphi)$ for any $\varphi$.

\begin{fact}
  The following are equivalent:
\begin{enumerate}
\item $\Gamma$ is an antitheorem of $\logic{L}$,
\item $\sigmapushp [\Gamma] \vdash_{\logic{L}} p$,
\item $\sigma [\Gamma] \vdash_{\logic{L}} \varphi$ for each formula $\varphi$ and each substitution $\sigma$.
\end{enumerate}
\end{fact}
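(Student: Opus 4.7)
My plan is to go around the cycle (1) $\Rightarrow$ (2) $\Rightarrow$ (3) $\Rightarrow$ (2) $\Rightarrow$ (1); since (3) $\Rightarrow$ (2) is just instantiation ($\sigma := \sigmapushp$, $\varphi := p$), the real content lies in the equivalence (1) $\Leftrightarrow$ (2) and in the implication (2) $\Rightarrow$ (3).

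For (1) $\Rightarrow$ (2), I exploit that antitheoremhood transfers along compositions of valuations with substitutions: if a valuation $v$ on a non-trivial model designates $\sigmapushp[\Gamma]$, then $v \circ \sigmapushp$ is again a valuation on the same model and it designates $\Gamma$; hence $\sigmapushp[\Gamma]$ is an antitheorem whenever $\Gamma$ is. Since by construction $p$ does not occur in $\sigmapushp[\Gamma]$, the preceding Fact immediately yields $\sigmapushp[\Gamma] \vdash_{\logic{L}} p$.

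For (2) $\Rightarrow$ (1), the same fact, read the other way, converts $\sigmapushp[\Gamma] \vdash_{\logic{L}} p$ into antitheoremhood of $\sigmapushp[\Gamma]$. To transfer this back to $\Gamma$, I suppose for contradiction that $v$ designates $\Gamma$ on some non-trivial model with designated set $F$. The composite $v \circ \sigmapopp$ is then again a valuation on the same model, and
\[ (v \circ \sigmapopp)(\sigmapushp(\gamma)) = v\bigl((\sigmapopp \circ \sigmapushp)(\gamma)\bigr) = v(\gamma) \in F \]
for every $\gamma \in \Gamma$, so $v \circ \sigmapopp$ designates $\sigmapushp[\Gamma]$, contradicting its antitheoremhood.

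For (2) $\Rightarrow$ (3), given $\sigma$ and $\varphi$ I build a substitution $\tau$ with $\tau \circ \sigmapushp = \sigma$ and $\tau(p) = \varphi$, and then invoke structurality on $\sigmapushp[\Gamma] \vdash_{\logic{L}} p$. The key preliminary observation is that the identity $\sigmapopp \circ \sigmapushp = \mathrm{id}$ forces $\sigmapushp$ to send each variable to a variable (a compound formula cannot be collapsed by a substitution to a single variable) and to do so injectively, while the image avoids $p$. One may therefore consistently set $\tau(r) := \sigma(\sigmapopp(r))$ for every variable $r \neq p$ and $\tau(p) := \varphi$; the compositions $\tau \circ \sigmapushp$ and $\sigma$ then agree on every variable and, being endomorphisms of the formula algebra, coincide everywhere. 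The only step requiring genuine thought is this observation that $\sigmapushp$ is a variable renaming; once it is in hand, the remainder reduces to routine substitution and valuation chasing.
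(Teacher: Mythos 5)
Your proof is correct, and it is organized genuinely differently from the paper's. The paper runs the cycle $(1) \Rightarrow (3) \Rightarrow (2) \Rightarrow (1)$ entirely semantically: $(1) \Rightarrow (3)$ by composing a counter-valuation for $\sigma[\Gamma] \vdash \varphi$ with $\sigma$, and $(2) \Rightarrow (1)$ by manufacturing, from a valuation designating $\Gamma$, a counter-valuation for $\sigmapushp[\Gamma] \vdash p$. You instead make $(2)$ the hub, use the preceding Fact (applied to $\sigmapushp[\Gamma]$, where $p$ is genuinely fresh) to pass back and forth between antitheoremhood of $\sigmapushp[\Gamma]$ and $\sigmapushp[\Gamma] \vdash_{\logic{L}} p$, and prove $(2) \Rightarrow (3)$ syntactically via structurality applied to an explicit $\tau$ with $\tau \circ \sigmapushp = \sigma$ and $\tau(p) = \varphi$. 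The cost of the syntactic step is your auxiliary observation that $\sigmapopp \circ \sigmapushp = \mathrm{id}$ forces $\sigmapushp$ to be an injective variable renaming avoiding $p$; this is right, though note the same argument must also exclude $\sigmapushp(q)$ being a \emph{constant} (a nullary connective), not only a compound formula -- the reasoning is identical, since substitutions fix constants. What your arrangement buys is a cleaner $(2) \Rightarrow (1)$: you only ever need to turn a valuation $v$ designating $\Gamma$ into one designating $\sigmapushp[\Gamma]$ (namely $v \circ \sigmapopp$), and never have to simultaneously undesignate $p$. The paper's version of that step perturbs $v$ at $p$ \emph{before} composing with $\sigmapopp$; since $\sigmapopp$ must send the variable $\sigmapushp(p)$ back to $p$, that perturbation can disturb the values taken on $\sigmapushp[\Gamma]$ when $p$ occurs in $\Gamma$, which is exactly the case this Fact exists to handle. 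Your route sidesteps that delicacy entirely, which I count as a point in its favour.
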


\begin{proof}
  If $\sigma[\Gamma] \nvdash_{\logic{L}} \varphi$, then there is a model $\pair{\alg{A}}{F}$ of $\logic{L}$ and a valuation $v$ on it such that $v[\sigma[\Gamma]] \subseteq F$ but $v(\varphi) \notin F$. Thus $\pair{\alg{A}}{F}$ is a non-trivial model of $\logic{L}$ and the valuation $w(\varphi) = v(\sigma(\varphi))$ witnesses that $\Gamma$ is not an antitheorem.

  Conversely, if $\Gamma$ is not an antitheorem of $\logic{L}$, then there is a valuation $v$ on some non-trivial model $\pair{\alg{A}}{F}$ of $\logic{L}$ such that $v[\Gamma] \subseteq F$. Consider the valuation $w$ on $\pair{\alg{A}}{F}$ such that $w(p) \notin F$ and $w(q) = v(q)$ otherwise. Then $(w \circ \sigmapopp)[\sigmapushp [\Gamma]] = w[(\sigmapopp \circ \sigmapushp) [\Gamma]] = w[\Gamma] \subseteq F$ while $(w \circ \sigmapopp)(p) = w(p) \notin F$. Thus $\sigmapushp [\Gamma] \nvdash_{\logic{L}} p$.

  The remaining implication is trivial: we instantiate $\sigma$ by $\sigma_{p}$ and $\varphi$ by $p$.
\end{proof}

  When we talk about the \emph{explosive rule} $\Gamma \vdash \emptyset$, we mean the rule $\sigmapushp [\Gamma] \vdash_{\logic{L}} p$. If~$p$ is a variable which does not occur in $\Gamma$, we may identify $\Gamma \vdash \emptyset$ with the rule $\Gamma \vdash p$. For logics which validate the rule $\False \vdash p$ for some constant $\False$, we may identify explosive rules with rules of the form $\Gamma \vdash \False$.

  An \emph{explosive extension} of $\logic{B}$ is an extension of $\logic{B}$ by a set of explosive rules. The~following lemma describes the consequence relation of such an extension.

\begin{lemma} \label{lemma: consequence in antiaxiomatic extensions}
  Let $\logic{L}$ be the extension of $\logic{B}$ by a set of explosive rules $\Delta_{i} \vdash \emptyset$ for $i \in I$. Then $\Gamma \vdash_{\logic{L}} \varphi$ if and only if either $\Gamma \vdash_{\logic{B}} \varphi$ or there is some substitution~$\sigma$ and some $i \in I$ such that $\Gamma \vdash_{\logic{B}} \sigma (\delta)$ for each $\delta \in \Delta_{i}$.
\end{lemma}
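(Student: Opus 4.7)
\medskip

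\noindent\textbf{Proof plan.} The plan is to prove the equivalence by establishing each direction separately. Let $\vdash^{*}$ denote the relation defined by the right-hand side of the biconditional: $\Gamma \vdash^{*} \varphi$ holds iff either $\Gamma \vdash_{\logic{B}} \varphi$, or some $i \in I$ and substitution $\sigma$ witness $\Gamma \vdash_{\logic{B}} \sigma(\delta)$ for every $\delta \in \Delta_{i}$.

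The easy direction ($\vdash^{*} \subseteq \vdash_{\logic{L}}$) proceeds as follows. If $\Gamma \vdash_{\logic{B}} \varphi$, then $\Gamma \vdash_{\logic{L}} \varphi$ because $\logic{L}$ extends $\logic{B}$. Otherwise, fix witnesses $\sigma$ and $i$ with $\Gamma \vdash_{\logic{B}} \sigma(\delta)$ for every $\delta \in \Delta_{i}$. By the preceding fact, validity of the explosive rule $\Delta_{i} \vdash \emptyset$ in $\logic{L}$ amounts to $\tau[\Delta_{i}] \vdash_{\logic{L}} \psi$ for every substitution $\tau$ and every formula $\psi$; taking $\tau = \sigma$ and $\psi = \varphi$ and applying cut yields $\Gamma \vdash_{\logic{L}} \varphi$.

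For the nontrivial inclusion $\vdash_{\logic{L}} \subseteq \vdash^{*}$, I would verify that $\vdash^{*}$ is itself a logic extending $\logic{B}$ and validating each rule $\Delta_{i} \vdash \emptyset$; then minimality of $\logic{L}$ forces $\vdash_{\logic{L}} \subseteq \vdash^{*}$. Reflexivity, monotonicity, and structurality of $\vdash^{*}$ are immediate from the corresponding properties of $\vdash_{\logic{B}}$ (for structurality, note that replacing the witness $\sigma$ by $\tau \circ \sigma$ handles the second disjunct). That $\vdash^{*}$ validates $\Delta_{i} \vdash \emptyset$ reduces, via the fact above, to checking $\sigmapushp[\Delta_{i}] \vdash^{*} p$; this holds through the second disjunct with witness $\sigmapushp$ and index $i$, by reflexivity of $\vdash_{\logic{B}}$.

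The one step that requires genuine case analysis is cut. Suppose $\Gamma \vdash^{*} \delta$ for every $\delta \in \Delta$ and $\Delta \vdash^{*} \varphi$. If some $\delta_{0} \in \Delta$ satisfies the second disjunct of $\Gamma \vdash^{*} \delta_{0}$ with witnesses $\sigma_{0}, i_{0}$, then $\Gamma \vdash_{\logic{B}} \sigma_{0}(\epsilon)$ for all $\epsilon \in \Delta_{i_{0}}$, so $\Gamma \vdash^{*} \varphi$ via the same witnesses. Otherwise $\Gamma \vdash_{\logic{B}} \delta$ holds for every $\delta \in \Delta$, and then a cut in $\vdash_{\logic{B}}$ handles both the case $\Delta \vdash_{\logic{B}} \varphi$ (directly giving $\Gamma \vdash_{\logic{B}} \varphi$) and the case in which $\Delta \vdash^{*} \varphi$ is witnessed by some $\sigma, i$ with $\Delta \vdash_{\logic{B}} \sigma(\delta')$ for $\delta' \in \Delta_{i}$ (yielding $\Gamma \vdash_{\logic{B}} \sigma(\delta')$ and hence $\Gamma \vdash^{*} \varphi$ via $\sigma, i$). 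This bookkeeping is the only real obstacle; once it is in place the rest of the argument is essentially automatic.
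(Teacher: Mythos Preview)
Your proposal is correct and follows exactly the approach the paper takes: the paper's proof simply states that the right-to-left direction is obvious and that for the converse ``it suffices to verify that the condition on the right-hand side of the equivalence indeed defines a logic.'' You have carried out precisely this verification in detail, including the only nontrivial step (closure under cut), so your argument is a faithful expansion of the paper's sketch.
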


\begin{proof}
  The right-to-left direction is obvious, since $\Gamma \vdash_{\logic{L}} \emptyset$ implies $\sigma[\Gamma] \vdash_{\logic{L}} \emptyset$. To prove the opposite direction it suffices to verify that the condition on the right-hand side of the equivalence indeed defines a logic.
\end{proof}

  The explosive part of a logic (relative to some base logic) will turn out to be a very useful construction in the following. Throughout this section, we assume that $\logic{L}$ and $\logic{L}_{i}$ for $i \in I$ are extensions of some base logic $\logic{B}$.

\begin{definition}
  The \emph{explosive part of $\logic{L}$ relative to $\logic{B}$}, denoted $\Exp_{\logic{B}} \logic{L}$, is the logic such that $\Gamma \vdash \varphi$ holds in $\Exp_{\logic{B}} \logic{L}$ if and only if either $\Gamma \vdash_{\logic{B}} \varphi$ or $\Gamma \vdash_{\logic{L}} \emptyset$.
\end{definition}

\begin{fact}
  $\logic{L}$ is an explosive extension of $\logic{B}$ if and only if $\logic{L} = \Exp_{\logic{B}} \logic{L}$.
\end{fact}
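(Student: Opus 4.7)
The plan is to split the biconditional into its two directions, using the preceding consequence lemma for the forward direction and a direct construction for the backward direction. Before starting either direction I would record the general observation that $\Exp_{\logic{B}} \logic{L} \logleq \logic{L}$ holds unconditionally: by definition $\Gamma \vdash_{\Exp_{\logic{B}} \logic{L}} \varphi$ requires $\Gamma \vdash_{\logic{B}} \varphi$ or $\Gamma \vdash_{\logic{L}} \emptyset$, and both alternatives imply $\Gamma \vdash_{\logic{L}} \varphi$ (the second because, by the preceding fact on antitheorems, $\Gamma \vdash_{\logic{L}} \emptyset$ gives $\sigma[\Gamma] \vdash_{\logic{L}} \psi$ for every $\sigma$ and $\psi$). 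So in each direction only the reverse inclusion needs work.

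For the forward implication I would assume $\logic{L}$ is the extension of $\logic{B}$ by a family of explosive rules $\Delta_{i} \vdash \emptyset$, $i \in I$, and invoke the preceding lemma. If $\Gamma \vdash_{\logic{L}} \varphi$, the lemma offers two cases: either $\Gamma \vdash_{\logic{B}} \varphi$, which yields $\Gamma \vdash_{\Exp_{\logic{B}} \logic{L}} \varphi$ at once, or there exist a substitution $\sigma$ and some $i \in I$ with $\Gamma \vdash_{\logic{B}} \sigma(\delta)$ for every $\delta \in \Delta_{i}$. In the latter case the substitution instance $\sigma[\Delta_{i}] \vdash \emptyset$ is also a rule of $\logic{L}$ by structurality, so combining the hypotheses $\Gamma \vdash_{\logic{L}} \sigma(\delta)$ with this explosive rule via cut produces $\Gamma \vdash_{\logic{L}} \emptyset$, whence $\Gamma \vdash_{\Exp_{\logic{B}} \logic{L}} \varphi$ again.

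For the backward implication, assuming $\logic{L} = \Exp_{\logic{B}} \logic{L}$, I would exhibit an explicit explosive axiomatization: let $\logic{L}'$ be the extension of $\logic{B}$ by all rules $\Gamma \vdash \emptyset$ such that $\Gamma \vdash_{\logic{L}} \emptyset$. The inclusion $\logic{L}' \logleq \logic{L}$ is immediate because $\logic{L}$ validates each of these rules and extends $\logic{B}$. For the converse inclusion, $\Gamma \vdash_{\logic{L}} \varphi$ unfolds by assumption into the disjunction $\Gamma \vdash_{\logic{B}} \varphi$ or $\Gamma \vdash_{\logic{L}} \emptyset$; the first disjunct transfers to $\logic{L}'$ through $\logic{B} \logleq \logic{L}'$, while the second says exactly that $\Gamma \vdash \emptyset$ is one of the axiomatising rules of $\logic{L}'$, and hence $\Gamma \vdash_{\logic{L}'} \varphi$.

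The only mildly delicate point is the cut step in the forward direction, where the schematic meaning of $\sigma[\Delta_{i}] \vdash \emptyset$ must be unpacked via the renaming substitutions $\sigmapushp, \sigmapopp$ from the preceding facts (choosing a variable disjoint from $\Gamma$ if necessary). This is bookkeeping rather than a real obstacle; the remainder of the argument is a routine use of the consequence lemma.
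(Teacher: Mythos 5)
Your proof is correct; the paper states this fact without proof, and your argument supplies exactly the intended justification: the forward direction via the preceding lemma on consequence in explosive extensions, the backward direction by re-axiomatizing $\logic{L}$ over $\logic{B}$ with the explosive rules $\Gamma \vdash \emptyset$ for the antitheorems $\Gamma$ of $\logic{L}$. The one delicate step --- passing from $\Gamma \vdash_{\logic{L}} \sigma(\delta)$ for all $\delta \in \Delta_{i}$ and $\Delta_{i} \vdash_{\logic{L}} \emptyset$ to the conclusion that $\Gamma$ is itself an antitheorem of $\logic{L}$ --- is correctly identified, and besides the renaming bookkeeping you sketch it also follows at once semantically, since a valuation on a non-trivial model of $\logic{L}$ designating $\Gamma$ would, composed with $\sigma$, designate $\Delta_{i}$.
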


  The logic $\Exp_{\logic{B}} \logic{L}$ is the largest extension of $\logic{B}$ by a set of explosive rules which lies below $\logic{L}$. Two extensions of $\logic{B}$ have the same explosive part if and only if they have the same anti\-theorems. Let us now make some basic observations about the explosive part operator $\Exp_{\logic{B}}$.

\begin{fact}
  $\Exp_{\logic{B}}$ is an interior operator on $\Ext \logic{B}$. That is, it is isotone and $\Exp_{\logic{B}} \Exp_{\logic{B}} \logic{L} = \Exp_{\logic{B}} \logic{L} \logleq \logic{L}$.
\end{fact}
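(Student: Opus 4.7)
The proof is essentially a routine verification from the definition of $\Exp_{\logic{B}}$, so my plan is to check the three conditions of being an interior operator in turn, unpacking the definition of $\Gamma \vdash_{\Exp_{\logic{B}} \logic{L}} \varphi$ (either $\Gamma \vdash_{\logic{B}} \varphi$ or $\Gamma \vdash_{\logic{L}} \emptyset$) in each case.

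First I would verify deflation, $\Exp_{\logic{B}} \logic{L} \logleq \logic{L}$. Given $\Gamma \vdash_{\Exp_{\logic{B}} \logic{L}} \varphi$, the first disjunct $\Gamma \vdash_{\logic{B}} \varphi$ transfers to $\logic{L}$ by $\logic{B} \logleq \logic{L}$, while the second disjunct $\Gamma \vdash_{\logic{L}} \emptyset$ gives $\sigma[\Gamma] \vdash_{\logic{L}} \psi$ for every substitution $\sigma$ and every formula $\psi$ by the preceding characterization of antitheorems, and in particular for the identity substitution and $\psi = \varphi$.

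Next I would show isotonicity. Suppose $\logic{L}_{1} \logleq \logic{L}_{2}$. Every antitheorem of $\logic{L}_{1}$ is obviously an antitheorem of $\logic{L}_{2}$ (since non-trivial models of $\logic{L}_{2}$ are non-trivial models of $\logic{L}_{1}$, or more syntactically since $\Gamma \vdash_{\logic{L}_{1}} p$ implies $\Gamma \vdash_{\logic{L}_{2}} p$). So if $\Gamma \vdash_{\Exp_{\logic{B}} \logic{L}_{1}} \varphi$ then either disjunct in the definition propagates to $\Exp_{\logic{B}} \logic{L}_{2}$.

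The only mildly substantive step is idempotence. Here the key observation I would establish first is that $\logic{L}$ and $\Exp_{\logic{B}} \logic{L}$ have exactly the same antitheorems: indeed, $\Gamma$ is an antitheorem of $\Exp_{\logic{B}} \logic{L}$ iff $\Gamma \vdash_{\Exp_{\logic{B}} \logic{L}} p$ for a fresh variable $p$, which by the definition of the explosive part unfolds to ``$\Gamma \vdash_{\logic{B}} p$ or $\Gamma \vdash_{\logic{L}} \emptyset$''; the first disjunct already says that $\Gamma$ is an antitheorem of $\logic{B}$, hence a fortiori of $\logic{L}$, so the disjunction collapses to $\Gamma \vdash_{\logic{L}} \emptyset$. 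Substituting this equivalence into the definition of $\Exp_{\logic{B}} \Exp_{\logic{B}} \logic{L}$ yields $\Exp_{\logic{B}} \Exp_{\logic{B}} \logic{L} = \Exp_{\logic{B}} \logic{L}$ directly.

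I do not anticipate any real obstacle; the only point to be a little careful about is distinguishing antitheoremhood (defined semantically via non-trivial models, or equivalently via $\sigmapushp$) from the bare rule $\Gamma \vdash p$ when $p$ might occur in $\Gamma$, but the earlier Fact characterizing antitheorems handles this uniformly.
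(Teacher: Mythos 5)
Your verification is correct and is exactly the routine unpacking of the definition that the paper intends — the paper states this as a Fact without proof. The one point that deserves care, the collapse of ``$\Gamma \vdash_{\logic{B}} p$ or $\Gamma \vdash_{\logic{L}} \emptyset$'' to ``$\Gamma \vdash_{\logic{L}} \emptyset$'' for fresh $p$ in the idempotence step, you handle properly via the antitheorem characterization.
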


\begin{fact}
  $\Exp_{\logic{B}} \bigcap_{i \in I} \logic{L}_{i} = \bigcap_{i \in I} \Exp_{\logic{B}} \logic{L}_{i}$.
\end{fact}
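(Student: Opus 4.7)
The plan is to unfold the definition of $\Exp_{\logic{B}}$ on both sides and reduce the equality to a simple propositional-logic manipulation, using one auxiliary observation about how the antitheorem relation interacts with intersection of logics.

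The key observation I would record first is that for any family $\{\logic{L}_{i}\}_{i \in I}$ of extensions of $\logic{B}$, one has $\Gamma \vdash_{\bigcap_{i \in I} \logic{L}_{i}} \emptyset$ if and only if $\Gamma \vdash_{\logic{L}_{i}} \emptyset$ for every $i \in I$. This is immediate from the earlier fact that $\Gamma \vdash \emptyset$ in a logic $\logic{L}$ is equivalent to the concrete rule $\sigmapushp[\Gamma] \vdash p$ belonging to $\logic{L}$, together with the definition of $\bigcap_{i \in I} \logic{L}_{i}$ as the collection of rules valid in every $\logic{L}_{i}$.

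Granted this, I would handle the two inclusions separately. The inclusion $\Exp_{\logic{B}} \bigcap_{i \in I} \logic{L}_{i} \logleq \bigcap_{i \in I} \Exp_{\logic{B}} \logic{L}_{i}$ is immediate from isotonicity of $\Exp_{\logic{B}}$, which is part of the preceding fact. For the reverse inclusion, suppose $\Gamma \vdash \varphi$ holds in $\bigcap_{i \in I} \Exp_{\logic{B}} \logic{L}_{i}$. By the definition of $\Exp_{\logic{B}}$, for each $i \in I$ we have either $\Gamma \vdash_{\logic{B}} \varphi$ or $\Gamma \vdash_{\logic{L}_{i}} \emptyset$. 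If $\Gamma \vdash_{\logic{B}} \varphi$, then $\Gamma \vdash \varphi$ in $\Exp_{\logic{B}} \bigcap_{i \in I} \logic{L}_{i}$ by definition; otherwise, since the first disjunct fails, for every $i \in I$ the second disjunct must hold, i.e.\ $\Gamma \vdash_{\logic{L}_{i}} \emptyset$ for all $i$. By the key observation this gives $\Gamma \vdash_{\bigcap_{i \in I} \logic{L}_{i}} \emptyset$, whence $\Gamma \vdash \varphi$ in $\Exp_{\logic{B}} \bigcap_{i \in I} \logic{L}_{i}$ by definition.

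There is no real obstacle here; the only subtlety is the distinction between the quantifier patterns ``$\Gamma \vdash_{\logic{B}} \varphi$ or $(\forall i)\, \Gamma \vdash_{\logic{L}_{i}} \emptyset$'' (left-hand side) versus ``$(\forall i)(\Gamma \vdash_{\logic{B}} \varphi$ or $\Gamma \vdash_{\logic{L}_{i}} \emptyset)$'' (right-hand side), but these are equivalent by a simple case split on whether $\Gamma \vdash_{\logic{B}} \varphi$ holds. The only non-trivial ingredient is the commutation of the antitheorem relation with intersection, which I would explicitly note before running the case split.
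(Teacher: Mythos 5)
Your proposal is correct and follows essentially the same route as the paper: the easy inclusion from isotonicity of the interior operator $\Exp_{\logic{B}}$, then a case split on whether $\Gamma \vdash_{\logic{B}} \varphi$ to turn the pointwise disjunctions into a single disjunction. The only difference is that you make explicit (via the syntactic characterization $\sigmapushp[\Gamma] \vdash p$) the commutation of the antitheorem relation with intersections, which the paper's one-line proof leaves implicit.
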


\begin{proof}
  The inequality $\Exp_{\logic{B}} \bigcap_{i \in I} \logic{L}_{i} \logleq \bigcap_{i \in I} \Exp_{\logic{B}} \logic{L}_{i}$ holds because $\Exp_{\logic{B}}$ is an interior operator. Conversely, suppose that $\Gamma \vdash \varphi$ is valid in $\Exp_{\logic{B}} \logic{L}_{i}$ for each $i \in I$. Then either $\Gamma \vdash_{\logic{B}} \varphi$ or $\Gamma \vdash_{\logic{L}_{i}} \emptyset$ for each $i \in I$. But then either $\Gamma \vdash_{\logic{B}} \varphi$ or $\Gamma$ is an antitheorem of $\bigcap_{i \in I} \logic{L}_{i}$.
\end{proof}

\begin{fact}
  $\bigvee_{i \in I} \Exp_{\logic{B}} \logic{L}_{i} = \bigcup_{i \in I} \Exp_{\logic{B}} \logic{L}_{i}$.
\end{fact}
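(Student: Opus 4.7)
The inclusion $\bigcup_{i \in I} \Exp_{\logic{B}} \logic{L}_{i} \logleq \bigvee_{i \in I} \Exp_{\logic{B}} \logic{L}_{i}$ is immediate since each $\Exp_{\logic{B}} \logic{L}_{i}$ is a summand of the join. For the reverse inclusion, the plan is to show that the union on the right-hand side is already a logic extending $\logic{B}$; since the join is by definition the smallest such logic, the inclusion follows. Throughout, I rely on the explicit description of $\Exp_{\logic{B}} \logic{L}_{i}$: the rule $\Gamma \vdash \varphi$ belongs to it iff either $\Gamma \vdash_{\logic{B}} \varphi$ or $\Gamma \vdash_{\logic{L}_{i}} \emptyset$.

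\emph{Routine closure conditions.} Reflexivity is witnessed by $\varphi \vdash_{\logic{B}} \varphi$. Monotonicity follows because both the relation $\vdash_{\logic{B}}$ and the property of being an antitheorem of $\logic{L}_{i}$ are preserved under adding premises. Structurality is similar: $\logic{B}$ is structural, and antitheorems of $\logic{L}_{i}$ are closed under substitution (if $\Gamma \vdash_{\logic{L}_{i}} \emptyset$ then $\sigma[\Gamma] \vdash_{\logic{L}_{i}} \emptyset$, since a witnessing valuation for $\sigma[\Gamma]$ on a non-trivial model of $\logic{L}_{i}$ would yield one for $\Gamma$ via precomposition with $\sigma$).

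\emph{The main step: cut.} Suppose $\Gamma \vdash \delta$ lies in the union for every $\delta \in \Delta$, and $\Delta \vdash \varphi$ lies in the union. I distinguish cases. If there is some $\delta \in \Delta$ for which the witness is of the second form, i.e.\ $\Gamma \vdash_{\logic{L}_{i}} \emptyset$ for some $i$, then already $\Gamma \vdash \varphi$ is a rule of $\Exp_{\logic{B}} \logic{L}_{i}$, so it lies in the union. Otherwise, $\Gamma \vdash_{\logic{B}} \delta$ for every $\delta \in \Delta$. If moreover $\Delta \vdash_{\logic{B}} \varphi$, cut in $\logic{B}$ yields $\Gamma \vdash_{\logic{B}} \varphi$, and we are done. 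The remaining subcase is the crux: $\Delta \vdash_{\logic{L}_{j}} \emptyset$ for some $j$. Since $\logic{B} \logleq \logic{L}_{j}$, we still have $\Gamma \vdash_{\logic{L}_{j}} \delta$ for each $\delta \in \Delta$, so any valuation on a non-trivial model of $\logic{L}_{j}$ designating all of $\Gamma$ would designate all of $\Delta$, contradicting that $\Delta$ is an antitheorem of $\logic{L}_{j}$. Hence $\Gamma \vdash_{\logic{L}_{j}} \emptyset$, so $\Gamma \vdash \varphi$ belongs to $\Exp_{\logic{B}} \logic{L}_{j}$, and therefore to the union.

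\emph{Expected obstacle.} The only non-cosmetic point is the last subcase of cut, where one must transfer the antitheorem property from $\Delta$ to $\Gamma$. This transfer goes through precisely because $\logic{B}$-derivability of each $\delta \in \Delta$ from $\Gamma$ survives in the stronger logic $\logic{L}_{j}$, so no valuation separating $\Gamma$ from antitheorem status on a non-trivial model of $\logic{L}_{j}$ can exist. Once this is in place, everything else is formal.
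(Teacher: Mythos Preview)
Your proof is correct. The paper's proof simply cites Lemma~\ref{lemma: consequence in antiaxiomatic extensions}, whose own proof consists of verifying that the displayed condition defines a logic; you carry out that verification directly in this specific instance, so the underlying argument is essentially the same.
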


\begin{proof}
  This is an immediate consequence of Lemma~\ref{lemma: consequence in antiaxiomatic extensions}.
\end{proof}

\begin{fact} \label{fact: join with lexp}
  Let $\logic{L}_{1} \logleq \logic{L}_{2}$. Then $\logic{L}_{1} \vee \Exp_{\logic{B}} \logic{L}_{2} = \logic{L}_{1} \cup \Exp_{\logic{B}} \logic{L}_{2}$.
\end{fact}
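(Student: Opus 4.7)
The containment $\logic{L}_{1} \cup \Exp_{\logic{B}} \logic{L}_{2} \subseteq \logic{L}_{1} \vee \Exp_{\logic{B}} \logic{L}_{2}$ is immediate, since the right-hand side is an extension of each logic on the left. For the reverse containment, my plan is to show that $\logic{L}_{1} \cup \Exp_{\logic{B}} \logic{L}_{2}$, viewed as a set of rules, is already closed under the four closure conditions defining a logic; then it will automatically contain (hence equal) the join. Reflexivity, monotonicity, and structurality of the union follow directly from the corresponding properties of $\logic{L}_{1}$ and $\Exp_{\logic{B}} \logic{L}_{2}$ separately, so the entire burden is to verify closure under cut.

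For cut, suppose $\Gamma \vdash \delta$ is in $\logic{L}_{1} \cup \Exp_{\logic{B}} \logic{L}_{2}$ for every $\delta \in \Delta$ and $\Delta \vdash \varphi$ is in $\logic{L}_{1} \cup \Exp_{\logic{B}} \logic{L}_{2}$. I will split according to which of the two logics witnesses the rule $\Delta \vdash \varphi$. If $\Delta \vdash_{\logic{L}_{1}} \varphi$, then for each $\delta \in \Delta$ the premise is either in $\logic{L}_{1}$ directly or in $\Exp_{\logic{B}} \logic{L}_{2}$; in the latter case, by definition of the explosive part, either $\Gamma \vdash_{\logic{B}} \delta$ (and so $\Gamma \vdash_{\logic{L}_{1}} \delta$ because $\logic{B} \logleq \logic{L}_{1}$) or $\Gamma$ is an antitheorem of $\logic{L}_{2}$. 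In the second possibility $\Gamma \vdash_{\Exp_{\logic{B}} \logic{L}_{2}} \varphi$ immediately, and we are done; otherwise every premise lies in $\logic{L}_{1}$ and cut inside $\logic{L}_{1}$ yields $\Gamma \vdash_{\logic{L}_{1}} \varphi$.

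If instead $\Delta \vdash_{\Exp_{\logic{B}} \logic{L}_{2}} \varphi$, then either $\Delta \vdash_{\logic{B}} \varphi$ (in which case $\Delta \vdash_{\logic{L}_{1}} \varphi$ and we reduce to the previous case) or $\Delta \vdash_{\logic{L}_{2}} \emptyset$. In the latter case I use the hypothesis $\logic{L}_{1} \logleq \logic{L}_{2}$: since both $\logic{L}_{1}$ and $\Exp_{\logic{B}} \logic{L}_{2}$ are extensions of $\logic{B}$ that sit below $\logic{L}_{2}$, every premise rule $\Gamma \vdash \delta$ is valid in $\logic{L}_{2}$, so cut in $\logic{L}_{2}$ gives that $\Gamma$ is itself an antitheorem of $\logic{L}_{2}$, and therefore $\Gamma \vdash_{\Exp_{\logic{B}} \logic{L}_{2}} \varphi$.

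The only conceptually delicate point is the last step, where one must use the assumption $\logic{L}_{1} \logleq \logic{L}_{2}$ in an essential way to transfer the premises into a common logic before cutting. Without that hypothesis, a premise could live in $\logic{L}_{1}$ but not in $\logic{L}_{2}$, blocking the argument — which is exactly why the statement is formulated under the assumption $\logic{L}_{1} \logleq \logic{L}_{2}$.
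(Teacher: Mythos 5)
Your proof is correct. The only point that needed care is the cut step, and you handle it properly: the crucial case is when the intermediate set $\Delta$ is an antitheorem of $\logic{L}_{2}$, and there you correctly use both $\logic{L}_{1} \logleq \logic{L}_{2}$ and $\Exp_{\logic{B}} \logic{L}_{2} \logleq \logic{L}_{2}$ to push all the premise rules $\Gamma \vdash \delta$ into $\logic{L}_{2}$ and conclude that $\Gamma$ is itself an antitheorem of $\logic{L}_{2}$ (any valuation on a non-trivial model of $\logic{L}_{2}$ designating $\Gamma$ would designate $\Delta$). Your route differs from the paper's in packaging rather than in substance: the paper observes that $\logic{L}_{1} \vee \Exp_{\logic{B}} \logic{L}_{2}$ is the extension of $\logic{L}_{1}$ by the explosive rules valid in $\logic{L}_{2}$ and applies Lemma~\ref{lemma: consequence in antiaxiomatic extensions} to get the characterization ``$\Gamma \vdash_{\logic{L}_{1}} \varphi$, or $\Gamma \vdash_{\logic{L}_{1}} \sigma(\delta)$ for all $\delta$ in some antitheorem $\Delta$ of $\logic{L}_{2}$,'' and then uses $\logic{L}_{1} \logleq \logic{L}_{2}$ exactly as you do to collapse the second disjunct into ``$\Gamma$ is an antitheorem of $\logic{L}_{2}$.'' Since that lemma is itself proved by checking that its right-hand side defines a logic, you have in effect inlined its proof, specialized to this situation. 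What the paper's factoring buys is a two-line derivation here and a reusable description of explosive extensions; what your version buys is self-containment and an explicit display of where the hypothesis $\logic{L}_{1} \logleq \logic{L}_{2}$ is indispensable.
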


\begin{proof}
  By Lemma~\ref{lemma: consequence in antiaxiomatic extensions}, $\Gamma \vdash \varphi$ holds in $\logic{L}_{1} \vee \Exp_{\logic{B}} \logic{L}_{2}$ if and only if either $\Gamma \vdash_{\logic{L}_{1}} \varphi$ or there is some antitheorem $\Delta$ of $\logic{L}_{2}$ and some substitution $\sigma$ such that $\Gamma \vdash_{\logic{L}_{1}} \sigma(\delta)$ for each $\delta \in \Delta$. But then $\logic{L}_{1} \logleq \logic{L}_{2}$ implies that $\Gamma$ is an antitheorem of $\logic{L}_{2}$, therefore $\Gamma \vdash \varphi$ holds in $\Exp_{\logic{B}} \logic{L}_{2}$.
\end{proof}

\begin{proposition}
  The explosive extensions of a logic~$\logic{B}$ form a completely distributive complete sublattice of $\Ext \logic{B}$. We denote it $\Exp \Ext \logic{B}$.
\end{proposition}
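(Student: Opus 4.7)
The plan is to derive both the complete sublattice and complete distributivity claims directly from the facts already assembled about the operator $\Exp_{\logic{B}}$. First I would verify closure of the class of explosive extensions under arbitrary meets and joins taken in $\Ext \logic{B}$; then I would observe that the induced join and meet coincide with set-theoretic union and intersection of consequence relations, whence complete distributivity is inherited from that of a power set lattice.

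For meets, suppose $\{\logic{L}_{i}\}_{i \in I}$ is a family of explosive extensions of $\logic{B}$, so that $\logic{L}_{i} = \Exp_{\logic{B}} \logic{L}_{i}$ for each $i \in I$. The preceding fact $\Exp_{\logic{B}} \bigcap_{i \in I} \logic{L}_{i} = \bigcap_{i \in I} \Exp_{\logic{B}} \logic{L}_{i}$ then yields $\bigcap_{i \in I} \logic{L}_{i} = \Exp_{\logic{B}} \bigcap_{i \in I} \logic{L}_{i}$, so the meet is again explosive. For joins, the fact $\bigvee_{i \in I} \Exp_{\logic{B}} \logic{L}_{i} = \bigcup_{i \in I} \Exp_{\logic{B}} \logic{L}_{i}$ tells us that the join in $\Ext \logic{B}$ of a family of explosive extensions is the set-theoretic union of the corresponding consequence relations. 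That this union is itself explosive is then clear: the join in $\Ext \logic{B}$ is axiomatized relative to $\logic{B}$ by the union of the axiomatizing families of explosive rules of the individual $\logic{L}_{i}$, which is again a set of explosive rules.

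Complete distributivity is then essentially automatic. Viewing each logic as the set of its valid rules, joins and meets in the sublattice $\Exp \Ext \logic{B}$ coincide with set-theoretic unions and intersections. The sublattice is therefore a ring of sets, and the complete distributive identity
$$\bigcap_{i \in I} \bigcup_{j \in J_{i}} \logic{L}_{ij} = \bigcup_{f \in \prod_{i \in I} J_{i}} \bigcap_{i \in I} \logic{L}_{i, f(i)},$$
together with its dual with the roles of $\bigcap$ and $\bigcup$ exchanged, is just the standard set-theoretic distributive law applied to the sets of rules of the logics $\logic{L}_{ij}$.

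The only real subtlety is the observation that arbitrary joins in the sublattice agree with set-theoretic unions of consequence relations; joins in $\Ext \logic{B}$ of arbitrary logics need not behave this way, but for explosive extensions this is precisely what the relevant fact above guarantees via Lemma~\ref{lemma: consequence in antiaxiomatic extensions}. Once this is in hand, the rest of the argument is essentially bookkeeping.
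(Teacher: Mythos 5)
Your proposal is correct and follows exactly the route the paper intends: the paper gives no explicit proof, but the two Facts it states immediately beforehand (that $\Exp_{\logic{B}}$ commutes with arbitrary intersections and that joins of explosive extensions are set-theoretic unions) are precisely the ingredients you use, and your deduction of complete distributivity from the lattice being a complete ring of sets is the standard and intended argument.
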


\begin{proposition}
  The finitary explosive extensions of a finitary logic $\logic{B}$ form an algebraic distributive sublattice of $\Ext_{\omega} \logic{B}$. We denote it $\Exp \Ext_{\omega} \logic{B}$.
\end{proposition}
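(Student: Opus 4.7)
The plan is to verify in turn that $\Exp \Ext_{\omega} \logic{B}$ is (i) a sublattice of $\Ext_{\omega} \logic{B}$, (ii) distributive, and (iii) algebraic. The technical workhorse throughout is the preceding fact $\bigvee_{i} \Exp_{\logic{B}} \logic{L}_{i} = \bigcup_{i} \Exp_{\logic{B}} \logic{L}_{i}$, which lets finitarity pass through joins. For the sublattice property, the intersection of two finitary explosive extensions is finitary (a common finite support is obtained by unioning two finite supports) and remains explosive because $\Exp_{\logic{B}}$ commutes with intersections. Their join in $\Ext \logic{B}$ equals the set-theoretic union $\logic{L}_{1} \cup \logic{L}_{2}$ by the join-as-union fact, and this union inherits finitarity since any rule in it lives in one of the $\logic{L}_{i}$ and has finite support there. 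Hence both finite operations stay inside $\Exp \Ext_{\omega} \logic{B}$ and agree with those of $\Exp \Ext \logic{B}$, so distributivity is inherited from the completely distributive lattice $\Exp \Ext \logic{B}$ of the previous proposition.

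For algebraicity I would exhibit as compact elements the logics $\logic{C}_{\Delta}$ extending $\logic{B}$ by a single explosive rule $\Delta \vdash \emptyset$ with $\Delta$ a finite set of formulas; each is clearly a finitary explosive extension of $\logic{B}$. Two claims then suffice. First, every $\logic{L} \in \Exp \Ext_{\omega} \logic{B}$ equals the join $\bigvee \{ \logic{C}_{\Delta} : \Delta \text{ is a finite antitheorem of } \logic{L} \}$: each summand lies below $\logic{L}$, and conversely Lemma~\ref{lemma: consequence in antiaxiomatic extensions} together with the finitarity of $\logic{L}$ shows that any rule $\Gamma \vdash \varphi$ of $\logic{L}$ either already holds in $\logic{B}$ or has a finite subset $\Gamma' \subseteq \Gamma$ which is an antitheorem of $\logic{L}$, in which case the rule is provable in $\logic{C}_{\Gamma'}$. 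Second, each $\logic{C}_{\Delta}$ is compact: if $\logic{C}_{\Delta} \leq \bigvee_{i \in I} \logic{L}_{i}$ in $\Exp \Ext_{\omega} \logic{B}$, then by the join-as-union description the generating rule of $\logic{C}_{\Delta}$ lies in some single $\logic{L}_{j}$, so already $\logic{C}_{\Delta} \leq \logic{L}_{j}$. Closing the family from the first claim under finite joins gives a directed family of compact elements with supremum $\logic{L}$, which is exactly what algebraicity asks for.

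The point to watch is the interplay between finitarity and infinite joins: one must be sure that arbitrary joins of finitary explosive extensions remain finitary and therefore coincide with joins in $\Exp \Ext \logic{B}$. Both facts ultimately rest on the collapse of joins to unions for explosive extensions, so the whole argument is really driven by that single observation. No other step is deep; the remainder is bookkeeping around finitarity.
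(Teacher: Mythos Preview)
Your proof is correct. The paper states this proposition without proof, leaving it as an immediate consequence of the preceding facts (in particular the collapse of joins to unions and the commutation of $\Exp_{\logic{B}}$ with intersections); your write-up simply spells out what the paper takes for granted, and the key steps --- finitarity of finite intersections via unioning finite supports, finitarity of arbitrary joins via the join-as-union fact, and compactness of the single-rule extensions $\logic{C}_{\Delta}$ --- are all sound.
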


  The lattices of explosive extensions of any logic $\logic{L}_{0}$ and of its explosive part $\Exp_{\logic{B}} \logic{L}_{0}$ are in fact isomorphic via the maps $\logic{L} \mapsto \Exp_{\logic{B}} \logic{L}$ and $\logic{L} \mapsto \logic{L}_{0} \vee \logic{L} = \logic{L}_{0} \cup \logic{L}$. If $\logic{B}$ and~$\logic{L}_{0}$ are finitary, this isomorphism restricts to an isomorphism between the lattices of finitary explosive extensions of $\logic{L}_{0}$ and $\Exp_{\logic{B}} \logic{L}_{0}$.

\begin{theorem} \label{thm: explosive isomorphism}
  Let $\logic{L}_{0}$ be an extension of $\logic{B}$. Then the lattices $\Exp \Ext \logic{L}_{0}$ and $\Exp \Ext \Exp_{\logic{B}} \logic{L}_{0}$ are isomorphic via the maps $\logic{L} \mapsto \Exp_{\logic{B}} \logic{L}$ and $\logic{L} \mapsto \logic{L}_{0} \cup \logic{L}$.
\end{theorem}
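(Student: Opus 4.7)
Abbreviate $\logic{L}_{0}^{\ast} := \Exp_{\logic{B}} \logic{L}_{0}$ and denote the two candidate maps by $\Phi \colon \logic{L} \mapsto \Exp_{\logic{B}} \logic{L}$ and $\Psi \colon \logic{L} \mapsto \logic{L}_{0} \cup \logic{L}$. I plan to verify in turn that both maps are well defined between the stated lattices, that they are isotone, and that they are mutually inverse. Well-definedness of $\Phi$ on $\Exp \Ext \logic{L}_{0}$ is immediate: $\logic{L} \loggeq \logic{L}_{0}$ together with the isotonicity of $\Exp_{\logic{B}}$ gives $\Phi(\logic{L}) \loggeq \logic{L}_{0}^{\ast}$, and $\Phi(\logic{L})$ is explosive over $\logic{B}$ by construction, hence also over $\logic{L}_{0}^{\ast}$. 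For well-definedness of $\Psi$ on $\Exp \Ext \logic{L}_{0}^{\ast}$, any such $\logic{L}$ is itself explosive over $\logic{B}$ (so $\logic{L} = \Exp_{\logic{B}} \logic{L}$), and Fact~\ref{fact: join with lexp} identifies the union $\logic{L}_{0} \cup \logic{L}$ with the join $\logic{L}_{0} \vee \logic{L}$, which is the extension of $\logic{L}_{0}$ by the explosive rules axiomatising $\logic{L}$ over $\logic{L}_{0}^{\ast}$. Isotonicity of both maps is clear from the isotonicity of $\Exp_{\logic{B}}$ and of joining with a fixed logic.

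The composition $\Psi \circ \Phi = \mathrm{id}$ is the easy direction. For $\logic{L} \in \Exp \Ext \logic{L}_{0}$, Fact~\ref{fact: join with lexp} applied with $\logic{L}_{1} = \logic{L}_{0} \logleq \logic{L} = \logic{L}_{2}$ yields $\Psi(\Phi(\logic{L})) = \logic{L}_{0} \vee \Exp_{\logic{B}} \logic{L}$. This join is trivially $\logleq \logic{L}$; conversely, each explosive rule $\Delta \vdash \emptyset$ axiomatising $\logic{L}$ over $\logic{L}_{0}$ encodes an antitheorem of $\logic{L}$ and is therefore valid already in $\Exp_{\logic{B}} \logic{L}$, forcing $\logic{L} \logleq \logic{L}_{0} \vee \Exp_{\logic{B}} \logic{L}$.

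The main obstacle is the reverse composition $\Phi \circ \Psi = \mathrm{id}$: for $\logic{L} \in \Exp \Ext \logic{L}_{0}^{\ast}$ one must show $\Exp_{\logic{B}}(\logic{L}_{0} \vee \logic{L}) = \logic{L}$. The inclusion $\loggeq$ is clear because $\logic{L}$ is explosive over $\logic{B}$ and sits below $\logic{L}_{0} \vee \logic{L}$; the hard direction amounts to showing that every antitheorem of $\logic{L}_{0} \vee \logic{L}$ is already an antitheorem of $\logic{L}$. The plan is to apply Lemma~\ref{lemma: consequence in antiaxiomatic extensions} to $\logic{L}_{0} \vee \logic{L}$, viewed as the extension of $\logic{L}_{0}$ by the explosive rules $\Delta_{i} \vdash \emptyset$ axiomatising $\logic{L}$ over $\logic{L}_{0}^{\ast}$, in order to trace any such antitheorem $\Gamma$ either to an antitheorem of $\logic{L}_{0}$ (which is automatically an antitheorem of $\logic{L}$) or to a derivation $\Gamma \vdash_{\logic{L}_{0}} \sigma[\Delta_{i}]$ for some $i$ and $\sigma$, and then to deduce from the explosive structure of $\logic{L}$ over $\logic{L}_{0}^{\ast}$ that $\Gamma$ is already an antitheorem of $\logic{L}$. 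Once both compositions are the identity, isotonicity combined with mutual inversion lifts the set-level bijection to the claimed lattice isomorphism.
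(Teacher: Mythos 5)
Your proposal retraces the paper's own route (same two maps, same appeals to Fact~\ref{fact: join with lexp} and Lemma~\ref{lemma: consequence in antiaxiomatic extensions}), and the composition $\Psi \circ \Phi = \mathrm{id}$ on $\Exp \Ext \logic{L}_{0}$ is argued correctly and completely. The problem is that the other composition, which you yourself single out as the main obstacle, is left as a plan rather than a proof, and the step you defer is exactly where the content of the theorem sits. Concretely: after applying Lemma~\ref{lemma: consequence in antiaxiomatic extensions} to $\logic{L}_{0} \vee \logic{L}$ you are left, in the nontrivial case, with $\Gamma \vdash_{\logic{L}_{0}} \sigma(\delta)$ for every $\delta$ in some antitheorem $\Delta_{i}$ of $\logic{L}$, and you must conclude that $\Gamma$ is an antitheorem of $\logic{L}$. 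This does not simply follow ``from the explosive structure of $\logic{L}$ over $\logic{L}_{0}^{\ast}$'': to cut the derivations $\Gamma \vdash \sigma(\delta)$ against $\sigma[\Delta_{i}] \vdash_{\logic{L}} \emptyset$ you would need those derivations to be available in $\logic{L}$, but $\logic{L}$ is only assumed to extend $\Exp_{\logic{B}} \logic{L}_{0}$, which can be strictly weaker than $\logic{L}_{0}$. The paper's own proof compresses this point into the single sentence asserting that $\Gamma \vdash_{\logic{L}_{0} \vee \logic{L}} \emptyset$ implies $\Gamma \vdash_{\logic{L}_{0}} \emptyset$ or $\Gamma \vdash_{\logic{L}} \emptyset$, so you are pointing at the right place; but a blind proof has to actually supply the argument, and as written yours does not.

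A second, related slip: you invoke Fact~\ref{fact: join with lexp} to identify $\logic{L}_{0} \cup \logic{L}$ with $\logic{L}_{0} \vee \logic{L}$ already at the well-definedness stage of $\Psi$. That Fact requires exhibiting $\logic{L}$ as $\Exp_{\logic{B}} \logic{L}_{2}$ for some $\logic{L}_{2} \loggeq \logic{L}_{0}$; for $\logic{L} \in \Exp \Ext \Exp_{\logic{B}} \logic{L}_{0}$ the only natural candidate is $\logic{L}_{2} = \logic{L}_{0} \vee \logic{L}$, and checking $\Exp_{\logic{B}}(\logic{L}_{0} \vee \logic{L}) = \logic{L}$ is precisely the hard direction you postponed. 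So the union form of $\Psi$ is a corollary of the missing step, not something you may assume before proving it; until that step is closed, the proposal establishes only that $\Phi$ is a left inverse of the join map, not the claimed isomorphism.
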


\begin{proof}
  The two maps are isotone and clearly $\logic{L}_{0} \vee \Exp_{\logic{B}} \logic{L} \logleq \logic{L}$ for $\logic{L}$ in $\Exp \Ext \logic{L}_{0}$. Conversely, if $\Gamma \vdash_{\logic{L}} \varphi$ for $\logic{L} \in \Exp \Ext \logic{L}_{0}$, then either $\Gamma \vdash_{\logic{L}_{0}} \varphi$ or $\Gamma \vdash_{\logic{L}} \emptyset$. In either case $\Gamma \vdash \varphi$ holds in $\logic{L}_{0} \vee \Exp_{\logic{B}} \logic{L}$. Thus $\logic{L} = \logic{L}_{0} \vee \Exp_{\logic{B}} \logic{L}$. Fact~\ref{fact: join with lexp} now implies that $\logic{L} = \logic{L}_{0} \cup \Exp_{\logic{B}} \logic{L}$ for each $\logic{L} \in \Exp \Ext \logic{L}_{0}$.

  On the other hand, $\logic{L} \logleq \Exp_{\logic{B}} (\logic{L}_{0} \vee \logic{L})$ for $\logic{L} \in \Exp \Ext (\Exp_{\logic{B}} \logic{L}_{0})$. Conversely, if $\Gamma \vdash \varphi$ holds in $\Exp_{\logic{B}} (\logic{L}_{0} \vee \logic{L})$ for $\logic{L} \in \Exp \Ext (\Exp_{\logic{B}} \logic{L}_{0})$, then either $\Gamma \vdash_{\logic{B}} \varphi$ or $\Gamma \vdash \emptyset$ holds in $\logic{L}_{0} \vee \logic{L}$. But $\logic{L}_{0} \vee \logic{L}$ is the extension of $\logic{L}_{0}$ by the explosive rules $\Gamma \vdash \emptyset$ valid in $\logic{L}$, therefore $\Gamma \vdash_{\logic{L}_{0} \vee \logic{L}} \emptyset$ implies $\Gamma \vdash_{\logic{L}_{0}} \emptyset$ or $\Gamma \vdash_{\logic{L}} \emptyset$. If $\Gamma \vdash \emptyset$ holds in $\logic{L}_{0}$, then it holds in $\Exp_{\logic{B}} \logic{L}_{0}$, and therefore also in $\logic{L}$. Thus $\Gamma \vdash_{\logic{B}} \varphi$ or $\Gamma \vdash_{\logic{L}} \emptyset$. In either case $\Gamma \vdash_{\logic{L}} \varphi$. Thus $\Exp_{\logic{B}} (\logic{L}_{0} \vee \logic{L}) = \logic{L}$ for $\logic{L} \in \Exp \Ext (\Exp_{\logic{B}} \logic{L}_{0})$.
\end{proof}

  Although axiomatizing the intersection of two logics may be a non-trivial task in general, axiomatizing the intersection of an explosive extension of $\logic{B}$ with an arbitrary extension of $\logic{B}$ turns out to be much easier.

\newcommand{\Lexp}{\logic{L}_{\mathrm{exp}}}

  We call two rules $\Gamma \vdash \varphi$~and~$\Delta \vdash \psi$ \emph{variable disjoint} if no propositional atom occurs as a sub\-formula both in $\Gamma \cup \{ \varphi \}$ and in $\Delta \cup \{ \psi \}$. The following two propositions hold, \emph{mutatis mutandis}, for the intersection $\bigcap_{i \in I} \logic{L}_{i}$ of a family $\logic{L}_{i}$ with $i \in I$ of explosive extensions of~$\logic{B}$ (instead of $\logic{L} \cap \Lexp$), provided that we can find axiomatizations $\rho_{i}$ of $\logic{L}_{i}$ such that each rule in $\rho_{i}$ is variable disjoint from each rule in $\rho_{j}$ if $i$ and $j$ are distinct.

\begin{proposition} \label{prop: axiomatizing cap exp}
  Let $\logic{L}$ be an extension of $\logic{B}$ by the rules $\Gamma_{i} \vdash \varphi_{i}$ for $i \in I$. Let $\Lexp$ be an explosive extension of $\logic{B}$ by the rules $\Delta_{j} \vdash \emptyset$ for $j \in I$ such that $\Gamma_{i} \vdash \varphi_{i}$ is variable disjoint from $\Delta_{j} \vdash \emptyset$ for each $i \in I$, $j \in J$. Then $\logic{L} \cap \Lexp$ is the extension of $\logic{B}$ by the rules $\Gamma_{i}, \Delta_{j} \vdash \varphi_{i}$ for $i \in I$, $j \in J$.
\end{proposition}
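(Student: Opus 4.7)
Write $\logic{M}$ for the proposed axiomatization of $\logic{L} \cap \Lexp$. The plan is to prove the two inclusions separately. The direction $\logic{M} \logleq \logic{L} \cap \Lexp$ is essentially immediate: each axiom $\Gamma_{i}, \Delta_{j} \vdash \varphi_{i}$ is derivable in $\logic{L}$ from $\Gamma_{i} \vdash \varphi_{i}$ by monotonicity, and it holds in $\Lexp$ because $\Delta_{j}$ is an antitheorem there, so $\Gamma_{i}, \Delta_{j}$ entails every formula, $\varphi_{i}$ in particular.

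For the converse, suppose $\Gamma \vdash \varphi$ holds in both $\logic{L}$ and $\Lexp$. I would invoke Lemma~\ref{lemma: consequence in antiaxiomatic extensions} to unpack derivability in $\Lexp$: either $\Gamma \vdash_{\logic{B}} \varphi$, in which case $\Gamma \vdash_{\logic{M}} \varphi$ is immediate, or there exist some $j \in J$ and some substitution $\sigma$ such that $\Gamma \vdash_{\logic{B}} \sigma(\delta)$ for every $\delta \in \Delta_{j}$. In the latter case I fix such $j$ and $\sigma$ and also fix a proof tree $T$ witnessing $\Gamma \vdash_{\logic{L}} \varphi$, composed of $\logic{B}$-derivations together with instances of the axioms $\Gamma_{i} \vdash \varphi_{i}$.

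The key step is a surgery turning $T$ into an $\logic{M}$-proof. At each node of $T$ where a rule $\Gamma_{i} \vdash \varphi_{i}$ is applied under some local substitution $\sigma'$, I replace that application by an application of the $\logic{M}$-axiom $\Gamma_{i}, \Delta_{j} \vdash \varphi_{i}$ under a substitution $\tau$ that agrees with $\sigma'$ on the variables of $\Gamma_{i} \cup \{ \varphi_{i} \}$ and with $\sigma$ on the variables of $\Delta_{j}$, so that $\tau[\Gamma_{i}] = \sigma'[\Gamma_{i}]$, $\tau(\varphi_{i}) = \sigma'(\varphi_{i})$, and $\tau[\Delta_{j}] = \sigma[\Delta_{j}]$. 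The surgery creates new premise leaves labelled by formulas $\sigma(\delta)$ for $\delta \in \Delta_{j}$, which I discharge by grafting on the $\logic{B}$-derivations of $\sigma(\delta)$ from $\Gamma$ supplied by the case hypothesis. The resulting tree is a legitimate proof of $\varphi$ from $\Gamma$ in $\logic{M}$.

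The only point requiring real care is the existence of the amalgamating substitution $\tau$: this is precisely what the variable-disjointness hypothesis is engineered to guarantee, since it lets $\sigma'$ and the global $\sigma$ be defined on disjoint sets of variables and then glued together without conflict. Everything else is essentially bookkeeping. The same strategy adapts to the multi-intersection variant sketched in the paragraph preceding the proposition, provided the axiomatizations of the individual $\logic{L}_{i}$ are chosen to be pairwise variable-disjoint, so that a single $\tau$ can simultaneously accommodate the local $\sigma'$ and a witnessing substitution for each of the finitely many antitheorems invoked.
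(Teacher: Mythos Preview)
Your proof is correct and follows essentially the same strategy as the paper's: both use Lemma~\ref{lemma: consequence in antiaxiomatic extensions} to reduce to the case where $\Gamma \vdash_{\logic{B}} \sigma[\Delta_{j}]$ for some $j$ and $\sigma$, and then perform a proof-tree surgery on the $\logic{L}$-derivation, replacing each instance of $\Gamma_{i} \vdash \varphi_{i}$ by the corresponding instance of $\Gamma_{i}, \Delta_{j} \vdash \varphi_{i}$ via an amalgamated substitution made possible by variable disjointness. Your version is in fact more explicit than the paper's, which compresses the surgery into the single sentence ``the rule $\Gamma, \Delta_{j} \vdash \varphi$ can be derived from the rules $\Gamma_{i}, \Delta_{j} \vdash \varphi_{i}$ if $\Gamma \vdash_{\logic{L}} \varphi$'' and leaves the construction of $\tau$ implicit.
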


\begin{proof}
  Clearly $\Gamma_{i}, \Delta_{j} \vdash_{\logic{L} \cap \Lexp} \varphi_{i}$ for each $i \in I$, $j \in J$. Conversely, $\Gamma \vdash_{\Lexp} \varphi$ implies that $\Gamma \vdash_{\logic{B}} \varphi$ or for some $\sigma$ and some $j \in J$ we have $\Gamma \vdash_{\logic{B}} \sigma (\delta_{j})$ for all $\delta_{j} \in \Delta_{j}$, thus $\Gamma \vdash \varphi$ holds in the extension of $\logic{B}$ by the rules $\Gamma, \Delta_{j} \vdash \varphi$ if $\Gamma \vdash_{\logic{L}} \varphi$. But the rule $\Gamma, \Delta_{j} \vdash \varphi$ can be derived from the rules $\Gamma_{i}, \Delta_{j} \vdash \varphi_{i}$ if $\Gamma \vdash_{\logic{L}} \varphi$.
\end{proof}

\begin{proposition} \label{prop: mod cap lexp}
  $\Mod (\logic{L} \cap \Lexp) = \Mod \logic{L} \cup \Mod \Lexp$ for each $\logic{L} \in \Ext \logic{B}$ and $\Lexp \in \Exp \Ext \logic{B}$.
\end{proposition}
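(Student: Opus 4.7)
The plan is to prove each inclusion separately, with the right-to-left inclusion being immediate and the left-to-right requiring a variable-disjointness argument in the spirit of Proposition~\ref{prop: axiomatizing cap exp}.

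For the easy direction $\Mod \logic{L} \cup \Mod \Lexp \subseteq \Mod (\logic{L} \cap \Lexp)$: since $\logic{L} \cap \Lexp$ lies below both $\logic{L}$ and $\Lexp$, every model of either is a model of the intersection.

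For the harder inclusion $\Mod (\logic{L} \cap \Lexp) \subseteq \Mod \logic{L} \cup \Mod \Lexp$, I would fix a matrix $\matr{A} = \pair{\alg{A}}{F}$ which is a model of $\logic{L} \cap \Lexp$ but \emph{not} a model of $\Lexp$, and show it must be a model of $\logic{L}$. Failure to model $\Lexp$ gives me some antitheorem $\Delta$ of $\Lexp$ and a valuation $w$ on $\matr{A}$ with $w[\Delta] \subseteq F$ (and $F \neq \alg{A}$). Now, to verify an arbitrary rule $\Gamma \vdash_{\logic{L}} \varphi$ in $\matr{A}$, I would first rename the variables of $\Delta$ via a substitution $\sigma$ so that the variables occurring in $\sigma[\Delta]$ are disjoint from those in $\Gamma \cup \{\varphi\}$. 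Since antitheorems are closed under substitution, $\sigma[\Delta]$ remains an antitheorem of $\Lexp$, so the combined rule $\Gamma, \sigma[\Delta] \vdash \varphi$ is valid both in $\logic{L}$ (weakening the premises of $\Gamma \vdash_{\logic{L}} \varphi$) and in $\Lexp$ (the antitheorem $\sigma[\Delta]$ derives everything). Therefore it is valid in $\logic{L} \cap \Lexp$, and hence in $\matr{A}$.

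To finish, given any valuation $v$ on $\matr{A}$ with $v[\Gamma] \subseteq F$, I would build a valuation $v'$ on $\matr{A}$ which agrees with $v$ on the atoms occurring in $\Gamma \cup \{\varphi\}$ and which sends each $\sigma(p)$ to $w(p)$ for the atoms $p$ occurring in $\Delta$; this is well-defined precisely because of the variable-disjointness just arranged, and arbitrary elsewhere. Then $v'[\Gamma] = v[\Gamma] \subseteq F$ and $v'[\sigma[\Delta]] = w[\Delta] \subseteq F$, so the validity of $\Gamma, \sigma[\Delta] \vdash \varphi$ in $\matr{A}$ forces $v(\varphi) = v'(\varphi) \in F$, establishing $\Gamma \vdash \varphi$ in $\matr{A}$.

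The only delicate point is the variable-renaming bookkeeping in the middle paragraph: one must be careful that the renamed antitheorem is still an antitheorem (which follows from structurality) and that the pasted valuation $v'$ agrees with $v$ on $\Gamma \cup \{\varphi\}$ and with $w \circ \sigma^{-1}$ on $\sigma[\Delta]$ simultaneously. Beyond this, the argument is essentially a matrix-theoretic reformulation of Proposition~\ref{prop: axiomatizing cap exp}.
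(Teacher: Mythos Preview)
Your proof is correct and follows essentially the same strategy as the paper's. The paper argues the contrapositive directly (if $\matr{A}$ is a model of neither $\logic{L}$ nor $\Lexp$, combine a failing $\logic{L}$-rule $\Gamma \vdash \varphi$ with a designated $\Lexp$-antitheorem $\Delta$, assumed variable-disjoint, into a single valuation witnessing that $\Gamma, \Delta \vdash \varphi$ fails in $\matr{A}$), whereas you phrase the same idea positively (fix a designated antitheorem once and use it to validate every $\logic{L}$-rule); the variable-disjointness and valuation-pasting are identical in both arguments.
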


\begin{proof}
  Clearly $\Mod \logic{L} \subseteq \Mod (\logic{L} \cap \Lexp)$ and $\Mod \Lexp \subseteq \Mod (\logic{L} \cap \Lexp)$. Conversely, suppose that a non-trivial matrix $\pair{\alg{A}}{F}$ is a model of neither $\logic{L}$ nor $\Lexp$. Then there are rules $\Gamma \vdash \varphi$ and $\Delta \vdash \emptyset$, without loss of generality variable disjoint, and valuations $v$ and $w$ on $\matr{A}$ such that $\Gamma \vdash_{\logic{L}} \varphi$ and $\Delta \vdash_{\Lexp} \emptyset$ and moreover $v[\Gamma] \subseteq F$, $v(\varphi) \notin F$, and $w[\Delta] \subseteq F$. Any valuation $u$ such that $u(p) = v(p)$ if $p$ occurs in $\Gamma$~or~$\varphi$ and $u(p) = w(p)$ if $p$ occurs in $\Delta$ then witnesses that the rule $\Gamma, \Delta \vdash \varphi$, which is valid in $\logic{L} \cap \Lexp$, fails in the matrix $\pair{\alg{A}}{F}$.
\end{proof}

  The logic determined by a product of matrices may be described in terms of the logics determined by the factors and their explosive parts. In the following proposition and its corollaries, the matrices $\matr{A}$, $\class{K}$, and $\matr{A}_{i}$ for $i \in I$ are assumed to be \emph{non-trivial} models of~$\logic{B}$.

\begin{proposition} \label{prop: log of product}
  $\Log \prod_{i \in I} \matr{A}_{i} = \bigcap_{i \in I} \Log \matr{A}_{i} \cup \bigcup_{i \in I} \left( \Exp_{\logic{B}} \Log \matr{A}_{i} \right)$.
\end{proposition}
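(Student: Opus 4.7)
The plan is to reason rule-by-rule, exploiting the fact that a valuation on $\prod_{i \in I} \matr{A}_{i}$ is a tuple $(v_{i})_{i \in I}$ of valuations on the factors and that the designated set of the product is $\prod_{i \in I} F_{i}$. Consequently, $v[\Gamma] \subseteq F$ amounts to $v_{i}[\Gamma] \subseteq F_{i}$ for every $i \in I$, and $v(\varphi) \notin F$ amounts to $v_{k}(\varphi) \notin F_{k}$ for some $k \in I$. From this componentwise reading, the rule $\Gamma \vdash \varphi$ fails in the product precisely when there exist valuations $v_{i}$ on each $\matr{A}_{i}$ designating all of $\Gamma$, at least one of which leaves $\varphi$ undesignated. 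Negating this condition, the rule holds in the product iff either (i) it holds in each $\matr{A}_{i}$ or (ii) $\Gamma$ is an antitheorem of $\Log \matr{A}_{i}$ for some $i \in I$.

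The next step is to match this dichotomy with the stated right-hand side. Clause (i) is exactly $\Gamma \vdash_{\bigcap_{i \in I} \Log \matr{A}_{i}} \varphi$. Clause (ii) says that for some $i \in I$ we have $\Gamma \vdash_{\Log \matr{A}_{i}} \emptyset$, which by the definition of the explosive part yields $\Gamma \vdash_{\Exp_{\logic{B}} \Log \matr{A}_{i}} \varphi$, and hence a consequence of $\bigcup_{i \in I} \Exp_{\logic{B}} \Log \matr{A}_{i}$. For the converse inclusion, any rule valid in some $\Exp_{\logic{B}} \Log \matr{A}_{i}$ is either already a $\logic{B}$-consequence --- in which case it holds in every $\matr{A}_{i}$ since each is a model of $\logic{B}$, so that (i) applies --- or it witnesses that $\Gamma$ is an antitheorem of $\Log \matr{A}_{i}$, so that (ii) applies. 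So the two sides enumerate the same rules.

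The step I expect to be the main obstacle is the forward direction of the dichotomy: given that the rule fails in some factor $\matr{A}_{k}$ via a valuation $v_{k}$ with $v_{k}[\Gamma] \subseteq F_{k}$ and $v_{k}(\varphi) \notin F_{k}$, I need to lift this to a refuting valuation on the product. This requires choosing, for each $i \neq k$, a valuation $w_{i}$ on $\matr{A}_{i}$ satisfying $w_{i}[\Gamma] \subseteq F_{i}$, and pasting these together with $v_{k}$ into a single valuation on $\prod_{i \in I} \matr{A}_{i}$ that designates $\Gamma$ but not $\varphi$. This is exactly the point where the failure of clause (ii) is used, so care is needed in the contrapositive bookkeeping; the non-triviality of each $\matr{A}_{i}$ ensures we are speaking about genuine antitheorems rather than a degenerate situation. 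Finally, the set-theoretic union on the right is indeed a logic because it coincides with $\Log \prod_{i \in I} \matr{A}_{i}$, which justifies the $\cup$ notation in the statement.
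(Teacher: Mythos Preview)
Your proposal is correct and follows essentially the same approach as the paper: both argue that if $\Gamma$ is not an antitheorem of any $\Log \matr{A}_{i}$, then one can pick designating valuations $v_{i}$ on each factor and splice them with a refuting valuation $w_{k}$ on some $\matr{A}_{k}$ to refute the rule in the product, which is exactly the ``main obstacle'' step you identify. The paper simply says the right-to-left inclusion is ``clear,'' while you spell out the case split between $\logic{B}$-valid rules and genuine antitheorems; your remark that non-triviality of the $\matr{A}_{i}$ is needed to equate ``no valuation on $\matr{A}_{i}$ designates $\Gamma$'' with ``$\Gamma$ is an antitheorem of $\Log \matr{A}_{i}$'' is also on point and matches the standing hypothesis stated just before the proposition.
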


\begin{proof}
  The right-to-left inclusion is clear. Conversely, suppose that $\Gamma \vdash \varphi$ holds in $\Log \prod_{i \in I} \matr{A}_{i}$. If no valuation on $\matr{A}_{i}$ designates $\Gamma$, then $\Gamma \vdash \varphi$ holds in $\Exp_{\logic{B}} \Log \matr{A}_{i}$. Otherwise, take a valuation $v_{i}$ on $\matr{A}_{i}$ which designates~$\Gamma$ for each~$i \in I$. If there were some $j \in I$ such that $\Gamma \nvdash \varphi$ in $\Log \matr{A}_{j}$, as witnessed by a valuation $w_{j}$, then the product of the valuation $w_{j}$ with the valuations $v_{i}$ for $i \neq j$ would witness that $\Gamma \nvdash \varphi$ in $\Log \prod_{i \in I} \matr{A}_{i}$. Thus $\Gamma \vdash \varphi$ in each $\Log \matr{A}_{i}$.
\end{proof}

  This formula for computing the logic determined by a product of matrices will be used throughout the paper. We recommend that the reader keep it in mind. We now state some of its immediate corollaries.

\begin{corollary}
  $\Log (\class{K} \times \matr{A}) = \Exp_{\logic{B}} \Log \class{K} \cup \Exp_{\logic{B}} \Log \matr{A} \cup (\Log \class{K} \cap \Log \matr{A})$.
\end{corollary}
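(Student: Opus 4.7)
The plan is to derive the corollary by applying Proposition~\ref{prop: log of product} pointwise for each $\matr{B} \in \class{K}$ and then reassembling the resulting intersection. First I unfold $\Log(\class{K} \times \matr{A}) = \bigcap_{\matr{B} \in \class{K}} \Log(\matr{B} \times \matr{A})$ and invoke Proposition~\ref{prop: log of product} to rewrite each factor as
\[
  X_{\matr{B}} \assign (\Log \matr{B} \cap \Log \matr{A}) \cup \Exp_{\logic{B}} \Log \matr{B} \cup \Exp_{\logic{B}} \Log \matr{A}.
\]
I will also need the identity $\bigcap_{\matr{B} \in \class{K}} \Exp_{\logic{B}} \Log \matr{B} = \Exp_{\logic{B}} \Log \class{K}$, which follows from the earlier Fact that $\Exp_{\logic{B}}$ commutes with intersections together with $\Log \class{K} = \bigcap_{\matr{B} \in \class{K}} \Log \matr{B}$.

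The $\supseteq$ inclusion is routine: each of the three summands on the right-hand side sits inside every $X_{\matr{B}}$, since $\Log \class{K} \cap \Log \matr{A} \logleq \Log \matr{B} \cap \Log \matr{A}$ and $\Exp_{\logic{B}} \Log \class{K} \logleq \Exp_{\logic{B}} \Log \matr{B}$ by monotonicity, while the $\Exp_{\logic{B}} \Log \matr{A}$ summand appears verbatim in both.

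For the $\subseteq$ inclusion I would fix a rule $\rho = (\Gamma \vdash \varphi) \in \bigcap_{\matr{B}} X_{\matr{B}}$ and split on whether $\rho$ lies in $\Log \matr{A}$. The pivotal bookkeeping observation is that membership in $\Exp_{\logic{B}} \Log \matr{B}$ already entails membership in $\Log \matr{B}$: unwinding the definition of the explosive part, either $\Gamma \vdash_{\logic{B}} \varphi$ (which transfers to $\matr{B}$ because $\matr{B}$ is a model of $\logic{B}$) or $\Gamma$ is an antitheorem of $\Log \matr{B}$, in which case the rule is vacuously valid in the non-trivial matrix $\matr{B}$. Granted this, if $\rho \in \Log \matr{A}$ then either $\rho \in \Exp_{\logic{B}} \Log \matr{A}$ (already on the right-hand side) or for every $\matr{B}$ the $X_{\matr{B}}$-membership collapses to $\rho \in \Log \matr{B}$, placing $\rho$ in $\Log \class{K} \cap \Log \matr{A}$. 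If instead $\rho \notin \Log \matr{A}$, then the first and third summands of each $X_{\matr{B}}$ are ruled out (since $\Exp_{\logic{B}} \Log \matr{A} \logleq \Log \matr{A}$), so $\rho \in \Exp_{\logic{B}} \Log \matr{B}$ for every $\matr{B}$, and the commutation identity above places $\rho$ in $\Exp_{\logic{B}} \Log \class{K}$.

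The only delicate point is the bookkeeping observation, which underwrites the reabsorption of the explosive pieces into $\Log \class{K}$ when $\rho \in \Log \matr{A}$; everything else is straightforward tracking of the three alternatives supplied by the preceding proposition.
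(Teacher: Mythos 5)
Your proof is correct and follows essentially the same route as the paper: unfold $\Log(\class{K}\times\matr{A})$ as $\bigcap_{\matr{B}\in\class{K}}\Log(\matr{B}\times\matr{A})$, apply Proposition~\ref{prop: log of product} factor-wise, and reassemble using the fact that $\Exp_{\logic{B}}$ commutes with intersections and that $\Exp_{\logic{B}}\logic{L}\logleq\logic{L}$. The paper carries out the reassembly by a displayed chain of distributivity identities rather than your element-wise case split on a rule $\rho$, but this is a purely presentational difference.
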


\begin{proof}
\begin{align*}
  \Log (\class{K} \times \matr{A}) & = \bigcap_{\matr{B} \in \class{K}} \Log \left( \matr{B} \times \matr{A} \right) \\
  & = \bigcap_{\matr{B} \in \class{K}} (\Exp_{\logic{B}} \Log \matr{B} \cup \Exp_{\logic{B}} \Log \matr{A} \cup (\Log \matr{B} \cap \Log \matr{A})) \\
  & = \Exp_{\logic{B}} \Log \matr{B} \cup \left(\bigcap_{\matr{B} \in \class{K}} (\Exp_{\logic{B}} \Log \matr{B} \cup \Log \matr{A}) \cap \bigcap_{\matr{B} \in \class{K}} \Log \matr{B}\right) \\
  & = \Exp_{\logic{B}} \Log \matr{A} \cup \left((\Log \matr{A} \cup \bigcap_{\matr{B} \in \class{K}} \Exp_{\logic{B}} \Log \matr{B}) \cap \bigcap_{\matr{B} \in \class{K}} \Log \matr{B}\right) \\
  & = \Exp_{\logic{B}} \Log \matr{A} \cup \Exp_{\logic{B}} \Log \class{K} \cup (\Log \matr{A} \cap \Log \class{K}).
\end{align*}
\end{proof}

\begin{corollary}
  If $\logic{B} = \Log \matr{A}$ and $\logic{L} = \Log \class{K}$, then $\Exp_{\logic{B}} \logic{L} = \Log (\class{K} \times \matr{A})$.
\end{corollary}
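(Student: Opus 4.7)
The plan is to apply the previous corollary directly and simplify the three terms in its right-hand side under the hypotheses $\logic{B} = \Log \matr{A}$ and $\logic{L} = \Log \class{K}$. The standing assumption (carried over from Proposition~\ref{prop: log of product}) is that $\class{K}$ and $\matr{A}$ are non-trivial models of $\logic{B}$, so in particular $\logic{B} \logleq \logic{L}$.

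First I would compute each term. The first, $\Exp_{\logic{B}} \Log \class{K}$, is $\Exp_{\logic{B}} \logic{L}$ by the very definition of $\logic{L}$. The second, $\Exp_{\logic{B}} \Log \matr{A}$, equals $\Exp_{\logic{B}} \logic{B}$; but the explosive part of $\logic{B}$ relative to itself is just $\logic{B}$, since every rule of $\logic{B}$ trivially satisfies the condition $\Gamma \vdash_{\logic{B}} \varphi$ in the definition of $\Exp_{\logic{B}}$. The third, $\Log \class{K} \cap \Log \matr{A}$, is $\logic{L} \cap \logic{B}$, which equals $\logic{B}$ because $\logic{B} \logleq \logic{L}$.

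Substituting these into the previous corollary gives
\begin{align*}
  \Log (\class{K} \times \matr{A}) = \Exp_{\logic{B}} \logic{L} \cup \logic{B} \cup \logic{B} = \Exp_{\logic{B}} \logic{L} \cup \logic{B}.
\end{align*}
Finally I would observe that $\logic{B} \logleq \Exp_{\logic{B}} \logic{L}$, since $\Exp_{\logic{B}} \logic{L}$ is by construction an extension of $\logic{B}$, so the union on the right collapses to $\Exp_{\logic{B}} \logic{L}$. There is no real obstacle here: the only thing to be mildly careful about is invoking $\logic{B} \logleq \logic{L}$, which is where the non-triviality assumption on $\class{K}$ (ensuring $\class{K} \subseteq \Mod \logic{B}$, hence $\logic{B} \logleq \Log \class{K}$) is quietly used.
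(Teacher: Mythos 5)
Your proof is correct and is exactly the derivation the paper intends: the corollary is stated without proof as an immediate consequence of the preceding one, and your simplification of its three terms (using $\Exp_{\logic{B}}\logic{B} = \logic{B}$, $\logic{B} \logleq \logic{L}$ from the standing non-triviality assumption, and $\logic{B} \logleq \Exp_{\logic{B}}\logic{L}$) is the right way to carry it out.
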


\begin{corollary}
  Let $\logic{L}$ be an explosive extension of $\logic{B}$. Then $\prod_{i \in I} \matr{A}_{i}$ is a model of $\logic{L}$ if and only if $\matr{A}_{i}$ is a model of $\logic{L}$ for some $i \in I$.
\end{corollary}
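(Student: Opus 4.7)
My plan is to deduce this corollary directly from Proposition~\ref{prop: log of product}, which yields the decomposition $\Log \prod_{i \in I} \matr{A}_{i} = \bigcap_{i \in I} \Log \matr{A}_{i} \cup \bigcup_{i \in I} \Exp_{\logic{B}} \Log \matr{A}_{i}$, and to exploit the characterizing property $\logic{L} = \Exp_{\logic{B}} \logic{L}$ of an explosive extension of $\logic{B}$.

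For the implication from right to left, suppose $\matr{A}_{j}$ is a model of $\logic{L}$ for some $j \in I$. Then $\logic{L} \logleq \Log \matr{A}_{j}$, and applying the monotone interior operator $\Exp_{\logic{B}}$ together with the explosiveness of $\logic{L}$ gives
\begin{align*}
  \logic{L} = \Exp_{\logic{B}} \logic{L} \logleq \Exp_{\logic{B}} \Log \matr{A}_{j} \logleq \bigcup_{i \in I} \Exp_{\logic{B}} \Log \matr{A}_{i} \logleq \Log \textstyle\prod_{i \in I} \matr{A}_{i},
\end{align*}
where the last inequality is the formula of Proposition~\ref{prop: log of product}. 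Hence $\prod_{i \in I} \matr{A}_{i}$ is a model of $\logic{L}$.

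For the converse, suppose $\prod_{i \in I} \matr{A}_{i}$ is a model of $\logic{L}$, so that every antitheorem $\Gamma$ of $\logic{L}$ is an antitheorem of the product. The key observation driving the forward inclusion of Proposition~\ref{prop: log of product} is that $\Gamma$ admits no designating valuation on $\prod_{i \in I} \matr{A}_{i}$ precisely when it admits no designating valuation on some factor $\matr{A}_{i}$: otherwise, a choice of designating valuation on each factor could be pasted into a designating valuation on the product. Consequently each antitheorem of $\logic{L}$ is realized by at least one $\matr{A}_{i}$. Since each $\matr{A}_{i}$ is already a model of $\logic{B}$ and $\logic{L}$ extends $\logic{B}$ by its antitheorems (Lemma~\ref{lemma: consequence in antiaxiomatic extensions}), finding a single index $j$ whose $\matr{A}_{j}$ realizes all antitheorems of $\logic{L}$ will yield $\matr{A}_{j} \in \Mod \logic{L}$.

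The main obstacle is precisely this last step: verifying that one index $j$ can be chosen uniformly across all antitheorems of $\logic{L}$, rather than letting the index depend on the antitheorem. I expect this uniformity to be the genuine content of the corollary and to follow from a primeness-type argument inside the complete sublattice $\Exp \Ext \logic{B}$, leveraging the identity $\logic{L} = \Exp_{\logic{B}} \logic{L}$ and the factorwise realization of antitheorems just established.
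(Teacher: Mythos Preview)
Your right-to-left direction is correct. The left-to-right direction, however, is not proved: you correctly isolate the missing step (finding a single index $j$ that works uniformly for every antitheorem of $\logic{L}$) but then only conjecture that a ``primeness-type argument'' will supply it. It will not. What you would need is for $\logic{L}$ to be completely join-prime in $\Exp \Ext \logic{B}$ (where joins are unions), and an arbitrary explosive extension has no reason to enjoy this property.

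Indeed, the forward implication fails in general. Work in a signature with two constants $c_{1}$ and $c_{2}$, let $\logic{B}$ be the least logic in this signature, and let $\logic{L}$ extend $\logic{B}$ by the two explosive rules $c_{1} \vdash \emptyset$ and $c_{2} \vdash \emptyset$. Take $\matr{A}_{1} = \pair{\{0,1\}}{\{1\}}$ with $c_{1}$ interpreted as $0$ and $c_{2}$ as $1$, and let $\matr{A}_{2}$ swap these interpretations. Then $c_{2} \vdash \emptyset$ fails in $\matr{A}_{1}$ and $c_{1} \vdash \emptyset$ fails in $\matr{A}_{2}$, so neither factor is a model of $\logic{L}$; yet in $\matr{A}_{1} \times \matr{A}_{2}$ both constants take the undesignated value $\pair{0}{1}$ or $\pair{1}{0}$, so the product \emph{is} a model of $\logic{L}$. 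Your observation that each antitheorem of $\logic{L}$ is an antitheorem of \emph{some} factor is exactly right, but as this example shows, no single factor need cover them all. The places where the paper later invokes this corollary happen to involve products in which one factor is a model of $\CL$, or one factor fails to be a model of $\ECQ$ and hence of any proper explosive extension of $\BD$; in those particular situations the desired conclusion holds for these more specific reasons. But the general statement you are attempting to prove is not correct, so no completion of your outline is possible.
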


  The following corollary describes the opposite extreme case. Let us call $\Gamma$ a \emph{potential antitheorem} of $\logic{B}$ if the extension of $\logic{B}$ by $\Gamma \vdash \emptyset$ is a non-trivial logic.

\begin{corollary} \label{cor: potential antitheorems}
  Let $\logic{L}$ be the extension of $\logic{B}$ by a set of rules of the form $\Gamma \vdash \varphi$ where $\Gamma$ is not a potential antitheorem of $\logic{B}$. Then $\prod_{i \in I} \matr{A}_{i}$ is a model of~$\logic{L}$ if and only if each $\matr{A}_{i}$ is a model of $\logic{L}$.
\end{corollary}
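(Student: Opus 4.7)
The plan is to apply Proposition~\ref{prop: log of product}, which already captures most of the structure of the logic of a product. The right-to-left implication is immediate and independent of the hypothesis on $\Gamma$: the class $\Mod \logic{L}$ is always closed under direct products, so if every $\matr{A}_{i}$ is a model of $\logic{L}$, then $\prod_{i \in I} \matr{A}_{i}$ is as well.

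For the left-to-right direction, suppose $\prod_{i \in I} \matr{A}_{i} \in \Mod \logic{L}$ and fix any $k \in I$. Since each $\matr{A}_{k}$ is already a non-trivial model of the base logic $\logic{B}$, to show $\matr{A}_{k} \in \Mod \logic{L}$ it suffices to verify that each of the axiomatizing rules $\Gamma \vdash \varphi$ of $\logic{L}$ (with $\Gamma$ not a potential antitheorem of $\logic{B}$) holds in $\matr{A}_{k}$. Such a rule holds in the product, so by Proposition~\ref{prop: log of product} either (a) $\Gamma \vdash \varphi$ holds in every $\Log \matr{A}_{i}$, or (b) there is some $j \in I$ such that $\Gamma \vdash \varphi$ holds in $\Exp_{\logic{B}} \Log \matr{A}_{j}$. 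Case (a) directly gives the rule in $\matr{A}_{k}$. In case (b), by the definition of the explosive part, either $\Gamma \vdash_{\logic{B}} \varphi$, in which case the rule again holds in every $\matr{A}_{i}$ since each is a model of $\logic{B}$, or else $\Gamma \vdash_{\Log \matr{A}_{j}} \emptyset$.

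The only real work is ruling out this last sub-case using the hypothesis on $\Gamma$. If $\Gamma \vdash_{\Log \matr{A}_{j}} \emptyset$, then no valuation on the non-trivial matrix $\matr{A}_{j}$ designates all of $\Gamma$, so $\matr{A}_{j}$ is itself a non-trivial model of the extension of $\logic{B}$ by the rule $\Gamma \vdash \emptyset$. Hence that extension is a non-trivial logic, which by definition means that $\Gamma$ is a potential antitheorem of $\logic{B}$, contradicting our assumption. The sub-case is therefore vacuous, which completes the argument. The only step requiring any care is the observation that the non-triviality of $\matr{A}_{j}$ as a model of $\logic{B}$ is precisely what converts "$\Gamma$ is an antitheorem of $\Log \matr{A}_{j}$" into "$\Gamma$ is a potential antitheorem of $\logic{B}$"; everything else is bookkeeping with Proposition~\ref{prop: log of product}.
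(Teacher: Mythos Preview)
Your proof is correct and follows essentially the same approach as the paper: both use Proposition~\ref{prop: log of product} and then rule out the antitheorem disjunct by observing that each $\Log \matr{A}_{j}$ is a non-trivial extension of $\logic{B}$, so $\Gamma$ being an antitheorem there would force $\Gamma$ to be a potential antitheorem of $\logic{B}$. The paper's version is simply terser, folding both directions into a single biconditional for each axiomatizing rule.
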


\begin{proof}
  A rule $\Gamma \vdash \varphi$ holds in $\Log \prod_{i \in I} \matr{A}_{i}$ if and only if it either holds in each~$\matr{A}_{i}$ or $\Gamma$ is an antitheorem of some $\Log \matr{A}_{i}$. But $\Log \matr{A}_{i}$ is a non-trivial extension of~$\logic{B}$.
\end{proof}

\section{Belnap--Dunn logic and its closest kin}
\label{sec: known super-belnap}

  It is now time to turn our attention from the general theory towards super-Belnap logics. In this section, we review some known facts about Belnap--Dunn logic and its closest relatives.

  The algebraic counterpart of Belnap--Dunn logic is the variety of De~Morgan algebras. A \emph{De~Morgan algebra} $\langle A, \wedge, \vee, \True, \False, \dmneg \rangle$ is a bounded \mbox{distributive} lattice $\langle A, \wedge, \vee, \True, \False \rangle$ with an order-inverting involution~$\dmneg x$ called De~Morgan negation. The~constants $\True$ and $\False$ denote the top and bottom elements respectively.

  The variety of De~Morgan algebras is axiomatized by the equation $\dmneg \dmneg x \approx x$ and either of the De Morgan laws $\dmneg (x \vee y) \approx \dmneg x \wedge \dmneg y$ or $\dmneg (x \wedge y) \approx \dmneg x \vee \dmneg y$ relative to an axiomatization of the variety of bounded distributive lattices.

  The only subdirectly irreducible De~Morgan algebras are the two-element Boolean algebra $\Btwo$, the three-element chain~$\Kthree$ with the unique order-inverting involution, and the four-element diamond $\DMfour$ with the unique order-inverting involution with two fixpoints (see~\cite{kalman58,pynko99c}). Clearly $\Btwo \leq \Kthree \leq \DMfour$.

  Each De~Morgan algebra is therefore a subdirect power of $\DMfour$. Moreover, each \emph{Kleene algebra} (De~Morgan algebra which satisfies $x \wedge \dmneg x \leq y \vee \dmneg y$) is a subdirect power of $\Kthree$, and each \emph{Boolean algebra} (De~Morgan algebra which satisfies $x \wedge \dmneg x \leq y$) is a subdirect power of $\Btwo$. There are no other varieties of De~Morgan algebras, apart from the trivial one.

\begin{figure}
\caption{Some important models of $\BD$}
\label{fig: matrices}

\bigskip

\begin{center}
\begin{tikzpicture}[scale=1,
  dot/.style={circle,fill,inner sep=2.5pt,outer sep=2.5pt}]
  \node (DM4a) at (0,-1) [dot] {};
  \node (DM4b) at (-1,0) [dot] {};
  \node (DM4c) at (1,0) [dot] {};
  \node (DM4d) at (0,1) [dot] {};
  \draw[-] (DM4a) edge (DM4b);
  \draw[-] (DM4a) edge (DM4c);
  \draw[-] (DM4b) edge (DM4d);
  \draw[-] (DM4c) edge (DM4d);
  \draw[rotate around={45:(0.5,0.5)}] (0.5,0.5) ellipse (0.5 and 1);
  \node at (0,-2) {$\BDmatrix$ ($\BD$)};
\end{tikzpicture}
\quad
\begin{tikzpicture}[scale=1,
  dot/.style={circle,fill,inner sep=2.5pt,outer sep=2.5pt}]
  \node (B2a) at (0,0) [dot] {};
  \node (B2b) at (0,2) [dot] {};
  \draw[-] (B2a) edge (B2b);
  \draw (0,2) ellipse (0.5 and 0.5);
  \node at (0,-1) {$\CLmatrix$ ($\CL$)};
\end{tikzpicture}
\quad
\begin{tikzpicture}[scale=1,
  dot/.style={circle,fill,inner sep=2.5pt,outer sep=2.5pt}]
  \node (K3a) at (0,0) [dot] {};
  \node (K3b) at (0,1) [dot] {};
  \node (K3c) at (0,2) [dot] {};
  \draw[-] (K3a) edge (K3b);
  \draw[-] (K3b) edge (K3c);
  \draw (0,2) ellipse (0.5 and 0.5);
  \node at (0,-1) {$\Kmatrix$ ($\K$)};
\end{tikzpicture}
\quad
\begin{tikzpicture}[scale=1,
  dot/.style={circle,fill,inner sep=2.5pt,outer sep=2.5pt}]
  \node (K3a) at (0,0) [dot] {};
  \node (K3b) at (0,1) [dot] {};
  \node (K3c) at (0,2) [dot] {};
  \draw[-] (K3a) edge (K3b);
  \draw[-] (K3b) edge (K3c);
  \draw (0,1.5) ellipse (0.5 and 1);
  \node at (0,-1) {$\LPmatrix$ ($\LP$)};
\end{tikzpicture}
\quad
\begin{tikzpicture}[scale=1,
  dot/.style={circle,fill,inner sep=2.5pt,outer sep=2.5pt}]
  \node (DM4a) at (0,-1) [dot] {};
  \node (DM4b) at (-1,0) [dot] {};
  \node (DM4c) at (1,0) [dot] {};
  \node (DM4d) at (0,1) [dot] {};
  \draw[-] (DM4a) edge (DM4b);
  \draw[-] (DM4a) edge (DM4c);
  \draw[-] (DM4b) edge (DM4d);
  \draw[-] (DM4c) edge (DM4d);
  \draw (0,1) ellipse (0.5 and 0.5);
  \node at (0,-2) {$\ETLmatrix$ ($\ETL$)};
\end{tikzpicture}
\end{center}

\end{figure}
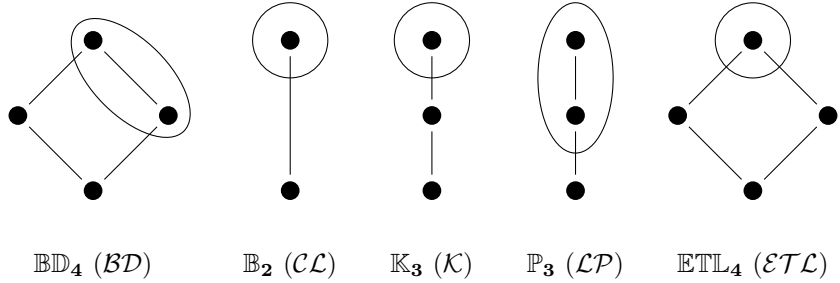

  The best-known super-Belnap logics are determined by matrices over one of the algebras $\Btwo$, $\Kthree$, $\DMfour$ where the designated elements form a lattice filter. These matrices are shown in Figure~\ref{fig: matrices}, where De~Morgan negation is interpreted by reflection across the horizontal axis of symmetry. The logics determined by these matrices are recorded in parentheses.

  \emph{Belnap--Dunn logic} $\BD$ itself is determined by the matrix $\BDmatrix$. The four elements of this matrix can be interpreted as the truth values True, False, Neither (True nor False), and Both (True and False). The matrices $\Kmatrix$ and $\LPmatrix$ are submatrices of $\BDmatrix$: the former drops the truth value Both, the latter drops the truth value Neither. The familiar matrix $\CLmatrix$ which determines classical logic $\CL$ is a submatrix of both $\Kmatrix$ and $\LPmatrix$. It is obtained from $\BDmatrix$ by restricting to the two classical values True and False.

  A~Hilbert-style axiomatization of Belnap--Dunn logic (i.e.\ an axiomatization in the sense of Section~\ref{sec: preliminaries}) was provided independently by Pynko~\cite{pynko95a} and Font~\cite{font97}. Both papers also contain sequent calculi for~$\BD$. More precisely, Pynko and Font study Belnap--Dunn logic without the constants $\True$~and~$\False$. However, to obtain an axiomatization of $\BD$ with the constants it suffices to add the rules
\begin{align*}
  \False \vee p & \vdash p, & \emptyset & \vdash \True, \\
  \dmneg \True \vee p & \vdash p, & \emptyset & \vdash \dmneg \False.
\end{align*}

  We shall see in Section~\ref{sec: different frameworks} that the presence or absence of these constants makes very little difference. One benefit of including them is that doing so collapses the distinction between the trivial logic axiomatized by $\emptyset \vdash p$ and the almost trivial logic axiomatized by $p \vdash q$, as well as between classical logic~$\CL$ and almost classical logic~$\CL^{-}$ where $\Gamma \vdash_{\CL^{-}} \varphi$ if and only if $\Gamma$ is non-empty and $\Gamma \vdash_{\CL} \varphi$.

  \emph{Kleene's strong three-valued logic} $\K$ is determined by the matrix $\Kmatrix$. This logic, or at least the three-valued semantics for its connectives, was introduced by Kleene~\cite{kleene38,kleene52} in connection with partial recursive functions. It was later used by Kripke~\cite{kripke75} in his theory of truth. The logic $\K$ is axiomatized relative to~$\BD$ by $(p \wedge \dmneg p) \vee q \vdash q$, or equivalently by the rule of resolution $p \vee q, \dmneg q \vee r \vdash p \vee r$,  as observed by Rivieccio~\cite{rivieccio12} and proved in~\cite{albuquerque+prenosil+rivieccio17}.

  The \emph{Logic of Paradox} $\LP$ is determined by the matrix $\LPmatrix$. It was introduced by Priest~\cite{priest79}, who proposed to use it to handle semantic paradoxes such as the Liar Paradox. Pynko~\cite{pynko95a} later proved that $\LP$ is axiomatized relative to~$\BD$ by the law of the excluded middle $\emptyset \vdash p \vee \dmneg p$. This~logic is the only non-trivial proper axiomatic extension of $\BD$.

  The intersection of $\LP$ and $\K$ is the logic determined by the set of matrices $\{ \Kmatrix, \LPmatrix \}$. We call it \emph{Kleene's logic of order}, following Rivieccio~\cite{rivieccio12}, and we denote it~$\KO$. This logic was called Kalman implication by Makinson~\cite{makinson73} and studied by Dunn~\cite{dunn76b}, who identified it as the so-called first-degree fragment of the relevance logic R-Mingle. (Recall that $\BD$ itself is the first-degree fragment of the logic of entailment~\cite{dunn76}.) Kleene's logic of order is axiomatized by the rule $(p \wedge \dmneg p) \vee r \vdash (q \vee \dmneg q) \vee r$ relative to $\BD$, as observed by Rivieccio~\cite{rivieccio12} and proved in \cite{albuquerque+prenosil+rivieccio17}. It can also be axiomatized by a rule in two variables, namely $(p \wedge \dmneg p) \vee q \vdash q \vee \dmneg q$.\footnote{Dunn~\cite{dunn00} provides an axiomatization of $\KO$ which relies on a metarule which allows one to infer $\varphi \vee \psi \vdash \chi$ from $\varphi \vdash \psi$ and $\psi \vdash \chi$. It is therefore not a (Hilbert-style) axiomatization in our sense of the word.}

  In addition to the above logics, super-Belnap logics of course also include \emph{classical logic} $\CL$, determined by the matrix $\CLmatrix$. Figure~\ref{fig: classical extensions} shows the super-Belnap logics introduced so far ordered by their logical strength.

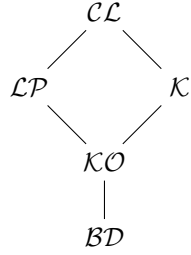
\begin{figure}
\caption{Some super-Belnap logics}
\label{fig: classical extensions}

\begin{center}
\begin{tikzpicture}[scale=1]
  \node (Belnap) at (0,0) {$\BD$};
  \node (KO) at (0,1) {$\KO$};
  \node (LP) at (-1,2) {$\LP$};
  \node (K) at (1,2) {$\K$};
  \node (CL) at (0,3) {$\CL$};
  \draw[-] (Belnap) -- (KO) -- (K) -- (CL);
  \draw[-] (KO) -- (LP) -- (CL);
\end{tikzpicture}
\end{center}

\end{figure}

  A more recent addition to the super-Belnap family is the \emph{Exactly True Logic} $\ETL$ introduced by Pietz \& Rivieccio~\cite{pietz+rivieccio13} as the logic of the matrix~$\ETLmatrix$. It~was also studied by Rivieccio in~\cite{rivieccio12}, where it was denoted $\logic{B}_{1}$.\footnote{The idea of preserving exact truth (truth and non-falsity) had previously been considered by Marcos~\cite{marcos11}, although the signature of his logic was larger than the signature of $\ETL$.} This logic is axiomatized relative to $\BD$ by the rule of disjunctive syllogism $p, \dmneg p \vee q \vdash q$. Classical logic is precisely the extension of $\ETL$ by the law of the excluded middle. That is, $\CL = \LP \vee \ETL$. We shall see that, in a way, this is the canonical decomposition of $\CL$ in the lattice of super-Belnap logics.

  We now review some known properties of these logics, which will be used throughout the paper. The logics $\CL$, $\KO$, and $\BD$ are directly related to the equational theories of Boolean, Kleene, and De~Morgan algebras.

\newlength{\auxlength}
\settowidth{\auxlength}{$\vdash_{\KO}$}

\begin{fact} \label{fact: semilattice based}
  Let $\Gamma$ be a finite set of formulas. Then:
\begin{enumerate}
\item $\Gamma \mathrel{\makebox[\auxlength][c]{$\vdash_{\BD}$}} \varphi$ if and only if \,$\bigwedge \Gamma \leq \varphi$ holds in all De Morgan algebras.
\item $\Gamma \mathrel{\makebox[\auxlength][c]{$\vdash_{\KO}$}} \varphi$ if and only if \,$\bigwedge \Gamma \leq \varphi$ holds in all Kleene algebras.
\item $\Gamma \mathrel{\makebox[\auxlength][c]{$\vdash_{\CL}$}} \varphi$ if and only if \,$\bigwedge \Gamma \leq \varphi$ holds in all Boolean algebras.
\end{enumerate}
\end{fact}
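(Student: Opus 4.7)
The plan is to reduce each clause to a single-premise inequality statement, using the fact that in each of the matrices $\CLmatrix$, $\Kmatrix$, $\LPmatrix$, $\BDmatrix$, the set $F$ of designated elements is a lattice filter of the underlying algebra, i.e.\ closed under binary meet and upward closed. Consequently, for a valuation~$v$ and a finite set~$\Gamma$, one has $v[\Gamma] \subseteq F$ iff $v(\bigwedge \Gamma) \in F$, so $\Gamma \vdash \varphi$ holds in the defining matrix (or matrices, for~$\KO$) iff $\bigwedge \Gamma \vdash \varphi$ does. The right-to-left direction of each clause is then immediate: if $\bigwedge \Gamma \leq \varphi$ holds in the corresponding variety, then $v(\bigwedge \Gamma) \leq v(\varphi)$ on the generating algebra, and upward closure of~$F$ promotes $v(\bigwedge \Gamma) \in F$ to $v(\varphi) \in F$.

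For the converse it suffices to show that $\psi \vdash_{\logic{L}} \varphi$ implies $v(\psi) \leq v(\varphi)$ on the generating algebra, since $\Btwo$, $\Kthree$, $\DMfour$ generate the varieties of Boolean, Kleene, and De~Morgan algebras respectively, and inequalities transfer through subdirect products. The $\CL$ case is trivial because $\{\True\}$ is an ultrafilter of~$\Btwo$. For $\KO$, validity in $\Kmatrix$ gives $v(\psi) = \True \Rightarrow v(\varphi) = \True$, while validity in $\LPmatrix$ gives $v(\psi) \in \{\True, m\} \Rightarrow v(\varphi) \in \{\True, m\}$, where $m$ denotes the middle element of~$\Kthree$; a case split on $v(\psi) \in \{\True, m, \False\}$ shows that these two implications together are equivalent to $v(\psi) \leq v(\varphi)$.

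The main obstacle is the $\BD$ clause, because validity of a rule in $\BDmatrix$ alone is strictly weaker than the corresponding inequality on~$\DMfour$ (for instance, the output pair sending $\psi$ to $\True$ and $\varphi$ to the designated non-top element satisfies the matrix condition vacuously, yet $\True \nlogleq v(\varphi)$). I would overcome this by exploiting the nontrivial automorphism~$\alpha$ of~$\DMfour$ that swaps the two fixpoints of negation while fixing $\True$ and~$\False$. Since $\alpha \circ v$ is again a valuation, applying the $\BDmatrix$ validity to $\alpha \circ v$ and transporting back along~$\alpha$ yields the ``dual'' implication in which the designated set is replaced by its image under~$\alpha$. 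Intersecting these two implications case-by-case on the four possible values of $v(\psi)$ in $\DMfour$ forces $v(\varphi) \geq v(\psi)$ throughout, completing the argument.
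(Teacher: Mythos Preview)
Your argument is correct. The paper states this fact without proof, treating it as folklore about the relationship between $\BD$, $\KO$, $\CL$ and the equational theories of De~Morgan, Kleene, and Boolean algebras (it is implicit in Font's analysis of $\BD$ and the subdirect representation theorems recalled just before the fact). Your reduction to a single premise via the filter property, followed by a case analysis on the generating algebra, is exactly the elementary verification one would supply if asked to unpack the claim.

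Two small remarks. First, in your parenthetical example for $\BD$ the matrix condition is satisfied \emph{non}-vacuously (both $v(\psi)=\True$ and $v(\varphi)=a$ are designated), not vacuously; the point you are making---that preservation of a single prime filter on $\DMfour$ is strictly weaker than the order---is nonetheless correct and well chosen. Second, your automorphism trick is the cleanest way to state the argument, but it is worth noting what it is really saying: $\DMfour$ has exactly two prime filters, interchanged by the conflation automorphism, so validity in $\BDmatrix$ amounts to simultaneous preservation of both prime filters, which in a finite distributive lattice characterises the order. This is the same phenomenon as in your $\KO$ case, where the two prime filters of $\Kthree$ are supplied separately by $\Kmatrix$ and $\LPmatrix$ rather than by an automorphism.
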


  The following observations follow immediately from the semantic definitions of the logics in question.

\begin{fact} \label{fact: contraposition}
  The logics $\BD$, $\KO$, and $\CL$ enjoy the contraposition property:
\begin{align*}
  \varphi \vdash_{\logic{L}} \psi ~ \implies ~ \dmneg \psi \vdash_{\logic{L}} \dmneg \varphi.
\end{align*}
  The logics $\K$ and $\LP$ are related by contraposition as follows:
\begin{align*}
  \varphi \vdash_{\K} \psi & ~ \implies ~ \dmneg \psi \vdash_{\LP} \dmneg \varphi, & \varphi \vdash_{\LP} \psi & ~ \implies ~ \dmneg \psi \vdash_{\K} \dmneg \varphi.
\end{align*}
\end{fact}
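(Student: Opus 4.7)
The plan is to handle the two claims separately, both directly from the matrix semantics, invoking only the order-inverting involution property of $\dmneg$ and the previously recorded Fact~\ref{fact: semilattice based}.

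For the three logics $\BD$, $\KO$, and $\CL$ with a single-formula premise, Fact~\ref{fact: semilattice based} applied to the singleton $\Gamma = \{ \varphi \}$ reduces $\varphi \vdash_{\logic{L}} \psi$ to the inequality $\varphi \leq \psi$ holding in all De~Morgan (respectively Kleene, Boolean) algebras. Since $\dmneg$ is an order-inverting involution on every such algebra, $\varphi \leq \psi$ is equivalent to $\dmneg \psi \leq \dmneg \varphi$, which by another application of Fact~\ref{fact: semilattice based} amounts to $\dmneg \psi \vdash_{\logic{L}} \dmneg \varphi$. So a single line of algebraic contraposition does all three cases uniformly.

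For the interaction between $\K$ and $\LP$, I would work directly with valuations on the common carrier $\Kthree = \{ f, n, t \}$, exploiting the fact that its negation satisfies $\dmneg f = t$, $\dmneg n = n$, $\dmneg t = f$, and that the designated set of $\LPmatrix$ is the $\dmneg$-image of the complement of the designated set of $\Kmatrix$. Concretely, for any valuation $v$ on $\Kthree$ and any formula $\chi$, one has $v(\dmneg \chi) = \dmneg v(\chi)$, hence $v(\dmneg \chi) \in \{ n, t \}$ iff $v(\chi) \neq t$. Assuming $\varphi \vdash_{\K} \psi$ and given a valuation $v$ with $v(\dmneg \psi) \in \{ n, t \}$, this gives $v(\psi) \neq t$, whence $v(\varphi) \neq t$ by contraposition on the $\K$-assumption, so $v(\dmneg \varphi) \in \{ n, t \}$. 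This yields $\dmneg \psi \vdash_{\LP} \dmneg \varphi$. The reverse direction is symmetric, swapping the roles of the designated sets via the dual equivalence $v(\dmneg \chi) = t$ iff $v(\chi) = f$.

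There is no real obstacle; the only content is recognizing the complementary pairing of designated sets between $\Kmatrix$ and $\LPmatrix$ under $\dmneg$. Once that is noted, everything reduces to chasing designations through the involution $\dmneg$ on~$\Kthree$.
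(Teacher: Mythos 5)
Your proof is correct and is essentially the argument the paper has in mind: the paper offers no explicit proof, remarking only that these facts ``follow immediately from the semantic definitions,'' and your two computations (algebraic contraposition via Fact~\ref{fact: semilattice based} for $\BD$, $\KO$, $\CL$, and designation-chasing on the common three-element chain for the $\K$/$\LP$ duality) are exactly the routine semantic verification being alluded to.
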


\begin{fact} \label{fact: pcp}
  The logics $\BD$, $\K$, $\LP$, $\KO$, $\CL$ enjoy the proof by cases property:
\begin{align*}
  \Gamma, \varphi \vee \psi \vdash_{\logic{L}} \chi & \iff \Gamma, \varphi \vdash_{\logic{L}} \chi \text{ and } \Gamma, \psi \vdash_{\logic{L}} \chi.
\end{align*}
\end{fact}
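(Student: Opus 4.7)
The plan is to establish the proof by cases property semantically, by exploiting the fact that each of the five listed logics is determined by a class of matrices $\langle \alg{A}, F\rangle$ in which the set $F$ of designated values is a \emph{prime} lattice filter of the underlying De~Morgan algebra $\alg{A}$. Specifically, $\CL$ is determined by $\CLmatrix$, $\K$ by $\Kmatrix$, $\LP$ by $\LPmatrix$, $\BD$ by $\BDmatrix$, and $\KO$ by $\{\Kmatrix, \LPmatrix\}$.

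The left-to-right direction $\Gamma, \varphi \vee \psi \vdash_{\logic{L}} \chi \Rightarrow \Gamma, \varphi \vdash_{\logic{L}} \chi$ (and symmetrically for $\psi$) is routine: since $F$ is an upset and $v(\varphi) \leq v(\varphi \vee \psi)$ in any De~Morgan algebra, any valuation $v$ on a model of $\logic{L}$ satisfying $v[\Gamma] \subseteq F$ and $v(\varphi) \in F$ also satisfies $v(\varphi \vee \psi) \in F$.

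For the right-to-left direction, assume $\Gamma, \varphi \vdash_{\logic{L}} \chi$ and $\Gamma, \psi \vdash_{\logic{L}} \chi$. Take one of the defining matrices $\matr{A} = \langle \alg{A}, F \rangle$ and a valuation $v$ with $v[\Gamma] \subseteq F$ and $v(\varphi \vee \psi) \in F$. The crux is the verification that $F$ is a prime filter of $\alg{A}$, so that $v(\varphi) \in F$ or $v(\psi) \in F$; in either case one of the hypotheses yields $v(\chi) \in F$. Primeness is immediate for $\CLmatrix$, $\Kmatrix$, and $\LPmatrix$ since in each case the complement of $F$ in the chain is a principal downset. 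The only case requiring a brief check is $\BDmatrix$, where $F = \{T, B\}$ in $\DMfour$: its complement $\{N, F\}$ lies below $N$, hence is a downward closed set closed under joins (as $N \vee F = N$), so $F$ is prime. For $\KO$ one can either reason matrix-by-matrix on $\Kmatrix$ and $\LPmatrix$, or invoke the trivial observation that PCP is preserved under intersections of logics.

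The main (and only) obstacle is the primeness check for $\BDmatrix$, which relies on the specific lattice structure of the diamond $\DMfour$, namely that $N \vee B = T$ while no pair from the complement $\{N, F\}$ joins into $\{T, B\}$. Everything else is a straightforward consequence of the fact that lattice filters are upsets and that the operators of $\BD$ are interpreted as lattice operations.
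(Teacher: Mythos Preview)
Your proof is correct and takes essentially the same approach the paper has in mind: the paper states that this fact (along with the contraposition fact) ``follows immediately from the semantic definitions of the logics in question'' and gives no further argument, and your semantic verification via primeness of the designated filter in each defining matrix is exactly how one would spell this out. One trivial notational remark: you use $F$ both for the designated filter and for the bottom element of $\DMfour$, which is harmless but could be cleaned up (e.g.\ use $\False$ for the latter, as the paper does).
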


  The reduced models of $\BD$ were described by Font~\cite{font97}, and the reduced models of $\ETL$ by Rivieccio~\cite{rivieccio12}. We will only need the following observations here.

\begin{proposition}
  Each reduced model of $\BD$ is a De Morgan algebra with a lattice filter. Conversely, each De Morgan algebra equipped with a lattice filter is a model of $\BD$ (although it need not be a reduced model).
\end{proposition}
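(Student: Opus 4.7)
The plan is to prove the two directions separately. For the converse (easier direction), let $\alg{A}$ be a De~Morgan algebra and $F \subseteq \alg{A}$ a lattice filter; I would verify each rule $\Gamma \vdash_{\BD} \varphi$ in $\pair{\alg{A}}{F}$. Using the finitarity of $\BD$, we may pass to a finite $\Gamma' \subseteq \Gamma$ with $\Gamma' \vdash_{\BD} \varphi$, and then Fact~\ref{fact: semilattice based} yields $\bigwedge \Gamma' \leq \varphi$ in every De~Morgan algebra (with the empty meet understood as $\True$, so that the axiomatic case $\Gamma' = \emptyset$ is covered). For any valuation $v$ with $v[\Gamma] \subseteq F$, closure of $F$ under finite meets together with $\True \in F$ gives $v(\bigwedge \Gamma') \in F$, and upward closure then produces $v(\varphi) \in F$.

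For the forward direction, let $\pair{\alg{A}}{F}$ be a reduced model of $\BD$. I would first show that $\alg{A}$ is a De~Morgan algebra by verifying each defining equation $s(\bar{p}) \approx t(\bar{p})$ pointwise. Fix $\bar{a} \in \alg{A}$; since $\Leibniz{\alg{A}}{F}$ is the identity relation, it suffices to prove $\pair{s^{\alg{A}}(\bar{a})}{t^{\alg{A}}(\bar{a})} \in \Leibniz{\alg{A}}{F}$, i.e.\ that for every formula $\chi(x, \bar{y})$ and every tuple $\bar{c} \in \alg{A}$ we have $\chi^{\alg{A}}(s^{\alg{A}}(\bar{a}), \bar{c}) \in F$ iff $\chi^{\alg{A}}(t^{\alg{A}}(\bar{a}), \bar{c}) \in F$. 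The key point is that the formulas $\chi(s(\bar{p}), \bar{q})$ and $\chi(t(\bar{p}), \bar{q})$ denote the same term function on every De~Morgan algebra (because $s \approx t$ does), so by Fact~\ref{fact: semilattice based} they are interderivable in $\BD$; instantiating $\bar{p} \mapsto \bar{a}$ and $\bar{q} \mapsto \bar{c}$ in the model $\pair{\alg{A}}{F}$ then delivers precisely the desired equivalence.

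With $\alg{A}$ now known to be a De~Morgan algebra, the filter properties of $F$ follow immediately from specific $\BD$-rules: closure under binary meets from $p, q \vdash_{\BD} p \wedge q$; upward closure from $p \vdash_{\BD} p \vee q$ (combined with $a \leq b \Leftrightarrow a \vee b = b$ in $\alg{A}$); and $\True \in F$ from the theorem $\emptyset \vdash_{\BD} \True$. I expect the main obstacle to be the first step of the forward direction --- bridging interderivability in $\BD$ and pointwise equality inside a reduced model --- and the crucial tool for overcoming it is the semilattice-based completeness of Fact~\ref{fact: semilattice based}, which reduces interderivability in $\BD$ to equality of term functions across the class of De~Morgan algebras.
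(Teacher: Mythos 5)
The paper does not actually prove this proposition --- it is stated without proof, with the description of the reduced models of $\BD$ delegated to Font's paper --- so there is no ``paper proof'' to compare against; what matters is whether your argument stands on its own, and it does. The converse direction is exactly as routine as you make it: finitarity of $\BD$ plus Fact~\ref{fact: semilattice based} plus the filter axioms. The forward direction is the substantive part, and your reduction is the right one: an equation $s \approx t$ valid in all De~Morgan algebras makes $\chi(s(\bar{p}),\bar{q})$ and $\chi(t(\bar{p}),\bar{q})$ interderivable in $\BD$ (by Fact~\ref{fact: semilattice based}, or even more directly because the two formulas compute the same function on $\DMfour$), and validity of these rules in the model then gives indiscernibility of $s^{\alg{A}}(\bar{a})$ and $t^{\alg{A}}(\bar{a})$ relative to $F$, whence equality in a reduced matrix. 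The one place where you lean on something the paper's preliminaries do not state is the step ``$\pair{s^{\alg{A}}(\bar{a})}{t^{\alg{A}}(\bar{a})} \in \Leibniz{\alg{A}}{F}$, i.e.\ indiscernibility by all unary polynomials with parameters'': the paper defines $\Leibniz{\alg{A}}{F}$ only as the largest congruence compatible with $F$, so you should note (or prove in a line) that the indiscernibility relation is itself a congruence compatible with $F$ and hence contained in $\Leibniz{\alg{A}}{F}$ --- this is the standard polynomial characterization of the Leibniz congruence and is harmless, but it is the hinge of the whole argument and deserves an explicit citation or verification. The concluding derivation of the filter properties from the rules $p, q \vdash p \wedge q$, $p \vdash p \vee q$, and $\emptyset \vdash \True$ is correct and is properly placed after the algebra has been identified as a lattice.
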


\begin{proposition} \label{prop: reduced models of etl}
  Each reduced model of $\ETL$ is a De Morgan algebra with $F = \{ \True \}$. Conversely, each De Morgan algebra equipped with $F = \{ \True \}$ is a model of $\ETL$ (although it need not be a reduced model).
\end{proposition}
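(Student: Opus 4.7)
The converse direction is immediate: by the preceding proposition, any De Morgan algebra equipped with the principal lattice filter $\{\True\}$ is already a model of $\BD$, so it only remains to verify the rule of disjunctive syllogism $p, \dmneg p \vee q \vdash q$. Given a valuation $v$ with $v(p) = \True$, we have $v(\dmneg p) = \False$ and hence $v(\dmneg p \vee q) = v(q)$, so designation of both premises forces $v(q) = \True$.

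For the forward direction, let $\pair{\alg A}{F}$ be a reduced model of $\ETL$. Since $\ETL$ extends $\BD$, the preceding proposition tells us that $\alg A$ is a De Morgan algebra and $F$ is a lattice filter, so in particular $\True \in F$. The plan is to show that every $a \in F$ is Leibniz-equivalent to $\True$, which by reducedness of the matrix will force $a = \True$ and hence $F = \{\True\}$.

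To compare $a$ and $\True$ under the Leibniz congruence it suffices to prove that $p(a) \in F$ iff $p(\True) \in F$ for every unary polynomial $p(x)$ over $\alg A$. The key reduction is that, after pushing negations inward via the De Morgan laws and expanding in disjunctive normal form, every such polynomial can be brought to the shape
$$p(x) = c_0 \vee (c_1 \wedge x) \vee (c_2 \wedge \dmneg x) \vee (c_3 \wedge x \wedge \dmneg x)$$
with parameters $c_i \in A$, so that $p(\True) = c_0 \vee c_1$. The direction from $p(\True) \in F$ to $p(a) \in F$ should then follow from pure lattice-filter reasoning, using the distributive identity $c_0 \vee (c_1 \wedge a) = (c_0 \vee c_1) \wedge (c_0 \vee a)$: both factors on the right lie in $F$ (the first by assumption, the second because $c_0 \vee a \geq a \in F$), so $p(a)$ dominates an element of $F$.

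The main obstacle, and the only step where $\ETL$ really enters beyond $\BD$, is the converse implication. Here the plan is to exploit that $\dmneg a$ absorbs the last two summands of $p(a)$, so from $p(a) \in F$ one extracts
$$\dmneg a \vee p(a) = (\dmneg a \vee c_0 \vee c_1) \wedge (\dmneg a \vee c_0 \vee a) \in F$$
and hence, by the filter property, $\dmneg a \vee (c_0 \vee c_1) \in F$. Disjunctive syllogism applied to $a \in F$ and this disjunction then delivers $c_0 \vee c_1 = p(\True) \in F$, completing the Leibniz-equivalence and thus the proposition.
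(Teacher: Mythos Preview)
Your proof is correct. The paper itself does not prove this proposition but attributes it to Rivieccio~\cite{rivieccio12}, so there is no ``paper's own proof'' to compare against; your argument via the polynomial characterization of the Leibniz congruence is a clean, self-contained route.

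One small remark on presentation: when you write ``both factors on the right lie in $F$'' in the direction $p(\True)\in F \Rightarrow p(a)\in F$, you are showing that $c_0 \vee (c_1 \wedge a) \in F$, and then using $p(a) \geq c_0 \vee (c_1 \wedge a)$ together with upward closure of $F$; it would be slightly clearer to say this explicitly. Similarly, in the converse direction you pass from $\dmneg a \vee p(a) \in F$ (which holds since $p(a)\in F$ and filters are upward closed) to both conjuncts of its factorization lying in $F$; again this is just upward closure, but worth stating. The logical content is all there.
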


  The following propositions shows that consequence in $\LP$, $\K$, and $\ETL$ may be reduced to consequence in $\BD$. Throughout the paper, by (classical) \emph{tautologies} and \emph{contradictions} we mean the tautologies and contradictions of classical logic.

\begin{proposition} \label{prop: consequence in k} \label{prop: consequence in lp} \label{prop: consequence in etl}
  ~
\begin{enumerate}
\item $\Gamma \vdash_{\LP} \varphi$ if and only if $\Gamma, \tau \vdash_{\BD} \varphi$ for some classical tautology $\tau$.
\item $\Gamma \vdash_{\K} \varphi$ if and only if $\Gamma \vdash_{\BD} \varphi \vee \chi$ for some classical contradiction $\chi$.
\item $\Gamma \vdash_{\ETL} \varphi$ if and only if $\Gamma \vdash_{\BD} \psi$ and $\psi \vdash_{\BD} \dmneg \psi \vee \varphi$ for some formula $\psi$.
\end{enumerate}
\end{proposition}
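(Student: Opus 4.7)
I would handle the three claims separately. Parts (1) and (2) are duals, each following from the known axiomatization of $\LP$ or $\K$ over $\BD$; part (3) is the most substantive, hinging on a semantic computation on $\DMfour$ which is the main obstacle.

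For (1), $\LP$ is axiomatized over $\BD$ by the single axiomatic rule $\emptyset \vdash p \vee \dmneg p$ and is finitary, so any $\LP$-derivation of $\varphi$ from $\Gamma$ rewrites as a $\BD$-derivation from $\Gamma \cup \{ \tau_{1}, \dots, \tau_{n} \}$ where each $\tau_{i}$ is a substitution instance of $p \vee \dmneg p$; setting $\tau := \tau_{1} \wedge \dots \wedge \tau_{n}$, a classical tautology, establishes the forward direction. For the converse it suffices that every classical tautology is an $\LP$-theorem, which I would prove by induction on formula complexity: for any $\LPmatrix$-valuation $v$, the classical valuation $v^{*}(p) := \True$ iff $v(p) \neq \False$ satisfies $v(\varphi) = \False \implies v^{*}(\varphi) = \False$ (the $\dmneg$ step uses $\dmneg \True = \False$ in $\LPmatrix$), so $v(\tau) = \False$ would contradict $\tau$ being a classical tautology.

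For (2), the rule $(p \wedge \dmneg p) \vee q \vdash q$ is equivalent over $\BD$ (via the proof by cases property) to the antiaxiomatic rule $p \wedge \dmneg p \vdash q$, so Lemma~\ref{lemma: consequence in antiaxiomatic extensions} gives $\Gamma \vdash_{\K} \varphi$ iff $\Gamma \vdash_{\BD} \varphi$ or $\Gamma \vdash_{\BD} \psi \wedge \dmneg \psi$ for some $\psi$; either case yields $\Gamma \vdash_{\BD} \varphi \vee \chi$ for a classical contradiction $\chi$. Conversely, a symmetric induction on $\Kmatrix$-valuations $v$ (with the classical $v^{+}(p) := \True$ iff $v(p) \neq \False$ one shows $v(\varphi) = \True \iff v^{+}(\varphi) = \True$ and $v(\varphi) = \False \iff v^{+}(\varphi) = \False$) proves every classical contradiction is a $\K$-antitheorem, so $\chi \vdash_{\K} \varphi$; combined with the proof by cases property of $\K$, this yields $\Gamma \vdash_{\K} \varphi$.

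For (3), use finitariness of $\ETL$ to reduce to finite $\Gamma$ and take $\psi := \bigwedge \Gamma$, making $\Gamma \vdash_{\BD} \psi$ immediate. The crux is the equivalence $\psi \vdash_{\ETL} \varphi \iff \psi \vdash_{\BD} \dmneg \psi \vee \varphi$. The forward direction I would argue semantically on $\DMfour$: given a $\BDmatrix$-valuation $v$ with $v(\psi)$ $\BD$-designated, if $v(\psi) \neq \True$ then $v(\psi)$ is the $\BD$-designated $\dmneg$-fixpoint of $\DMfour$ so $v(\dmneg \psi) = v(\psi)$ is already designated, whereas if $v(\psi) = \True$ then $v(\dmneg \psi) = \False$ and viewing $v$ as an $\ETLmatrix$-valuation the hypothesis forces $v(\varphi) = \True$; either way $v(\dmneg \psi \vee \varphi)$ is $\BD$-designated. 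The converse is purely proof-theoretic: $\psi \vdash_{\BD} \dmneg \psi \vee \varphi$ lifts to $\ETL$, and disjunctive syllogism combined with cut yields $\psi \vdash_{\ETL} \varphi$, hence $\Gamma \vdash_{\ETL} \varphi$.
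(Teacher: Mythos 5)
Parts (1) and (3) of your proposal are essentially correct, and in fact reconstruct the arguments that the paper simply cites (Pynko/Font for the axiomatization of $\LP$, Pietz \& Rivieccio for $\ETL$). The one small repair needed in (1) is that the induction must carry the two hypotheses $v(\varphi)=\False\Rightarrow v^{*}(\varphi)=\False$ and $v(\varphi)=\True\Rightarrow v^{*}(\varphi)=\True$ simultaneously, since the negation step of the first claim needs the second.

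Part (2), however, contains a genuine error. The rule $(p\wedge\dmneg p)\vee q\vdash q$ is \emph{not} equivalent over $\BD$ to the explosive rule $p\wedge\dmneg p\vdash q$: the latter axiomatizes $\ECQ$, and the paper's whole picture rests on $\ECQ\logleq\ETL<\K$. Concretely, $\ETLmatrix$ is a model of $\ECQ$, yet the valuation sending $p$ to the designated fixpoint $\mathsf{b}$ and $q$ to the undesignated fixpoint $\mathsf{n}$ of $\DMfour$ gives $v((p\wedge\dmneg p)\vee q)=\mathsf{b}\vee\mathsf{n}=\True$ while $v(q)=\mathsf{n}$, so $(p\wedge\dmneg p)\vee q\vdash q$ fails there. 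The proof by cases property is a property of a \emph{fixed} consequence relation ($\BD$ and $\K$ have it), not a principle that transfers to the extension of $\BD$ by $p\wedge\dmneg p\vdash q$ — and indeed the paper shows that $\ECQ$ lacks it. Consequently the characterization you extract from Lemma~\ref{lemma: consequence in antiaxiomatic extensions} describes $\ECQ$, not $\K$, and your left-to-right direction of (2) is unproven: $(p\wedge\dmneg p)\vee q\vdash_{\K}q$, but $(p\wedge\dmneg p)\vee q\nvdash_{\BD}q$ and $(p\wedge\dmneg p)\vee q\nvdash_{\BD}\psi\wedge\dmneg\psi$ for any $\psi$. (Your right-to-left direction does survive, although the biconditionals $v(\varphi)=\True\iff v^{+}(\varphi)=\True$ fail already for atoms with $v(p)=\mathsf{n}$; only the left-to-right implications hold, and only those are needed.) The repair is the paper's route: deduce (2) from (1) via the contraposition relation of Fact~\ref{fact: contraposition}. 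For finite $\Gamma$ with $\gamma:=\bigwedge\Gamma$, $\gamma\vdash_{\K}\varphi$ implies $\dmneg\varphi\vdash_{\LP}\dmneg\gamma$, hence $\dmneg\varphi\wedge\tau\vdash_{\BD}\dmneg\gamma$ for some classical tautology $\tau$ by (1), hence by contraposition in $\BD$ we get $\gamma\vdash_{\BD}\varphi\vee\dmneg\tau$, where $\dmneg\tau$ is a classical contradiction; the converse chain runs the same steps backwards.
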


\begin{proof}
  The claim for $\LP$ is equivalent to the fact that $\LP$ is axiomatized by the law of the excluded middle relative to $\BD$. (For a direct semantic proof of the equivalence, see~\cite[Proposition~3.5]{prenosil18thesis}.) The claim for $\K$ then follows from the contraposition relation between $\K$ and $\LP$. Finally, the claim for $\ETL$ was proved by Pietz \& Rivieccio~\cite[Lemma~3.2]{pietz+rivieccio13}.
\end{proof}

  We define conjunctive and disjunctive normal forms of formulas of $\BD$ as in classical logic: a \emph{literal} is an atom or a negated atom, a \emph{conjunctive (disjunctive) clause} is a conjunction (disjunction) of literals, and a formula is in \emph{conjunctive (disjunctive) normal form} if it is a conjunction of disjunctive clauses (a disjunction of conjunctive clauses). The empty conjunction (dis\-junction) is identified with $\True$~($\False$). A clause is \emph{positive} if it does not contain negated atoms.

  Each formula is equivalent in $\BD$ to a formula in conjunctive normal form, and therefore also to a formula in disjunctive normal form (see~\cite{font97}). More precisely, each formula is equivalent to $\True$ or to $\False$ or to a non-empty disjunction (conjunction) of non-empty conjunctions (disjunctions) of literals.

\begin{proposition} \label{prop: consequence in bd}
  Let $\varphi$ be a disjunctive clause. Then $\Gamma \vdash_{\BD} \varphi$ if and only if $\gamma \vdash_{\BD} \varphi$ for some $\gamma \in \Gamma$.
\end{proposition}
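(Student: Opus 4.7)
The plan is to reduce to the case where $\Gamma$ consists of disjunctive clauses, and then construct an explicit valuation on $\BDmatrix$ witnessing that some single $\gamma \in \Gamma$ must already entail $\varphi$ whenever $\Gamma$ does.

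\textbf{Reduction to disjunctive clauses.} Each formula $\gamma \in \Gamma$ is $\BD$-interderivable with the finite set $\{D_{\gamma,1},\dots,D_{\gamma,k_{\gamma}}\}$ of disjunctive clauses appearing in its conjunctive normal form. Replacing every $\gamma \in \Gamma$ by these clauses produces a set $\Gamma'$ of disjunctive clauses with $\Gamma \vdash_{\BD} \varphi$ if and only if $\Gamma' \vdash_{\BD} \varphi$. Moreover, if some $D_{\gamma,j} \in \Gamma'$ satisfies $D_{\gamma,j} \vdash_{\BD} \varphi$, then by cut (using $\gamma \vdash_{\BD} D_{\gamma,j}$) we obtain $\gamma \vdash_{\BD} \varphi$. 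Hence it suffices to prove the proposition under the assumption that every element of $\Gamma$ is already a disjunctive clause.

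\textbf{Reformulation via the proof-by-cases property.} Write $\varphi = l_{1} \vee \dots \vee l_{r}$ and set $L = \{l_{1},\dots,l_{r}\}$. Suppose for contradiction that $\Gamma \vdash_{\BD} \varphi$ but $\gamma \nvdash_{\BD} \varphi$ for every $\gamma \in \Gamma$. Since each $\gamma = m_{1} \vee \dots \vee m_{s}$ is a disjunctive clause, Fact~\ref{fact: pcp} gives a literal $m(\gamma)$ of $\gamma$ with $m(\gamma) \nvdash_{\BD} \varphi$; in particular $m(\gamma) \notin L$, since otherwise $m(\gamma)$ would be a disjunct of $\varphi$ and would trivially entail it.

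\textbf{The separating valuation.} Denote the four elements of $\BDmatrix$ by $t, b, n, f$ (True, Both, Neither, False), with designated set $\{t, b\}$ and $\dmneg$ fixing $n$ and $b$ while swapping $t$ and $f$. Define a valuation $v$ on $\BDmatrix$ atom-by-atom: put $v(p) = b$ if neither $p$ nor $\dmneg p$ lies in $L$; $v(p) = t$ if $\dmneg p \in L$ and $p \notin L$; $v(p) = f$ if $p \in L$ and $\dmneg p \notin L$; and $v(p) = n$ if both $p, \dmneg p \in L$. A short case check confirms that $v(l_{i}) \in \{f, n\}$ for every $l_{i} \in L$, so $v(\varphi)$ is undesignated; and $v(m) \in \{t, b\}$ for every literal $m \notin L$, so $v(m(\gamma))$ is designated and hence $v(\gamma)$ is designated for every $\gamma \in \Gamma$. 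This contradicts $\Gamma \vdash_{\BD} \varphi$.

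The main subtlety is the four-case design of $v$: every literal in $L$ must be undesignated while every literal outside $L$ must be designated, and it is precisely the ambiguity of the De~Morgan fixpoints $n$ and $b$ that allows both requirements to be met simultaneously. No finitarity assumption on $\Gamma$ is needed, since $\varphi$ is a single finite formula and $v$ is defined uniformly on all atoms that do not appear in $\varphi$.
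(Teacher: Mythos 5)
Your proof is correct and takes essentially the same route as the paper's: reduce $\Gamma$ to disjunctive clauses, observe that each non-entailing clause contains a literal not occurring in $\varphi$, and then use the unique valuation on $\BDmatrix$ that assigns an undesignated value to every literal of $\varphi$ and a designated value to every other literal. The only difference is that you spell this valuation out case by case (via the fixpoints $\mathsf{n}$ and $\mathsf{b}$), which the paper leaves implicit.
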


\begin{proof}
  Since each formula is equivalent in $\BD$ to a conjunction of disjunctive clauses, we may assume without loss of generality that each formula of $\Gamma$ is a disjunctive clause. If $\gamma \nvdash_{\BD} \varphi$ for each $\gamma \in \Gamma$, then each $\gamma \in \Gamma$ contains a literal which does not occur in $\varphi$. The unique valuation on $\BDmatrix$ which assigns an undesignated value to every literal which occurs in $\varphi$ and a designated value to every other literal then witnesses that $\Gamma \nvdash_{\BD} \varphi$.
\end{proof}

  Unlike in classical logic, the equivalent conjunctive and disjunctive normal form of a formula is essentially unique in $\BD$ (see~\cite[Theorem~3.15]{prenosil18thesis}).

\section{Completeness theorems and explosive parts}
\label{sec:completeness}

\label{sec: completeness}

  In this section, we prove several new completeness theorems for super-Belnap logics. The explosive part operator $\Exp$ turns out to be a useful tool for this purpose.

  Let us first introduce two related sequences of super-Belnap logics. The logic $\ECQ_{n}$ ($\ETL_{n}$) for $n \geq 1$ extends $\BD$ ($\ETL$) by the explosive rule
\begin{equation*}
  (p_{1} \wedge \dmneg p_{1}) \vee \dots \vee (p_{n} \wedge \dmneg p_{n}) \vdash \emptyset.
\end{equation*}
  We use $\ECQ$ as a synonym for $\ECQ_{1}$. Clearly $\ECQ \logleq \ETL$, so $\ETL_{1} = \ETL$. These logics are ordered as follows:
\begin{align*}
  \ECQ_{n} & \logleq \ETL_{n}, & \ECQ_{n} & \logleq \ECQ_{n+1}, & \ETL_{n} & \logleq \ETL_{n+1}.
\end{align*}
  The joins (unions) of these sequences of logics will be denoted $\ECQ_{\omega}$ and $\ETL_{\omega}$:
\begin{align*}
 \ECQ_{\omega} & \assign \bigcup_{n \geq 1} \ECQ_{n}, & \ETL_{\omega} & \assign \bigcup_{n \geq 1} \ETL_{n}.
\end{align*}

  The logics $\ETL_{n}$ and their union $\ETL_{\omega}$ were first introduced by Rivieccio~\cite{rivieccio12} under the names $\logic{B}_{n}$ and $\logic{B}_{\omega}$. Rivieccio provided a completeness theorem for $\ETL_{\omega}$ and proved that $\ETL_{n} <\ETL_{n+1}$. It follows that $\ECQ_{n} < \ECQ_{n+1}$ and that the logics $\ECQ_{\omega}$ and $\ETL_{\omega}$ are not finitely axiomatizable. The inequality $\ECQ_{n} < \ETL_{n}$ also holds: if $p, \dmneg p \vee q \vdash q$ were valid in $\ECQ_{n}$, then we would have either $p, \dmneg p \vee q \vdash_{\BD} q$ or $p, \dmneg p \vee q \vdash_{\ECQ_{n}} \emptyset$. But $\ECQ_{n} \logleq \CL$ and $p, \dmneg p \vee q \nvdash_{\CL} \emptyset$.

  We now determine the explosive parts of $\LP$, $\ETL$, and $\CL$. These logics are finitary, therefore it only suffices to consider antitheorems of the form $\{ \gamma \}$.

\begin{proposition}
  $\Exp_{\BD} \LP = \BD$.
\end{proposition}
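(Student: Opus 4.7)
The plan is to unfold the definition of $\Exp_{\BD} \LP$: by the definition given in Section~\ref{sec: explosive}, the rule $\Gamma \vdash \varphi$ holds in $\Exp_{\BD} \LP$ if and only if either $\Gamma \vdash_{\BD} \varphi$ or $\Gamma$ is an antitheorem of $\LP$. Since $\BD \logleq \Exp_{\BD} \LP$ always holds, proving equality reduces to showing that $\LP$ has no antitheorems at all, so that the second disjunct above is vacuous.

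To rule out antitheorems of $\LP$, I~would work directly with the defining matrix $\LPmatrix$, which is a non-trivial model of $\LP$. The three values of $\LPmatrix$ include the unique fixpoint of De~Morgan negation (corresponding to the value \emph{Both}), which lies in the designated filter $\{1, \tfrac{1}{2}\}$. Consider the valuation $v$ on $\LPmatrix$ sending every propositional atom to this fixpoint. A~straightforward induction on formula complexity, using that $\tfrac{1}{2}$ is closed under $\wedge$, $\vee$, and $\dmneg$, yields $v(\varphi) = \tfrac{1}{2}$ for every formula $\varphi$. Hence $v$ designates every set $\Gamma$ of formulas, so no $\Gamma$ can satisfy $\Gamma \vdash_{\LP} \emptyset$.

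Combining the two steps: for every $\Gamma$ and $\varphi$, the condition defining $\Exp_{\BD} \LP$ collapses to $\Gamma \vdash_{\BD} \varphi$, giving $\Exp_{\BD} \LP = \BD$. There is no real obstacle here; the only thing to be careful about is to appeal to an actual non-trivial model of $\LP$ (which $\LPmatrix$ provides) rather than working with the class of all models abstractly.
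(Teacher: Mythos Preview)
There is a genuine gap: the signature of $\BD$ in this paper includes the constants $\True$ and $\False$. Your inductive claim that $v(\varphi) = \tfrac12$ for every formula $\varphi$ fails at the base case for constants, since $v(\False) = \False$ is the bottom element of $\LPmatrix$, which is \emph{not} designated. Consequently your conclusion that $\LP$ has no antitheorems is simply false: for instance $\{\False\}$ and $\{p \wedge \False\}$ are antitheorems of $\LP$. The statement $\Exp_{\BD} \LP = \BD$ remains true only because these are already antitheorems of $\BD$; the correct target is that $\LP$ and $\BD$ have the \emph{same} antitheorems, not that $\LP$ has none.

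Your semantic idea can be repaired. Using the fact (recorded in Section~\ref{sec: different frameworks}) that every formula is $\BD$-equivalent to $\True$, to $\False$, or to a constant-free formula, one checks the three cases: if $\varphi \dashv\vdash_{\BD} \False$ then $\{\varphi\}$ is already a $\BD$-antitheorem; if $\varphi \dashv\vdash_{\BD} \True$ then $\{\varphi\}$ is not an $\LP$-antitheorem; and if $\varphi$ is $\BD$-equivalent to a constant-free formula, your fixpoint valuation applies and again $\{\varphi\}$ is not an $\LP$-antitheorem. This is a legitimate alternative to the paper's argument, which instead invokes Proposition~\ref{prop: consequence in lp} (reducing $\LP$-consequence to $\BD$-consequence from a tautology) together with Proposition~\ref{prop: consequence in bd} to show directly that $\gamma \vdash_{\LP} \emptyset$ implies $\gamma \vdash_{\BD} \emptyset$.
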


\begin{proof}
  If $\gamma \vdash_{\LP} \emptyset$, then $\gamma, \tau \vdash_{\BD} \emptyset$ for some classical tautology~$\tau$ by Proposition~\ref{prop: consequence in lp}, so either $\gamma \vdash_{\BD} \emptyset$ or $\tau \vdash_{\BD} \emptyset$ by Proposition~\ref{prop: consequence in bd}. But $\tau \nvdash_{\BD} \emptyset$ because $\tau \nvdash_{\CL} \emptyset$.
\end{proof}

\begin{proposition}
  $\Exp_{\BD} \ETL = \ECQ$.
\end{proposition}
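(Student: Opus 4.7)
The plan is to prove the two inequalities separately. For $\ECQ \leq \Exp_{\BD} \ETL$, I observe that $\ECQ$ is by construction an explosive extension of $\BD$, so in view of the characterization of $\Exp_{\BD} \ETL$ as the largest explosive extension of $\BD$ below $\ETL$, it suffices to check $\ECQ \leq \ETL$. This reduces to verifying that $p \wedge \dmneg p$ is an antitheorem of $\ETL$, which is immediate from the semantics of $\ETLmatrix$: the only designated value is $\True$, but no valuation can send both $p$ and $\dmneg p$ to $\True$, so the premise is never designated and the rule holds vacuously.

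For the reverse inclusion $\Exp_{\BD} \ETL \leq \ECQ$, it suffices to show that every antitheorem of $\ETL$ is already an antitheorem of $\ECQ$. Using the finitarity of $\ETL$ together with the fact that $\BD$ has a well-behaved meet (Fact~\ref{fact: semilattice based}), I reduce to a single-formula antitheorem $\gamma \vdash_{\ETL} \emptyset$, equivalently $\gamma \vdash_{\ETL} q$ for a variable $q$ not occurring in $\gamma$. Applying Proposition~\ref{prop: consequence in etl}(3) yields a formula $\psi$ satisfying $\gamma \vdash_{\BD} \psi$ and $\psi \vdash_{\BD} \dmneg \psi \vee q$.

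To convert this into the explosive rule for $\ECQ$, I apply the substitution $q \mapsto \False$. Since $q$ does not occur in $\gamma$, the first consequence becomes $\gamma \vdash_{\BD} \chi$, where $\chi := \psi[q/\False]$; the second, after simplifying $\dmneg \chi \vee \False$ to $\dmneg \chi$ in $\BD$, becomes $\chi \vdash_{\BD} \dmneg \chi$. Combining, $\gamma \vdash_{\BD} \chi \wedge \dmneg \chi$, and then the explosive rule of $\ECQ$ (instantiated with $p \mapsto \chi$) gives $\gamma \vdash_{\ECQ} \emptyset$.

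The only delicate step is the elimination of the fresh variable $q$ on the right-hand side of $\psi \vdash_{\BD} \dmneg \psi \vee q$, which I handle by the substitution $q \mapsto \False$ available in the present signature; alternatively, one could give a direct semantic argument on $\BDmatrix$ after first renaming so that $q$ does not appear in $\psi$, which is in the spirit of Proposition~\ref{prop: consequence in bd}. Either way, the essential point is that $\ETL$-antitheorems are witnessed in $\BD$ by formulas that are forced to entail their own negation, which is exactly the content of the ECQ rule.
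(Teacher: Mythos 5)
Your proof is correct and follows essentially the same route as the paper's: one inclusion is just $\ECQ \logleq \ETL$ plus the fact that $\ECQ$ is explosive, and the other extracts, via Proposition~\ref{prop: consequence in etl}, a formula that $\BD$-entails its own negation from any $\ETL$-antitheorem $\gamma$. The only difference is cosmetic: the paper instantiates the conclusion as $\dmneg \gamma$ and derives $\gamma \vdash_{\BD} \dmneg \gamma$ directly, whereas you instantiate a fresh variable $q$ and eliminate it with the substitution $q \mapsto \False$; both are sound.
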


\begin{proof}
  If $\gamma \vdash_{\ETL} \emptyset$, then $\gamma \vdash_{\ETL} \dmneg \gamma$, so $\gamma \vdash_{\BD} \dmneg \gamma$ by Proposition~\ref{prop: consequence in etl} and $\gamma \vdash_{\ECQ} \emptyset$. Thus $\Exp_{\BD} \ETL \logleq \ECQ$. Conversely, $\ECQ \leq \ETL$.
\end{proof}

\begin{proposition}
  $\Exp_{\BD} \CL = \ECQ_{\omega}$.
\end{proposition}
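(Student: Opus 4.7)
The plan is to prove $\Exp_{\BD} \CL = \ECQ_{\omega}$ by two inclusions, mirroring the style of the two preceding propositions. Since $\CL$ is finitary, it suffices to consider singleton antitheorems $\{\gamma\}$. The easy direction $\ECQ_{\omega} \leq \Exp_{\BD} \CL$ amounts to noting that each $\ECQ_{n}$ is, by definition, an explosive extension of $\BD$, and that $\ECQ_{n} \leq \CL$ because the antecedent $(p_{1} \wedge \dmneg p_{1}) \vee \dots \vee (p_{n} \wedge \dmneg p_{n})$ is a classical contradiction. Since $\Exp_{\BD} \CL$ is the largest explosive extension of $\BD$ below $\CL$, each $\ECQ_{n}$ sits below it, hence so does their union $\ECQ_{\omega}$.

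For the nontrivial direction, I would reduce an arbitrary $\CL$-antitheorem to disjunctive normal form in $\BD$. Suppose $\gamma \vdash_{\CL} \emptyset$. By the normal form fact recalled just before Proposition~\ref{prop: consequence in bd}, $\gamma$ is $\BD$-equivalent either to $\False$ (in which case $\gamma \vdash_{\BD} \emptyset$, and we are done), to $\True$ (which is excluded since $\gamma$ is a classical contradiction), or to a disjunction $C_{1} \vee \dots \vee C_{m}$ of non-empty conjunctions of literals. As $\BD \leq \CL$, each $C_{j}$ is itself a classical contradiction, and a conjunction of literals is classically contradictory only if it contains some atom $p_{j}$ together with its negation $\dmneg p_{j}$. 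Hence $C_{j} \vdash_{\BD} p_{j} \wedge \dmneg p_{j}$ for each $j$.

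Applying the proof by cases property of $\BD$ (Fact~\ref{fact: pcp}), I would then conclude
\begin{equation*}
  \gamma \vdash_{\BD} (p_{1} \wedge \dmneg p_{1}) \vee \dots \vee (p_{m} \wedge \dmneg p_{m}).
\end{equation*}
The right-hand side is a substitution instance of the antecedent of the defining explosive rule of $\ECQ_{m}$ (repetitions among the $p_{j}$'s are harmless, since such repetitions arise precisely from a non-injective substitution applied to the rule with distinct variables). Therefore $\gamma \vdash_{\ECQ_{m}} \emptyset$, and so $\gamma \vdash_{\ECQ_{\omega}} \emptyset$, completing the inclusion $\Exp_{\BD} \CL \leq \ECQ_{\omega}$.

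No genuine obstacle is anticipated; the argument is essentially bookkeeping once one uses DNF in $\BD$. The only subtle point worth double-checking is that the proof by cases property really lets one combine the $m$ separate $\BD$-consequences $C_{j} \vdash_{\BD} p_{j} \wedge \dmneg p_{j}$ into a single consequence from $C_{1} \vee \dots \vee C_{m}$, but this is an immediate iterated application of Fact~\ref{fact: pcp}.
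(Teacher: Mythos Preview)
Your proposal is correct and follows essentially the same approach as the paper: reduce $\gamma$ to a disjunction of conjunctive clauses in $\BD$, observe that each clause, being a classical contradiction, must contain a complementary pair of literals, and conclude via the proof by cases property that $\gamma \vdash_{\BD} (p_{1}\wedge\dmneg p_{1})\vee\dots\vee(p_{m}\wedge\dmneg p_{m})$. The only cosmetic difference is that you treat the degenerate cases $\gamma\equiv\True$ and $\gamma\equiv\False$ explicitly, which the paper leaves implicit.
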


\begin{proof}
  The inclusion $\ECQ_{\omega} \logleq \Exp_{\BD} \CL$ is clear. Conversely, suppose that $\gamma \vdash_{\CL} \emptyset$. Let $\gamma_{1} \vee \dots \vee \gamma_{n}$ be a disjunction of conjunctive clauses which is equivalent $\gamma$ in $\BD$. Then $\gamma_{i} \vdash_{\CL} \emptyset$ for each~$\gamma_{i}$ by the proof by cases property. It~follows that $\gamma_{i}$ is equivalent in $\BD$ to $p_{i} \wedge \dmneg p_{i} \wedge \varphi_{i}$ for some atom $p_{i}$ and some formula $\varphi_{i}$. Thus $\gamma \vdash_{\BD} (p_{1} \wedge \dmneg p_{1}) \vee \dots \vee (p_{n} \wedge \dmneg p_{n})$ and $\gamma \vdash_{\ECQ_{\omega}} \emptyset$.
\end{proof}

  The following lemma will help us identify classical contradictions. Throughout the paper, we take
\begin{align*}
  \chi_{n} & \assign (p_{1} \wedge \dmneg p_{1}) \vee \dots \vee (p_{n} \wedge \dmneg p_{n}).
\end{align*}

\begin{lemma}
  A formula $\chi$ is a classical contradiction if and only if there is some substitution $\sigma$ such that $\chi \vdash_{\BD} \sigma(\chi_{n})$.
\end{lemma}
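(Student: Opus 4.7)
The plan is to handle the two directions separately. The right-to-left implication is immediate: the formula $\sigma(\chi_{n})$ is itself a classical contradiction, since each of its disjuncts $\sigma(p_{i}) \wedge \dmneg \sigma(p_{i})$ is classically unsatisfiable. Combined with $\BD \logleq \CL$, the assumption $\chi \vdash_{\BD} \sigma(\chi_{n})$ yields $\chi \vdash_{\CL} \sigma(\chi_{n})$, hence $\chi \vdash_{\CL} \emptyset$, i.e., $\chi$ is a classical contradiction.

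For the left-to-right direction I would essentially reuse the argument that appears in the proof of $\Exp_{\BD} \CL = \ECQ_{\omega}$ just above. Given a classical contradiction $\chi$, I would first take a $\BD$-equivalent disjunctive normal form $\gamma_{1} \vee \dots \vee \gamma_{n}$ of $\chi$, using the normal form result recorded before Proposition~\ref{prop: consequence in bd}. The proof by cases property of $\CL$ (Fact~\ref{fact: pcp}) then implies that each conjunctive clause $\gamma_{i}$ is also a classical contradiction. Since $\gamma_{i}$ is a conjunction of literals that fails in every Boolean valuation, it must contain a complementary pair of literals in some atom $q_{i}$, and therefore $\gamma_{i} \vdash_{\BD} q_{i} \wedge \dmneg q_{i}$. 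Joining these inequalities via monotonicity and the semilattice characterization of $\vdash_{\BD}$ on finite sets (Fact~\ref{fact: semilattice based}), I obtain
\begin{equation*}
  \chi \vdash_{\BD} (q_{1} \wedge \dmneg q_{1}) \vee \dots \vee (q_{n} \wedge \dmneg q_{n}),
\end{equation*}
and it suffices to take $\sigma$ to be any substitution sending each fixed atom $p_{i}$ appearing in $\chi_{n}$ to $q_{i}$; then the right-hand side is exactly $\sigma(\chi_{n})$.

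I do not anticipate any real obstacle. The only point requiring a tiny extra remark is the degenerate case in which $\chi$ is $\BD$-equivalent to $\False$ (so the disjunctive normal form would be empty): there the claim still holds trivially because $\False$ is always undesignated in every model of $\BD$, whence $\False \vdash_{\BD} \sigma(\chi_{1})$ vacuously for any $\sigma$. Thus the only genuine content of the argument is the matching of complementary literals in each conjunctive clause, which is exactly what the proof by cases property reduces the problem to.
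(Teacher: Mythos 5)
Your proof is correct and is essentially the paper's argument: the paper disposes of the lemma in one line by citing $\Exp_{\BD} \CL = \ECQ_{\omega}$ together with the description of consequence in explosive extensions (Lemma~\ref{lemma: consequence in antiaxiomatic extensions}), and the substance of that citation is exactly the disjunctive-normal-form/proof-by-cases/complementary-literal argument you inline here. Your handling of the degenerate case where $\chi$ is $\BD$-equivalent to $\False$ is also fine, though ``vacuously'' is not quite the word --- the point is simply that $\False$ is the bottom element, so $\False \vdash_{\BD} \varphi$ for every $\varphi$.
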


\begin{proof}
  This follows from the fact that $\Exp_{\BD} \CL = \ECQ_{\omega}$ by Lemma~\ref{lemma: consequence in antiaxiomatic extensions}.
\end{proof}

\begin{proposition}
  $(\LP \cap \ECQ_{\omega}) \vee \ECQ = \ECQ_{\omega}$.
\end{proposition}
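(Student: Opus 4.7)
The inclusion $(\LP \cap \ECQ_\omega) \vee \ECQ \leq \ECQ_\omega$ is immediate, since both joinands lie below $\ECQ_\omega$ (recall $\ECQ = \ECQ_1 \leq \ECQ_\omega$). For the reverse inclusion, I would take an arbitrary rule $\Gamma \vdash_{\ECQ_\omega} \varphi$ and apply Lemma~\ref{lemma: consequence in antiaxiomatic extensions}: since $\ECQ_\omega$ is the extension of $\BD$ by the explosive rules $\chi_n \vdash \emptyset$ for $n \geq 1$, either $\Gamma \vdash_{\BD} \varphi$ (in which case the conclusion is trivial, as $\BD \leq \LP \cap \ECQ_\omega$), or there exist $n \geq 1$ and a substitution $\sigma$ with $\Gamma \vdash_{\BD} \sigma(\chi_n)$.

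In the non-trivial case I set $\psi \assign \sigma(\chi_n)$. The key observation is that $\dmneg \chi_n$ is equivalent in $\BD$ (via the De~Morgan laws) to $\bigwedge_{i=1}^{n} (p_i \vee \dmneg p_i)$, which is a theorem of $\LP$; hence $\dmneg \psi = \sigma(\dmneg \chi_n)$ is likewise an $\LP$-theorem, so $\Gamma \vdash_{\LP} \dmneg \psi$. At the same time, $\Gamma \vdash_{\BD} \psi$ together with $\chi_n \vdash_{\ECQ_\omega} \emptyset$ shows that $\Gamma$ is an antitheorem of $\ECQ_\omega$; in particular $\Gamma \vdash_{\ECQ_\omega} \dmneg \psi$. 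Combined with the trivial $\Gamma \vdash_{\LP} \psi$ and $\Gamma \vdash_{\ECQ_\omega} \psi$ (both following from $\Gamma \vdash_{\BD} \psi$), this yields $\Gamma \vdash_{\LP \cap \ECQ_\omega} \psi$ and $\Gamma \vdash_{\LP \cap \ECQ_\omega} \dmneg \psi$.

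A second application of Lemma~\ref{lemma: consequence in antiaxiomatic extensions}, this time to $(\LP \cap \ECQ_\omega) \vee \ECQ$ viewed as the extension of $\LP \cap \ECQ_\omega$ by the single explosive rule $p, \dmneg p \vdash \emptyset$ (which axiomatizes $\ECQ$ over $\BD$), then finishes the argument: the substitution $p \mapsto \psi$ witnesses that both premises of this rule are derivable from $\Gamma$ in $\LP \cap \ECQ_\omega$, so $\Gamma \vdash_{(\LP \cap \ECQ_\omega) \vee \ECQ} \varphi$. The only mildly delicate point is the choice of witness $\psi = \sigma(\chi_n)$, chosen so that $\dmneg \psi$ comes for free in $\LP$ (as an instance of a conjunction of excluded middles) while $\Gamma$ derives $\dmneg \psi$ in $\ECQ_\omega$ by explosion from being an $\ECQ_\omega$-antitheorem. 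Everything else is routine unfolding of Lemma~\ref{lemma: consequence in antiaxiomatic extensions}.
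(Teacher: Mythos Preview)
Your argument is correct, but it proceeds quite differently from the paper's proof. The paper first establishes the auxiliary inequality $\ECQ_{\omega} \logleq \LP \vee \ECQ$ by exhibiting a specific formula $\varphi = (p \vee q) \wedge (\dmneg p \vee \dmneg q)$ such that $(p \wedge \dmneg p) \vee (q \wedge \dmneg q) \vee r \vdash_{\LP} (\varphi \wedge \dmneg \varphi) \vee r$, and then finishes with a semantic argument: any model of $(\LP \cap \ECQ_{\omega}) \vee \ECQ$ is a model of $\ECQ$ and, by Proposition~\ref{prop: mod cap lexp}, a model of either $\LP$ or $\ECQ_{\omega}$; in the first case one invokes $\ECQ_{\omega} \logleq \LP \vee \ECQ$ to conclude. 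Your route, by contrast, is entirely syntactic: two applications of Lemma~\ref{lemma: consequence in antiaxiomatic extensions}, hinged on the simple observation that $\dmneg \chi_{n}$ is (equivalent in $\BD$ to) a conjunction of excluded middles and hence a theorem of $\LP$. This is arguably more direct and avoids both the auxiliary formula and the model-theoretic detour; the paper's approach, on the other hand, yields the useful byproduct $\ECQ_{\omega} \logleq \LP \vee \ECQ$, which it exploits again immediately afterwards.
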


\begin{proof}
  We first prove that $\ECQ_{\omega} \logleq \LP \vee \ECQ$. It suffices to show that the rule $(p \wedge \dmneg p) \vee (q \wedge \dmneg q) \vee r \vdash (\varphi \wedge \dmneg \varphi) \vee r$ is derivable in $\LP$ for some $\varphi$. In~particular, let $\varphi = (p \vee q) \wedge (\dmneg p \vee \dmneg q)$. Then $(\varphi \wedge \dmneg \varphi) \vee r$ is equivalent to the conjunction of the formulas $p \vee q \vee r$, $p \vee \dmneg q \vee r$, $\dmneg p \vee q \vee r$, $\dmneg p \vee \dmneg q \vee r$, $p \vee \dmneg p \vee r$, $q \vee \dmneg q \vee r$. But the last two formulas are theorems of $\LP$ and the rest are derivable from $(p \wedge \dmneg p) \vee (q \wedge \dmneg q)$ in $\BD$.

  It remains to prove that $\ECQ_{\omega} \logleq (\LP \cap \ECQ_{\omega}) \vee \ECQ$. Consider a model $\matr{A}$ of $(\LP \cap \ECQ_{\omega}) \vee \ECQ$. Then $\matr{A}$ is a model of $\ECQ$, as well as a model of either $\LP$ or $\ECQ_{\omega}$ by Proposition~\ref{prop: mod cap lexp}. In the former case, $\matr{A}$ is still a model of $\ECQ_{\omega}$ because $\ECQ_{\omega} \logleq \LP \vee \ECQ$.
\end{proof}

  Cashing in our general observations about the explosive part operator from Section~\ref{sec: explosive}, we can immediately infer that
\begin{align*}
  \LP \vee \ECQ = \LP \vee \ECQ_{\omega} = \LP \vee \Exp_{\BD} \CL = \LP \cup \Exp_{\BD} \CL = \Exp_{\LP} \CL.
\end{align*}
  Similarly, $\LP \cap \ECQ_{\omega} \logleq \KO$ ($\LPmatrix$ is a model of $\LP$ and $\Kmatrix$ of $\ECQ_{\omega}$), therefore
\begin{align*}
  \KO \vee \ECQ = \KO \vee \ECQ_{\omega} = \KO \vee \Exp_{\BD} \CL = \KO \cup \Exp_{\BD} \CL = \Exp_{\KO} \CL.
\end{align*}
  Now recall how a completeness theorem for $\Exp_{\logic{B}} \logic{L}$ is obtained from completeness theorems for $\logic{L} = \Log \matr{A}$ and $\logic{B} = \Log \matr{B}$ if $\logic{L}$ is an extension of $\logic{B}$:
\begin{align*}
  \Exp_{\logic{B}} \logic{L} = \Log \matr{A} \times \matr{B}.
\end{align*}
  We immediately obtain the following batch of completeness theorems. The completeness theorem for $\LP \vee \ECQ$ was already proved by Pynko~\cite{pynko00}.\footnote{A completeness theorem for $\ETL_{\omega}$ was already proved by Rivieccio~\cite{rivieccio12} with respect to the slightly more complicated matrix $\ETLmatrix \times \Kmatrix$. This is no contra\-diction: the logic $\ETL_{\omega}$ is complete with respect to any matrix of the form $\matr{A} \times \matr{B}$ such that $\Log \matr{A} \logleq \ETL_{\omega} \logleq \Log \matr{B} \logleq \CL$. Observe that the algebraic reducts of $\CLmatrix \times \ETLmatrix$ and $\Kmatrix \times \ETLmatrix$ do not generate the same quasivariety. In particular, the algebraic reduct of $\ETLmatrix \times \CLmatrix$ satisfies the quasiequation $x \approx \dmneg x \implies x \approx y$, while the algebraic reduct of $\ETLmatrix \times \Kmatrix$ does not.}

\pagebreak

\begin{proposition}
  $\ECQ = \Log \ETLmatrix \times \BDmatrix$.
\end{proposition}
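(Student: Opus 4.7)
The plan is to combine two results already established in the excerpt. First, the proposition $\Exp_{\BD} \ETL = \ECQ$ proved earlier identifies $\ECQ$ as the explosive part of $\ETL$ relative to $\BD$. Second, the corollary following Proposition~\ref{prop: log of product} states that whenever $\logic{B} = \Log \matr{B}$ and $\logic{L} = \Log \matr{A}$ with $\logic{L}$ an extension of $\logic{B}$, one has $\Exp_{\logic{B}} \logic{L} = \Log (\matr{A} \times \matr{B})$.

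Instantiating this corollary with $\matr{B} \assign \BDmatrix$ (so that $\logic{B} = \BD$ by the semantic definition of Belnap--Dunn logic) and $\matr{A} \assign \ETLmatrix$ (so that $\logic{L} = \ETL$ by the definition of Exactly True Logic), and noting that $\ETL$ is indeed an extension of $\BD$, we obtain
\begin{align*}
  \Exp_{\BD} \ETL = \Log (\ETLmatrix \times \BDmatrix).
\end{align*}
Combining this with $\Exp_{\BD} \ETL = \ECQ$ yields the desired equality.

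There is essentially no obstacle: both ingredients have been proved, and the statement is a direct substitution into the general formula for the logic of a product of matrices in terms of explosive parts. The only point worth checking is that the hypotheses of the corollary are met, namely that $\BDmatrix$ and $\ETLmatrix$ are non-trivial models of $\BD$, which is immediate from Figure~\ref{fig: matrices} (both have a proper non-empty filter of designated values, and $\ETL$ extends $\BD$).
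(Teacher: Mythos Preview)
Your proof is correct and is exactly the argument the paper intends: the proposition is listed immediately after the general formula $\Exp_{\logic{B}} \logic{L} = \Log(\matr{A} \times \matr{B})$ as one of a ``batch of completeness theorems'' obtained by direct instantiation, using $\Exp_{\BD} \ETL = \ECQ$.
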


\begin{proposition}
  $\ECQ_{\omega} = \Log \CLmatrix \times \BDmatrix$.
\end{proposition}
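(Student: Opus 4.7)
The plan is to combine two facts already established in the excerpt. The Proposition just stated above shows that $\Exp_{\BD} \CL = \ECQ_{\omega}$, which identifies our target logic as an explosive part. The Corollary immediately following Proposition~\ref{prop: log of product} provides the general formula: if $\logic{B} = \Log \matr{A}$ and $\logic{L} = \Log \class{K}$, then $\Exp_{\logic{B}} \logic{L} = \Log (\class{K} \times \matr{A})$.

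I would instantiate this corollary with $\matr{A} \assign \BDmatrix$ (so $\logic{B} = \BD$, which is the standing base logic) and $\class{K} \assign \{\CLmatrix\}$ (so $\logic{L} = \CL$). Since $\CL$ is indeed an extension of $\BD$, the corollary applies and yields $\Exp_{\BD} \CL = \Log (\CLmatrix \times \BDmatrix)$. Combining this with $\Exp_{\BD} \CL = \ECQ_{\omega}$ gives the desired completeness theorem.

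There is no real obstacle here; the result is immediate once one notices that both sides have already been identified with the same explosive part. The only thing worth checking is that $\CLmatrix \times \BDmatrix$ is genuinely a non-trivial product (so that the corollary's hypothesis on non-trivial models is met), but both factors are non-trivial models of $\BD$, so this is automatic. The proof therefore amounts to a single line citing the previous proposition and the product-of-matrices corollary.
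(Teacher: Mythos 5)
Your proof is correct and is exactly the argument the paper intends: the paper states this proposition as part of a batch that ``immediately'' follows from the identity $\Exp_{\BD}\CL = \ECQ_{\omega}$ together with the corollary $\Exp_{\logic{B}}\logic{L} = \Log(\class{K}\times\matr{A})$ for $\logic{B}=\Log\matr{A}$, $\logic{L}=\Log\class{K}$. Your instantiation ($\matr{A}:=\BDmatrix$, $\class{K}:=\{\CLmatrix\}$) and the check that both factors are non-trivial models of $\BD$ are precisely what is needed.
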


\begin{proposition}
  $\ETL_{\omega} = \Log \CLmatrix \times \ETLmatrix$.
\end{proposition}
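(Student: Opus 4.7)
The plan is to exhibit $\ETL_{\omega}$ as the explosive part $\Exp_{\ETL} \CL$ and then invoke the Corollary that $\Exp_{\logic{B}} \logic{L} = \Log (\class{K} \times \matr{A})$ whenever $\logic{B} = \Log \matr{A}$ and $\logic{L} = \Log \class{K}$. Applied with $\matr{A} = \ETLmatrix$ and $\class{K} = \{ \CLmatrix \}$ this will give exactly $\ETL_{\omega} = \Log (\CLmatrix \times \ETLmatrix)$, in direct parallel with the proofs of the two preceding propositions.

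For the inclusion $\ETL_{\omega} \logleq \Exp_{\ETL} \CL$, I check that the defining explosive rules of $\ETL_{\omega}$ over $\ETL$ all hold in $\Exp_{\ETL} \CL$. Each formula $\chi_{n}$ is a classical contradiction, hence $\chi_{n} \vdash_{\CL} \emptyset$, and so the rule $\chi_{n} \vdash \emptyset$ belongs to $\Exp_{\ETL} \CL$ by the definition of the explosive part. Combined with $\ETL \logleq \Exp_{\ETL} \CL$, this gives the inclusion.

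For the converse, I use Lemma~\ref{lemma: consequence in antiaxiomatic extensions} to unravel $\Exp_{\ETL} \CL$: a rule $\Gamma \vdash \varphi$ holds in $\Exp_{\ETL} \CL$ iff either $\Gamma \vdash_{\ETL} \varphi$ (which immediately gives $\Gamma \vdash_{\ETL_{\omega}} \varphi$) or $\Gamma \vdash_{\CL} \emptyset$. In the latter case, finitarity of $\CL$ supplies a finite $\Gamma' \subseteq \Gamma$ with $\Gamma' \vdash_{\CL} \emptyset$, so that $\bigwedge \Gamma'$ is a classical contradiction. The lemma characterizing classical contradictions then produces a substitution $\sigma$ and an $n \geq 1$ with $\bigwedge \Gamma' \vdash_{\BD} \sigma (\chi_{n})$; hence $\Gamma \vdash_{\BD} \sigma (\chi_{n})$, and applying the appropriate substitution instance of the defining rule of $\ETL_{n}$ yields $\Gamma \vdash_{\ETL_{n}} \emptyset$, whence $\Gamma \vdash_{\ETL_{\omega}} \emptyset$ and \emph{a fortiori} $\Gamma \vdash_{\ETL_{\omega}} \varphi$.

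The argument is a routine instance of the template already used to prove $\ECQ = \Log (\ETLmatrix \times \BDmatrix)$ and $\ECQ_{\omega} = \Log (\CLmatrix \times \BDmatrix)$. The only step requiring any real content is the passage from a $\CL$-antitheorem to a $\BD$-derivation of a substitution instance of some $\chi_{n}$, and this is precisely the content of the classical contradiction lemma proved earlier in this section; no new obstacle arises.
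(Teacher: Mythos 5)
Your proof is correct and takes essentially the paper's intended route: the paper obtains this proposition ``immediately'' from the identification $\ETL_{\omega} = \Exp_{\ETL}\CL$ (a consequence of $\Exp_{\BD}\CL = \ECQ_{\omega}$ and the isomorphism between explosive extensions of $\ECQ$ and of $\ETL$) together with the corollary $\Exp_{\logic{B}}\Log\class{K} = \Log(\class{K}\times\matr{A})$, and you supply exactly that identification, with the converse inclusion handled via the lemma characterizing classical contradictions by $\chi \vdash_{\BD} \sigma(\chi_{n})$. The only cosmetic point is that the equivalence ``$\Gamma\vdash\varphi$ in $\Exp_{\ETL}\CL$ iff $\Gamma\vdash_{\ETL}\varphi$ or $\Gamma\vdash_{\CL}\emptyset$'' is just the definition of the explosive part rather than an application of the lemma on consequence in antiaxiomatic extensions.
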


\begin{proposition}
  $\LP \vee \ECQ = \Log \CLmatrix \times \LPmatrix$.
\end{proposition}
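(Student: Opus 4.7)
The plan is to reduce the statement to the equality $\LP \vee \ECQ = \Exp_{\LP} \CL$, which is already noted in the displayed discussion just before the proposition, and then apply the product formula for explosive parts that was derived as a corollary of Proposition~\ref{prop: log of product}.

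More concretely, I would proceed in two short steps. First, I would recall from the discussion preceding the batch of completeness theorems that $\LP \vee \ECQ = \Exp_{\LP} \CL$. This identity was obtained by chaining the equalities $\LP \vee \ECQ = \LP \vee \ECQ_{\omega} = \LP \vee \Exp_{\BD} \CL = \LP \cup \Exp_{\BD} \CL = \Exp_{\LP} \CL$, where the first step uses $(\LP \cap \ECQ_{\omega}) \vee \ECQ = \ECQ_{\omega}$ (hence $\LP \vee \ECQ = \LP \vee \ECQ_{\omega}$), the second step uses $\Exp_{\BD} \CL = \ECQ_{\omega}$, and the join collapses to a union by Fact~\ref{fact: join with lexp} since $\LP \logleq \CL$.

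Second, I would apply the corollary stating that if $\logic{B} = \Log \matr{A}$ and $\logic{L} = \Log \class{K}$, then $\Exp_{\logic{B}} \logic{L} = \Log (\class{K} \times \matr{A})$. Taking $\logic{B} \assign \LP = \Log \LPmatrix$ and $\logic{L} \assign \CL = \Log \CLmatrix$, this yields
\begin{align*}
  \Exp_{\LP} \CL = \Log (\CLmatrix \times \LPmatrix).
\end{align*}
Combining the two steps gives $\LP \vee \ECQ = \Log (\CLmatrix \times \LPmatrix)$, as required.

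There is no real obstacle here; the work has already been done in the preceding propositions and in the general theory of explosive parts from Section~\ref{sec: explosive}. The only thing one must verify in passing is that $\LP$ is genuinely extended by $\CL$ (so that the explosive-part operator $\Exp_{\LP}$ applies to $\CL$) and that $\LPmatrix$ is non-trivial (so that the product formula derived from Proposition~\ref{prop: log of product} is applicable); both are immediate.
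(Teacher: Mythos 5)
Your proof is correct and is essentially the paper's own argument: the paper states the chain $\LP \vee \ECQ = \LP \vee \ECQ_{\omega} = \LP \vee \Exp_{\BD}\CL = \LP \cup \Exp_{\BD}\CL = \Exp_{\LP}\CL$ in the discussion immediately preceding the proposition and then invokes the same corollary $\Exp_{\logic{B}}\logic{L} = \Log(\class{K} \times \matr{A})$ of Proposition~\ref{prop: log of product} to conclude $\Exp_{\LP}\CL = \Log(\CLmatrix \times \LPmatrix)$. Your attention to the side conditions (non-triviality of the matrices and $\LP \logleq \CL$) is appropriate but, as you note, immediate.
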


\begin{proposition}
  $\KO \vee \ECQ = \Log \{ \CLmatrix \times \LPmatrix, \Kmatrix \}$.
\end{proposition}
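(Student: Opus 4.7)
The plan is to reduce the statement, via the machinery already in place, to the simple lattice-theoretic identity $(\LP \vee \ECQ) \cap \K = \KO \vee \ECQ$. The key observation is that
\begin{equation*}
  \Log \{ \CLmatrix \times \LPmatrix, \Kmatrix \} = \Log (\CLmatrix \times \LPmatrix) \cap \Log \Kmatrix,
\end{equation*}
and the two factors on the right are already known: $\Log \Kmatrix = \K$ by definition, and $\Log (\CLmatrix \times \LPmatrix) = \LP \vee \ECQ$ is the content of the immediately preceding proposition. So once we show $(\LP \vee \ECQ) \cap \K = \KO \vee \ECQ$, we are done.

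To handle this equality, I would exploit that $\ECQ$ is an explosive extension of $\BD$. By Fact~\ref{fact: join with lexp}, this means that joining $\ECQ$ with any extension of $\BD$ is the same as taking the set-theoretic union of the two consequence relations. In particular, $\LP \vee \ECQ = \LP \cup \ECQ$ and $\KO \vee \ECQ = \KO \cup \ECQ$. The first of these gives
\begin{equation*}
  (\LP \vee \ECQ) \cap \K = (\LP \cup \ECQ) \cap \K = (\LP \cap \K) \cup (\ECQ \cap \K).
\end{equation*}
Now $\LP \cap \K = \KO$ by definition, and $\ECQ \logleq \K$ since the axiom $p \wedge \dmneg p \vdash q$ of $\ECQ$ is vacuously valid in $\Kmatrix$ (the value $v(p) \wedge v(\dmneg p)$ is at most the middle element of $\Kthree$, hence never designated). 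Thus $\ECQ \cap \K = \ECQ$ and the right-hand side collapses to $\KO \cup \ECQ = \KO \vee \ECQ$.

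There is no real obstacle here beyond book-keeping: the proof is a short sequence of applications of Fact~\ref{fact: join with lexp}, the previously established completeness theorem for $\LP \vee \ECQ$, and the trivial verification that $\ECQ \logleq \K$. One could alternatively start from the observation $\KO \vee \ECQ = \Exp_{\KO} \CL$ already derived in the paragraph preceding the batch of completeness theorems, but since $\KO$ is not the logic of a single matrix, the most direct path is the one above, which routes through $\LP \vee \ECQ = \Log (\CLmatrix \times \LPmatrix)$.
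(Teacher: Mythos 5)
Your overall skeleton matches the paper's: both proofs reduce the statement to $\Log(\CLmatrix \times \LPmatrix) \cap \Log \Kmatrix = (\LP \vee \ECQ) \cap \K$ via the preceding completeness theorem, and then verify $(\LP \vee \ECQ) \cap \K = \KO \vee \ECQ$ by a union/intersection computation. However, there is a genuine gap in the middle step. You invoke Fact~\ref{fact: join with lexp} to claim $\LP \vee \ECQ = \LP \cup \ECQ$ and $\KO \vee \ECQ = \KO \cup \ECQ$, but that fact only says $\logic{L}_{1} \vee \Exp_{\logic{B}} \logic{L}_{2} = \logic{L}_{1} \cup \Exp_{\logic{B}} \logic{L}_{2}$ under the hypothesis $\logic{L}_{1} \logleq \logic{L}_{2}$. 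Here $\ECQ = \Exp_{\BD} \ETL$, and neither $\LP \logleq \ETL$ nor $\KO \logleq \ETL$ holds (indeed $\ETL$ has no theorems beyond those of $\BD$, while $\LP$ proves $p \vee \dmneg p$; and the rule $(p \wedge \dmneg p) \vee q \vdash q \vee \dmneg q$ of $\KO$ fails in $\ETLmatrix$). Both identities are in fact \emph{false}: the paper shows $\LP \vee \ECQ = \LP \cup \ECQ_{\omega}$ and $\KO \vee \ECQ = \KO \cup \ECQ_{\omega}$, and the rule $(p_{1} \wedge \dmneg p_{1}) \vee (p_{2} \wedge \dmneg p_{2}) \vdash q$ of $\ECQ_{2} \logleq \ECQ_{\omega}$ lies in neither $\LP$ (it fails in $\LPmatrix$) nor $\ECQ$ (since $\ECQ = \ECQ_{1} < \ECQ_{2}$), so $\LP \cup \ECQ \subsetneq \LP \vee \ECQ$ and $\KO \cup \ECQ \subsetneq \KO \vee \ECQ$. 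Your chain therefore computes the wrong set on the left and asserts the wrong identity on the right; the two errors do not cancel, since equality of the two strictly smaller sets does not yield equality of the two joins.

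The repair is exactly the paper's route: use $\LP \vee \ECQ = \LP \cup \ECQ_{\omega}$ (which \emph{is} a legitimate instance of Fact~\ref{fact: join with lexp}, taking $\logic{L}_{2} = \CL$ and $\Exp_{\BD} \CL = \ECQ_{\omega}$, since $\LP \logleq \CL$), intersect with $\K$ to get $(\LP \cap \K) \cup (\ECQ_{\omega} \cap \K) = \KO \cup \ECQ_{\omega}$ (here $\ECQ_{\omega} \logleq \K$ replaces your observation that $\ECQ \logleq \K$), and then appeal to the identity $\KO \cup \ECQ_{\omega} = \KO \vee \ECQ$ already derived in the paragraph preceding this batch of completeness theorems. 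With $\ECQ$ replaced by $\ECQ_{\omega}$ throughout, your argument becomes the paper's proof.
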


\begin{proof}
  This holds because $\LP \vee \ECQ = \Log \CLmatrix \times \LPmatrix$ and $(\LP \vee \ECQ) \cap \K = (\LP \cup \ECQ_{\omega}) \cap \K = (\LP \cap \K) \cup (\ECQ_{\omega} \cap \K) = \KO \cup \ECQ_{\omega} = \KO \vee \ECQ$.
\end{proof}

  The observations that $\Exp_{\BD} \LP = \BD$ and $\Exp_{\BD} \ETL = \ECQ$ are easy to prove but crucial for understanding the structure of the lattice of explosive extensions of $\BD$, as we now show.

\begin{proposition} \label{prop: ecq lp splitting}
  For each $\logic{L} \loggeq \BD$ either $\ECQ \logleq \logic{L}$ or ${\logic{L} \logleq \LP}$.
\end{proposition}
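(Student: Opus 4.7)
The plan is to prove the contrapositive semantically. Assume $\ECQ \nlogleq \logic{L}$. Since $\ECQ$ is the extension of $\BD$ by the rule $p \wedge \dmneg p \vdash q$ (with $p, q$ distinct atoms), its failure in $\logic{L}$ yields a reduced model $\matr{A} \in \Mod^{*} \logic{L}$ witnessing this failure. As reduced models of $\BD$ are De~Morgan algebras with lattice filters, we obtain an element $a \in \alg{A}$ with $a, \dmneg a \in F$, together with some $u \notin F$. In particular $\matr{A}$ is non-trivial, so $\True \in F$ and $\False \notin F$.

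The key move is to exhibit $\LPmatrix$ as (logically equivalent to) a submatrix of $\matr{A}$. Set $b \assign a \wedge \dmneg a$ and $b' \assign a \vee \dmneg a = \dmneg b$. Since $F$ is a lattice filter, $b, b' \in F$; also $b \leq b'$. I would then verify that $\{\False, b, b', \True\}$ is a De~Morgan subalgebra of $\alg{A}$: it is a chain, hence closed under $\wedge$ and $\vee$, it contains the constants, and $\dmneg$ swaps $\False \leftrightarrow \True$ and $b \leftrightarrow b'$. Furthermore $b \neq \False$ (because $b \in F$) and $b' \neq \True$ (otherwise $b = \dmneg b' = \False$, contradicting $b \in F$). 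The induced submatrix $\matr{A}'$ thus has designated set $\{b, b', \True\}$.

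Now I would split into two cases. If $a = \dmneg a$ then $b = b'$ and $\matr{A}'$ is literally a copy of $\LPmatrix$, living as a submatrix of $\matr{A}$. Otherwise $b < b'$, and a routine verification shows that the equivalence relation identifying $b$ with $b'$ (and keeping $\False, \True$ as singletons) is a congruence of the underlying algebra compatible with the designated set, whose quotient is isomorphic to $\LPmatrix$. Hence the Leibniz reduct of $\matr{A}'$ is $\LPmatrix$. In either case $\matr{A}'$ is logically equivalent to $\LPmatrix$, and since $\Mod \logic{L}$ is closed under submatrices, $\logic{L} \logleq \Log \matr{A}' = \Log \LPmatrix = \LP$.

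The main subtlety is just the bookkeeping in the second case: checking that the various potential collapses (such as $b' = \True$) are excluded by the hypothesis that $F$ is a proper filter, and that the proposed congruence is both a genuine congruence and compatible with the designation. Nothing deeper is required; the whole argument is a direct construction carried out inside the single witnessing matrix~$\matr{A}$.
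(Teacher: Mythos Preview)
Your proof is correct and follows essentially the same approach as the paper's. The paper phrases it slightly more tersely by directly picking $a \in F$ with $a \leq \dmneg a$ (which amounts to your replacement of $a$ by $b = a \wedge \dmneg a$) and then passing to the three- or four-element submatrix $\False < a \leq \dmneg a < \True$ whose Leibniz reduct is $\LPmatrix$; your explicit case split and congruence verification spell out exactly what the paper leaves implicit.
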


\begin{proof}
  If $\ECQ \nlogleq \logic{L}$, then $\logic{L}$ has a non--trivial reduced model $\pair{\alg{A}}{F}$ such that $a \in F$ for some $a \leq \dmneg a$. But then the three- or four-element submatrix $\False < a \leq \dmneg a  < \True$ of $\pair{\alg{A}}{F}$ is a model of $\logic{L}$. In either case the Leibniz reduct of this submatrix is isomorphic to $\LPmatrix$, therefore $\LPmatrix$ is a model of $\logic{L}$ and $\logic{L} \logleq \LP$.
\end{proof}

\begin{proposition}
  $\ECQ$ is the smallest proper explosive extension of $\BD$.
\end{proposition}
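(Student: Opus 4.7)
The plan is to combine the splitting proposition just proved (Proposition~\ref{prop: ecq lp splitting}) with the earlier computation $\Exp_{\BD} \LP = \BD$. First I would verify the easy half: $\ECQ$ is a \emph{proper} explosive extension of $\BD$, since the rule $p \wedge \dmneg p \vdash q$ axiomatizing $\ECQ$ fails in $\BDmatrix$ (take a valuation sending $p$ to a fixpoint of negation and $q$ to an undesignated value), so $\BD < \ECQ$.

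For minimality, let $\logic{L}$ be any proper explosive extension of $\BD$, i.e.\ $\logic{L} \in \Exp \Ext \BD$ with $\BD < \logic{L}$. By Proposition~\ref{prop: ecq lp splitting} applied to $\logic{L}$, either $\ECQ \leq \logic{L}$, in which case we are done, or $\logic{L} \leq \LP$. In the latter case, since $\logic{L}$ is an explosive extension of $\BD$ sitting below $\LP$, and $\Exp_{\BD} \LP$ is by definition the largest explosive extension of $\BD$ below $\LP$, we get $\logic{L} \leq \Exp_{\BD} \LP = \BD$, contradicting the properness assumption $\BD < \logic{L}$. Hence the second alternative is impossible and $\ECQ \leq \logic{L}$.

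There is no real obstacle here: the work is all done in the preceding splitting proposition and the identity $\Exp_{\BD} \LP = \BD$. The proof is essentially two lines assembling those two facts, once one notes that being an \emph{explosive} extension is what rules out sitting strictly between $\BD$ and $\LP$.
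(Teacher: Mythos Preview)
Your proof is correct and follows essentially the same route as the paper's: use the splitting of Proposition~\ref{prop: ecq lp splitting} together with $\Exp_{\BD} \LP = \BD$ to rule out the case $\logic{L} \logleq \LP$ for a proper explosive extension. The paper phrases the key step as $\logic{L} = \Exp_{\BD} \logic{L} \logleq \Exp_{\BD} \LP = \BD$ (using that an explosive extension equals its own explosive part plus monotonicity of $\Exp_{\BD}$), whereas you invoke the equivalent characterization of $\Exp_{\BD} \LP$ as the largest explosive extension below $\LP$; these are the same argument.
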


\begin{proof}
  If $\logic{L}$ is an explosive extension of $\BD$ such that $\logic{L} \logleq \LP$, then $\logic{L} = \Exp_{\BD} \logic{L} \logleq \Exp_{\BD} \LP = \BD$.
\end{proof}

  In other words, $\Exp \Ext \ECQ$ is the lattice of proper explosive extensions of~$\BD$. Because $\Exp_{\BD} \ETL = \ECQ$, this lattice is isomorphic to $\Exp \Ext \ETL$.

\begin{theorem} \label{thm: iso exp ext etl}
  The lattices $\Exp \Ext \ECQ$ and $\Exp \Ext \ETL$ are isomorphic via the maps $\logic{L} \mapsto \ETL \vee \logic{L} = \ETL \cup \logic{L}$ and $\logic{L} \mapsto \Exp_{\BD} \logic{L} = \Exp_{\ECQ} \logic{L}$.
\end{theorem}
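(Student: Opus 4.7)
The statement is essentially a specialization of the general Theorem~\ref{thm: explosive isomorphism}, instantiated with $\logic{B} = \BD$ and $\logic{L}_{0} = \ETL$. The plan is to invoke that theorem directly, and then perform the small additional bookkeeping needed to identify the two operators $\Exp_{\BD}$ and $\Exp_{\ECQ}$ on the relevant sublattice.

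First I would apply Theorem~\ref{thm: explosive isomorphism} with $\logic{B} = \BD$ and $\logic{L}_{0} = \ETL$. Since $\Exp_{\BD} \ETL = \ECQ$ by the proposition established earlier in this section, the theorem immediately yields that $\Exp \Ext \ETL$ and $\Exp \Ext \ECQ$ are isomorphic via the maps $\logic{L} \mapsto \Exp_{\BD} \logic{L}$ (on the $\ETL$-side) and $\logic{L} \mapsto \ETL \cup \logic{L} = \ETL \vee \logic{L}$ (on the $\ECQ$-side, with the equality of join and union guaranteed by Fact~\ref{fact: join with lexp}).

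It then remains to check that on $\Exp \Ext \ETL$ the operator $\Exp_{\BD}$ coincides with $\Exp_{\ECQ}$. The inclusion $\Exp_{\BD} \logic{L} \logleq \Exp_{\ECQ} \logic{L}$ is immediate from $\BD \logleq \ECQ$. For the converse, take any $\logic{L} \in \Exp \Ext \ETL$, so $\ECQ \logleq \ETL \logleq \logic{L}$, and suppose $\Gamma \vdash \varphi$ holds in $\Exp_{\ECQ} \logic{L}$. If $\Gamma \vdash_{\logic{L}} \emptyset$, we are done. Otherwise $\Gamma \vdash_{\ECQ} \varphi$, and since $\ECQ$ is the extension of $\BD$ by the single explosive rule $p \wedge \dmneg p \vdash \emptyset$, Lemma~\ref{lemma: consequence in antiaxiomatic extensions} gives either $\Gamma \vdash_{\BD} \varphi$ (in which case we are done) or the existence of a substitution $\sigma$ with $\Gamma \vdash_{\BD} \sigma(p) \wedge \dmneg \sigma(p)$; in the latter case, because $\ECQ \logleq \logic{L}$, we conclude $\Gamma \vdash_{\logic{L}} \emptyset$, as desired.

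There is no serious obstacle here — the whole statement is a verification. The only point that requires a bit of care is the observation, used implicitly in the application of Theorem~\ref{thm: explosive isomorphism}, that explosive extensions of $\ECQ$ are themselves explosive extensions of $\BD$ (since $\ECQ$ is already axiomatized over $\BD$ by an explosive rule), so the map $\logic{L} \mapsto \ETL \cup \logic{L}$ makes sense and Fact~\ref{fact: join with lexp} applies with $\logic{L}_{1} = \ETL$ and $\Exp_{\BD} \logic{L}_{2} = \logic{L}$.
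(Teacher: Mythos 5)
Your proposal is correct and follows the paper's own route: the paper proves this theorem in one line as "a particular instance of Theorem~\ref{thm: explosive isomorphism}" (with $\logic{B} = \BD$, $\logic{L}_{0} = \ETL$, and $\Exp_{\BD}\ETL = \ECQ$). Your additional verification that $\Exp_{\BD}$ and $\Exp_{\ECQ}$ agree on $\Exp \Ext \ETL$, and that explosive extensions of $\ECQ$ are explosive extensions of $\BD$, correctly fills in details the paper leaves implicit.
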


\begin{proof}
  This is a particular instance of Theorem~\ref{thm: explosive isomorphism}.
\end{proof}

\begin{corollary}
  $\Exp_{\BD} \ETL_{n} = \ECQ_{n}$ and $\ETL_{n} = \ETL \cup \ECQ_{n}$.
\end{corollary}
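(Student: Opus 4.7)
The plan is to read both equations off Theorem~\ref{thm: iso exp ext etl} by instantiating the isomorphism at $\logic{L} = \ECQ_{n}$. That theorem asserts that $\logic{L} \mapsto \ETL \cup \logic{L}$ and $\logic{L} \mapsto \Exp_{\BD} \logic{L}$ are mutually inverse lattice bijections between $\Exp \Ext \ECQ$ and $\Exp \Ext \ETL$. So it suffices (i) to verify that $\ECQ_{n} \in \Exp \Ext \ECQ$, and (ii) to identify the image $\ETL \cup \ECQ_{n}$ with $\ETL_{n}$. Everything else is automatic.

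For (i), $\ECQ_{n}$ is by definition the extension of $\BD$ by the single explosive rule $\chi_{n} \vdash \emptyset$. Substituting one and the same variable $p$ for each $p_{i}$ collapses $\chi_{n}$ to $p \wedge \dmneg p$, so the defining rule of $\ECQ = \ECQ_{1}$ is a substitution instance of this rule, whence $\ECQ \logleq \ECQ_{n}$. Being an extension of $\ECQ$ by an explosive rule, $\ECQ_{n}$ lies in $\Exp \Ext \ECQ$. For (ii), the axiomatization of $\ETL_{n}$ as the extension of $\ETL$ by $\chi_{n} \vdash \emptyset$ is literally obtained by taking the union of the rules axiomatizing $\ETL$ (over $\BD$) with the rule axiomatizing $\ECQ_{n}$ (over $\BD$). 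Hence $\ETL_{n} = \ETL \vee \ECQ_{n}$; and the equality $\ETL \vee \ECQ_{n} = \ETL \cup \ECQ_{n}$ is furnished by Theorem~\ref{thm: iso exp ext etl} itself (or equivalently by Fact~\ref{fact: join with lexp} applied with $\logic{L}_{1} = \ETL$ and $\Exp_{\BD} \logic{L}_{2} = \ECQ_{n}$, using $\logic{L}_{2} = \ETL_{n}$).

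Combining (i) and (ii): the isomorphism applied to $\ECQ_{n} \in \Exp \Ext \ECQ$ gives $\Exp_{\BD}(\ETL \cup \ECQ_{n}) = \ECQ_{n}$, and substituting $\ETL \cup \ECQ_{n} = \ETL_{n}$ yields both $\ETL_{n} = \ETL \cup \ECQ_{n}$ and $\Exp_{\BD} \ETL_{n} = \ECQ_{n}$. I do not anticipate a genuine obstacle; the entire argument is bookkeeping matching the defining axiomatizations of $\ECQ_{n}$ and $\ETL_{n}$ against the shape of the isomorphism, and the only thing that needs a sentence of justification is the substitution argument showing $\ECQ \logleq \ECQ_{n}$, which is what places $\ECQ_{n}$ inside $\Exp \Ext \ECQ$ rather than merely $\Exp \Ext \BD$.
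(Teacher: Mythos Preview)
Your proposal is correct and follows exactly the intended route: the paper states this corollary without proof, immediately after Theorem~\ref{thm: iso exp ext etl}, leaving the reader to instantiate the isomorphism at $\ECQ_{n}$ just as you do. Your steps (i) and (ii) spell out precisely the bookkeeping the paper elides.
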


  $\Exp \Ext \ECQ$ is also isomorphic to the lattice $\LP \cap \Exp \Ext \ECQ$ of all intersections of $\LP$ with an explosive extension of $\ECQ$.

\begin{theorem} \label{thm: iso exp ext ecq}
  The lattices $\Exp \Ext \ECQ$ and $\LP \cap \Exp \Ext \ECQ$ are iso\-morphic via the maps $\logic{L} \mapsto \LP \cap \logic{L}$ and $\logic{L} \mapsto \logic{L} \vee \ECQ$.
\end{theorem}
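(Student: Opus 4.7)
My plan is to show that the isotone maps $\Phi \colon \logic{L} \mapsto \LP \cap \logic{L}$ and $\Psi \colon \logic{L}' \mapsto \logic{L}' \vee \ECQ$ are mutually inverse. Isotonicity of both is immediate, so everything reduces to verifying the two compositional identities $\Phi \circ \Psi = \mathrm{id}$ and $\Psi \circ \Phi = \mathrm{id}$.

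The easier identity is $\Phi \circ \Psi = \mathrm{id}$. Given $\logic{L}' = \LP \cap \logic{M}$ with $\logic{M} \in \Exp \Ext \ECQ$, I would sandwich $\logic{L}' \logleq \logic{L}' \vee \ECQ \logleq \logic{M} \vee \ECQ = \logic{M}$, where the last equality uses $\ECQ \logleq \logic{M}$. Intersecting with $\LP$ and using $\logic{L}' \logleq \LP$ then yields $\logic{L}' = \LP \cap \logic{L}' \logleq \LP \cap (\logic{L}' \vee \ECQ) \logleq \LP \cap \logic{M} = \logic{L}'$, which forces equality throughout.

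The substantial identity is $\Psi \circ \Phi = \mathrm{id}$, namely $(\LP \cap \logic{L}) \vee \ECQ = \logic{L}$ for $\logic{L} \in \Exp \Ext \ECQ$. The inequality $\logleq$ is trivial. For the reverse, I would take an arbitrary rule $\Gamma \vdash_{\logic{L}} \varphi$ and invoke Lemma~\ref{lemma: consequence in antiaxiomatic extensions}. This splits into two cases: either $\Gamma \vdash_{\BD} \varphi$, in which case the rule already lies in $\LP \cap \logic{L}$; or some antitheorem $\Delta$ of $\logic{L}$ admits a substitution $\sigma$ with $\Gamma \vdash_{\BD} \sigma(\delta)$ for each $\delta \in \Delta$. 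In the latter case, by structurality and cut it suffices to show that $\Delta$ remains an antitheorem of $(\LP \cap \logic{L}) \vee \ECQ$.

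The main obstacle will be precisely this propagation of antitheorems. By Proposition~\ref{prop: mod cap lexp}, the non-trivial models of $(\LP \cap \logic{L}) \vee \ECQ$ decompose into models of $\logic{L}$, on which $\Delta$ is automatically undesignable, and models of $\LP \vee \ECQ$. On the latter class I would exploit the identification $\LP \vee \ECQ = \Exp_{\LP} \CL$ recorded earlier, whose antitheorems coincide with the classical contradictions; combined with $\Exp_{\BD} \CL = \ECQ_{\omega}$, this reduces the remaining step to relating the antitheorems of $\logic{L}$ to those of $\ECQ_{\omega}$, in close analogy with the argument already given for $(\LP \cap \ECQ_{\omega}) \vee \ECQ = \ECQ_{\omega}$ in the preceding section.
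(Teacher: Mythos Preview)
Your proposal is correct and follows essentially the same route as the paper: both establish $(\LP \cap \logic{L}) \vee \ECQ = \logic{L}$ by decomposing the models of $(\LP \cap \logic{L}) \vee \ECQ$ into models of $\logic{L}$ and models of $\LP \vee \ECQ$, and then invoking $\logic{L} \logleq \ECQ_{\omega} \logleq \LP \vee \ECQ$. The only cosmetic difference is that the paper obtains this decomposition via the explicit axiomatization of $\LP \cap \logic{L}$ supplied by Proposition~\ref{prop: axiomatizing cap exp}, whereas you reach it directly through the companion Proposition~\ref{prop: mod cap lexp}.
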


\begin{proof}
  Consider an extension $\logic{L}$ of $\ECQ$ by the explosive rules $\Gamma_{i} \vdash \emptyset$ for $i \in I$. Intersecting with $\LP$ yields a logic axiomatized by the rules $\Gamma_{i} \vdash p_{i} \vee \dmneg p_{i}$ for $i \in I$, where the atom $p_{i}$ does not occur in $\Gamma_{i}$ by Proposition~\ref{prop: axiomatizing cap exp}. (We rename the variables of~$\Gamma$ if necessary.) But a matrix validates $\Gamma_{i} \vdash p_{i} \vee \dmneg p_{i}$ if and only if it validates either $\Gamma_{i} \vdash \emptyset$ or $\emptyset \vdash p_{i} \vee \dmneg p_{i}$. Since $\ECQ$ is the smallest proper explosive extension of $\BD$, it follows that a matrix validates both $p, \dmneg p \vdash \emptyset$ and $\Gamma_{i} \vdash p_{i} \vee \dmneg p_{i}$ if and only if it validates either $\Gamma_{i} \vdash \emptyset$ or both $\emptyset \vdash p \vee \dmneg p$ and $p, \dmneg p \vdash \emptyset$. But $\logic{L} \logleq \ECQ_{\omega} \logleq \LP \vee \ECQ$, therefore a matrix is a model of both $\ECQ$ and $\LP \cap \logic{L}$ if and only if it is a model of $\logic{L}$, i.e.\ $(\LP \cap \logic{L}) \vee \ECQ = \logic{L}$.
\end{proof}

  In the rest of this section, we prove some more completeness theorems for super-Belnap logics. The methodology used above will not apply here, since these logics will not be identified as explosive parts of other super-Belnap logics.

  We introduce $\Kminus$ as the logic determined by the eight-element matrix $\Kminusmatrix$ shown in Figure~\ref{fig: kminus matrix}, where De~Morgan negation again corresponds to reflection across the horizontal axis of symmetry. This may seem like a very \emph{ad hoc} logic to study at first sight, but we shall see that this logic is one of the two lower covers of $\K$ in $\Ext \BD$ (hence the name), the other being $\KO \vee \ECQ$.

\begin{figure}
\caption{The matrix $\Kminusmatrix$}
\label{fig: kminus matrix}

\bigskip

\begin{center}
\begin{tikzpicture}[scale=1,
  dot/.style={circle,fill,inner sep=2.5pt,outer sep=2.5pt}]
  \node (bot) at (0,0) [dot] {};
  \node (a) at (-1,1) [dot] {};
  \node (b) at (1,1) [dot] {};
  \node (c) at (-2,2) [dot] {};
  \node (d) at (0,2) [dot] {};
  \node (e) at (-1,3) [dot] {};
  \node (f) at (1,3) [dot] {};
  \node (top) at (0,4) [dot] {};
  \node (aname) at (-2.5,2) {$\mathsf{a}$};
  \node (bname) at (1.5,3) {$\mathsf{b}$};
  \node (bname) at (1.5,1) {$\mathsf{c}$};
  \draw[-] (bot) -- (a) -- (c) -- (e) -- (top);
  \draw[-] (bot) -- (b) -- (d) -- (f) -- (top);
  \draw[-] (a) -- (d) -- (e);
  \draw (0,4) ellipse (0.5 and 0.5);
  \node at (0,-1) {$\Kminusmatrix$ ($\Kminus$)};
\end{tikzpicture}
\end{center}

\end{figure}

\begin{proposition}[Consequence in $\Kminus$] \label{prop: consequence in kminus}
  $\Gamma \vdash_{\Kminus} \varphi$ if and only if $\Gamma \vdash_{\BD} \chi \vee \psi$ and $\Gamma \vdash_{\BD} \dmneg \psi \vee \varphi$ for some formula $\psi$ and some classical contradiction $\chi$.
\end{proposition}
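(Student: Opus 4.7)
The plan is to prove the biconditional in two parts: the right-to-left direction by a direct derivation in $\Kminus$, and the left-to-right direction by verifying that the condition on the right-hand side already defines a consequence relation $\vdash_{L}$ extending $\Kminus$, so that together with the first direction the two relations coincide.

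For the right-to-left direction, assume $\Gamma \vdash_{\BD} \chi \vee \psi$ and $\Gamma \vdash_{\BD} \dmneg \psi \vee \varphi$ with $\chi$ a classical contradiction. By the lemma just established, $\chi \vdash_{\BD} \sigma(\chi_{n})$ for some substitution $\sigma$ and some $n \in \omega$, so $\Gamma \vdash_{\BD} \sigma(\chi_{n}) \vee \psi$. Substituting $p_{i} \mapsto \sigma(p_{i})$, $q \mapsto \psi$, and $r \mapsto \varphi$ in the $\Kminus$-axiom $\chi_{n} \vee q, \dmneg q \vee r \vdash r$ and cutting with the two $\BD$-consequences yields $\Gamma \vdash_{\Kminus} \varphi$.

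For the left-to-right direction, let $\vdash_{L}$ denote the relation defined by the right-hand side. Reflexivity (with $\psi = \True$), monotonicity, and structurality of $\vdash_{L}$ are immediate from the corresponding properties of $\BD$, using that $\sigma\chi$ is a classical contradiction whenever $\chi$ is. Every rule of $\Kminus$ then belongs to $\vdash_{L}$: each $\BD$-rule holds with $\psi = \True$; the $\ETL$-axiom $p, \dmneg p \vee q \vdash q$ holds with $\psi = p$ and $\chi = \False$; and the $\Kminus$-axiom $\chi_{n} \vee q, \dmneg q \vee r \vdash r$ holds with $\psi = q$ and $\chi = \chi_{n}$. Once cut is verified, $\vdash_{\Kminus} \subseteq \vdash_{L}$.

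Cut is the main obstacle. By the finitarity of $\BD$ and iteration, it reduces to single-formula cut: given $\Gamma \vdash_{L} \alpha$ witnessed by $(\psi_{\alpha}, \chi_{\alpha})$ and $\Gamma, \alpha \vdash_{L} \varphi$ witnessed by $(\psi, \chi)$, conclude $\Gamma \vdash_{L} \varphi$ with witnesses $\psi^{*} \assign \psi \wedge \psi_{\alpha}$ and $\chi^{*} \assign \chi_{\alpha} \vee (\psi_{\alpha} \wedge \dmneg \psi_{\alpha}) \vee (\psi_{\alpha} \wedge \chi)$. Since $\chi^{*}$ is a disjunction of classical contradictions, it is itself a classical contradiction. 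Writing $G \assign \bigwedge \Gamma$ and translating to lattice inequalities in all De~Morgan algebras via Fact~\ref{fact: semilattice based}, the inequality $G \leq \dmneg \psi^{*} \vee \varphi$ follows by decomposing $G$ along $G \leq \dmneg \psi_{\alpha} \vee \alpha$ and applying $G \wedge \alpha \leq \dmneg \psi \vee \varphi$; the inequality $G \leq \chi^{*} \vee \psi^{*}$ follows by decomposing $G$ along $G \leq \chi_{\alpha} \vee \psi_{\alpha}$, further decomposing $G \wedge \psi_{\alpha}$ along $G \leq \dmneg \psi_{\alpha} \vee \alpha$, and collecting the residual meets with $\dmneg \psi_{\alpha}$ and $\chi$ into $\chi^{*}$.
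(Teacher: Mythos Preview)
Your right-to-left direction is fine (modulo the verification that the rule $\chi_{n} \vee q, \dmneg q \vee r \vdash r$ holds in $\Kminusmatrix$, which the paper carries out explicitly). Your cut argument for $\vdash_{L}$ is also correct: the distributive computations you sketch do yield $G \leq \chi^{*} \vee \psi^{*}$ and $G \leq \dmneg\psi^{*} \vee \varphi$, and $\chi^{*}$ is indeed a classical contradiction.

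The genuine gap is in the left-to-right direction, and it is a circularity. At this point in the paper $\Kminus$ is defined \emph{semantically} as $\Log \Kminusmatrix$; the axiomatization by the rules $\chi_{n} \vee q, \dmneg q \vee r \vdash r$ is Proposition~\ref{prop: kminus}, which is derived \emph{from} the present proposition. Your argument shows that $\vdash_{L}$ is a logic and that it validates the $\BD$-rules, the $\ETL$-axiom, and the rules $\chi_{n} \vee q, \dmneg q \vee r \vdash r$. That only yields $\ETLplus_{\omega} \subseteq {\vdash_{L}}$. To conclude $\Kminus \subseteq {\vdash_{L}}$ you would need $\Kminus = \ETLplus_{\omega}$, which is precisely the completeness theorem that this proposition is supposed to establish. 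In effect your argument proves $\ETLplus_{\omega} \subseteq {\vdash_{L}} \subseteq \Kminus$, but not the missing inclusion $\Kminus \subseteq \ETLplus_{\omega}$.

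The paper avoids this circularity by a direct semantic argument: assuming $\gamma \vdash_{\Kminus} \varphi$ (i.e.\ validity in $\Kminusmatrix$), it reduces to the case where $\gamma$ is a disjunction of at most two conjunctive clauses and $\varphi$ is a disjunctive clause, and then explicitly constructs the witnesses $\psi$ and $\chi$ by case analysis, producing counterexample valuations on $\Kminusmatrix$ when the witnesses fail to exist. Some semantic input of this kind is unavoidable here; your purely syntactic strategy cannot close the loop without it.
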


\begin{proof}
  Right to left, it suffices to verify that the rule $\chi_{n} \vee q, \dmneg q \vee r \vdash r$ holds in $\Kminusmatrix$ for each $n \geq 1$, where $\chi_{n} \assign (p_{1} \wedge \dmneg p_{1}) \vee \dots \vee (p_{n} \wedge \dmneg p_{n})$. This is true because for each valuation $v$ on $\Kminusmatrix$ we have $v(\chi_{n}) \leq \mathsf{a} \vee \mathsf{c}$, so $v(\chi_{n} \vee q) = \mathsf{t}$ implies $v(q) \geq \mathsf{b}$. But then $v(\dmneg q) \leq \mathsf{c}$, so $v(\dmneg q \vee r) = \True$ implies $v(r) = \True$.

  Conversely, suppose that $\Gamma \vdash_{\Kminus} \varphi$. By finitarity, we may assume that $\Gamma = \{ \gamma \}$. We first prove two auxiliary claims. Firstly, we show that if the left-to-right implication holds for each $\gamma \in \{ \delta_{1}, \delta_{2}, \delta_{3} \}$, where
\begin{align*}
  \delta_{1} & \assign \gamma_{2} \vee \gamma_{3}, &
  \delta_{2} & \assign \gamma_{3} \vee \gamma_{1}, &
  \delta_{3} & \assign \gamma_{1} \vee \gamma_{2}
\end{align*}
  then it holds for $\gamma \assign \gamma_{1} \vee \gamma_{2} \vee \gamma_{3}$. If the implication holds in these three cases, then we have formulas $\psi_{i}$ and classical contradictions $\chi_{i}$ for $1 \leq i \leq 3$ such that
\begin{align*}
  \delta_{i} & \vdash_{\BD} \chi_{i} \vee \psi_{i} & & \text{ and } & \delta_{i} & \vdash_{\BD} \dmneg \psi_{i} \vee \varphi.
\end{align*}
  Observe that
\begin{align*}
  \gamma_{1} & \vdash_{\BD} \delta_{2} \vee \delta_{3}, & \gamma_{2} & \vdash_{\BD} \delta_{3} \vee \delta_{1}, & \gamma_{3} & \vdash_{\BD} \delta_{1} \vee \delta_{2}.
\end{align*}
  Now take
\begin{align*}
  \psi & \assign (\psi_{1} \vee \psi_{2}) \wedge (\psi_{2} \vee \psi_{3}) \wedge (\psi_{3} \vee \psi_{1}), &
  \chi & \assign \chi_{1} \vee \chi_{2} \vee \chi_{3}.
\end{align*}
  Then
\begin{align*}
  \gamma_{1} \vdash_{\BD} \delta_{2} \wedge \delta_{3} \vdash_{\BD} (\chi_{2} \vee \psi_{2}) \wedge (\chi_{3} \vee \psi_{3}) \vdash_{\BD} \chi_{2} \vee \chi_{3} \vee (\psi_{2} \wedge \psi_{3}) \vdash_{\BD} \chi \vee \psi,
\end{align*}
  and likewise $\gamma_{2} \vdash_{\BD} \chi \vee \psi$ and $\gamma_{3} \vdash_{\BD} \chi \vee \psi$. Moreover,
\begin{align*}
  \gamma_{1} \vdash_{\BD} \delta_{2} \wedge \delta_{3} \vdash_{\BD} (\dmneg \psi_{2} \wedge \dmneg \psi_{3}) \vee \varphi \vdash_{\BD} \dmneg \psi \vee \varphi,
\end{align*}
  and likewise $\gamma_{2} \vdash_{\BD} \dmneg \psi \vee \varphi$ and $\gamma_{3} \vdash_{\BD} \dmneg \psi \vee \varphi$. By the proof by cases property for $\BD$ (Fact~\ref{fact: pcp}) we have
\begin{align*}
  \gamma_{1} \vee \gamma_{2} \vee \gamma_{3} & \vdash_{\BD} \chi \vee \psi & & \text{ and } &
  \gamma_{1} \vee \gamma_{2} \vee \gamma_{3} & \vdash_{\BD} \dmneg \psi \vee \varphi,
\end{align*}
  therefore the implication holds for $\gamma \assign \gamma_{1} \vee \gamma_{2} \vee \gamma_{3}$.

  Secondly, we show that if the left-to-right implication holds for $\varphi_{1}$ and  $\varphi_{2}$, then it holds for $\varphi \assign \varphi_{1} \wedge \varphi_{2}$. The assumption yields formulas $\psi_{1}, \psi_{2}$ and contradictions $\chi_{1}$, $\chi_{2}$ such that
\begin{align*}
  \gamma & \vdash_{\BD} \chi_{i} \vee \psi_{i} & & \text{ and } & \gamma & \vdash_{\BD} \dmneg \psi_{i} \vee \varphi_{i}.
\end{align*}
  But then taking $\chi \assign \chi_{1} \vee \chi_{2}$ and $\psi \assign \psi_{1} \wedge \psi_{2}$ yields that
\begin{align*}
  \gamma & \vdash_{\BD} \chi \vee \psi & & \text{ and } & \gamma & \vdash_{\BD} \dmneg \psi \vee \varphi.
\end{align*}

  We now prove the left-to-right implication for arbitrary $\gamma$ using these two auxiliary claims. By the first claim, it suffices to prove the implication for $\gamma \assign \gamma_{1} \vee \gamma_{2}$, where $\gamma_{1}$ and $\gamma_{2}$ are conjunctive clauses. By the second claim, it suffices to prove the implication under the assumption that $\varphi$ is a disjunctive clause.

  If $\gamma$ is a classical contradiction, the implication holds trivially for $\chi \assign \gamma$ and $\psi \assign \dmneg \gamma$. Otherwise, we may suppose that without loss of generality the conjunctive clause $\gamma_{2}$ is not a classical contradiction.

  Suppose now that the right-hand side of the implication fails and $\gamma_{1}$ is not a classical contradiction. Taking $\psi \assign \gamma$, either $\gamma_{1} \nvdash_{\BD} \dmneg \gamma \vee \varphi$ or $\gamma_{2} \nvdash_{\BD} \dmneg \gamma \vee \varphi$. In particular, either $\gamma_{1} \nvdash_{\BD} \varphi$ or $\gamma_{2} \nvdash_{\BD} \varphi$. Suppose without loss of generality that $\gamma_{2} \nvdash_{\BD} \varphi$. Then $\gamma_{2}$ has no literal in common with $\varphi$, therefore there is a valuation $v$ on $\Kminusmatrix$ such that $v(l) = \True$ for each literal $l$ of $\gamma_{2}$ while $v(l) \in \{ \False, \mathsf{b}, \mathsf{c} \}$ for each literal $l$ of $\varphi$. This valuation $v$ witnesses that $\gamma \nvdash_{\Kminus} \varphi$.

  On the other hand, suppose that the right-hand side of the implication fails and $\gamma_{1}$ is a classical contradiction. Taking $\chi \assign \gamma_{1}$ and $\psi \assign \gamma_{2}$, either $\gamma_{1} \nvdash_{\BD} \dmneg \gamma_{2} \vee \varphi$ or $\gamma_{2} \nvdash_{\BD} \dmneg \gamma_{2} \vee \varphi$. The latter case, where $\gamma_{2} \nvdash_{\BD} \varphi$, has already been dealt with. Suppose therefore that $\gamma_{1} \nvdash_{\BD} \dmneg \gamma_{2} \vee \varphi$.

  Now consider the following valuation $v$ on $\Kminusmatrix$. If $p$ and $\dmneg p$ are both literals of $\gamma_{1}$, take $v(p) \assign \mathsf{a}$. If $p$ but not $\dmneg p$ is a literal of $\gamma_{1}$, take $v(p) \assign \True$, while if $\dmneg p$ but not $p$ is a literal of $\gamma_{1}$, take $v(p) \assign \False$. For atoms such that neither $p$ nor $\dmneg p$ is a literal of $\gamma_{1}$, take $v(p) \assign \mathsf{b}$ if $p$ is a literal of $\gamma_{2}$ and $v(p) \assign \mathsf{c}$ if $\dmneg p$ is a literal of $\gamma_{2}$. (These two subcases are mutually exclusive, since $\gamma_{2}$ is not a classical contradiction.) For other atoms $p$ take arbitrary $v(p) \in \{ \mathsf{b}, \mathsf{c} \}$.

  We have $v(\gamma_{1}) = \mathsf{a}$, since $\gamma_{1}$ contains both $p$ and $\dmneg p$ for some atom $p$. Moreover, $v(\gamma_{2}) \in \{ \True, \mathsf{b} \}$, since $\gamma_{2}$ is a conjunction of literals $l$ with ${v(l) \in \{ \True, \mathsf{b} \}}$: if $l$ is a literal of both $\gamma_{1}$ and $\gamma_{2}$, then $\dmneg l$ is not a literal of $\gamma_{1}$, since $\gamma_{1} \nvdash_{\BD} \dmneg \gamma_{2}$. Thus $v(\gamma) = v(\gamma_{1} \vee \gamma_{2}) = \True$. But $v(\varphi) \in \{ \False, \mathsf{b}, \mathsf{c} \}$ because all literals take values in $\{ \False, \mathsf{b}, \mathsf{c}, \True \}$ and no literal of $\varphi$ takes the value $\True$ because $\gamma_{1} \nvdash_{\BD} \varphi$, so $\gamma \nvdash_{\Kminus} \varphi$.
\end{proof}

  A completeness theorem for $\Kminus$ now follows as a corollary. Let $\ETLplus_{n}$ with $n \geq 1$ be the extension of $\BD$ by the rule
\begin{align*}
  \chi_{n} \vee q, \dmneg q \vee r & \vdash r, & \text{where } \chi_{n} \assign (p_{1} \wedge \dmneg p_{1}) \vee \dots (p_{n} \wedge \dmneg p_{n}).
\end{align*}
  We call this rule the $n$-explosive disjunctive syllogism: as special cases it subsumes both the ordinary disjunctive syllogism $p, \dmneg p \vee q \vdash q$ and the rule of \emph{ex contradictione quodlibet} in the form $\chi_{n} \vdash q$.

   In particular, $\ETLplus_{1}$ is axiomatized by the rule $(p \wedge \dmneg p) \vee q, \dmneg q \vee r \vdash r$. Clearly $\ETLplus_{n} \logleq \ETLplus_{n+1}$. The join (union) of this chain of logics will be denoted $\ETLplus_{\omega}$.

\begin{fact}
  $\ETL_{n+1} < \ETLplus_{n}$.
\end{fact}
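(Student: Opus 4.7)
The plan is to prove the two halves of the strict inequality separately: first the inclusion $\ETL_{n+1} \leq \ETLplus_n$, then strictness via a separating matrix.

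For the inclusion, I will use the observation that for every $k \leq n$ the $k$-explosive disjunctive syllogism $\chi_k \vee q, \dmneg q \vee r \vdash r$ is a substitution instance of the $n$-explosive one, obtained by sending $p_{k+1}, \ldots, p_n$ to $\False$ and simplifying with $\False \wedge \dmneg \False \equiv \False$ in $\BD$. The case $k = 0$ is ordinary disjunctive syllogism, which gives $\ETL \leq \ETLplus_n$. It then suffices to derive $\chi_{n+1} \vdash \emptyset$ inside $\ETLplus_n$. The key move is the one-step reduction $\chi_k \vdash_{\ETLplus_n} \chi_{k-1}$ for each $1 \leq k \leq n+1$: apply the $(k-1)$-explosive disjunctive syllogism with $q := p_k$ and $r := \chi_{k-1}$, noting that both required premises $\chi_{k-1} \vee p_k$ and $\chi_{k-1} \vee \dmneg p_k$ are $\BD$-consequences of $\chi_k = \chi_{k-1} \vee (p_k \wedge \dmneg p_k)$. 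Iterating from $k = n+1$ down to $k = 1$ yields $\chi_{n+1} \vdash_{\ETLplus_n} \chi_1$, and $\chi_1 \vdash \emptyset$ holds already in $\ECQ \leq \ETLplus_n$.

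For strictness, I will exhibit a matrix that models $\ETL_{n+1}$ but invalidates the $n$-explosive disjunctive syllogism, namely the product $\ETLmatrix \times \Kmatrix$ with designated set $\{(\True, \True)\}$. Because $a \wedge \dmneg a \leq \mathsf{n}$ for every $a \in \Kthree$, the $\Kthree$-coordinate of $v(\chi_k)$ is never $\True$, so $\chi_k \vdash \emptyset$ holds in the product for every $k$; combined with the fact that disjunctive syllogism holds in both factors, this makes the product a model of $\ETL_{n+1}$. To see that the $n$-explosive rule fails, I exploit the paraconsistency of the $\DMfour$-coordinate, where $\mathsf{n} \vee \mathsf{b} = \True$ while $\mathsf{n} \wedge \dmneg \mathsf{n} = \mathsf{n}$ and $\mathsf{b} \wedge \dmneg \mathsf{b} = \mathsf{b}$. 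Choosing $v(p_1) = (\mathsf{b}, \mathsf{n})$ together with $v(p_i) = (\mathsf{n}, \mathsf{n})$ for $i \geq 2$ makes $v(\chi_n)$ equal $(\True, \mathsf{n})$ when $n \geq 2$ and $(\mathsf{b}, \mathsf{n})$ when $n = 1$; picking $v(q)$ and $v(r)$ so that the two premises of the rule both evaluate to $(\True, \True)$ while the first coordinate of $v(r)$ stays below $\True$ then falsifies the conclusion.

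The main conceptual obstacle is the choice of separating matrix. Simpler candidates such as $\Kmatrix$, $\LPmatrix$, or $\ETLmatrix$ itself either validate $n$-ExpDS outright or fail to model $\ETL_{n+1}$; the product $\ETLmatrix \times \Kmatrix$ works precisely because the conservative $\Kmatrix$-factor prevents $\chi_k$ from ever being designated, while the paraconsistent $\ETLmatrix$-factor retains enough structure to break the $n$-explosive rule. Once this matrix is in hand, the remaining verification reduces to a routine finite case check inside $\DMfour \times \Kthree$.
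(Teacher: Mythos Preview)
Your proof is correct. For the inclusion you do essentially what the paper does: the paper applies the $n$-explosive disjunctive syllogism once to get $\chi_{n+1} \vdash_{\ETLplus_n} \chi_n$ and then appeals to $\ETL_n \leq \ETLplus_n$, whereas you unroll this into the chain $\chi_{n+1} \vdash \chi_n \vdash \dots \vdash \chi_1$ and finish with $\ECQ$. (Minor wording quibble: iterating the step for $k=n+1,\dots,2$ already lands you at $\chi_1$; applying it once more for $k=1$ would give $\chi_0=\False$.)

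The strictness argument is where you genuinely diverge from the paper. The paper argues abstractly: since $\{\chi_n \vee q,\, \dmneg q \vee r\}$ is not a classical contradiction and $\ETL_\omega$ is an explosive extension of $\ETL$ whose antitheorems are exactly the classical contradictions, the $1$-explosive disjunctive syllogism fails in $\ETL_\omega \supseteq \ETL_{n+1}$. You instead exhibit the concrete separating matrix $\ETLmatrix \times \Kmatrix$ and verify by hand that it models $\ETL_{n+1}$ but not $\ETLplus_n$. The two routes are closely related---your matrix is in fact a characteristic matrix for $\ETL_\omega$ (the paper notes this in a footnote)---but your argument is self-contained and does not rely on the explosive-part machinery developed earlier, at the cost of a small explicit computation in $\DMfour \times \Kthree$. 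The paper's version is shorter given the surrounding theory; yours would stand on its own.
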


\begin{proof}
  We have $\chi_{n+1} \vdash_{\BD} \chi_{n} \vee p_{n+1}$ and $\chi_{n+1} \vdash_{\BD} \dmneg p_{n+1} \vee \chi_{n}$, hence $\chi_{n+1} \vdash_{\ETLplus_{n}} \chi_{n}$. But $\ETL_{n} \logleq \ETLplus_{n}$, so $\chi_{n+1} \vdash_{\ETLplus_{n}} \emptyset$. On the other hand, $\ETLplus_{1} \nleq \ETL_{\omega}$ because $\chi_{n} \vee q, \dmneg q \vee r$ is not a classical contradiction.
\end{proof}

  The inequalities $\ETLplus_{n} \logleq \ETLplus_{n+1}$ are in fact strict (Fact~\ref{fact: etlplus separation}). We postpone the proof of this fact until we have the appropriate tools to separate these logics.

\begin{proposition}[Completeness for $\Kminus$] \label{prop: kminus}
  The logic $\Kminus$ is axiomatized by the infinite set of rules ${\chi_{n} \vee q, \dmneg q \vee r \vdash r}$ for $n \geq 1$, i.e.\ $\Kminus = \ETLplus_{\omega}$.
\end{proposition}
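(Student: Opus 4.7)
The plan is to prove the two inclusions $\ETLplus_{\omega} \logleq \Kminus$ and $\Kminus \logleq \ETLplus_{\omega}$ using the syntactic description of consequence in $\Kminus$ established in Proposition~\ref{prop: consequence in kminus}, together with the lemma characterizing classical contradictions via the formulas $\chi_{n}$.

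The easy direction is $\ETLplus_{\omega} \logleq \Kminus$: each axiomatic rule $\chi_{n} \vee q, \dmneg q \vee r \vdash r$ of $\ETLplus_{\omega}$ was already shown to hold in the matrix $\Kminusmatrix$ at the start of the proof of Proposition~\ref{prop: consequence in kminus} (via the observation that $v(\chi_{n}) \leq \mathsf{a} \vee \mathsf{c}$ in $\Kminusmatrix$), so every rule of $\ETLplus_{\omega}$ is valid in $\Kminus$.

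For the nontrivial direction $\Kminus \logleq \ETLplus_{\omega}$, suppose $\Gamma \vdash_{\Kminus} \varphi$. By Proposition~\ref{prop: consequence in kminus} there exist a formula $\psi$ and a classical contradiction $\chi$ such that $\Gamma \vdash_{\BD} \chi \vee \psi$ and $\Gamma \vdash_{\BD} \dmneg \psi \vee \varphi$. By the lemma characterizing classical contradictions, there is a substitution $\sigma$ and some $n \geq 1$ with $\chi \vdash_{\BD} \sigma(\chi_{n})$, so by monotonicity we also have $\Gamma \vdash_{\BD} \sigma(\chi_{n}) \vee \psi$. Now apply the axiomatic rule $\chi_{n} \vee q, \dmneg q \vee r \vdash r$ of $\ETLplus_{\omega}$ under the substitution that sends each $p_{i}$ to $\sigma(p_{i})$, sends $q$ to $\psi$, and sends $r$ to $\varphi$ (after renaming variables of $\sigma(\chi_{n})$ if they clash with $\psi$ or $\varphi$, which is harmless since $\BD$ is structural). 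This yields $\sigma(\chi_{n}) \vee \psi, \dmneg \psi \vee \varphi \vdash_{\ETLplus_{\omega}} \varphi$, and combining with the two $\BD$-derivations above (and $\BD \logleq \ETLplus_{\omega}$) gives $\Gamma \vdash_{\ETLplus_{\omega}} \varphi$, as required.

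The main obstacle has essentially been absorbed into the preceding proposition: the real work of handling arbitrary premises and conclusions (reducing to conjunctive-clause premises and disjunctive-clause conclusions, and separately treating contradictory and non-contradictory conjuncts) was done there. After that, the present proposition reduces to the observation that every classical contradiction is $\BD$-above some substitution instance of $\chi_{n}$, which is exactly what allows the semantic criterion from Proposition~\ref{prop: consequence in kminus} to be matched with a single application of an $\ETLplus_{\omega}$-axiom.
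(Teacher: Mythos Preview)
Your proof is correct and is exactly the argument the paper has in mind: it presents this proposition as an immediate corollary of Proposition~\ref{prop: consequence in kminus} together with the lemma that every classical contradiction lies $\BD$-below some $\sigma(\chi_{n})$. The parenthetical about renaming variables is unnecessary (the atoms $p_{1},\dots,p_{n},q,r$ in the rule are distinct, so a single substitution sending them to $\sigma(p_{1}),\dots,\sigma(p_{n}),\psi,\varphi$ always exists), but this does not affect correctness.
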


  We may also axiomatize the intersections of the logics $\ETL$, $\ETLplus_{n}$, and $\Kminus$ with $\LP$. We shall use the notation $\KOminus \assign \LP \cap \Kminus$. This is because $\KOminus$ turns out to be the only lower cover of $\KO$.

\begin{proposition} \label{prop: lp cap etl}
  $\LP \cap \ETL$ is axiomatized by $p, \dmneg p \vee q \vee \dmneg q \vdash q \vee \dmneg q$.
\end{proposition}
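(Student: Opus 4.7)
The plan is to prove the two inclusions separately, writing $\logic{L}$ for the extension of $\BD$ by the displayed rule.

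The easy direction $\logic{L} \leq \LP \cap \ETL$ is a semantic check. In $\LP$ the conclusion $q \vee \dmneg q$ is a theorem, so the rule holds trivially; in $\ETL$ the rule is the substitution instance of disjunctive syllogism $p, \dmneg p \vee r \vdash r$ under $r := q \vee \dmneg q$, so it holds there as well.

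For the hard direction $\LP \cap \ETL \leq \logic{L}$, suppose $\Gamma \vdash_{\LP \cap \ETL} \varphi$; by finitarity I may assume $\Gamma$ is finite and put $\psi := \bigwedge \Gamma$. Propositions~\ref{prop: consequence in lp} and~\ref{prop: consequence in etl} then supply a classical tautology $\tau$ with $\psi, \tau \vdash_{\BD} \varphi$ and the $\BD$-identity $\psi \vdash_{\BD} \dmneg \psi \vee \varphi$. Since $\LP$ is axiomatized over $\BD$ by $\emptyset \vdash p \vee \dmneg p$, I may further choose $\tau = \bigwedge_i (\chi_i \vee \dmneg \chi_i)$. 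Applying the rule with $p := \psi$ and $q := \varphi$ (using $\psi \vdash_{\BD} \dmneg \psi \vee \varphi \vee \dmneg \varphi$ by disjunctive weakening) already yields $\Gamma \vdash_{\logic{L}} \varphi \vee \dmneg \varphi$; the strategy is to produce each $\chi_i \vee \dmneg \chi_i$ in $\logic{L}$ from $\Gamma$ in the same spirit, so that $\Gamma \vdash_{\logic{L}} \tau$ and combining with $\psi, \tau \vdash_{\BD} \varphi$ gives $\Gamma \vdash_{\logic{L}} \varphi$.

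The main obstacle I anticipate is the derivation of $\chi_i \vee \dmneg \chi_i$: applying the rule with $p := \psi$ and $q := \chi_i$ reduces the task to producing $\dmneg \psi \vee \chi_i \vee \dmneg \chi_i$ from $\Gamma$ in $\logic{L}$, which is not immediate from $\psi \vdash_{\BD} \dmneg \psi \vee \varphi$. I would attempt to bridge this by iterated applications of the rule with auxiliary $q$-formulas built from $\psi$, $\varphi$ and the $\chi_i$'s, together with the conjunctive and disjunctive normal form calculus of Proposition~\ref{prop: consequence in bd}. If the direct syntactic route becomes too delicate, a semantic fallback invokes Theorem~\ref{thm: gratzer-quackenbush} and the local finiteness of De~Morgan algebras: it suffices to show that every finite reduced model of $\logic{L}$ lies in $\AlgS^{*} \AlgP_{\omega}^{*} \{\LPmatrix, \ETLmatrix\}$, which can be approached by splitting on whether the model's filter validates the law of the excluded middle (making it an $\LP$-model) and, if not, extracting from the contrapositive of the rule enough structure on the filter to embed the model into a product of $\LPmatrix$- and $\ETLmatrix$-style components.
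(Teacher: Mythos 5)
Your easy direction is fine, and your setup of the hard direction (invoking Propositions~\ref{prop: consequence in lp} and~\ref{prop: consequence in etl}) is the right starting point, but the plan stalls exactly where you say it does, and the target you chose is the wrong one. Deriving the witnessing tautology $\tau$ from $\Gamma$ inside $\logic{L}$ is in general impossible: since $\logic{L} \logleq \ETL$ and already $p \nvdash_{\ETL} q \vee \dmneg q$ (assign $p$ the top of $\ETLmatrix$ and $q$ the negation fixpoint), $\LP \cap \ETL$ does not prove arbitrary classical tautologies from arbitrary premises. The only way $\Gamma \vdash_{\logic{L}} \tau$ could hold is when $\tau$ is essentially $\varphi$ itself, which makes the plan circular. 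No amount of iterating the rule with auxiliary $q$'s will fix this.

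The missing idea is to analyse the \emph{conclusion} rather than manufacture the tautological \emph{premise}. Write $\varphi$ as a conjunction of disjunctive clauses $\varphi_{i}$ and treat each clause separately. If $\Gamma \nvdash_{\BD} \varphi_{i}$, then applying Proposition~\ref{prop: consequence in bd} to $\Gamma, \tau \vdash_{\BD} \varphi_{i}$ forces $\tau \vdash_{\BD} \varphi_{i}$, so $\varphi_{i}$ is itself a classical tautology and hence contains a complementary pair of literals: $\varphi_{i}$ is $\BD$-equivalent to $q_{i} \vee \dmneg q_{i} \vee \psi_{i}$. Now the $\ETL$-witness $\delta$ with $\Gamma \vdash_{\BD} \delta$ and $\delta \vdash_{\BD} \dmneg \delta \vee q_{i} \vee \dmneg q_{i} \vee \psi_{i}$ feeds directly into the rule in the form $p, \dmneg p \vee q \vee \dmneg q \vee r \vdash q \vee \dmneg q \vee r$, which is equivalent to the stated rule because $q \vee \dmneg q \vee r$ is itself of the form $s \vee \dmneg s$ (its De~Morgan negation lies below it, so $s \assign q \vee \dmneg q \vee r$ works). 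This derives each $\varphi_{i}$, hence $\varphi$. Your semantic fallback states a correct sufficiency criterion via Theorem~\ref{thm: gratzer-quackenbush}, but as written it is only a restatement of the problem, not an argument.
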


\begin{proof}
  Suppose that $\Gamma \vdash_{\LP \cap \ETL} \varphi$ and $\Gamma \nvdash_{\BD} \varphi$. In $\BD$ the formula $\varphi$ is equivalent to a conjunction of disjunctive clauses $\varphi_{i}$ for $i \in I$. Then $\Gamma, \tau \vdash_{\BD} \varphi$ for some classical tautology $\tau$ by Proposition~\ref{prop: consequence in lp}. But $\Gamma \nvdash_{\BD} \varphi$, so ${\tau \vdash_{\BD} \varphi_{i}}$ by Proposition \ref{prop: consequence in bd}. Each $\varphi_{i}$ is thus a tautology, hence it is equivalent to $q_{i} \vee \dmneg q_{i} \vee \psi_{i}$ for some atom $q_{i}$ and some formula $\psi_{i}$. By Proposition \ref{prop: consequence in etl}, $\Gamma \vdash_{\ETL} \varphi$ implies that $\Gamma \vdash_{\BD} \delta$ and $\Gamma \vdash_{\BD} \dmneg \delta \vee q \vee \dmneg q \vee \psi_{i}$ for some $\delta$. The rule $p, \dmneg p \vee q \vee \dmneg q \vee r \vdash q \vee \dmneg q \vee r$ then suffices to derive $\varphi_{i}$ from $\Gamma$. The equivalence of this rule and the rule $p, \dmneg p \vee q \vee \dmneg q \vdash q \vee \dmneg q$ follows from the fact that in any De Morgan algebra, the elements of the form $q \vee \dmneg q \vee r$ are precisely the elements of the form $q \vee \dmneg q$. Finally, we can derive $\varphi$ from the set of formulas $\set{\varphi_{i}}{i \in I}$ in $\BD$.
\end{proof}

\begin{proposition}[Completeness for $\LP \cap \ETLplus_{n}$]
  The logic $\LP \cap \ETLplus_{n}$ is axiomatized by the infinite set of rules $\chi_{n} \vee q, \dmneg q \vee r \vee \dmneg r \vdash r \vee \dmneg r$ for $n \geq 1$.
\end{proposition}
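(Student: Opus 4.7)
The soundness direction is immediate: the proposed rule is the substitution instance $r \mapsto r \vee \dmneg r$ of the defining rule $\chi_{n} \vee q, \dmneg q \vee r \vdash r$ of $\ETLplus_{n}$, so it is valid there; and since $r \vee \dmneg r$ is a theorem of $\LP$, the rule is trivially valid in $\LP$ as well.

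For completeness, I would follow the strategy of Proposition \ref{prop: lp cap etl}. Assume $\Gamma \vdash_{\LP \cap \ETLplus_{n}} \varphi$. Using that every formula has a conjunctive normal form in $\BD$, it suffices to derive each disjunctive clause $\varphi_{i}$ of the CNF of $\varphi$ from $\Gamma$ in the proposed axiomatization. Combining $\Gamma \vdash_{\LP} \varphi_{i}$ with Proposition \ref{prop: consequence in lp} yields $\Gamma, \tau \vdash_{\BD} \varphi_{i}$ for some classical tautology $\tau$, and Proposition \ref{prop: consequence in bd} then forces either $\gamma \vdash_{\BD} \varphi_{i}$ for some $\gamma \in \Gamma$, which finishes the clause outright in $\BD$, or $\tau \vdash_{\BD} \varphi_{i}$, so $\varphi_{i}$ itself is a classical tautology. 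In the latter case $\varphi_{i}$ contains complementary literals, hence $\varphi_{i} \equiv p \vee \dmneg p \vee \psi$ in $\BD$; by the observation from Proposition \ref{prop: lp cap etl} that elements of the form $q \vee \dmneg q \vee r$ coincide with elements of the form $q \vee \dmneg q$ in any De Morgan algebra, $\varphi_{i}$ is then $\BD$-equivalent to $\varphi_{i} \vee \dmneg \varphi_{i}$.

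The key auxiliary step, and the main technical obstacle, is an $\ETLplus_{n}$-analog of Proposition \ref{prop: consequence in kminus}: namely, $\Gamma \vdash_{\ETLplus_{n}} \varphi$ if and only if there exist a substitution $\sigma$ and a formula $\delta$ with $\Gamma \vdash_{\BD} \sigma(\chi_{n}) \vee \delta$ and $\Gamma \vdash_{\BD} \dmneg \delta \vee \varphi$. This should be a straightforward adaptation of the proof of Proposition \ref{prop: consequence in kminus}, using the proof by cases property of $\BD$ (Fact \ref{fact: pcp}) and conjunctive/disjunctive normal forms to reduce to the situation where the premise is a disjunction of two conjunctive clauses and the conclusion is a disjunctive clause, with the role of an arbitrary classical contradiction played throughout by a substitution instance of $\chi_{n}$.

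Granting this lemma and applying it to $\Gamma \vdash_{\ETLplus_{n}} \varphi_{i}$, choose $\sigma$ and $\delta$ with $\Gamma \vdash_{\BD} \sigma(\chi_{n}) \vee \delta$ and $\Gamma \vdash_{\BD} \dmneg \delta \vee \varphi_{i}$. The substitution instance of the proposed rule obtained via $p_{j} \mapsto \sigma(p_{j})$, $q \mapsto \delta$, and $r \mapsto \varphi_{i}$ reads
\[
  \sigma(\chi_{n}) \vee \delta,\ \dmneg \delta \vee \varphi_{i} \vee \dmneg \varphi_{i} \vdash \varphi_{i} \vee \dmneg \varphi_{i},
\]
and both premises are $\BD$-derivable from $\Gamma$ (the second by weakening). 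Hence $\Gamma$ proves $\varphi_{i} \vee \dmneg \varphi_{i}$ in the proposed axiomatization, and this is $\BD$-equivalent to $\varphi_{i}$, completing the argument.
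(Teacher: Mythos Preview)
Your proposed $\ETLplus_{n}$-analogue of Proposition~\ref{prop: consequence in kminus} is false for each fixed $n$, and the adaptation is not straightforward: the first auxiliary claim in the proof of Proposition~\ref{prop: consequence in kminus} forms the disjunction $\chi \assign \chi_{1} \vee \chi_{2} \vee \chi_{3}$ of three classical contradictions and uses that $\chi$ is again a contradiction. For fixed $n$ the disjunction of three substitution instances of $\chi_{n}$ is only an instance of $\chi_{3n}$, not of $\chi_{n}$, so this closure property fails and the reduction to two conjunctive clauses collapses.

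Here is a concrete counterexample for $n = 1$. Two applications of the $\ETLplus_{1}$-rule give $\chi_{2} \vdash_{\ETLplus_{1}} s$ for a fresh atom $s$ (first derive $p_{1} \wedge \dmneg p_{1}$ from $\chi_{2}$, then $s$), yet there exist no formulas $\alpha, \delta$ with $\chi_{2} \vdash_{\BD} (\alpha \wedge \dmneg \alpha) \vee \delta$ and $\chi_{2} \vdash_{\BD} \dmneg \delta \vee s$. Let $\sigma$ be the automorphism of $\DMfour$ swapping the two fixpoints of $\dmneg$, and consider the valuations $v, w$ on $\BDmatrix$ that assign the two fixpoints to $p_{1}, p_{2}$ in opposite orders and $\False$ to every other atom (including $s$). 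Then $w(\beta) = \sigma(v(\beta))$ for every formula $\beta$; both valuations designate $\chi_{2}$ but not $s$. Chasing the two $\BD$-entailments under $v$ forces $v(\delta)$ to be either $\False$ or the designated fixpoint, and in the former case forces $v(\alpha)$ to be the designated fixpoint as well. Since $\sigma$ fixes $\False$ and swaps the fixpoints, whichever alternative holds for $v$ is violated for $w$. Hence the single-step description of $\ETLplus_{n}$-consequence fails, and with it your argument. The paper's own proof does not rely on any such $\ETLplus_{n}$-specific characterization; it invokes the full Proposition~\ref{prop: consequence in kminus} for $\Kminus$ directly.
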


\begin{proof}
  Suppose that $\Gamma \vdash_{\KOminus} \varphi$ and $\Gamma \nvdash_{\BD} \varphi$. We may assume that $\varphi$ is a disjunctive clause, as in the proof of Proposition~\ref{prop: lp cap etl}. Then $\Gamma, \tau \vdash_{\BD} \varphi$ for some tautology $\tau$ by Proposition~\ref{prop: consequence in lp}, hence $\tau \vdash_{\BD} \varphi$ by Proposition~\ref{prop: consequence in bd}. It~follows that $\varphi$ is equivalent to $r \vee \dmneg r \vee \alpha$ for some atom $r$ and some formula~$\alpha$. Then $\Gamma \vdash_{\Kminus} \varphi$ implies that $\Gamma \vdash_{\BD} \chi \vee \psi$ and $\Gamma \vdash_{\BD} \dmneg \psi \vee r \vee \dmneg r \vee \alpha$ for some formula $\psi$ and some contradiction $\chi$ by Proposition~\ref{prop: consequence in kminus}. The~formula $\varphi$ is thus derivable from $\Gamma$ in the extension of $\BD$ by the rules $\chi_{n} \vee q, \dmneg q \vee r \vee \dmneg r \vee s \vdash r \vee \dmneg r \vee s$ for $n \in \omega$. But these rules are derivable from the simpler rules $\chi_{n} \vee q, \dmneg q \vee r \vee \dmneg r \vdash r \vee \dmneg r$, as in the proof of Proposition~\ref{prop: lp cap etl}.
\end{proof}

\begin{proposition}[Completeness for $\KOminus$] \label{prop: kominus}
  The logic $\KOminus \assign \LP \cap \Kminus$ is axiomatized by the infinite set of rules $\chi_{n} \vee q, \dmneg q \vee r \vee \dmneg r \vdash r \vee \dmneg r$ for $n \geq 1$.
\end{proposition}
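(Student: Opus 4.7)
The plan is to follow the proof of the previous proposition (on $\LP \cap \ETLplus_n$) essentially verbatim, using the completeness theorem for $\Kminus$ (Proposition~\ref{prop: consequence in kminus}) in place of its $n$-bounded analogue. What needs to change is only the place where a classical contradiction is produced: instead of being a substitution instance of a fixed $\chi_n$, it will be a substitution instance of $\chi_n$ for some $n \geq 1$, which is exactly what the full family of rules $\chi_n \vee q, \dmneg q \vee r \vee \dmneg r \vdash r \vee \dmneg r$ can handle.

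First I would suppose $\Gamma \vdash_{\KOminus} \varphi$ with $\Gamma \nvdash_{\BD} \varphi$. Using the $\BD$-equivalence of each formula with a conjunction of disjunctive clauses together with the proof by cases property, I reduce to the case where $\varphi$ is a disjunctive clause. From $\Gamma \vdash_{\LP} \varphi$ and Proposition~\ref{prop: consequence in lp}, $\Gamma, \tau \vdash_{\BD} \varphi$ for a classical tautology $\tau$; combined with $\Gamma \nvdash_{\BD} \varphi$ and Proposition~\ref{prop: consequence in bd} applied to the disjunctive clause $\varphi$, this forces $\tau \vdash_{\BD} \varphi$, so $\varphi$ is itself a classical tautology. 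Hence $\varphi$ is $\BD$-equivalent to $r \vee \dmneg r \vee \alpha$ for some atom $r$ and some formula $\alpha$.

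Next, from $\Gamma \vdash_{\Kminus} \varphi$ and Proposition~\ref{prop: consequence in kminus}, I obtain a formula $\psi$ and a classical contradiction $\chi$ such that $\Gamma \vdash_{\BD} \chi \vee \psi$ and $\Gamma \vdash_{\BD} \dmneg \psi \vee r \vee \dmneg r \vee \alpha$. By the lemma characterizing classical contradictions, $\chi \vdash_{\BD} \sigma(\chi_n)$ for some substitution $\sigma$ and some $n \geq 1$. A single application of a substitution instance of the rule $\chi_n \vee q, \dmneg q \vee r \vee \dmneg r \vee s \vdash r \vee \dmneg r \vee s$ (with $q \mapsto \psi$, $s \mapsto \alpha$) then yields $\varphi$ from $\Gamma$.

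It remains to derive the rules $\chi_n \vee q, \dmneg q \vee r \vee \dmneg r \vee s \vdash r \vee \dmneg r \vee s$ with the auxiliary disjunct $s$ from the simpler rules $\chi_n \vee q, \dmneg q \vee r \vee \dmneg r \vdash r \vee \dmneg r$ listed in the statement. This is precisely the reduction used at the end of the proof of Proposition~\ref{prop: lp cap etl}: since the elements of the form $r \vee \dmneg r \vee s$ in a De Morgan algebra coincide with the elements of the form $r' \vee \dmneg r'$ (take $r' \assign r \vee s \wedge (r \vee \dmneg s)$ or argue semantically), one reads off the reduction formally. Since no genuinely new difficulty arises, the main obstacle is simply bookkeeping — keeping track of the free atom $r$ in $\varphi$ and the atoms $p_1, \ldots, p_n$ that show up inside the contradiction $\chi$ so that the substituted rule instance is well-formed.
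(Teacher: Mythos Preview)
Your proposal is correct and follows the paper's approach essentially verbatim: reduce to a disjunctive clause, use $\LP$ to force $\varphi$ to be a tautology of the form $r \vee \dmneg r \vee \alpha$, use Proposition~\ref{prop: consequence in kminus} to extract $\chi$ and $\psi$, then fire the appropriate $n$-th rule and eliminate the auxiliary disjunct~$s$ via the De~Morgan identity as in Proposition~\ref{prop: lp cap etl}. One small caution: your explicit candidate $r' \assign r \vee s \wedge (r \vee \dmneg s)$ is not the right witness (and is ambiguous as written); the paper just invokes the semantic fact that in any De~Morgan algebra the elements of the form $r \vee \dmneg r \vee s$ coincide with those of the form $r' \vee \dmneg r'$, so you should simply cite that rather than commit to a specific term.
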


\section{The lattice of super-Belnap logics}
\label{sec: super-belnap lattice}

  In this section, we study the global structure of the lattice of super-Belnap logics. We prove that lattice of non-trivial proper extensions of $\BD$ \emph{splits} into the three disjoint intervals $[\LP \cap \ECQ, \LP]$, $[\ECQ, \LP \vee \ECQ]$, and $[\ETL, \CL]$. That is, each logic in $[\BD, \CL]$ lies in exactly one of these intervals. Moreover, the lattice of non-trivial proper extensions of $\ETL$ has the structure $[\ETL_{2}, \Kminus] < \K < \CL$. We then identify some other splittings of $\Ext \BD$ and use these results to list all super-Belnap logics which satisfy various natural metalogical properties.

  We first provide an example which shows that the lattice of super-Belnap logics is non-modular (and therefore non-distributive). Recall that a lattice is \emph{modular} if it satisfies the equation $(a \wedge b) \vee c = (a \vee c) \wedge b$ for $c \leq b$.

\begin{proposition}
  $(\LP \cap \ETL) \vee \ECQ < (\LP \vee \ECQ) \cap \ETL$.
\end{proposition}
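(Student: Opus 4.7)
The plan is first to note that the inclusion $\logleq$ is automatic from the general lattice inequality applied to $\ECQ \logleq \ETL$, and then to exhibit a concrete super-Belnap rule that belongs to $(\LP \vee \ECQ) \cap \ETL$ but not to $(\LP \cap \ETL) \vee \ECQ$.

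Before searching for such a rule I would reduce both sides to unions. Using $\Exp_{\BD} \ETL = \ECQ$ together with Fact~\ref{fact: join with lexp} applied to $\LP \cap \ETL \logleq \ETL$, the left-hand side simplifies to $(\LP \cap \ETL) \cup \ECQ$. By the earlier identity $\LP \vee \ECQ = \LP \cup \ECQ_{\omega}$ (which follows from $\Exp_{\BD} \CL = \ECQ_{\omega}$ and Fact~\ref{fact: join with lexp} applied to $\LP \logleq \CL$), the right-hand side simplifies to $(\LP \cup \ECQ_{\omega}) \cap \ETL$. Consequently a rule lies in the left-hand side iff it holds in $\LP \cap \ETL$ or in $\ECQ$, and it lies in the right-hand side iff it holds in $\ETL$ and in either $\LP$ or $\ECQ_{\omega}$.

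Since any rule valid in both $\LP$ and $\ETL$ already lies in $\LP \cap \ETL$, it suffices to find a rule valid in $\ETL$ and in $\ECQ_{\omega}$ but neither in $\LP$ nor in $\ECQ$. The completeness theorem $\ECQ = \Log (\ETLmatrix \times \BDmatrix)$ tells me that any antitheorem of $\ETL$ is automatically an antitheorem of $\ECQ$; hence the premise set $\Gamma$ must be a classical contradiction (to land in $\ECQ_{\omega}$) yet be designatable in $\ETLmatrix$ (to escape $\ECQ$), and the derivation of the conclusion in $\ETL$ must then proceed by a genuine instance of disjunctive syllogism rather than by $\Gamma$ being an antitheorem. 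Recognising that these constraints can simultaneously be met is the main hurdle; once a candidate is in hand, the individual verifications are short valuation chases.

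A concrete candidate meeting all four constraints is the rule $p, \dmneg p \vee \chi_{2} \vdash \chi_{2}$, where $\chi_{2} = (x \wedge \dmneg x) \vee (y \wedge \dmneg y)$. Validity in $\ETL$ follows directly in $\ETLmatrix$: if both premises evaluate to $\True$, then $v(p) = \True$ forces $v(\dmneg p) = \False$, so $v(\chi_{2})$ must equal $\True$. Membership in $\ECQ_{\omega}$ reduces to the inequality $p \wedge (\dmneg p \vee \chi_{2}) \logleq \chi_{3}(p, x, y)$, which holds in every De~Morgan algebra by distributivity. Failure in $\LP$ is witnessed in $\LPmatrix$ by the valuation sending $p$ to Both and $x, y$ to $\False$, which designates both premises but leaves the conclusion undesignated. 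Finally, failure in $\ECQ$ is witnessed in $\ETLmatrix \times \BDmatrix$ by a valuation whose first coordinates realise $v(\chi_{2}) = \True$ (say $x \mapsto$~Both and $y \mapsto$~Neither) and whose second coordinates send $x$ and $y$ to $\False$; then both premises are designated in the product while the conclusion $v(\chi_{2}) = (\True, \False)$ has an undesignated second coordinate.
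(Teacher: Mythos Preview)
Your proof is correct and follows essentially the same approach as the paper: both reduce the left-hand side to $(\LP \cap \ETL) \cup \ECQ$ via Fact~\ref{fact: join with lexp}, rewrite the right-hand side using $\LP \vee \ECQ = \LP \cup \ECQ_{\omega}$, and then exhibit a separating rule that is valid in $\ETL$ and in $\ECQ_{\omega}$ but in neither $\LP$ nor $\ECQ$. The paper chooses the rule $(p_{1} \wedge \dmneg p_{1}) \vee (p_{2} \wedge \dmneg p_{2}),\, q,\, \dmneg q \vee r \vdash r$, keeping $\chi_{2}$ as a stand-alone premise and applying disjunctive syllogism to the remaining two; you instead fold $\chi_{2}$ into the disjunctive-syllogism premise via $p,\, \dmneg p \vee \chi_{2} \vdash \chi_{2}$. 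Both choices work for the same underlying reason, namely that $\chi_{2}$ can be designated in $\ETLmatrix$ (so the premise set is not an antitheorem of $\ECQ$) while still being a classical contradiction.

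One small omission: in your counterexample on $\ETLmatrix \times \BDmatrix$ you specify the values of $x$ and $y$ but not of $p$. You should say explicitly that $p$ is sent to $(\True, \text{Both})$; then $\dmneg p \vee \chi_{2}$ evaluates to $(\True, \text{Both})$ as well, so both premises are designated while the conclusion $(\True, \False)$ is not. Without this, the claim ``both premises are designated in the product'' is not yet justified.
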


\begin{proof}
  By Fact~\ref{fact: join with lexp} we have $(\LP \cap \ETL) \vee \ECQ = (\LP \cap \ETL) \vee \Exp \ETL = (\LP \cap \ETL) \cup \Exp \ETL = (\LP \cap \ETL) \cup \ECQ$ and $(\LP \vee \ECQ) \cap \ETL = (\LP \cup \ECQ_{\omega}) \cap \ETL = (\LP \cap \ETL) \cup (\ECQ_{\omega} \cap \ETL)$. It therefore suffices to find $\Gamma$ and $\varphi$ such that $\Gamma \vdash_{\ECQ_{\omega} \cap \ETL} \varphi$ but $\Gamma \nvdash_{\ECQ} \varphi$ and $\Gamma \nvdash_{\LP} \varphi$.

  The rule $(p_1 \wedge \dmneg p_1) \vee (p_2 \wedge \dmneg p_2), q, \dmneg q \vee r \vdash r$ holds in $\ECQ_{\omega} \cap \ETL$ but not in $\LP = \Log \LPmatrix$. Moreover, $\ECQ = \Log \ETLmatrix \times \BDmatrix$ and there is a valuation on $\ETLmatrix \times \BDmatrix$ which designates $(p_1 \wedge \dmneg p_1) \vee (p_2 \wedge \dmneg p_2)$, therefore the rule in question holds in $\ECQ$ only if $q, \dmneg q \vee r \vdash_{\ECQ} r$. But $q, \dmneg q \vee r \nvdash_{\ECQ} r$.
\end{proof}

\begin{corollary}
  $\Ext_{\omega} \BD$ is not a modular lattice.
\end{corollary}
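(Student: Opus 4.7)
The plan is to derive this immediately from the preceding proposition by instantiating the modular law. Recall that modularity of a lattice $L$ requires $(a \wedge b) \vee c = (a \vee c) \wedge b$ whenever $c \logleq b$. I would set $a \assign \LP$, $b \assign \ETL$, and $c \assign \ECQ$.

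First I would verify that the hypothesis $c \logleq b$ holds, i.e.\ that $\ECQ \logleq \ETL$; this was already noted in Section~\ref{sec: completeness} (indeed, $\ECQ = \Exp_{\BD} \ETL$ by the proposition computing explosive parts, and the explosive part operator is an interior operator, so $\Exp_{\BD} \ETL \logleq \ETL$). I would also briefly note that $\LP$, $\ETL$, and $\ECQ$ are all finitary (each being axiomatized over $\BD$ by a single finite rule, respectively the excluded middle, disjunctive syllogism, and $p, \dmneg p \vdash q$), so they all lie in $\Ext_{\omega} \BD$.

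If $\Ext_{\omega} \BD$ were modular, applying the modular law to $a, b, c$ as above would yield $(\LP \cap \ETL) \vee \ECQ = (\LP \vee \ECQ) \cap \ETL$. But the preceding proposition establishes the strict inequality $(\LP \cap \ETL) \vee \ECQ < (\LP \vee \ECQ) \cap \ETL$, a contradiction. There is no real obstacle here — the entire content of the corollary is packaged into the preceding proposition, and the only thing to check is that the triple $(\LP, \ETL, \ECQ)$ fits the premise of the modular law. Since modularity is a weakening of distributivity, the corollary also immediately implies that $\Ext_{\omega} \BD$ is non-distributive.
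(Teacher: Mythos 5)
Your proof is correct and is exactly the argument the paper intends: the corollary is an immediate instantiation of the modular law with $a = \LP$, $b = \ETL$, $c = \ECQ$ (noting $\ECQ \logleq \ETL$), contradicted by the strict inequality $(\LP \cap \ETL) \vee \ECQ < (\LP \vee \ECQ) \cap \ETL$ of the preceding proposition. Your side remarks — that all three logics are finitary and hence live in $\Ext_{\omega} \BD$, and that non-modularity implies non-distributivity — are accurate and match the paper's framing.
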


  We now identify some natural splittings of $\Ext \BD$. We already know that $[\BD, \CL]$ splits into $[\BD, \LP]$ and $[\ECQ, \CL]$ (Proposition~\ref{prop: ecq lp splitting}).

  In the following, $\theta(a, b)$ for $a, b \in \alg{A}$ will denote the principal congruence of the De~Morgan algebra $\alg{A}$ generated by the pair $\pair{a}{b}$. A result of Sankappanavar~\cite{sankappanavar80} states that for $a \leq b$ we have $\pair{x}{y} \in \theta(a, b)$ if and only if the following equations are satisfied:
\begin{align*}
  x \wedge a \wedge \dmneg b & = y \wedge a \wedge \dmneg b, & (x \wedge a) \vee \dmneg a & = (y \wedge a) \vee \dmneg a, \\
  x \vee b \vee \dmneg a & =  y \vee b \vee \dmneg a, & (x \vee b) \wedge \dmneg b & = (y \vee b) \wedge \dmneg b.
\end{align*}

\begin{proposition}
  $\LP \cap \ECQ$ is the smallest proper extension of $\BD$.
\end{proposition}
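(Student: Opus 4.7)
The plan is to combine the splitting of $\Ext\BD$ from Proposition~\ref{prop: ecq lp splitting} with a submatrix-plus-Leibniz-quotient argument. First, by Proposition~\ref{prop: axiomatizing cap exp}, since $\LP$ is axiomatized over $\BD$ by the rule $\emptyset \vdash p \vee \dmneg p$ and $\ECQ$ by the variable-disjoint explosive rule $q, \dmneg q \vdash \emptyset$, the logic $\LP \cap \ECQ$ is axiomatized relative to $\BD$ by the single rule $q, \dmneg q \vdash p \vee \dmneg p$. This rule fails in $\BDmatrix$ under the valuation $q \mapsto \mathrm{Both}$, $p \mapsto \mathrm{Neither}$, witnessing $\BD < \LP \cap \ECQ$.

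Now let $\logic{L}$ be an arbitrary proper extension of $\BD$. By Proposition~\ref{prop: ecq lp splitting}, either $\ECQ \leq \logic{L}$ — in which case $\LP \cap \ECQ \leq \ECQ \leq \logic{L}$ — or $\logic{L} \leq \LP$. The remaining work is in the second case, which I handle by contradiction: assume $\LP \cap \ECQ \nleq \logic{L}$. Then the rule $q, \dmneg q \vdash p \vee \dmneg p$ fails in some reduced model $\pair{\alg{A}}{F}$ of $\logic{L}$, witnessed by $a, b \in \alg{A}$ with $a, \dmneg a \in F$ and $b \vee \dmneg b \notin F$. My goal is to extract $\BDmatrix$ from $\pair{\alg{A}}{F}$, contradicting $\BD < \logic{L}$.

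Normalising, set $a' := a \wedge \dmneg a$ and $b' := b \vee \dmneg b$, so $a' \leq \dmneg a'$ with both in $F$, and $\dmneg b' \leq b'$ with both outside $F$. Let $\matr{B} := \pair{\alg{B}}{F \cap \alg{B}}$, where $\alg{B}$ is the subalgebra of $\alg{A}$ generated by $\{a', b'\}$. As a submatrix of a model of $\logic{L}$, $\matr{B} \in \Mod \logic{L}$, and hence so is its Leibniz reduct $\matr{B}^*$. The heart of the argument is to check that $\matr{B}^* \cong \BDmatrix$, which immediately yields $\BDmatrix \in \Mod \logic{L}$ and the desired contradiction.

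To verify $\matr{B}^* \cong \BDmatrix$, the key observation is a dichotomy: by distributivity each element of $\alg{B}$ is a join of meets of subsets of $\{a', \dmneg a', b', \dmneg b'\}$, and the relations $a' \leq \dmneg a'$, $\dmneg b' \leq b'$, together with $a' \nleq b'$ (which is forced by $a' \in F$ and $b' \notin F$), imply that each element of $\alg{B}$ is either ${\geq}\, a'$ (hence in $F$) or ${\leq}\, b'$ (hence outside $F$). The four bit-pairs $\bigl(\mathbf{1}_{x \in F},\, \mathbf{1}_{\dmneg x \in F}\bigr)$ therefore partition $\alg{B}$ into the nonempty intervals $[a', \dmneg a']$, $[a' \vee \dmneg b', \True]$, $[\False, \dmneg a' \wedge b']$, and $[\dmneg b', b']$, and a case analysis using this dichotomy together with distributivity shows that this partition is a congruence compatible with $F \cap \alg{B}$ whose quotient matrix is exactly $\BDmatrix$. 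Since $\BDmatrix$ is reduced, this partition must coincide with the Leibniz congruence, giving $\matr{B}^* \cong \BDmatrix$. The bulk of the work is the case analysis for $\wedge$ (the case for $\dmneg$ is immediate from the bit-pair symmetry, and $\vee$ follows by De Morgan duality); the pivotal step in each subcase is the dichotomy $\geq a'$ or $\leq b'$ for arbitrary elements of $\alg{B}$.
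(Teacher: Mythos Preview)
Your proof is correct and takes a genuinely different route from the paper's.

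The paper does not invoke the splitting from Proposition~\ref{prop: ecq lp splitting}; it works directly from the assumption $\LP \cap \ECQ \nlogleq \logic{L}$, picks a reduced model $\pair{\alg{A}}{F}$ with witnesses $a \in F$, $b \notin F$ satisfying $a \leq \dmneg a$ and $\dmneg b \leq b$, and then uses Sankappanavar's description of principal congruences on De~Morgan algebras to show that reducedness of $\pair{\alg{A}}{F}$ forces $a = \dmneg a$. It then introduces the single element $c \assign (a \wedge b) \vee \dmneg b$, checks $c = \dmneg c$, and observes that the subalgebra generated by $\{a, c\}$ has at most six elements; quotienting by the explicit principal congruence $\theta(a \vee c, \True)$ yields $\BDmatrix$.

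Your argument avoids Sankappanavar's congruence description entirely: you never reduce to the fixpoint case $a' = \dmneg a'$, but work with the bare inequalities $a' \leq \dmneg a'$ and $\dmneg b' \leq b'$ and let the dichotomy ``every element of $\alg{B}$ is ${\geq}\,a'$ or ${\leq}\,b'$'' do the structural work. The key consequence of the dichotomy --- which you use implicitly in the congruence case analysis --- is that $F \cap \alg{B}$ is the \emph{prime} principal filter generated by $a'$, so membership in $F$ respects $\vee$ as well as $\wedge$; this is exactly what makes the bit-pair partition a congruence. Your route is more conceptual and self-contained (no external congruence lemma), while the paper's is more concrete and produces an explicit small subalgebra with a named principal congruence. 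One minor redundancy: the appeal to Proposition~\ref{prop: ecq lp splitting} is not actually needed --- the assumption $\LP \cap \ECQ \nlogleq \logic{L}$ alone already furnishes the required witnesses in a reduced model, with no preliminary case split.
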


\begin{proof}
  Suppose that $\LP \cap \ECQ \nlogleq \logic{L}$. Then $p \wedge \dmneg p \nvdash_{\logic{L}} q \vee \dmneg q$, so $\logic{L}$ has a reduced model $\pair{\alg{A}}{F}$ with $a \in F$ and $b \notin F$ such that $a \leq \dmneg a$ and~$\dmneg b \leq b$. The~congruence $\theta(a, \dmneg a)$ is compatible with $F$: if $x \in F$ and $\pair{x}{y} \in \theta(a, \dmneg a)$, then $x \wedge a = y \wedge a$ by the above description of principal congruences on De~Morgan algebras. Since $x \wedge a \in F$, we have $y \in F$. Since the matrix $\pair{\alg{A}}{F}$ is reduced, the congruence $\theta(a, \dmneg a)$ is the identity relation, therefore $a = \dmneg a$.

  Consider the submatrix $\pair{\alg{B}}{G}$ of $\pair{\alg{A}}{F}$ generated by $a$ and $c = (a \wedge b) \vee \dmneg b$. Note that $c = \dmneg c = (a \vee \dmneg b) \wedge b$. The elements $a$ and $c$ are distinct, since $a \in F$ but $c \notin F$ (because $b \notin F$). The universe of $\alg{B}$ is the set $\{ \False, a \wedge c, a, c, a \vee c,  \True \}$ and $G \assign F \cap \alg{B} = \{ a, a \vee c, \True \}$. The congruence $\theta(a \vee c, \True)$ is compatible with $G$ and the matrix $\pair{\alg{B} / \theta(a \vee c, \True)}{G / \theta(a \vee c, \True)}$ is isomorphic to~$\BDmatrix$. Therefore $\BDmatrix$ is a model of $\logic{L}$ and $\logic{L} \logleq \BD$.
\end{proof}

\pagebreak

\begin{proposition}
  $\CL$ is the largest non-trivial extension of $\BD$.
\end{proposition}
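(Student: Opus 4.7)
The plan is to start from a non-trivial reduced model of $\logic{L}$ and exhibit $\CLmatrix$ as a submatrix, so that $\logic{L} \logleq \Log \CLmatrix = \CL$. The argument is parallel in spirit to the preceding proof, but simpler: here we do not need to quotient by a principal congruence, since the two-element Boolean subalgebra sits inside every reduced model of $\BD$ automatically.

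First, I would recall that $\CL$ is itself a non-trivial extension of $\BD$, so it only remains to show it is the top element among non-trivial extensions. Let $\logic{L}$ be any non-trivial extension of $\BD$. Since $\logic{L} = \Log \Mod^{*} \logic{L}$ and $\logic{L}$ is non-trivial, there is a non-trivial reduced model $\matr{A} = \pair{\alg{A}}{F}$ of $\logic{L}$. By the description of reduced models of $\BD$ recalled in the previous section, $\alg{A}$ is a De~Morgan algebra and $F \subseteq \alg{A}$ is a lattice filter, with $F \neq \alg{A}$ by non-triviality.

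Next I would use the constants to pin down two elements of $F$ and its complement. The axioms $\emptyset \vdash_{\BD} \True$ and $\emptyset \vdash_{\BD} \dmneg \False$ force $\True \in F$. Upward closure of the lattice filter $F$ together with $F \neq \alg{A}$ gives $\False \notin F$, since $\False$ is the bottom element of $\alg{A}$. Hence $\{\True, \False\}$ is a two-element subset of $\alg{A}$ closed under $\wedge, \vee, \dmneg$ and containing the constants, so it carries a subalgebra $\alg{B} \leq \alg{A}$ isomorphic to $\Btwo$, and the induced submatrix $\pair{\alg{B}}{F \cap \alg{B}} = \pair{\{\True, \False\}}{\{\True\}}$ is isomorphic to $\CLmatrix$.

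Finally, since $\Mod \logic{L}$ is closed under submatrices, $\CLmatrix \in \Mod \logic{L}$, so $\logic{L} \logleq \Log \CLmatrix = \CL$. I do not expect any real obstacle here; the only point to double-check is that the presence of the truth constants in the signature really does guarantee $\True \in F$ and $\False \notin F$ in every non-trivial model, which is immediate from the extra axioms for the constants listed in Section~\ref{sec: known super-belnap}. (Without the constants one would need a slightly different argument, which is presumably postponed to the discussion of different frameworks in Section~\ref{sec: different frameworks}.)
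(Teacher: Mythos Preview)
Your argument is correct and follows essentially the same route as the paper's proof: pick a non-trivial reduced model, observe that $\True \in F$ and $\False \notin F$, and take the two-element submatrix $\{\False,\True\}$, which is isomorphic to $\CLmatrix$. The paper's version is simply terser, omitting the justification of $\True \in F$ and $\False \notin F$ that you spell out.
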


\begin{proof}
  If $\logic{L}$ is a non-trivial extension of $\BD$, then it has a non-trivial reduced model $\pair{\alg{A}}{F}$. Then $\True \in F$ and $\False \notin F$, so the submatrix of $\pair{\alg{A}}{F}$ with the universe $\{ \False, \True \}$ is isomorphic to $\CLmatrix$. Therefore $\CLmatrix$ is a model of $\logic{L}$ and $\logic{L} \logleq \CL$.
\end{proof}

\begin{proposition} \label{prop: k lp splitting}
  The interval $[\BD, \CL]$ splits into $[\BD, \K]$ and $[\LP, \CL]$. All~logics in $[\BD, \K]$ have the same theorems, as do all logics in $[\LP, \CL]$.
\end{proposition}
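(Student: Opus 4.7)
The plan is to reduce the splitting to the single test of whether $p \vee \dmneg p$ is a theorem of $\logic{L}$. Since $\LP$ is axiomatized relative to $\BD$ by $\emptyset \vdash p \vee \dmneg p$, the condition $\LP \logleq \logic{L}$ is equivalent to $p \vee \dmneg p$ being a theorem of $\logic{L}$; and since $p \vee \dmneg p$ fails in $\Kmatrix$ (valuate the atom at the middle element), the condition $\logic{L} \logleq \K$ precludes $\LP \logleq \logic{L}$, giving disjointness of the two intervals. To establish exhaustiveness it will therefore suffice to show that if $p \vee \dmneg p$ is not a theorem of $\logic{L}$, then $\Kmatrix$ is a model of $\logic{L}$.

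Suppose $p \vee \dmneg p$ is not a theorem of $\logic{L}$. Fix a reduced model $\pair{\alg{A}}{F}$ of $\logic{L}$ and a valuation $v$ with $v(p) \vee \dmneg v(p) \notin F$, and set $c \assign v(p) \vee \dmneg v(p)$. Since reduced models of $\BD$ are De~Morgan algebras with a lattice filter and $\dmneg c \leq c$, also $\dmneg c \notin F$. The subalgebra $\alg{B}$ of $\alg{A}$ generated by $c$ has universe $\{\False, \dmneg c, c, \True\}$, a four-element Kleene chain (degenerating to $\{\False, c, \True\}$ in case $c = \dmneg c$), and the induced submatrix has designated set $\{\True\}$. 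In the degenerate case this submatrix is already $\Kmatrix$; otherwise the unique nontrivial congruence of the chain compatible with $\{\True\}$ identifies $\dmneg c$ with $c$, and the Leibniz reduct is again isomorphic to $\Kmatrix$. Closure of $\Mod \logic{L}$ under submatrices and strict homomorphic images then yields $\Kmatrix \in \Mod \logic{L}$, i.e.\ $\logic{L} \logleq \K$.

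The coincidence of theorems on each interval is handled separately. For $[\LP, \CL]$ one checks that $\LP$ and $\CL$ share their theorems: for every classical tautology $\tau$, reflexivity yields $\tau \vdash_{\BD} \tau$, so Proposition~\ref{prop: consequence in k}(1) gives $\emptyset \vdash_{\LP} \tau$; conversely $\LP \logleq \CL$ makes every $\LP$-theorem a classical tautology. For $[\BD, \K]$ one checks that $\BD$ and $\K$ share their theorems: one direction follows from $\BD \logleq \K$, while the reverse is proved by associating to each $\BDmatrix$-valuation $v$ a $\Kmatrix$-valuation $v'$ (identifying the value $b$ with $\True$ while leaving $n$, $\False$, $\True$ fixed) and verifying by a double induction on $\varphi$, tracking both the ``true'' and ``false'' conditions, that $v'(\varphi) = \True$ forces $v(\varphi) \in F$.

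The main technical point is the congruence analysis in the second paragraph: showing that collapsing $\{\dmneg c, c\}$ yields the largest congruence of the four-element Kleene chain compatible with $\{\True\}$. This amounts to noting that any further identification must eventually force $\True$ into a larger equivalence class (either directly, or via applying $\dmneg$ to a pair involving $\False$), destroying compatibility, so the Leibniz reduct is genuinely $\Kmatrix$ rather than a coarser quotient such as $\CLmatrix$.
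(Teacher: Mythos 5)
Your splitting argument is the paper's own: a failure of $\emptyset \vdash p \vee \dmneg p$ in a reduced model produces an element $c$ with $\dmneg c \leq c$ and $c \notin F$, the generated submatrix is the chain $\{\False, \dmneg c, c, \True\}$ with $\{\True\}$ designated, and collapsing $\dmneg c$ with $c$ gives $\Kmatrix$; this is exactly the paper's proof, and your extra observation that the collapse is the full Leibniz reduct is harmless but not needed ($\Kmatrix$ is already reduced, and any strict homomorphic image of a model is a model). Where you genuinely diverge is the ``same theorems'' claims. For $[\LP,\CL]$ you derive the coincidence from Proposition~\ref{prop: consequence in lp}(1) rather than citing Priest --- fine. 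For $[\BD,\K]$ the paper uses a slicker route: the contraposition relation $\emptyset \vdash_{\K} \varphi \Rightarrow \dmneg\varphi \vdash_{\LP} \emptyset$ (Fact~\ref{fact: contraposition}) together with $\Exp_{\BD}\LP = \BD$ gives $\dmneg\varphi \vdash_{\BD} \emptyset$, whence $\emptyset \vdash_{\BD} \varphi$ by contraposition in $\BD$; this recycles machinery already established and avoids any induction. Your direct semantic argument also works, but note that the map sending Both to $\True$ is \emph{not} a De~Morgan homomorphism ($\dmneg$ is not preserved at Both), so the induction really does require either the two-condition formulation you gesture at or a reduction to negation normal form where one only checks the inequality $h(\dmneg x) \geq \dmneg h(x)$ at literals; as written this step is a sketch rather than a proof, though a standard and repairable one. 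Everything else checks out.
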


\begin{proof}
  Suppose that $\LP \nlogleq \logic{L}$. Then $\emptyset \nvdash_{\logic{L}} p \vee \dmneg p$, so $\logic{L}$ has a reduced model $\pair{\alg{A}}{F}$ such that $a \notin F$ for some $a \in \alg{A}$ such that $\dmneg a \leq a$. Consider the submatrix $\pair{\alg{B}}{G}$ of $\pair{\alg{A}}{F}$ generated by $a$. The universe of $\alg{B}$ is the set $\{ \False, \dmneg a, a, \True\}$ and $G \assign F \cap \alg{A} = \{ \True \}$. The congruence $\theta(\dmneg a, a)$ on $\alg{B}$ is compatible with $G$ and the matrix $\pair{\alg{B} / \theta(\dmneg a, a)}{G / \theta(\dmneg a, a)}$ is isomorphic to $\Kmatrix$. Therefore $\Kmatrix$ is a model of $\logic{L}$ and $\logic{L} \logleq \K$.

  The claim that $\LP$ and $\CL$ have the same theorems was proved by Priest~\cite{priest79}. To prove that $\K$ and $\BD$ also have the same theorems, recall the contrapositive relation between $\K$ and $\LP$: $\emptyset \vdash_{\K} \varphi$ implies $\dmneg \varphi \vdash_{\LP} \emptyset$. But $\Exp_{\BD} \LP = \BD$, so $\dmneg \varphi \vdash_{\BD} \emptyset$, and by contraposition $\emptyset \vdash_{\BD} \varphi$.
\end{proof}

  Note that the constants $\True$ and $\False$ are part of our signature, therefore $\BD$ does have theorems. We omit the straightforward verification of the following fact.

\begin{lemma} \label{lemma: three blocks}
  The algebra in Figure~\ref{fig: three blocks} is the free De Morgan algebra generated by $a$~and~$b$ modulo the inequalities $b \leq a$ and $a \leq \dmneg a \vee b$.
\end{lemma}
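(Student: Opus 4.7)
My plan is to verify the universal property of the pictured algebra, which I call $\alg{T}$: given any De~Morgan algebra $\alg{D}$ with elements $a', b' \in \alg{D}$ satisfying $b' \leq a'$ and $a' \leq \dmneg a' \vee b'$, I want to exhibit a unique De~Morgan homomorphism $h \colon \alg{T} \to \alg{D}$ sending $a \mapsto a'$ and $b \mapsto b'$. This splits into three steps: checking that $\alg{T}$ is itself a De~Morgan algebra and that its distinguished elements $a, b$ satisfy the two inequalities (routine inspection of the Hasse diagram); checking that $\{a, b\}$ generate $\alg{T}$; and checking that every equation between $a, b$-terms holding in $\alg{T}$ is derivable from the De~Morgan axioms together with the two defining inequalities.

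For the second and third steps the key technical move is to derive, once and for all, a small collection of consequences of the defining inequalities. From $b \leq a$ we obtain $\dmneg a \leq \dmneg b$, so $a \wedge \dmneg b \geq a \wedge \dmneg a$; from $a \leq \dmneg a \vee b$, by meeting with $\dmneg b$, we obtain $a \wedge \dmneg b \leq \dmneg a$, whence $a \wedge \dmneg b = a \wedge \dmneg a$ and dually $\dmneg a \vee b = a \vee \dmneg a$. Meeting $a \leq \dmneg a \vee b$ with $a$ further yields $a = b \vee (a \wedge \dmneg a)$, and dually $\dmneg a = \dmneg b \wedge (a \vee \dmneg a)$. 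Repeated application of these identities together with the standard distributive and De~Morgan laws reduces any De~Morgan term in $a, b$ to one of finitely many canonical representatives, whose enumeration can then be matched against the elements of $\alg{T}$.

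The main obstacle, though shallow rather than deep, is the exhaustive enumeration in step~two. Because De~Morgan algebras lack complementation, there is no Boolean normal form to appeal to, and the finiteness of the set of reduced terms must be established by direct case analysis on how the derived identities interact with the distributive laws. Once the reduction is shown to terminate at precisely the elements pictured in Figure~\ref{fig: three blocks}, existence of $h$ follows because every term equation that holds in $\alg{T}$ is already a consequence of the De~Morgan axioms together with $b \leq a$ and $a \leq \dmneg a \vee b$, and therefore persists in $\alg{D}$; uniqueness is automatic since $\alg{T}$ is $\{a, b\}$-generated.
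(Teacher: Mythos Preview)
Your proposal is correct. The paper itself omits the proof entirely, stating only ``We omit the straightforward verification of the following fact,'' so there is no argument in the paper to compare against; your approach via the universal property is exactly the kind of verification the author has in mind.

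One minor point of care: your derivation ``from $a \leq \dmneg a \vee b$, by meeting with $\dmneg b$, we obtain $a \wedge \dmneg b \leq \dmneg a$'' is slightly compressed. Meeting with $\dmneg b$ gives $a \wedge \dmneg b \leq (\dmneg a \wedge \dmneg b) \vee (b \wedge \dmneg b) = \dmneg a \vee (b \wedge \dmneg b)$ (using $\dmneg a \leq \dmneg b$), and to conclude $a \wedge \dmneg b \leq \dmneg a$ you then need $b \wedge \dmneg b \leq \dmneg a$, i.e.\ $a \leq b \vee \dmneg b$; this follows since $a \leq \dmneg a \vee b \leq \dmneg b \vee b$. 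With that gap filled, your key identities $a \wedge \dmneg b = a \wedge \dmneg a$, $\dmneg a \vee b = a \vee \dmneg a$, $a = b \vee (a \wedge \dmneg a)$, and $\dmneg a = \dmneg b \wedge (a \vee \dmneg a)$ are all correct and are precisely what is needed to show that every $\{a,b\}$-term reduces to one of the ten elements in the figure.

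As a possible shortcut for the exhaustive enumeration in step three, you could instead verify freeness by separation: list the pairs $(a',b') \in \DMfour^{2}$ satisfying $b' \leq a'$ and $a' \leq \dmneg a' \vee b'$, observe that the corresponding evaluation maps into $\DMfour$ jointly separate the ten labelled elements of $\alg{T}$, and conclude that the canonical surjection from the free-mod-relations algebra onto $\alg{T}$ is injective. This avoids the term-rewriting case analysis, though either route works.
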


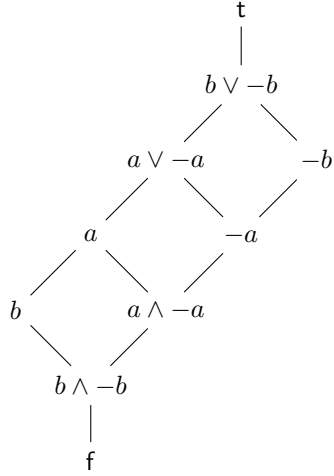
\begin{figure}[t]
\caption{The free two-generated De~Morgan algebra modulo $b \leq a$ and $a \leq \dmneg a \vee b$}
\label{fig: three blocks}

\begin{center}
\begin{tikzpicture}
  \node (bot) at (0,0) {$\False$};
  \node (bnegb) at (0,1) {$b \wedge \dmneg b$};
  \node (anega) at (1,2) {$a \wedge \dmneg a$};
  \node (nega) at (2,3) {$\dmneg a$};
  \node (negb) at (3,4) {$\dmneg b$};
  \node (b) at (-1,2) {$b$};
  \node (a) at (0,3) {$a$};
  \node (aveenega) at (1,4) {$a \vee \dmneg a$};
  \node (bveenegb) at (2,5) {$b \vee \dmneg b$};
  \node (top) at (2,6) {$\True$};
  \draw[-] (bot) -- (bnegb) -- (anega) -- (nega) -- (negb) -- (bveenegb);
  \draw[-] (bnegb) -- (b) -- (a) -- (aveenega) -- (bveenegb) -- (top);
  \draw[-] (anega) -- (a);
  \draw (nega) -- (aveenega);
\end{tikzpicture}
\end{center}

\end{figure}

\begin{proposition}
  $[\BD, \CL]$ splits into $[\BD, \LP \vee \ECQ]$ and $[\ETL, \CL]$.
\end{proposition}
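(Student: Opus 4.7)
The plan is to adapt the argument of Proposition~\ref{prop: k lp splitting}. First, for disjointness of the two intervals, I would verify $\ETL \not\logleq \LP \vee \ECQ$ directly: writing $m$ for the middle element of $\Kthree$, the assignment $p \mapsto (\True, m)$ and $q \mapsto (\True, \False)$ in $\CLmatrix \times \LPmatrix$ makes $p$ and $\dmneg p \vee q = (\True, m)$ designated while $q$ is not, so disjunctive syllogism fails. Since $\LP \vee \ECQ = \Log(\CLmatrix \times \LPmatrix)$ by an earlier completeness theorem, this establishes that no logic can lie in both intervals.

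For the covering direction, suppose $\ETL \not\logleq \logic{L}$. Then disjunctive syllogism must fail in some reduced model $\pair{\alg{A}}{F}$ of $\logic{L}$, witnessed by elements $u, v \in \alg{A}$ with $u, \dmneg u \vee v \in F$ but $v \notin F$. I would first replace $u, v$ by $u' \assign u \wedge (\dmneg u \vee v)$ and $v' \assign u \wedge v$, and check by direct computation that $u' \in F$, $v' \notin F$, $v' \leq u'$, and $u' \leq \dmneg u' \vee v'$. Next, let $\alg{B}$ be the (finite) subalgebra of $\alg{A}$ generated by $u'$ and $v'$. By Lemma~\ref{lemma: three blocks} there is a surjective De~Morgan homomorphism $h \colon \alg{F} \twoheadrightarrow \alg{B}$ from the ten-element free algebra $\alg{F}$ of Figure~\ref{fig: three blocks} sending its free generators $a, b$ to $u', v'$ respectively. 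Setting $G_0 \assign h^{-1}[F \cap \alg{B}]$, the matrix $\pair{\alg{F}}{G_0}$ will be a strict homomorphic preimage of the submatrix $\pair{\alg{B}}{F \cap \alg{B}} \leq \pair{\alg{A}}{F}$, and hence itself a model of $\logic{L}$.

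The filter $G_0$ contains the generator $a$ of $\alg{F}$ but not $b$. The key combinatorial step is to enumerate all such filters: using the identity $a \wedge \dmneg b = a \wedge \dmneg a$ (which follows from the relation $a \leq \dmneg a \vee b$), one checks that the only options are the principal filters $\uparrow a$ and $\uparrow(a \wedge \dmneg a)$. If $G_0 = \uparrow a$, I would define $\alg{F} \to \Btwo \times \Kthree$ by $a \mapsto (\True, m)$ and $b \mapsto (\True, \False)$; this map is well-defined by Lemma~\ref{lemma: three blocks}, surjective, and pulls back the designated set $\{(\True, m), (\True, \True)\}$ exactly onto $\uparrow a$. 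Hence $\CLmatrix \times \LPmatrix$ is a strict homomorphic image of $\pair{\alg{F}}{G_0}$ and thus a model of $\logic{L}$, giving $\logic{L} \logleq \Log(\CLmatrix \times \LPmatrix) = \LP \vee \ECQ$. If instead $G_0 = \uparrow(a \wedge \dmneg a)$, the analogous map $\alg{F} \to \Kthree$ given by $a \mapsto m$ and $b \mapsto \False$ pulls back $\{m, \True\}$ onto $\uparrow(a \wedge \dmneg a)$, exhibiting $\LPmatrix$ as a strict image and giving $\logic{L} \logleq \LP \logleq \LP \vee \ECQ$.

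The main obstacle I foresee is the two-filter enumeration: the identity $a \wedge \dmneg b = a \wedge \dmneg a$ is essential, since it rules out filters containing $\dmneg b$ but not $\dmneg a$. With this in hand, one must verify that any filter strictly enlarging $\uparrow a$ in $\alg{F}$ either contains $b$ or contains $a \wedge \dmneg a$, because every other candidate element below $a$ lies below $b$ and so is inadmissible.
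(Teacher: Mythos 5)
Your proof is correct and follows essentially the same route as the paper: normalize the counterexample to disjunctive syllogism so that $b \leq a$ and $a \leq \dmneg a \vee b$, pull the generated submatrix back to the ten-element free algebra of Figure~\ref{fig: three blocks} via Lemma~\ref{lemma: three blocks}, observe that the only lattice filters containing $a$ but not $b$ are $\uparrow a$ and $\uparrow(a \wedge \dmneg a)$, and conclude that $\CLmatrix \times \LPmatrix$ or $\LPmatrix$ is a model of $\logic{L}$. The only (harmless) difference is that you exhibit explicit strict surjections onto these matrices where the paper quotients by a principal congruence, and you include the disjointness check that the paper defers to Theorem~\ref{thm: three blocks}.
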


\begin{proof}
  Suppose that $\ETL \nlogleq \logic{L}$. Then $p, \dmneg p \vee q \nvdash_{\logic{L}} q$ and $\logic{L}$ has a reduced model $\pair{\alg{A}}{F}$ such that $a \in F$ and $\dmneg a \vee b \in F$ but $b \notin F$ for some $a, b \in \alg{A}$. Without loss of generality we may take $b \assign a \wedge b$ and $a \assign a \wedge (\dmneg a \vee b)$, i.e.\ we may assume that $b \leq a$ and $a \leq \dmneg a \vee b$.

  Consider the submatrix $\pair{\alg{B}}{G}$ of $\pair{\alg{A}}{F}$ generated by $a$ and $b$. Let $\alg{C}$ be the algebra shown in Figure \ref{fig: three blocks}. By Lemma \ref{lemma: three blocks} there is a homomorphism $h\colon \alg{C} \rightarrow \alg{B}$. Take $H \assign h^{-1}[G]$. Then the matrix $\pair{\alg{C}}{H}$ is a model of $\logic{L}$, being an strict homomorphic preimage of a model of~$\logic{L}$. We have $a \in H$ and $b \notin H$.

  We distinguish two cases. If $a \wedge \dmneg a \notin H$, then $H = \{ a, a \vee \dmneg a, b \vee \dmneg b, \True \}$, hence $\theta(a, a \vee \dmneg a)$ is compatible with $H$ and the matrix $\pair{\alg{C} / \theta(a, a \vee \dmneg a)}{H / \theta(a, a \vee \dmneg a)}$ is isomorphic to $\LPmatrix \times \CLmatrix$. On the other hand, if $a \wedge \dmneg a \in H$, then $H = \{ a, a \vee \dmneg a, \linebreak b \vee \dmneg b, \True, a \wedge \dmneg a, \dmneg a, \dmneg b \}$, hence $\theta(a \wedge \dmneg a, \True)$ is compatible with $H$ and the matrix $\pair{\alg{C} / \theta(a \wedge \dmneg a, \True)}{H / \theta(a \wedge \dmneg a, \True)}$ is isomorphic to $\LPmatrix$. Thus either $\LPmatrix \times \CLmatrix$ is a model of $\logic{L}$ and $\logic{L} \logleq \LP \vee \ECQ$, or $\LPmatrix$ is a model of $\logic{L}$ and $\logic{L} \logleq \LP \logleq \LP \vee \ECQ$.
\end{proof}

  It follows that the join $\CL = \LP \vee \ETL$ is canonical in the following sense.

\begin{corollary}
  If $\CL = \logic{L}_{1} \vee \logic{L}_{2}$ with $\logic{L}_{1} < \CL$ and $\logic{L}_{2} < \CL$, then either $\LP \logleq \logic{L}_{1}$ and $\ETL \logleq \logic{L}_{2}$ or $\ETL \logleq \logic{L}_{1}$ and $\LP \logleq \logic{L}_{2}$.
\end{corollary}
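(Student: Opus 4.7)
The plan is to derive the corollary from the two splittings of $[\BD, \CL]$ established in the preceding propositions, namely $[\BD, \CL] = [\BD, \K] \mathrel{\dot{\cup}} [\LP, \CL]$ (Proposition~\ref{prop: k lp splitting}) and $[\BD, \CL] = [\BD, \LP \vee \ECQ] \mathrel{\dot{\cup}} [\ETL, \CL]$ (the proposition immediately preceding the corollary). Since these splittings together with $\LP \vee \ETL = \CL$ yield the geometry of the top of the lattice that the corollary is really describing, the argument is essentially a case analysis on where $\logic{L}_{1}$ and $\logic{L}_{2}$ land.

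First I would observe that by the second splitting, if both $\logic{L}_{1}$ and $\logic{L}_{2}$ belonged to $[\BD, \LP \vee \ECQ]$, then their join would still be bounded by $\LP \vee \ECQ$, and since $\LP \vee \ECQ < \CL$ (because $\LP \vee \ECQ$ itself lies in $[\BD, \LP \vee \ECQ]$ and hence not in $[\ETL, \CL]$), this would contradict $\logic{L}_{1} \vee \logic{L}_{2} = \CL$. Therefore at least one of them, say $\logic{L}_{2}$ after relabelling, must lie in $[\ETL, \CL]$, giving $\ETL \logleq \logic{L}_{2}$.

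Next I would apply the first splitting in the same style: if both $\logic{L}_{1}$ and $\logic{L}_{2}$ were in $[\BD, \K]$, their join would be bounded by $\K < \CL$, again contradicting $\logic{L}_{1} \vee \logic{L}_{2} = \CL$. So at least one of them must satisfy $\LP \logleq \logic{L}_{i}$. The key point that closes the argument is that this $i$ cannot be~$2$: if we had $\LP \logleq \logic{L}_{2}$ in addition to $\ETL \logleq \logic{L}_{2}$, then $\logic{L}_{2} \loggeq \LP \vee \ETL = \CL$, contradicting $\logic{L}_{2} < \CL$. Hence $\LP \logleq \logic{L}_{1}$, which together with $\ETL \logleq \logic{L}_{2}$ is exactly one of the two cases asserted by the corollary.

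There is no real obstacle here beyond bookkeeping; the only point that requires attention is the verification that $\LP \vee \ECQ < \CL$ and $\K < \CL$, both of which are immediate from the splittings (each of $\LP \vee \ECQ$ and $\K$ sits in exactly one side of the relevant splitting and cannot equal $\CL$, which sits on the other side). Everything else is a symmetric case-analysis driven by the incompatibility of the two splittings with $\logic{L}_{1} \vee \logic{L}_{2} = \CL$.
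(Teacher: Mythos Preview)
Your proposal is correct and essentially the same as the paper's proof: both arguments use the two splittings $[\BD,\K]\mathrel{\dot\cup}[\LP,\CL]$ and $[\BD,\LP\vee\ECQ]\mathrel{\dot\cup}[\ETL,\CL]$ to force $\LP$ below one of the $\logic{L}_i$ and $\ETL$ below one of them, and then invoke $\LP\vee\ETL=\CL$ to rule out both landing in the same $\logic{L}_i$. The only difference is presentational: the paper argues contrapositively in one line per splitting, whereas you fix a labelling first and chase the contradiction from there.
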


\begin{proof}
  If $\LP \nlogleq \logic{L}_{1}$ and $\LP \nlogleq \logic{L}_{2}$, then $\logic{L}_{1} \vee \logic{L}_{2} \logleq \K$. Likewise, if $\ETL \nlogleq \logic{L}_{1}$ and $\ETL \nlogleq \logic{L}_{2}$, then $\logic{L}_{1} \vee \logic{L}_{2} \logleq \LP \vee \ECQ$. But if $\LP \logleq \logic{L}_{1}$ and $\ETL \logleq \logic{L}_{1}$, then $\CL \logleq \logic{L}_{1}$, and likewise for $\logic{L}_{2}$.
\end{proof}

  Taking the above splittings together yields the following theorem.

\begin{theorem} \label{thm: three blocks}
  Each non-trivial proper extension of $\BD$ lies in one of the disjoint intervals $[\LP \cap \ECQ, \LP]$, $[\ECQ, \LP \vee \ECQ]$, or $[\ETL, \CL]$.
\end{theorem}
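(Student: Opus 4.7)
My plan is to simply assemble the three splittings that have already been established in this section, namely (i) that $\LP \cap \ECQ$ is the smallest proper extension of $\BD$ and $\CL$ the largest non-trivial one, (ii) the splitting of $[\BD, \CL]$ into $[\BD, \LP]$ and $[\ECQ, \CL]$ (Proposition~\ref{prop: ecq lp splitting}), and (iii) the splitting of $[\BD, \CL]$ into $[\BD, \LP \vee \ECQ]$ and $[\ETL, \CL]$. Let $\logic{L}$ be a non-trivial proper extension of $\BD$. By (i), $\LP \cap \ECQ \logleq \logic{L} \logleq \CL$.

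First I would apply (ii): either $\logic{L} \logleq \LP$, in which case $\logic{L} \in [\LP \cap \ECQ, \LP]$ immediately; or $\ECQ \logleq \logic{L}$, in which case I would further apply (iii). This second application gives two subcases: if $\logic{L} \logleq \LP \vee \ECQ$, then combined with $\ECQ \logleq \logic{L}$ we land in $[\ECQ, \LP \vee \ECQ]$; if instead $\ETL \logleq \logic{L}$, then combined with $\logic{L} \logleq \CL$ we land in $[\ETL, \CL]$. This covers all three intervals in the statement.

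It remains to check pairwise disjointness, which I expect to be the only real content beyond bookkeeping, but even this is light. For $[\LP \cap \ECQ, \LP] \cap [\ECQ, \LP \vee \ECQ]$: any common element would yield $\ECQ \logleq \LP$, contradicting $p, \dmneg p \nvdash_{\LP} q$. For $[\LP \cap \ECQ, \LP] \cap [\ETL, \CL]$: a common element would force $\ETL \logleq \LP$, contradicting the fact that disjunctive syllogism $p, \dmneg p \vee q \vdash q$ fails in $\LP$. For $[\ECQ, \LP \vee \ECQ] \cap [\ETL, \CL]$: a common element would force $\ETL \logleq \LP \vee \ECQ$, which is ruled out by the splitting (iii) itself (the two halves of a splitting are disjoint by definition, as witnessed concretely by the failure of disjunctive syllogism in $\LP \vee \ECQ = \LP \cup \ECQ_{\omega}$, since it holds in neither $\LP$ nor $\ECQ_{\omega}$).

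The main ``obstacle'' is really only to verify that the three splittings interact cleanly, i.e.\ that applying (iii) inside the $\ECQ \logleq \logic{L}$ branch of (ii) produces exactly two disjoint intervals above $\ECQ$, which follows because $\ECQ \logleq \LP \vee \ECQ$ and $\ECQ \logleq \ETL$. No new model-theoretic or algebraic argument is required; the theorem is a pure lattice-theoretic consequence of the three splittings above.
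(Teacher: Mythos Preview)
Your proposal is correct and follows essentially the same approach as the paper: the covering of $[\LP \cap \ECQ, \CL]$ by the three intervals is immediate from the previously established splittings, and the paper's own proof, like yours, only explicitly records the disjointness checks $\ECQ \nlogleq \LP$ and $\ETL \nlogleq \LP \vee \ECQ$. The paper witnesses the latter via the failure of disjunctive syllogism in $\CLmatrix \times \LPmatrix$, whereas you use the description $\LP \vee \ECQ = \LP \cup \ECQ_{\omega}$; both are fine.
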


\begin{proof}
  These intervals are indeed disjoint: $\ECQ \nlogleq \LP$ and $\ETL \nlogleq \LP \vee \ECQ$ because $p, \dmneg p \vdash \emptyset$ fails in $\LPmatrix$ and $p, \dmneg p \vee q \vdash q$ fails in $\CLmatrix \times \LPmatrix$.
\end{proof}

  Each of these three intervals in fact contains a continuum of finitary logics (Corollary~\ref{cor: continuum in three intervals}). We can also split the lattice of super-Belnap logics into a finite upper part and an infinite lower part. We omit the tedious but straightforward verification of the following claim.

\begin{lemma} \label{lemma: ko splitting}
  The algebra shown in Figure \ref{fig: ko splitting} is the free algebra generated by $a$ and $b$ modulo the inequalities $a \leq \dmneg a$ and $b \leq \dmneg b$.
\end{lemma}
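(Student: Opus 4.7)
The plan is semantic, in the style one would use to prove Lemma~\ref{lemma: three blocks}. Denote by $\alg{F}$ the algebra shown in Figure~\ref{fig: ko splitting}, and by $\alg{F}_{0}$ the free De~Morgan algebra on two generators $a, b$ modulo the congruence generated by $a \leq \dmneg a$ and $b \leq \dmneg b$. The aim is to establish an isomorphism $\alg{F}_{0} \cong \alg{F}$.

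First I would verify by inspection of the Hasse diagram that $\alg{F}$ really is a De~Morgan algebra, that it is generated by its distinguished elements $a$ and $b$ under $\wedge$, $\vee$, $\dmneg$, $\True$, $\False$, and that the defining inequalities $a \leq \dmneg a$ and $b \leq \dmneg b$ hold in $\alg{F}$. This is a routine but nontrivial finite check. The universal property of $\alg{F}_{0}$ then produces a surjective De~Morgan homomorphism $q\colon \alg{F}_{0} \twoheadrightarrow \alg{F}$ fixing $a$ and $b$, and the entire task reduces to showing that $q$ is injective.

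For injectivity I would use that the variety of De~Morgan algebras is generated by $\DMfour$, so $\alg{F}_{0}$ embeds as a subdirect power of $\DMfour$. Under the universal property of $\alg{F}_{0}$, De~Morgan homomorphisms $\alg{F}_{0} \to \DMfour$ correspond bijectively with pairs $(a', b') \in \DMfour^{2}$ satisfying $a' \leq \dmneg a'$ and $b' \leq \dmneg b'$; the admissible values are $\False$ together with the two fixed points of $\dmneg$, yielding exactly nine such homomorphisms. Index them by a set $V$ with $|V| = 9$. These homomorphisms jointly embed $\alg{F}_{0}$ into $\DMfour^{V}$, so $q$ will be injective as soon as the composite $\Phi\colon \alg{F} \to \DMfour^{V}$ is injective; concretely, $\Phi$ assigns to each $x \in \alg{F}$ its vector of evaluations under the nine admissible valuations of $a, b$ into $\DMfour$.

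The main obstacle I expect is the injectivity of $\Phi$: one has to check that no two distinct elements of the Hasse diagram of $\alg{F}$ coincide under all nine admissible valuations. The cleanest route is to first rewrite each element of $\alg{F}$ as a De~Morgan term in the literals $\{ a, \dmneg a, b, \dmneg b, \True, \False \}$ in conjunctive and disjunctive normal form (cf.\ Proposition~\ref{prop: consequence in bd}), and then to separate the resulting finite list of normal forms pair by pair by exhibiting, for each pair, an admissible valuation that distinguishes them. This reduces the proof to a finite, mechanical case analysis, which is precisely the tedious verification that the lemma statement invites us to skip.
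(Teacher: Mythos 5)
Your setup is sound: the surjection $q\colon \alg{F}_{0} \twoheadrightarrow \alg{F}$ from the universal property, the count of exactly nine admissible valuations into $\DMfour$ (the values $\False$ and the two negation fixpoints), and the resulting embedding $e\colon \alg{F}_{0} \hookrightarrow \DMfour^{9}$ are all correct. The gap is in the final inference. Injectivity of your map $\Phi$ (defined on the $20$ labelled nodes of Figure~\ref{fig: ko splitting} by evaluating their labels under the nine valuations) does not yield injectivity of $q$: it only shows that the $20$ displayed terms are pairwise inequivalent modulo the relations, i.e.\ that $\lvert \alg{F}_{0} \rvert \geq 20$, and that lower bound is already forced by the surjectivity of $q$ onto a $20$-element algebra. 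Injectivity of $q$ requires the opposite bound $\lvert \alg{F}_{0}\rvert \leq 20$. Your argument would need $e$ to factor as $\Phi \circ q$, but the existence of such a factorization is equivalent to $\ker q$ being trivial, i.e.\ to the very claim being proved; and injectivity of $\Phi$ as a mere function does not supply it. Concretely, if the figure accidentally depicted a proper quotient of $\alg{F}_{0}$ with consistent labels, every check you propose (De~Morgan axioms, generation by $a$ and $b$, the two inequalities, pairwise separation of the labels by the nine valuations) would still pass, yet the lemma would be false.

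The missing step --- which is exactly where the ``tedious but straightforward verification'' that the paper omits actually lives --- is a closure computation rather than a separation computation: one must show that the $20$ tuples $\Phi(x) \in \DMfour^{9}$ form a subalgebra of $\DMfour^{9}$, equivalently that $\Phi$ is a \emph{homomorphism}, equivalently that the $20$ displayed terms are closed under $\wedge$, $\vee$, $\dmneg$ modulo $a \leq \dmneg a$ and $b \leq \dmneg b$. Granting that, $e(\alg{F}_{0})$ is the subalgebra of $\DMfour^{9}$ generated by the tuples $e(a)$ and $e(b)$, both of which lie in your $20$-element set, so $e(\alg{F}_{0})$ is contained in that set, whence $\lvert \alg{F}_{0}\rvert \leq 20$ and the surjection $q$ is a bijection. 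The cleanest packaging of the whole lemma is therefore: generate the subalgebra of $\DMfour^{9}$ from the two tuples $(v(a))_{v}$ and $(v(b))_{v}$, check that it has $20$ elements, and match it against the figure. Your pairwise-separation check is the easy half of that computation (and is subsumed by it); the generation/closure half, which your proposal omits, is the part that carries the proof.
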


\begin{figure}[!t]
\caption{The free two-generated De~Morgan algebra modulo $a \leq \dmneg a$ and $b \leq \dmneg b$}
\label{fig: ko splitting}

\medskip

\begin{center}
\begin{tikzpicture}[x={(2,1)}, y={(1,2)}, z={(-2,2)}, every node/.style={outer sep=2},scale=0.8]
  \node (bot) at (3.5,-1,0) {$\False$};
  \node (pnegr) at (1.5,0,0) {$a \wedge b$};
  \node (negpnegr) at (3,0,0) {$\dmneg a \wedge b$};
  \node (negr) at (5,0,0) {$b$};
  \node (pr) at (-0.5,1,0) {$a \wedge \dmneg b$};
  \node (alpha) at (1,1,0) {$(a \wedge \dmneg b) \vee (\dmneg a \wedge b)$};
  \node (negrveepr) at (3,1,0) {$b \vee (a \wedge \dmneg b)$};
  \node (negpr) at (1,2,0) {$\dmneg a \wedge \dmneg b$};
  \node (negrveenegpr) at (3,2,0) {$b \vee (\dmneg a \wedge \dmneg b)$};
  \node (r) at (4.5,2,0) {$\dmneg b$};
  \node (p) at (-0.5,1,1) {$a$};
  \node (pveenegpnegr) at (1,1,1) {$a \vee (\dmneg a \wedge b)$};
  \node (pveenegr) at (3,1,1) {$a \vee b$};
  \node (pveenegpr) at (1,2,1) {$a \vee (\dmneg a \wedge \dmneg b)$};
  \node (beta) at (3,2,1) {$a \vee b \vee (\dmneg a \wedge \dmneg b)$};
  \node (pveer) at (4.5,2,1) {$a \vee \dmneg b$};
  \node (negp) at (-1,3,1) {$\dmneg a$};
  \node (negpveenegr) at (1,3,1) {$\dmneg a \vee b$};
  \node (negpveer) at (2.5,3,1) {$\dmneg a \vee \dmneg b$};
  \node (top) at (0.5,4,1) {$\True$};
  \draw[-] (pnegr) -- (negpnegr) -- (negr);
  \draw[-] (pr) -- (alpha) -- (negrveepr);
  \draw[dashed] (negpr) -- (negrveenegpr);
  \draw[-] (negrveenegpr) -- (r);
  \draw[-] (bot) -- (pnegr) -- (pr) -- (p);
  \draw[-] (negpnegr) -- (alpha);
  \draw[dashed] (alpha) -- (negpr) -- (pveenegpr);
  \draw[-] (negr) -- (negrveepr) -- (negrveenegpr) -- (beta);
  \draw[-] (r) -- (pveer) -- (negpveer) -- (top);
  \draw[-] (p) -- (pveenegpnegr) -- (pveenegr);
  \draw[-] (pveenegpr) -- (beta) -- (pveer);
  \draw[-] (negp) -- (negpveenegr) -- (negpveer);
  \draw[-] (alpha) -- (pveenegpnegr) -- (pveenegpr) -- (negp);
  \draw[-] (negrveepr) --(pveenegr) -- (beta) -- (negpveenegr);
\end{tikzpicture}
\end{center}
\end{figure}

  The following proposition extends the unpublished result of Rivieccio that $\K$ is an upper cover of $\Kminus$ (defined semantically).

\begin{proposition} \label{prop: ko splitting}
  $[\BD, \CL]$ splits into $[\BD, \Kminus]$ and $[\KO, \CL]$.
\end{proposition}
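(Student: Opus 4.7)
The plan is to imitate the proof of the splitting $[\BD, \CL] = [\BD, \LP \vee \ECQ] \cup [\ETL, \CL]$, replacing the role of Lemma~\ref{lemma: three blocks} by Lemma~\ref{lemma: ko splitting}. The goal is to show: if $\KO \nlogleq \logic{L}$, then $\Kminusmatrix$ is a model of $\logic{L}$, whence $\logic{L} \logleq \Kminus$. To begin, using the two-variable axiomatization of $\KO$ relative to $\BD$ by the rule $(p \wedge \dmneg p) \vee q \vdash q \vee \dmneg q$ from Section~\ref{sec: known super-belnap}, the assumption $\KO \nlogleq \logic{L}$ yields a reduced model $\pair{\alg{A}}{F}$ of $\logic{L}$ with elements $x, y \in \alg{A}$ such that $(x \wedge \dmneg x) \vee y \in F$ and $y \vee \dmneg y \notin F$. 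I then set $a \assign x \wedge \dmneg x$ and $b \assign y \wedge \dmneg y$, so that $a \leq \dmneg a$ and $b \leq \dmneg b$. Note that $b \vee \dmneg b = y \vee \dmneg y \notin F$, whereas $a \vee \dmneg b \geq a \vee y = (x \wedge \dmneg x) \vee y \in F$, so $a \vee \dmneg b \in F$ by upward closure of $F$.

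Let $\alg{B}$ be the subalgebra of $\alg{A}$ generated by $a$ and $b$, and $G \assign F \cap \alg{B}$. Lemma~\ref{lemma: ko splitting} provides a surjective homomorphism $h \colon \alg{C} \to \alg{B}$, where $\alg{C}$ is the free De~Morgan algebra displayed in Figure~\ref{fig: ko splitting}. Taking $H \assign h^{-1}[G]$ makes $\pair{\alg{C}}{H}$ a strict homomorphic preimage of the submatrix $\pair{\alg{B}}{G}$ of $\pair{\alg{A}}{F}$, and hence itself a model of $\logic{L}$. Moreover the filter $H$ satisfies $a \vee \dmneg b \in H$ and $b \vee \dmneg b \notin H$, which leaves only finitely many possibilities for $H$ to be enumerated.

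In each such case I would exhibit a principal congruence $\theta$ of $\alg{C}$ compatible with $H$, for which the strict homomorphic image $\pair{\alg{C}/\theta}{H/\theta}$ is isomorphic to $\Kminusmatrix$; compatibility is verified via Sankappanavar's description of principal De~Morgan congruences recalled just before Lemma~\ref{lemma: three blocks}. Since $\pair{\alg{C}/\theta}{H/\theta}$ is then a model of $\logic{L}$, so is $\Kminusmatrix$, yielding $\logic{L} \logleq \Kminus$. The main obstacle is precisely this last step: navigating the twenty-element algebra $\alg{C}$ to locate, in each case distinguished by $H$, the correct principal congruence and verify that the resulting eight-element quotient really is $\Kminusmatrix$ with its designated filter $\{\True\}$. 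This is not a conceptual difficulty but a delicate finite computation, in the same spirit as the case split at the end of the proof of the $\ETL$-splitting.
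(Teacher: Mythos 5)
Your setup is exactly the paper's: the two witnesses with $a \leq \dmneg a$, $b \leq \dmneg b$, $a \vee \dmneg b \in F$, $\dmneg b \notin F$, the subalgebra generated by $a$ and $b$, the surjection from the free algebra $\alg{C}$ of Lemma~\ref{lemma: ko splitting}, and the strict preimage $\pair{\alg{C}}{H}$. The gap is in the endgame. You propose that \emph{in every case} one can find a congruence $\theta$ compatible with $H$ with $\pair{\alg{C}/\theta}{H/\theta} \cong \Kminusmatrix$. This is not a delicate computation that happens to work out; it is false. Since $\Kminusmatrix$ is reduced, it can be a strict homomorphic image of $\pair{\alg{C}}{H}$ only if it \emph{is} the Leibniz reduct of $\pair{\alg{C}}{H}$, and there are cases where that reduct is a different matrix. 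Concretely, take $\logic{L} = \ECQ_{\omega}$ with reduced model $\BDmatrix \times \CLmatrix$ (note $\KO \nlogleq \ECQ_{\omega}$): the witnesses $a = (\mathsf{b},0)$, $b = (\mathsf{n},1)$ generate all of $\BDmatrix \times \CLmatrix$, which is already reduced, so every reduced strict homomorphic image of $\pair{\alg{C}}{H}$ is $\BDmatrix \times \CLmatrix$, whose underlying lattice is not isomorphic to that of $\Kminusmatrix$. No choice of $\theta$ can produce $\Kminusmatrix$ here.

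The proof therefore has to branch. Besides the case where the quotient really is $\Kminusmatrix$, the case analysis on $H$ (or rather on the intermediate quotient $I$ the paper forms first) produces quotients isomorphic to a matrix with logic $\BD$, to $\BDmatrix \times \CLmatrix$, or to $\ETLmatrix \times \CLmatrix$; in those branches one concludes $\logic{L} \logleq \BD$, $\logic{L} \logleq \ECQ_{\omega}$, or $\logic{L} \logleq \ETL_{\omega}$ respectively, and then uses the separately established facts $\Log(\BDmatrix \times \CLmatrix) = \ECQ_{\omega}$, $\Log(\ETLmatrix \times \CLmatrix) = \ETL_{\omega}$, and $\BD \logleq \ECQ_{\omega} \logleq \ETL_{\omega} \logleq \Kminus$ (equivalently, that $\Kminusmatrix$ validates the rules $\chi_{n} \vdash \emptyset$ and the disjunctive syllogism) to land below $\Kminus$ anyway. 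So your target "$\Kminusmatrix \in \Mod\logic{L}$" is still reached in every branch, but not by the mechanism you describe; the missing ingredient is this second route through the completeness theorems for $\ECQ_{\omega}$ and $\ETL_{\omega}$. As written, the plan cannot be completed.
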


\begin{proof}
  Suppose that $\KO \nlogleq \logic{L}$. Then $(p \wedge \dmneg p) \vee r \nvdash_{\logic{L}} (q \vee \dmneg q) \vee r$ and $\logic{L}$ has a reduced model $\pair{\alg{A}}{F}$ such that $a \vee d \in F$ and $c \vee d \notin F$ for some $a, c, d \in \alg{A}$ such that $a \leq \dmneg a$ and $\dmneg c \leq c$. Let $b \assign \dmneg d$. Without loss of generality we may take $d \assign c \vee d$. It follows that $b \leq \dmneg b$ and $\dmneg b \notin F$.

  Consider the submatrix $\pair{\alg{B}}{G}$ of $\pair{\alg{A}}{F}$ generated by the elements $a$ and $b$. Let $\alg{C}$ be the algebra shown in Figure \ref{fig: ko splitting}. By Lemma \ref{lemma: ko splitting} there is a surjective homomorphism $h\colon \alg{C} \rightarrow \alg{B}$. Then the matrix $\pair{\alg{C}}{H}$ with $H \assign h^{-1}[G]$ is a model of $\logic{L}$, being a strict homomorphic preimage of a model of $\logic{L}$. We have $a \vee \dmneg b \in H$ and $\dmneg b \notin H$. The congruence $\theta(\dmneg a \vee \dmneg b, \True)$ is then compatible with $H$, thus $\pair{\alg{D}}{I} \assign \pair{\alg{C} / \theta(\dmneg a \vee \dmneg b, \True)}{H / \theta(\dmneg a \vee \dmneg b, \True)}$ is a model of $\logic{L}$.

  There are now several cases to consider. If $\dmneg a \vee b \notin I$, then the congruence $\theta(a, \dmneg a)$ is compatible with $I$ and $\pair{\alg{D} / \theta(a, \dmneg a)}{I / \theta(a, \dmneg a)}$ is isomorphic to the matrix $\Kminusmatrix$ (recall Figure \ref{fig: kminus matrix}). In that case $\Kminusmatrix$ is a model of $\logic{L}$ and ${\logic{L} \logleq \Kminus}$. On~the other hand, if $a \vee (\dmneg a \wedge b) \in I$, then the rule $p \wedge \dmneg p \vdash q \vee \dmneg q$ fails in $\pair{\alg{D}}{I}$, hence $\LP \cap \ECQ \nlogleq \Log \alg{D}$ and $\logic{L} \logleq \Log \alg{D} = \BD$.

  Finally, if $\dmneg a \vee b \in I$ and $a \vee (\dmneg a \wedge b) \notin I$,  then ${\theta(a \vee b \vee (\dmneg a \wedge \dmneg b), \True)}$ is a congruence compatible with $I$ and it yields either the matrix $\BDmatrix \times \CLmatrix$ or the matrix $\ETLmatrix \times \CLmatrix$. In the former case $\logic{L} \logleq \Log \BDmatrix \times \CLmatrix = \ECQ_{\omega} \logleq \Kminus$, while in the latter case $\logic{L} \logleq \Log \ETLmatrix \times \CLmatrix = \ETL_{\omega} \logleq \Kminus$.
\end{proof}

  Recall the definition of $\KOminus$ as $\LP \cap \Kminus$.

\begin{proposition} \label{prop: kominus vee etl}
  $(\LP \cap \ETLplus_{n}) \vee \ETL = \ETLplus_{n}$. $\KOminus \vee \ETL = \Kminus$.
\end{proposition}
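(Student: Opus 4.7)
The easy inclusion $(\LP \cap \ETLplus_n) \vee \ETL \logleq \ETLplus_n$ holds because specialising the defining rule of $\ETLplus_n$ by setting $q \assign p$ yields disjunctive syllogism $p, \dmneg p \vee r \vdash r$ (using $p \vdash_{\BD} \chi_n \vee p$), so $\ETL \logleq \ETLplus_n$, while trivially $\LP \cap \ETLplus_n \logleq \ETLplus_n$.

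For the reverse inclusion I plan to argue semantically: since $\logic{L} = \Log \Mod^{*} \logic{L}$ for any $\logic{L}$, it suffices to show that every reduced model of $(\LP \cap \ETLplus_n) \vee \ETL$ is a model of $\ETLplus_n$. Any such reduced model is in particular a reduced model of $\ETL$, so by Proposition~\ref{prop: reduced models of etl} it has the form $\pair{\alg{A}}{\{\True\}}$ for $\alg{A}$ a De~Morgan algebra, and it moreover satisfies the axiom $\chi_n \vee q, \dmneg q \vee r \vee \dmneg r \vdash r \vee \dmneg r$ of $\LP \cap \ETLplus_n$. Fix $\vec{d}, b, c \in \alg{A}$ with $\chi_n(\vec{d}) \vee b = \True$ and $\dmneg b \vee c = \True$; the task is to derive $c = \True$. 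Applying the axiom with valuation $\vec{p} \mapsto \vec{d}$, $q \mapsto b$, $r \mapsto c$ (the second premise holds because $\dmneg b \vee c \vee \dmneg c \geq \dmneg b \vee c = \True$) yields $c \vee \dmneg c = \True$, and by De~Morgan also $c \wedge \dmneg c = \False$, so $c$ is Boolean with complement $\dmneg c$. Meeting $\dmneg b \vee c = \True$ with $\dmneg c$ forces $\dmneg c \leq \dmneg b$, i.e.\ $b \leq c$, whence $\chi_n(\vec{d}) \vee c = \True$ and similarly $\dmneg c \leq \chi_n(\vec{d}) = \bigvee_i (d_i \wedge \dmneg d_i)$. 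By distributivity $\dmneg c = \bigvee_i (\dmneg c \wedge d_i \wedge \dmneg d_i)$; setting $e_i \assign \dmneg c \wedge d_i$ (so $\dmneg e_i = c \vee \dmneg d_i$) and using $c \wedge \dmneg c = \False$ computes $e_i \wedge \dmneg e_i = \dmneg c \wedge d_i \wedge \dmneg d_i$. Applying the axiom a second time, now with $q \mapsto c$ and $r \mapsto e_i$, is legitimate: the first premise $\chi_n(\vec{d}) \vee c = \True$ holds, and the second $\dmneg c \vee e_i \vee \dmneg e_i = \True$ holds because $c \wedge (e_i \wedge \dmneg e_i) \leq c \wedge \dmneg c = \False$. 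Hence $e_i \vee \dmneg e_i = \True$, so $e_i \wedge \dmneg e_i = \False$, i.e.\ $\dmneg c \wedge d_i \wedge \dmneg d_i = \False$ for every $i$; summing, $\dmneg c = \False$, that is $c = \True$.

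The second equality follows from the first: $\Kminus = \bigcup_{n \geq 1} \ETLplus_n$ by Proposition~\ref{prop: kminus} and $\KOminus = \LP \cap \Kminus = \bigcup_{n \geq 1}(\LP \cap \ETLplus_n)$ (by finitarity of $\LP$ and $\Kminus$, or read off Proposition~\ref{prop: kominus}), and joining with a fixed logic commutes with directed unions, so $\KOminus \vee \ETL = \bigcup_n ((\LP \cap \ETLplus_n) \vee \ETL) = \bigcup_n \ETLplus_n = \Kminus$. The main technical hurdle is choosing the two instantiations of the $\LP \cap \ETLplus_n$ axiom in the right order: the first Booleanises $c$ and pushes $\dmneg c$ below $\chi_n(\vec{d})$, while the second --- crucially applied with $q$ replaced by the now-Boolean $c$ --- converts each summand $\dmneg c \wedge d_i \wedge \dmneg d_i$ of $\dmneg c$ into zero, which is what upgrades the ``law of the excluded middle for $r$'' produced by the $\LP \cap \ETLplus_n$ axiom into the ``$r = \True$'' of $\ETLplus_n$.
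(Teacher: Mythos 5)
Your proof is correct, but it takes a genuinely different route from the paper's. The paper argues syntactically: setting $\Gamma = \{\chi_{n} \vee q, \dmneg q \vee r\}$, it reduces the problem to deriving $\dmneg(\chi_{n} \vee q) \vee r$ from $\Gamma$ inside $\LP \cap \ETLplus_{n}$ (after which the disjunctive syllogism of $\ETL$ yields $r$), and it obtains the needed disjuncts $p_{i} \vee \dmneg p_{i} \vee r$ by applying the rule $\chi_{n} \vee q, \dmneg q \vee \psi \vee \dmneg \psi \vdash \psi \vee \dmneg \psi$ to the cleverly chosen $\psi \assign r \vee (p \wedge \dmneg p)$. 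You instead argue semantically: every reduced model of the join has the form $\pair{\alg{A}}{\{\True\}}$ by Proposition~\ref{prop: reduced models of etl}, and you verify the $\ETLplus_{n}$ rule by direct computation in $\alg{A}$. I checked the algebra and it goes through: the first application of the $\LP \cap \ETLplus_{n}$ rule makes $c$ complemented (so $c \wedge \dmneg c = \False$, giving $b \leq c$ and $\dmneg c \leq \chi_{n}(\vec{d})$), and the second application, with $q$ instantiated by the now-Boolean $c$ and $r$ by $e_{i} = \dmneg c \wedge d_{i}$, kills each disjunct $\dmneg c \wedge d_{i} \wedge \dmneg d_{i}$ of $\dmneg c$; the premise $\dmneg c \vee e_{i} \vee \dmneg e_{i} = \True$ is indeed designated since $\dmneg e_{i} = c \vee \dmneg d_{i} \geq c$. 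Note that you only use the soundness half of the axiomatization of $\LP \cap \ETLplus_{n}$ (its rule follows from the $\ETLplus_{n}$ rule by substituting $r \vee \dmneg r$ for $r$, and from excluded middle in $\LP$), so there is no circularity with the completeness propositions. What each approach buys: the paper's derivation is self-contained and purely proof-theoretic, while yours leans on the classification of $\Mod^{*}\ETL$ but is more mechanical to verify and is close in spirit to the decomposition $\alg{A} \cong [\False, a] \times [\False, \dmneg a]$ that the paper uses later for the restricted proof by cases property. Your deduction of the second equality from the first via directed unions is also fine, since all the logics involved are finitary.
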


\begin{proof}
  Let $\chi_{n} \assign (p_{1} \wedge \dmneg p_{1}) \vee \dots \vee (p_{n} \wedge \dmneg p_{n})$ and $\Gamma = \{ \chi_{n} \vee q, \dmneg q \vee r \}$. Because $\chi_{n} \vee q, \dmneg (\chi_{n} \vee q) \vee r \vdash_{\ETL} r$, to prove that $\Gamma \vdash q$ holds in $(\LP \cap \ETLplus_{n}) \vee \ETL$ it will suffice to show that $\Gamma \vdash \dmneg (\chi_{n} \vee q) \vee r$ holds in $\LP \cap \ETLplus_{n}$. But $\Gamma \vdash_{\BD} \dmneg q \vee r$, therefore it suffices to show that $\Gamma \vdash p \vee \dmneg p \vee r$ holds in $\LP \cap \ETLplus_{n}$. Let $\psi \assign r \vee (p \wedge \dmneg p)$. Then $\Gamma \vdash_{\BD} \dmneg q \vee \psi \vee \dmneg \alpha$ and $\Gamma \vdash_{\BD} \chi_{n} \vee q$, so $\Gamma \vdash_{\LP \cap \ETLplus_{n}} \psi \vee \dmneg \psi$. But $\psi \vee \dmneg \psi \vdash_{\BD} p \vee \dmneg p \vee r$.
\end{proof}

\begin{theorem} \label{thm: upper part}
  The interval $[\KOminus, \CL]$ splits into the intervals $[\LP, \CL]$, $[\KO, \K]$, and $[\KOminus, \Kminus]$, where
\begin{align*}
  [\LP, \CL] & = \{ \LP, \LP \vee \ECQ, \CL \}, \\ [\KO, \K] & = \{ \KO, \KO \vee \ECQ, \K \}, \\ [\KOminus, \Kminus] & = \{ \KOminus, \KOminus \vee \ECQ, \Kminus \}.
\end{align*}
\end{theorem}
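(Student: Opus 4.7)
The plan is to combine three splittings already in hand—Theorem~\ref{thm: three blocks}, Proposition~\ref{prop: k lp splitting}, and Proposition~\ref{prop: ko splitting}—with Proposition~\ref{prop: kominus vee etl} and two collapse identities that still need to be established: $\K = \KO \vee \ETL$ and $\ECQ_{\omega} \leq \KO \vee \ECQ$ (together with its analogue $\ECQ_{\omega} \leq \KOminus \vee \ECQ$).

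First I would establish the splitting itself. Given $\logic{L} \in [\KOminus, \CL]$, Proposition~\ref{prop: k lp splitting} places $\logic{L}$ in $[\LP, \CL]$ when $\logic{L} \geq \LP$, and below $\K$ otherwise. In the latter case Proposition~\ref{prop: ko splitting} further places $\logic{L}$ in $[\KOminus, \Kminus]$ when $\logic{L} \leq \Kminus$ and in $[\KO, \K]$ when $\logic{L} \geq \KO$. Pairwise disjointness of the three subintervals follows from the disjointness clauses of the two splitting propositions together with $\KO \leq \LP$ (so $\logic{L} \geq \LP$ forces $\logic{L} \geq \KO$, ruling out $\logic{L} \leq \Kminus$).

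To describe each subinterval I run through the three disjoint cases of Theorem~\ref{thm: three blocks} and take meets with the ambient interval. In $[\LP, \CL]$ the three cases collapse to $\LP$, $\LP \vee \ECQ$, and $\CL = \LP \vee \ETL$ respectively. In $[\KO, \K]$ they collapse to $\KO = \LP \cap \K$, $\KO \vee \ECQ = (\LP \vee \ECQ) \cap \K$ (the calculation already appearing in the proof of the completeness theorem for $\KO \vee \ECQ$ in Section~\ref{sec: completeness}), and $\K = \KO \vee \ETL$. In $[\KOminus, \Kminus]$ they collapse to $\KOminus = \LP \cap \Kminus$, $\KOminus \vee \ECQ$ (after checking $(\LP \vee \ECQ) \cap \Kminus = \KOminus \vee \ECQ_{\omega} = \KOminus \vee \ECQ$ using $\ECQ_{\omega} \leq \Kminus$ and $\ECQ_{\omega} \leq \KOminus \vee \ECQ$), and $\Kminus = \KOminus \vee \ETL$ by Proposition~\ref{prop: kominus vee etl}.

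The main obstacle is supplying the two collapse identities. For $\K \leq \KO \vee \ETL$ I argue proof-theoretically starting from $(p \wedge \dmneg p) \vee q$: $\KO$'s axiom delivers $q \vee \dmneg q$, while in $\BD$ the premise also yields $\dmneg p \vee p \vee q$; conjoining these and then applying distributivity and De~Morgan produces $((\dmneg p \vee p) \wedge \dmneg q) \vee q = \dmneg((p \wedge \dmneg p) \vee q) \vee q$, whereupon $\ETL$'s disjunctive syllogism with $\alpha \assign (p \wedge \dmneg p) \vee q$ concludes $q$. For $\ECQ_{\omega} \leq \KOminus \vee \ECQ$ I show $\chi_n \vdash \emptyset$ in $\KOminus \vee \ECQ$ as follows: instantiate the axiomatization of $\KOminus$ from Proposition~\ref{prop: kominus} with $q \assign \False$ (so that the second premise $\dmneg q \vee r \vee \dmneg r = \True$ is a $\BD$-theorem) to obtain $\chi_n \vdash_{\KOminus} r \vee \dmneg r$ for every $r$; conjoin the instances for $r \assign p_{1}, \dots, p_{n}$ to derive $\dmneg \chi_n$; then apply the explosive rule of $\ECQ$ to the pair $\chi_n, \dmneg \chi_n$. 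The same template proves $\ECQ_{\omega} \leq \KO \vee \ECQ$, with iterated applications of the three-variable form $(p \wedge \dmneg p) \vee r \vdash (q \vee \dmneg q) \vee r$ of $\KO$'s axiom used to successively replace each disjunct $p_i \wedge \dmneg p_i$ in $\chi_n$ by $r \vee \dmneg r$ before conjoining.
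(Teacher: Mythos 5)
Your proof is correct and follows the same skeleton as the paper's: both obtain the splitting of $[\KOminus,\CL]$ from Propositions~\ref{prop: k lp splitting} and~\ref{prop: ko splitting}, and both describe each cell by intersecting the three blocks of Theorem~\ref{thm: three blocks} with $[\LP,\CL]$, $[\KO,\K]$, $[\KOminus,\Kminus]$, using $\CL=\LP\vee\ETL$, $\K=\KO\vee\ETL$, $\Kminus=\KOminus\vee\ETL$ (Proposition~\ref{prop: kominus vee etl}) for the top cells and $(\LP\vee\ECQ)\cap\K=\KO\vee\ECQ$, $(\LP\vee\ECQ)\cap\Kminus=\KOminus\vee\ECQ$ for the middle ones. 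Where you diverge is in how the auxiliary collapse identities are supplied. The paper computes $(\LP\vee\ECQ)\cap\Kminus=\KOminus\vee\ECQ$ via the explosive-part calculus --- $\LP\vee\ECQ=\LP\cup\ECQ_{\omega}$, distributing the intersection, then invoking $(\LP\cap\ECQ_{\omega})\vee\ECQ=\ECQ_{\omega}$ and $\ECQ_{\omega}\logleq\Kminus$ together with Fact~\ref{fact: join with lexp} --- and it states $\K=\KO\vee\ETL$ without proof (it is established in \cite{albuquerque+prenosil+rivieccio17}). You instead give direct Hilbert-style derivations: $\chi_{n}\vdash\emptyset$ in $\KO\vee\ECQ$ (respectively $\KOminus\vee\ECQ$) by deriving each $p_{i}\vee\dmneg p_{i}$ from $\chi_{n}$, conjoining to obtain $\dmneg\chi_{n}$, and exploding; and $(p\wedge\dmneg p)\vee q\vdash q$ in $\KO\vee\ETL$ by manufacturing $\dmneg\bigl((p\wedge\dmneg p)\vee q\bigr)\vee q$ from $q\vee\dmneg q$ and $p\vee\dmneg p\vee q$ and then applying disjunctive syllogism. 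Both derivations check out; in particular the substitution $q\assign\False$ into the rules of Proposition~\ref{prop: kominus} does turn the second premise into a $\BD$-theorem, so $\chi_{n}\vdash_{\KOminus}r\vee\dmneg r$ as you claim. Your route is more self-contained and elementary at these points; the paper's gets the same identities essentially for free from the already-developed structure of $\Exp\Ext\BD$, at the price of leaning on that machinery.
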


\begin{proof}
  The claim for $[\LP, \CL]$ holds because each non-trivial super-Belnap logic lies in one of the intervals $[\BD, \LP]$, $[\ECQ, \LP \vee \ECQ]$, $[\ETL, \CL]$ and $\CL = \LP \vee \ETL$. The~claim for $[\KO, \K]$ holds because $\KO = \LP \cap \K$ and $\KO \vee \ECQ = (\LP \vee \ECQ) \cap \K$ and $\K = \KO \vee \ETL$, therefore $[\KO, \K] \cap [\BD, \LP] = \{ \KO \}$ and $[\KO, \K] \cap [\ECQ, \LP \vee \ECQ] = \{ \KO \vee \ECQ \}$ and $[\KO, \K] \cap [\ETL, \CL] = \{ \K \}$. Likewise, the claim for $[\KOminus, \Kminus]$ holds because $\KOminus = \LP \cap \Kminus$ and $\KOminus \vee \ECQ = (\LP \vee \ECQ) \cap \Kminus$ and $\KOminus \vee \ETL = \Kminus$. The second equality holds because $(\LP \vee \ECQ) \cap \Kminus = (\LP \cup \ECQ_{\omega}) \cap \Kminus = (\LP \cap \Kminus) \cup \ECQ_{\omega} = \KOminus \vee \ECQ$, since $(\LP \cap \ECQ_{\omega}) \vee \ECQ = \ECQ_{\omega}$ and $\ECQ_{\omega} \logleq \Kminus$.
\end{proof}

  The interval $[\LP, \CL]$ was already described by Pynko~\cite{pynko00}.

\begin{theorem} \label{thm: lower part}
  Each non-trivial proper extension of $\BD$ lies in one of the disjoint intervals $[\KO, \CL]$, $[\LP \cap \ECQ, \KOminus]$, $[\ECQ, \KOminus \vee \ECQ]$, or $[\ETL, \Kminus]$.
\end{theorem}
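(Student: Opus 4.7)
The strategy is to combine two splittings already in hand: Proposition~\ref{prop: ko splitting}, which splits $[\BD, \CL]$ into $[\BD, \Kminus]$ and $[\KO, \CL]$, with the three-way splitting of Theorem~\ref{thm: three blocks}. Given a non-trivial proper extension $\logic{L}$ of $\BD$, I would first apply Proposition~\ref{prop: ko splitting}: if $\KO \logleq \logic{L}$, then $\logic{L}$ is already placed in $[\KO, \CL]$; otherwise $\logic{L} \logleq \Kminus$. In the latter case, Theorem~\ref{thm: three blocks} locates $\logic{L}$ in exactly one of $[\LP \cap \ECQ, \LP]$, $[\ECQ, \LP \vee \ECQ]$, or $[\ETL, \CL]$, so the problem reduces to intersecting each of these three intervals with the principal ideal $[\BD, \Kminus]$.

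Next I would compute the three intersections. The first is $[\LP \cap \ECQ, \LP] \cap [\BD, \Kminus] = [\LP \cap \ECQ, \LP \cap \Kminus] = [\LP \cap \ECQ, \KOminus]$, using the definition $\KOminus \assign \LP \cap \Kminus$. The second is $[\ECQ, \LP \vee \ECQ] \cap [\BD, \Kminus] = [\ECQ, (\LP \vee \ECQ) \cap \Kminus] = [\ECQ, \KOminus \vee \ECQ]$, where the crucial identity $(\LP \vee \ECQ) \cap \Kminus = \KOminus \vee \ECQ$ is exactly the one derived inside the proof of Theorem~\ref{thm: upper part} from $(\LP \cap \ECQ_{\omega}) \vee \ECQ = \ECQ_{\omega} \logleq \Kminus$. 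The third is immediate: since $\Kminus \logleq \K \logleq \CL$, we have $[\ETL, \CL] \cap [\BD, \Kminus] = [\ETL, \Kminus]$, provided $\ETL \logleq \Kminus$, which follows from $\Kminus = \ETLplus_{\omega}$ (Proposition~\ref{prop: kminus}) by substituting $\False$ for $p$ in the axiom of $\ETLplus_{1}$ to recover the disjunctive syllogism.

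Finally I would verify pairwise disjointness. The three lower intervals inherit mutual disjointness from Theorem~\ref{thm: three blocks} (their respective ambient intervals are disjoint, so the restricted intervals are too). The interval $[\KO, \CL]$ is disjoint from each of the other three because all three are contained in $[\BD, \Kminus]$, while $\KO \nlogleq \Kminus$ is precisely the content of the splitting in Proposition~\ref{prop: ko splitting}.

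There is no real obstacle to overcome: all the genuinely non-trivial work has been done upstream. The key non-trivial inequality $\KO \nlogleq \Kminus$ is already built into Proposition~\ref{prop: ko splitting}; the identification $(\LP \vee \ECQ) \cap \Kminus = \KOminus \vee \ECQ$ rests on Proposition~\ref{prop: kominus vee etl} and the computation in Theorem~\ref{thm: upper part}; and the containment $\ETL \logleq \Kminus$ is an immediate consequence of the axiomatization of $\Kminus$ via Proposition~\ref{prop: kminus}. Modulo those results, the present theorem reduces to a bookkeeping exercise of intersecting intervals in $\Ext \BD$.
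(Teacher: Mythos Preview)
Your proposal is correct and follows essentially the same route as the paper, which simply records that the theorem is immediate from Theorems~\ref{thm: three blocks} and~\ref{thm: upper part}. You make the bookkeeping explicit by invoking Proposition~\ref{prop: ko splitting} directly and intersecting the three intervals of Theorem~\ref{thm: three blocks} with $[\BD,\Kminus]$, but the substantive ingredients---the splitting $[\BD,\Kminus]$ versus $[\KO,\CL]$ and the identity $(\LP\vee\ECQ)\cap\Kminus=\KOminus\vee\ECQ$---are exactly those already packaged inside Theorem~\ref{thm: upper part}.
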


\begin{proof}
  This follows immediately from Theorems~\ref{thm: three blocks} and~\ref{thm: upper part}.
\end{proof}

  The following proposition extends the unpublished result of Rivieccio that $\ETL_{2}$ is the smallest proper extension of $\ETL$.

\begin{proposition} \label{prop:lp-cap-ecq2-splitting}
  $[\BD, \CL]$ splits into $[\BD, \ETL]$ and $[\LP \cap \ECQ_{2}, \CL]$.
\end{proposition}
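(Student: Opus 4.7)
The proposition claims a splitting of $[\BD, \CL]$, requiring disjointness and exhaustiveness of the two intervals. For disjointness I would check $\LP \cap \ECQ_{2} \nlogleq \ETL$ by falsifying in $\ETLmatrix$ the rule $\rho \colon (p_{1} \wedge \dmneg p_{1}) \vee (p_{2} \wedge \dmneg p_{2}) \vdash q \vee \dmneg q$ that axiomatizes $\LP \cap \ECQ_{2}$ over $\BD$ by Proposition~\ref{prop: axiomatizing cap exp}: mapping $p_{1}$ and $p_{2}$ to the two distinct self-dual middle elements of $\DMfour$ makes the antecedent equal $\True$, while mapping $q$ to either of them makes the consequent undesignated.

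For exhaustiveness I would prove the contrapositive: $\LP \cap \ECQ_{2} \nlogleq \logic{L}$ implies $\ETLmatrix$ is a model of $\logic{L}$, whence $\logic{L} \logleq \ETL$. Failure of $\rho$ in some reduced model $\pair{\alg{A}}{F}$ of $\logic{L}$ yields elements $a, c, d \in \alg{A}$ with $(a \wedge \dmneg a) \vee (c \wedge \dmneg c) \in F$ and $d \vee \dmneg d \notin F$; replacing $a$ and $c$ by $a \wedge \dmneg a$ and $c \wedge \dmneg c$, we may assume $a \leq \dmneg a$ and $c \leq \dmneg c$, so $a \vee c \in F$. If $a \in F$ (symmetrically for $c$), then $\dmneg a \in F$ by upward closure and hence $a = a \wedge \dmneg a \in F$, so already the single-variable rule $(p \wedge \dmneg p) \vdash q \vee \dmneg q$ axiomatizing $\LP \cap \ECQ$ over $\BD$ fails in $\logic{L}$, witnessed by $p \mapsto a$, $q \mapsto d$. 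By the earlier proposition that $\LP \cap \ECQ$ is the smallest proper extension of $\BD$, this forces $\logic{L} = \BD \logleq \ETL$.

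The remaining case is $a, c \notin F$, so $a \vee c \in F$ is a non-trivial join. Let $\alg{B}$ be the subalgebra of $\alg{A}$ generated by $a, c$ and $G \assign F \cap \alg{B}$; the submatrix $\pair{\alg{B}}{G}$ is a model of $\logic{L}$. By Lemma~\ref{lemma: ko splitting}, there is a surjective homomorphism $h \colon \alg{C} \to \alg{B}$ from the free De~Morgan algebra $\alg{C}$ on two generators subject to $a \leq \dmneg a$ and $c \leq \dmneg c$ depicted in Figure~\ref{fig: ko splitting}, so the strict preimage $\pair{\alg{C}}{H}$ with $H \assign h^{-1}[G]$ is a model of $\logic{L}$. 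The key free-algebra calculation is that imposing $a \vee c = \True$ on $\alg{C}$ forces $\dmneg a \wedge \dmneg c = \False$, whence $\dmneg a \wedge c \leq \dmneg a \wedge \dmneg c = \False$, and then by distributivity $\dmneg a = \dmneg a \wedge (a \vee c) = (a \wedge \dmneg a) \vee (\dmneg a \wedge c) = a$ (and similarly $\dmneg c = c$), so the resulting quotient of $\alg{C}$ is $\DMfour$ with $a, c$ mapping to its two self-dual middle elements. I would then produce a congruence $\theta$ on $\alg{C}$ compatible with $H$ such that $\pair{\alg{C}/\theta}{H/\theta} \cong \ETLmatrix$; closure of $\Mod \logic{L}$ under strict homomorphic images then gives $\ETLmatrix \in \Mod \logic{L}$. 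The main obstacle is locating this $\theta$: a natural candidate is the principal congruence $\theta(a \vee c, \True)$, whose action is described by Sankappanavar's formula, but enforcing $H/\theta = \{[\True]\}$ may require enlarging $\theta$ to collapse further elements of $H$ to $\True$, with a sub-case analysis on which of the twenty-odd elements of Figure~\ref{fig: ko splitting} lie in $H$.
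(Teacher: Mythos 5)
Your overall architecture matches the paper's: pass to a reduced countermodel, take the submatrix generated by the two witnesses, pull back along the surjection from the free algebra $\alg{C}$ of Figure~\ref{fig: ko splitting}, and collapse $\theta(a \vee c, \True)$ to reach $\ETLmatrix$; your computation that this quotient of $\alg{C}$ is $\DMfour$ is correct, and so is the disjointness check. But there is a genuine gap, and it sits exactly where you flag ``the main obstacle'': your case split is on the wrong condition. Discarding the case $a \in F$ or $c \in F$ leaves $a, c \notin F$, which does \emph{not} rule out $\dmneg a \in F$ (hence $\dmneg a \in H$). This sub-case is non-vacuous: in $\LPmatrix \times \BDmatrix$ take $a = \pair{n}{\False}$ and $c = \pair{\False}{b}$, where $n$ is the designated fixpoint of $\LPmatrix$ and $b$ the designated fixpoint of $\BDmatrix$; then $a, c \notin F$, $a \vee c \in F$, $a \leq \dmneg a$, $c \leq \dmneg c$, yet $\dmneg a = \pair{n}{\True} \in F$. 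When $\dmneg a \in H$, the filter $H$ strictly contains ${\uparrow}(a \vee c)$: it contains $e \assign \dmneg a \wedge (a \vee c) = a \vee (\dmneg a \wedge c)$, and $e \leq \dmneg e$, so $e$ and $\dmneg e$ both lie in $H$ and $\pair{\alg{C}}{H}$ is not even a model of $\ECQ$. Since $\Mod \ECQ$ is closed under strict homomorphic preimages and every quotient by a compatible congruence is a strict homomorphic image, \emph{no} compatible congruence --- however much you enlarge $\theta(a \vee c, \True)$ --- can turn $\pair{\alg{C}}{H}$ into $\ETLmatrix$. So the ``enlarge $\theta$'' strategy cannot close this sub-case.

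The repair is the paper's dichotomy: split on whether $\dmneg a \in H$ or $\dmneg c \in H$. If so, the element $e$ above (or its analogue for $c$) yields some $d \in F$ with $d \leq \dmneg d$, which together with your undesignated $d' \vee \dmneg d'$ falsifies $p \wedge \dmneg p \vdash q \vee \dmneg q$ and forces $\logic{L} \logleq \BD$ by the minimality of $\LP \cap \ECQ$ --- i.e.\ this sub-case exits the same way as your first case, not via $\ETLmatrix$. If instead $\dmneg a \notin H$ and $\dmneg c \notin H$, then every element of $\alg{C}$ outside ${\uparrow}(a \vee c)$ lies below $\dmneg a$ or below $\dmneg c$ (a finite check on Figure~\ref{fig: ko splitting}), so $H$ is exactly the six-element principal filter ${\uparrow}(a \vee c)$; this filter is precisely the preimage of $\True$ under the quotient onto $\DMfour$, so $\theta(a \vee c, \True)$ is compatible with $H$ with no enlargement needed and the quotient is $\ETLmatrix$ on the nose. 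With the corrected case split, and the sub-case analysis you deferred actually carried out, the argument goes through.
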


\begin{proof}
  Suppose that $\LP \cap \ECQ_{2} \nlogleq \logic{L}$. Then $(p \wedge \dmneg p) \vee (q \wedge \dmneg q) \nvdash_{\logic{L}} r \vee \dmneg r$ by Proposition \ref{prop: axiomatizing cap exp} and $\logic{L}$ has a reduced model $\pair{\alg{A}}{F}$ such that $a \vee b \in F$ for some $a, b \in \alg{A}$ with $a \leq \dmneg a$ and $b \leq \dmneg b$, and $c \notin F$ for some $c \in \alg{A}$ with $\dmneg c \leq c$.

  Consider the submatrix $\pair{\alg{B}}{G}$ of $\pair{\alg{A}}{F}$ generated by the elements $a$ and $b$. Let $\alg{C}$ be the algebra shown in Figure~\ref{fig: ko splitting}. As in the proof of Proposition \ref{prop: ko splitting} there is a surjective homomorphism $h\colon \alg{C} \rightarrow \alg{B}$, therefore $\pair{\alg{C}}{H}$ is a model of~$\logic{L}$ for $H \assign h^{-1}[G]$. We have $a \vee b \in H$.

  If $\dmneg a \in H$ or $\dmneg b \in H$, then there is some $d \in F$ such that $d \leq \dmneg d$: either $d = h(a \vee (\dmneg a \wedge \dmneg b))$ or $d = h(b \vee (a \wedge \dmneg b))$. Since $c \notin F$ for some $c \in \alg{A}$ such that $\dmneg c \leq c$, it follows that the rule $p \wedge \dmneg p \vdash q \vee \dmneg q$ fails in $\pair{\alg{A}}{F}$, hence $\LP \cap \ECQ \nlogleq \Log \pair{\alg{A}}{F}$ and $\logic{L} \logleq \Log \pair{\alg{A}}{F} = \BD$.

  Finally, if $\dmneg a \notin H$ and $\dmneg b \notin H$, then $H$ is the principal filter generated by $a \vee b$ and $\theta(a \vee b, \True)$ is compatible with $H$. The matrix $\pair{\alg{C} / \theta(a \vee b, \True)}{H / \theta(a \vee b, \True)}$ is then isomorphic to $\ETLmatrix$, hence $\ETLmatrix$ is a model of $\logic{L}$ and $\logic{L} \logleq \ETL$.
\end{proof}

\begin{theorem}
  $[\ETL, \CL]$ has the structure $\ETL < [\ETL_{2}, \Kminus] < \K < \CL$.
\end{theorem}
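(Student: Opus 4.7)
The plan is to show that every $\logic{L} \in [\ETL, \CL]$ lies in exactly one of four linearly ordered classes, $\{\ETL\}$, $[\ETL_{2}, \Kminus]$, $\{\K\}$, and $\{\CL\}$. The proof is essentially an assembly of the splitting results already established in this section, together with one lattice-theoretic identity.

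First, I would establish that every $\logic{L} > \ETL$ in $[\ETL, \CL]$ satisfies $\logic{L} \geq \ETL_{2}$. Since $\logic{L} \nleq \ETL$, Proposition~\ref{prop:lp-cap-ecq2-splitting} places $\logic{L}$ in $[\LP \cap \ECQ_{2}, \CL]$, so $\logic{L} \geq (\LP \cap \ECQ_{2}) \vee \ETL$. The key identity
\[
  (\LP \cap \ECQ_{2}) \vee \ETL = \ETL_{2}
\]
follows by combining Theorem~\ref{thm: iso exp ext ecq} (which yields $(\LP \cap \ECQ_{2}) \vee \ECQ = \ECQ_{2}$) with the corollary to Theorem~\ref{thm: iso exp ext etl} ($\ETL_{2} = \ETL \cup \ECQ_{2}$), using $\ECQ \leq \ETL$ to discard the auxiliary join with $\ECQ$.

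Second, I would split off the cases above $\K$. Applying Proposition~\ref{prop: ko splitting}, either $\logic{L} \leq \Kminus$, in which case $\logic{L} \in [\ETL_{2}, \Kminus]$, or $\logic{L} \geq \KO$. In the latter case $\logic{L} \geq \KO \vee \ETL = \K$ (an equality recorded in the proof of Theorem~\ref{thm: upper part}). A further appeal to Proposition~\ref{prop: k lp splitting} then forces either $\logic{L} = \K$ or $\logic{L} \geq \LP \vee \K \geq \LP \vee \ETL = \CL$, so $\logic{L} = \CL$.

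Finally, I would verify strictness between the blocks: $\ETL < \ETL_{2}$ and $\Kminus < \K$ are Rivieccio's results recalled in the introduction (for the latter, a direct countermodel suffices: the valuation $p \mapsto \mathsf{a}$, $q \mapsto \mathsf{b}$ on $\Kminusmatrix$ gives $\mathsf{a} \vee \mathsf{b} = \True \in F$ while $\mathsf{b} \notin F$, so the axiom $(p \wedge \dmneg p) \vee q \vdash q$ of $\K$ fails in $\Kminus$); and $\K < \CL$ is immediate since $\LP \nleq \K$. The only real technical step is the identity $(\LP \cap \ECQ_{2}) \vee \ETL = \ETL_{2}$, which is where the isomorphism theorems of Section~\ref{sec: completeness} earn their keep; everything else is routine bookkeeping with the three splittings (Propositions~\ref{prop: k lp splitting}, \ref{prop: ko splitting}, and~\ref{prop:lp-cap-ecq2-splitting}).
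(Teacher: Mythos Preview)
Your proposal is correct and follows essentially the same approach the paper intends: the theorem is stated in the paper without an explicit proof, as it is meant to be an immediate consequence of the three splittings (Propositions~\ref{prop: k lp splitting}, \ref{prop: ko splitting}, and~\ref{prop:lp-cap-ecq2-splitting}) together with the identities $\KO \vee \ETL = \K$ and $\LP \vee \ETL = \CL$ recorded earlier. Your derivation of the identity $(\LP \cap \ECQ_{2}) \vee \ETL = \ETL_{2}$ via Theorems~\ref{thm: iso exp ext etl} and~\ref{thm: iso exp ext ecq} is the right way to handle the one nontrivial step, and the countermodel you give for $\Kminus < \K$ is fine (though the paper simply cites this as Rivieccio's result preceding Proposition~\ref{prop: ko splitting}).
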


  In other words, the rule schema $\chi \vee p, \dmneg p \vee q \vdash q$, where $\chi$ ranges over all classical contradictions, is the strongest set of rules which lies properly between the disjunctive syllogism and the resolution rule.

\begin{proposition} \label{prop: etl omega splitting}
  For each super-Belnap logic $\logic{L}$ either $\logic{L} \logleq \ETL_{\omega}$ or the rule $(p \wedge \dmneg p) \vee q \vee \dmneg q, (q \wedge \dmneg q) \vee p \vee \dmneg p \vdash p \vee \dmneg p$ holds in $\logic{L}$.
\end{proposition}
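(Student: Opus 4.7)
The strategy, following the template of Propositions~\ref{prop: k lp splitting}, \ref{prop: ko splitting}, and~\ref{prop:lp-cap-ecq2-splitting}, is to assume that the rule fails in $\logic{L}$ and then exhibit $\CLmatrix \times \ETLmatrix$ as a model of $\logic{L}$; this will give $\logic{L} \logleq \Log(\CLmatrix \times \ETLmatrix) = \ETL_{\omega}$ by the completeness theorem established earlier.

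Starting from a reduced model $\pair{\alg{A}}{F}$ of $\logic{L}$ together with elements $a, b \in \alg{A}$ witnessing the failure, the first step is to replace $a$ by $a \vee \dmneg a$ and $b$ by $b \wedge \dmneg b$. A routine computation shows that this substitution preserves the three witness conditions and additionally ensures $\dmneg a \leq a$ and $b \leq \dmneg b$ in $\alg{A}$, so that the witness data simplifies to $a \notin F$, $\dmneg a \vee \dmneg b \in F$, and $a \vee b \in F$; note also that $a \wedge b \leq a \notin F$ forces $a \wedge b \notin F$ by upward closure.

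By a direct enumeration (in the style of Figures~\ref{fig: three blocks} and~\ref{fig: ko splitting}), the free De~Morgan algebra on two generators $x, y$ modulo the relations $\dmneg x \leq x$, $y \leq \dmneg y$, $x \vee y = \True$, and $x \wedge y = \False$ has exactly eight elements and is isomorphic, as a matrix with designated set $\{(\True, \True)\}$, to $\CLmatrix \times \ETLmatrix$ via $x \mapsto (\True, \mathsf{b})$ and $y \mapsto (\False, \mathsf{n})$, where $\mathsf{b}$ and $\mathsf{n}$ are the two fixed points of De~Morgan negation in $\ETLmatrix$. I would therefore pass to the subalgebra $\alg{B}$ of $\alg{A}$ generated by $a, b$ (a model of $\logic{L}$ with filter $G = F \cap \alg{B}$) and quotient by the congruence $\theta$ generated by $(a \vee b, \True)$ and $(a \wedge b, \False)$, aiming to identify the resulting quotient matrix with $\CLmatrix \times \ETLmatrix$.

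The main obstacle is verifying that $\theta$ is compatible with $G$ and that the quotient is precisely $\CLmatrix \times \ETLmatrix$ with designated set $\{(\True, \True)\}$. Sankappanavar's description of principal congruences (recalled at the start of Section~\ref{sec: super-belnap lattice}) yields a concrete criterion; however, since lattice filters in De~Morgan algebras need not be prime, direct compatibility is delicate. The plan is to mirror the final case analyses in the proofs of Propositions~\ref{prop: ko splitting} and~\ref{prop:lp-cap-ecq2-splitting}, splitting on which auxiliary elements of $\alg{B}$ (notably $\dmneg b$, $a \wedge \dmneg b$, $\dmneg a \vee b$) happen to lie in $G$. In any sub-case in which the naive congruence fails to yield $\CLmatrix \times \ETLmatrix$, one either further identifies the accidentally designated elements with $\True$, or observes that the resulting quotient matrix, equipped with a larger filter, still has its logic contained in $\ETL_{\omega}$ by appeal to the earlier splitting results. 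I expect the bookkeeping of this case analysis, rather than any conceptual obstacle, to be the main difficulty.
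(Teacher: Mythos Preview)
Your approach is essentially the same as the paper's, though your presentation adds an unnecessary detour. After your substitution, swapping $a \leftrightarrow \dmneg a$ puts both generators into the form $x \leq \dmneg x$, and then the relevant free De~Morgan algebra is exactly the one in Figure~\ref{fig: ko splitting}; the paper simply reuses that algebra (and the strict-preimage filter $H$ on it) rather than introducing a new free object. Your plan to quotient first by $(a \vee b, \True)$ and $(a \wedge b, \False)$ and only then case-split is a detour: compatibility of that congruence with $H$ is precisely what the case analysis decides, so you end up doing the same work anyway. The paper's case analysis on $H$ is short: if $a \vee (\dmneg a \wedge b) \in H$ then $\LP \cap \ECQ$ fails and $\logic{L} \logleq \BD$; otherwise $H$ is the principal filter generated by one of $a \vee (\dmneg a \wedge \dmneg b)$, $a \vee b$, or $a \vee b \vee (\dmneg a \wedge \dmneg b)$, and the Leibniz reduct of $\pair{\alg{C}}{H}$ is $\BDmatrix \times \CLmatrix$ in the first two cases and $\ETLmatrix \times \CLmatrix$ in the third. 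Since $\Log(\BDmatrix \times \CLmatrix) = \ECQ_{\omega} \logleq \ETL_{\omega}$ and $\Log(\ETLmatrix \times \CLmatrix) = \ETL_{\omega}$, you are done. So your target matrix $\CLmatrix \times \ETLmatrix$ arises only in one of the subcases; in the others you land in $\BDmatrix \times \CLmatrix$ or $\BDmatrix$ itself, which is fine but worth anticipating.
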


\begin{proof}
  Suppose that $(p \wedge \dmneg p) \vee q \vee \dmneg q, (q \wedge \dmneg q) \vee p \vee \dmneg p \nvdash_{\logic{L}} p \vee \dmneg p$. Then $\logic{L}$ has a reduced model $\pair{\alg{A}}{F}$ such that $a \vee \dmneg b \in F$ and $b \vee \dmneg a \in F$ but $\dmneg b \notin F$ for some $a, b \in \alg{A}$ such that $a \leq \dmneg a$ and $b \leq \dmneg b$.

  We proceed as in the proofs of the previous propositions. Again, if we have $a \vee (\dmneg a \wedge b) \in H$, then the rule $p \wedge \dmneg p \vdash q \vee \dmneg q$ fails in $\pair{\alg{C}}{H}$, hence $\logic{L} \logleq \Log \pair{\alg{C}}{H} = \BD$. Suppose therefore that $a \vee (\dmneg a \wedge b) \notin H$. Then $H$ is a principal filter generated either by $a \vee (\dmneg a \wedge \dmneg b)$ or by $a \vee b$ or by $a \vee b \vee (\dmneg a \wedge \dmneg b)$. In the first two cases, the Leibniz reduct of $\pair{\alg{C}}{H}$ is isomorphic to the matrix $\BDmatrix \times \CLmatrix$, while in the third case it is isomorphic to the matrix $\ETLmatrix \times \CLmatrix$. But we know that $\Log \BDmatrix \times \CLmatrix = \ECQ_{\omega} \logleq \ETL_{\omega}$ and $\Log \ETLmatrix \times \CLmatrix = \ETL_{\omega}$. Therefore $\logic{L} \logleq \Log \pair{\alg{C}}{H} \logleq \ETL_{\omega}$.
\end{proof}

\begin{figure}
\caption{Part of the lattice of super-Belnap logics}
\label{fig: ext bd}

\bigskip

\begin{center}
\begin{tikzpicture}[scale=1.15]
  \node (Belnap) at (0,0) {$\BD$};
  \node (LPcapECQ) at (0,1) {$\LP \cap \ECQ$};
  \node (ECQ) at (1,2) {$\ECQ$};
  \node (ETL) at (2,3) {$\ETL$};
  \node (ETLplus) at (2,5) {$\ETLplus_{1}$};
  \node (ETLplus2) at (1,6) {$\ETLplus_{2}$};
  \node (LPcapECQ2) at (-1,2) {$\LP \cap \ECQ_{2}$};
  \node (ECQ2) at (0,3) {$\ECQ_{2}$};
  \node (ETL2) at (1,4) {$\ETL_{2}$};
  \node (dots1) at (-2,3) {$\dots$};
  \node (dots2) at (-1,4) {$\dots$};
  \node (dots3) at (0,5) {$\dots$};
  \node (dots4) at (-0.5,6.5) {$\dots$};
  \node (LPcapECQomega) at (-3,4) {$\LP \cap \ECQ_{\omega}$};
  \node (ECQomega) at (-2,5) {$\ECQ_{\omega}$};
  \node (ETLomega) at (-1,6) {$\ETL_{\omega}$};
  \node (LPcapETLplusomega) at (-4,5) {$\KOminus$};
  \node (LPcapETLplusomegaveeECQ) at (-3,6) {$\KOminus \vee \ECQ$}; 
  \node (ETLplusomega) at (-2,7) {$\Kminus$};
  \node (KO) at (-5,6) {$\KO$};
  \node (KOveeECQ) at (-4,7) {$\KO \vee \ECQ$};
  \node (K) at (-3,8) {$\K$};
  \node (LP) at (-6,7) {$\LP$};
  \node (LPveeECQ) at (-5,8) {$\LP \vee \ECQ$};
  \node (CL) at (-4,9) {$\CL$};
  \draw[-] (Belnap) -- (LPcapECQ) -- (LPcapECQ2) -- (dots1) -- (LPcapECQomega) -- (LPcapETLplusomega) -- (KO) -- (LP);
  \draw[-] (ECQ) -- (ECQ2) -- (dots2) -- (ECQomega) -- (LPcapETLplusomegaveeECQ) -- (KOveeECQ) -- (LPveeECQ);
  \draw[-] (ETL) -- (ETL2) -- (dots3) -- (ETLomega);
  \draw[-] (ETLplus) -- (ETLplus2) -- (dots4) -- (ETLplusomega) -- (K) -- (CL);
  \draw[-] (LPcapECQ) -- (ECQ) -- (ETL);
  \draw[-] (LPcapECQ2) -- (ECQ2) -- (ETL2) -- (ETLplus);
  \draw[-] (LPcapECQomega) -- (ECQomega) -- (ETLomega) -- (ETLplusomega);
  \draw[-] (LPcapETLplusomega) -- (LPcapETLplusomegaveeECQ) -- (ETLplusomega);
  \draw[-] (KO) -- (KOveeECQ) -- (K);
  \draw[-] (LP) -- (LPveeECQ) -- (CL);
\end{tikzpicture}
\end{center}

\end{figure}
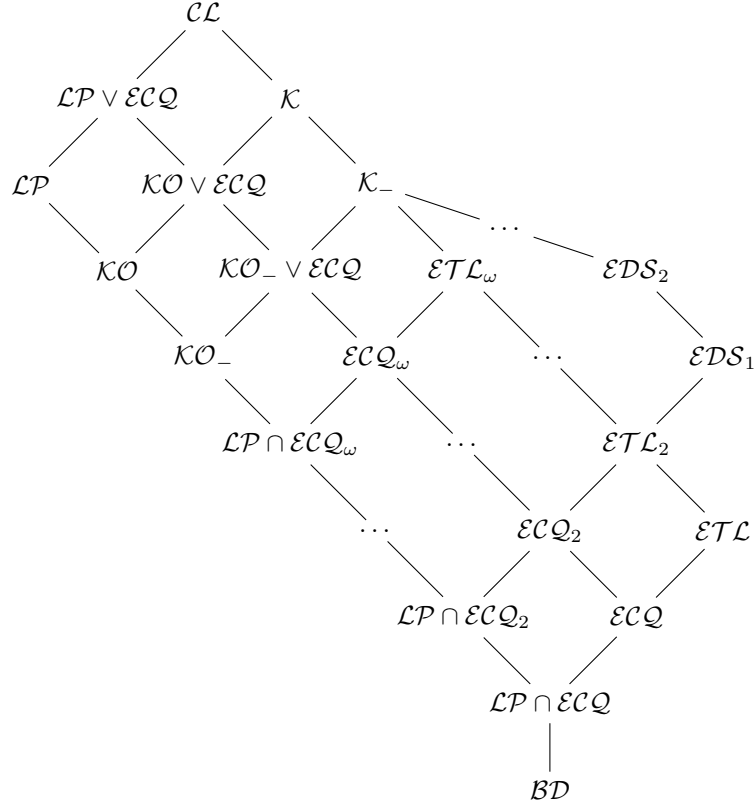

  Proving an informative completeness theorem for the logic axiomatized by the above rule remains an open problem. Apart from this logic, Figure~\ref{fig: ext bd} shows the relative positions of the logics discussed above. It only depicts the inclusions among the selected logics -- it~does not faithfully represent meets and joins.

  The splittings established above can be used to determine which super-Belnap logics satisfy various metalogical properties. This was already done in~\cite{albuquerque+prenosil+rivieccio17} to classify non-trivial super-Belnap logics within the Leibniz and Frege hierarchies of abstract algebraic logic. The only protoalgebraic one is $\CL$, the only Fregean one is also $\CL$, and the only selfextensional ones are $\BD$, $\KO$, and $\CL$.\footnote{Recall that a logic $\logic{L}$ is called \emph{protoalgebraic} if there is a set of formulas in two variables $\Delta(p, q)$ such that $p, \Delta(p, q) \vdash_{\logic{L}} q$ and $\emptyset \vdash_{\logic{L}} \delta(p, p)$ for each $\delta(p, p) \in \Delta(p, p)$. It is \emph{selfextensional} if the equivalence or interderivability relation $\varphi \dashv \vdash_{\logic{L}} \psi$ is a congruence on the algebra of formulas, i.e.\ if replacing a subformula of $\alpha$ by an interderivable formula results in a formula $\beta$ which is interderivable with $\alpha$. The logic is called \emph{Fregean} if this replacement principle  holds for interderivability modulo any set of formulas $\Gamma$, i.e.\ for the relation $\Gamma, \varphi \vdash_{\logic{L}} \psi$ and $\Gamma, \psi \vdash_{\logic{L}} \varphi$.} We~shall provide two alternative proofs of this last fact below.

 Following Cintula \& Noguera~\cite{cintula+noguera13}, we say that a super-Belnap logic $\logic{L}$ enjoys the \emph{(weak) proof by cases property} if (for $\Gamma = \emptyset$)
\begin{align*}
  \Gamma, \varphi \vee \psi \vdash_{\logic{L}} \chi \iff \Gamma, \varphi \vdash_{\logic{L}} \chi \text{ and } \Gamma, \psi \vdash_{\logic{L}} \chi.
\end{align*}

\begin{proposition} \label{prop: pcp}
  The only non-trivial super-Belnap logics which enjoy the (weak) proof by cases property are $\BD$, $\KO$, $\LP$, $\K$, and $\CL$.
\end{proposition}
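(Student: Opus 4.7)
The ``if'' direction is immediate from Fact~\ref{fact: pcp}, which actually gives the full PCP (and hence the weak PCP) for $\BD$, $\KO$, $\LP$, $\K$, and $\CL$. For the converse, let $\logic{L}$ be a non-trivial super-Belnap logic with $\logic{L} \notin \{\BD, \KO, \LP, \K, \CL\}$; the plan is to exhibit a failure of weak PCP in $\logic{L}$.

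The first step is to locate $\logic{L}$ in one of three convenient intervals. Combining Theorem~\ref{thm: lower part} with the explicit enumeration $[\KO, \CL] = \{\KO, \KO \vee \ECQ, \K, \LP, \LP \vee \ECQ, \CL\}$ extracted from Theorem~\ref{thm: upper part}, and observing that $[\ECQ, \KOminus \vee \ECQ] \subseteq [\ECQ, \LP \vee \ECQ]$ since $\KOminus \logleq \LP$, one verifies that $\logic{L}$ must belong to one of: (i)~$[\ECQ, \LP \vee \ECQ]$, (ii)~$[\ETL, \Kminus]$, or (iii)~$[\LP \cap \ECQ, \KOminus]$. (The two logics of $[\KO, \CL]$ outside the allowed list, namely $\KO \vee \ECQ$ and $\LP \vee \ECQ$, both fall into case~(i).)

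In cases (i) and (ii), both $q \vdash_\logic{L} q$ and $p \wedge \dmneg p \vdash_\logic{L} q$ are valid (the latter because $\ECQ \logleq \logic{L}$), so weak PCP would force $(p \wedge \dmneg p) \vee q \vdash_\logic{L} q$. For~(i), this rule is refuted in $\LP \vee \ECQ = \Log \CLmatrix \times \LPmatrix$ by the valuation $p \mapsto (\True, m)$, $q \mapsto (\True, \False)$, where $m$ is the designated middle element of $\LPmatrix$: the formula $(p \wedge \dmneg p) \vee q$ takes the designated value $(\True, m)$ while $q$ takes the undesignated value $(\True, \False)$. For~(ii), the same rule is refuted in $\Kminus$ by the valuation $p \mapsto \mathsf{a}$, $q \mapsto \mathsf{b}$ on $\Kminusmatrix$: since $\mathsf{a} = \dmneg \mathsf{a}$, the formula $p \wedge \dmneg p$ evaluates to $\mathsf{a}$, and inspection of Figure~\ref{fig: kminus matrix} shows $\mathsf{a} \vee \mathsf{b} = \True$ (as $\True$ is the only common upper bound of the incomparable pair $\mathsf{a}, \mathsf{b}$), while $\mathsf{b}$ itself is not designated.

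In case (iii) I instead use $r \vdash_\logic{L} r \vee \dmneg r$ (trivial from $\BD$) together with $p \wedge \dmneg p \vdash_\logic{L} r \vee \dmneg r$ (which holds because $\LP \cap \ECQ$, the smallest proper extension of $\BD$, is axiomatized by $p \wedge \dmneg p \vdash q \vee \dmneg q$ and is contained in $\logic{L}$); weak PCP then yields $(p \wedge \dmneg p) \vee r \vdash_\logic{L} r \vee \dmneg r$. The same valuation $p \mapsto \mathsf{a}$, $r \mapsto \mathsf{b}$ refutes this in $\Kminus$: one has $r \vee \dmneg r = \mathsf{b} \vee \mathsf{c} = \mathsf{b}$ (as $\mathsf{c} \logleq \mathsf{b}$ in $\Kminusmatrix$), which is undesignated. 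Since $\logic{L} \logleq \KOminus \logleq \Kminus$, the failure in $\Kminus$ transfers to $\logic{L}$. The only non-routine part of the argument is the bookkeeping needed to assemble the various splittings into the three cases above; once that is done, each counter-example is a direct computation in a small De~Morgan algebra.
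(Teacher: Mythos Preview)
Your proof is correct but takes a genuinely different route from the paper's. The paper's key observation is that weak PCP itself pushes $\logic{L}$ up the lattice: any proper extension of $\BD$ validates $p \wedge \dmneg p \vdash q \vee \dmneg q$ (since $\LP \cap \ECQ \logleq \logic{L}$), and weak PCP then yields $(p \wedge \dmneg p) \vee r \vdash_{\logic{L}} (q \vee \dmneg q) \vee r$, i.e.\ $\KO \logleq \logic{L}$; similarly, if $\ECQ \logleq \logic{L}$ then weak PCP gives $(p \wedge \dmneg p) \vee q \vdash_{\logic{L}} q$, i.e.\ $\K \logleq \logic{L}$. After that one only needs to inspect the six logics in $[\KO,\CL]$ and discard $\KO \vee \ECQ$ and $\LP \vee \ECQ$ with a single counterexample. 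By contrast, you invoke the full splitting machinery (Theorems~\ref{thm: upper part} and~\ref{thm: lower part}) to place $\logic{L}$ in one of three intervals and then supply a bespoke matrix refutation in each case. Both arguments are valid and neither is circular (the structure theorems precede this proposition in the paper); the paper's version is shorter and isolates the conceptual reason the list is what it is, namely that weak PCP is precisely what upgrades the $\LP \cap \ECQ$-axiom to the $\KO$-axiom and the $\ECQ$-axiom to the $\K$-axiom, while your version has the virtue of producing explicit witnesses in $\CLmatrix \times \LPmatrix$ and $\Kminusmatrix$.
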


\begin{proof}
  Let $\logic{L}$ be a proper extension of $\BD$ with the weak proof by cases property. Then $\LP \cap \ECQ \logleq \logic{L}$, i.e.\ $p \wedge \dmneg p \vdash_{\logic{L}} q \vee \dmneg q$. By the weak proof by cases property, $(p \wedge \dmneg p) \vee r \vdash_{\logic{L}} (q \vee \dmneg q) \vee r$, i.e.\ $\KO \logleq \logic{L}$. Moreover, if $\ECQ \logleq \logic{L}$, then $p \wedge \dmneg p \vdash_{\logic{L}} q$ and $(p \wedge \dmneg p) \vee q \vdash_{\logic{L}} q$, i.e.\ $\K \logleq \logic{L}$. But the only non-trivial proper extensions of $\KO$ are $\LP$, $\KO \vee \ECQ$, $\LP \vee \ECQ$, $\K$, and $\CL$. The logics $\KO \vee \ECQ$ and $\LP \vee \ECQ$ do not enjoy the weak proof by cases property: they validate $p \vdash p$ and $(q \wedge \dmneg q) \vdash p$ but not $p \vee (q \wedge \dmneg q) \vdash p$.
\end{proof}

  We say that $\logic{L}$ enjoys the \emph{contraposition property} if $\varphi \vdash_{\logic{L}} \psi$ implies $\dmneg \psi \vdash_{\logic{L}} \dmneg \varphi$.

\begin{proposition} \label{prop:contraposition}
  The only non-trivial super-Belnap logics which enjoy the contra\-position property are $\BD$, $\KO$, and $\CL$.
\end{proposition}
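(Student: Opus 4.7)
The plan is to prove the converse: every non-trivial super-Belnap logic with contraposition equals $\BD$, $\KO$, or $\CL$. These three are known to enjoy contraposition by Fact~\ref{fact: contraposition}. Let $\logic{L}$ be a non-trivial super-Belnap logic with contraposition, and assume $\logic{L} \neq \BD$. Then $\logic{L}$ properly extends $\BD$, so the smallest proper extension satisfies $\LP \cap \ECQ \leq \logic{L}$, giving the rule $p \wedge \dmneg p \vdash_{\logic{L}} q \vee \dmneg q$.

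First I would push $\logic{L}$ up to $\KO$ by conjoining a spare variable before contraposing. Using conjunction-introduction (available in $\BD$) together with the rule above yields $(p \wedge \dmneg p) \wedge \dmneg r \vdash_{\logic{L}} (q \vee \dmneg q) \wedge \dmneg r$. Applying contraposition and then the De Morgan laws $\dmneg(x \wedge y) \dashv \vdash \dmneg x \vee \dmneg y$, $\dmneg(x \vee y) \dashv \vdash \dmneg x \wedge \dmneg y$, and $\dmneg \dmneg x \dashv \vdash x$ transforms this into $(q \wedge \dmneg q) \vee r \vdash_{\logic{L}} (p \vee \dmneg p) \vee r$, which up to renaming is the axiomatizing rule of $\KO$ over $\BD$. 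Hence $\KO \leq \logic{L}$.

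By Theorem~\ref{thm: upper part} we have $\logic{L} \in \{\KO, \KO \vee \ECQ, \K, \LP, \LP \vee \ECQ, \CL\}$, and it remains to eliminate the four middle candidates. For this I rely on two auxiliary observations, both obtained by the same contraposition trick. If $\ECQ \leq \logic{L}$, then applying contraposition to $p \wedge \dmneg p \vdash q$ yields $\dmneg q \vdash_{\logic{L}} p \vee \dmneg p$; substituting $q \mapsto \False$ and using $\emptyset \vdash_{\BD} \True$ gives $\emptyset \vdash_{\logic{L}} p \vee \dmneg p$, so $\LP \leq \logic{L}$. If on the other hand $\LP \leq \logic{L}$, then combining Fact~\ref{fact: contraposition}'s cross-relation $\varphi \vdash_{\K} \psi \iff \dmneg \psi \vdash_{\LP} \dmneg \varphi$ with $\logic{L}$'s contraposition forces every single-premise $\K$-rule into $\logic{L}$, and conjunction of premises extends this to all finitary $\K$-rules, giving $\K \leq \logic{L}$.

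These two observations rule out $\K$ and $\KO \vee \ECQ$ (both have only $\BD$-theorems, so $\LP \not\leq \logic{L}$ would be forced to hold yet fails) as well as $\LP$ and $\LP \vee \ECQ$ (since $\K \not\leq \LP \vee \ECQ$, as witnessed by the failure of resolution $p \vee q, \dmneg q \vee r \vdash p \vee r$ in $\LPmatrix$ under $p = \False$, $q = \Both$, $r = \False$). The main obstacle is the initial step deriving $\KO \leq \logic{L}$: once contraposition is seen to interact with conjoining a spare variable via the De Morgan laws, the rest is a routine case analysis over the six-element interval $[\KO, \CL]$.
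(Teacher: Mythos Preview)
Your argument is correct in outline and takes a genuinely different route from the paper. The paper first proves a general implication: contraposition implies the weak proof by cases property (from $\varphi \vdash \chi$ and $\psi \vdash \chi$ one gets $\dmneg\chi \vdash \dmneg\varphi \wedge \dmneg\psi$ and contraposes back). It then simply invokes Proposition~\ref{prop: pcp} to reduce to the five logics $\BD$, $\KO$, $\LP$, $\K$, $\CL$, and finishes by directly exhibiting a failure of contraposition in $\K$ and in~$\LP$. Your approach instead pushes $\logic{L}$ up to $\KO$ by the ``conjoin and contrapose'' trick, then does a case analysis over the six-element interval $[\KO,\CL]$ using two bootstrap lemmas ($\ECQ\leq\logic{L}\Rightarrow\LP\leq\logic{L}$ and $\LP\leq\logic{L}\Rightarrow\K\leq\logic{L}$). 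The paper's route is shorter and yields a reusable abstract fact; yours avoids relying on Proposition~\ref{prop: pcp} but depends on the explicit description of $[\KO,\CL]$ in Theorem~\ref{thm: upper part}.

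There is one genuine gap. To rule out $\LP \vee \ECQ$ you need $\K \nleq \LP \vee \ECQ$, and you justify this by exhibiting a failure of resolution in~$\LPmatrix$. But $\LPmatrix$ is \emph{not} a model of $\LP \vee \ECQ$ (the rule $p,\dmneg p \vdash q$ already fails there), so a counterexample in $\LPmatrix$ says nothing about $\LP \vee \ECQ$. The fix is easy: either test the rule in $\CLmatrix \times \LPmatrix$ (e.g.\ $p=(\False,\False)$, $q=(\True,\mathrm{Both})$, $r=(\True,\False)$ refutes resolution), or, more cleanly, invoke the splitting of $[\BD,\CL]$ into $[\BD,\LP\vee\ECQ]$ and $[\ETL,\CL]$: since $\ETL \leq \K$, the logic $\K$ lies in the second block and hence $\K \nleq \LP \vee \ECQ$.
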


\begin{proof}
  Let $\logic{L}$ be a super-Belnap logic with the contra\-position property. Then $\varphi \vdash_{\logic{L}} \chi$ and $\psi \vdash_{\logic{L}} \chi$ imply $\dmneg \chi \vdash_{\logic{L}} \dmneg \varphi$ and $\dmneg \chi \vdash_{\logic{L}} \dmneg \psi$. Thus $\dmneg \chi \vdash_{\logic{L}} (\dmneg \varphi \wedge \dmneg \psi)$ and $\dmneg (\dmneg \varphi \wedge \dmneg \psi) \vdash_{\logic{L}} \dmneg \dmneg \chi$, hence $\varphi \vee \psi \vdash_{\logic{L}} \chi$. The contraposition property therefore implies the weak proof by cases property. It now suffices to verify that $\K$ and $\LP$ do not satisfy the contra\-position property: we have $p \wedge \dmneg p \vdash_{\K} q$ but $\dmneg q \nvdash_{\K} \dmneg (p \wedge \dmneg p)$, and likewise $q \vdash_{\LP} p \vee \dmneg p$ but $\dmneg (p \vee \dmneg p) \nvdash_{\LP} \dmneg q$.
\end{proof}

\begin{proposition} \label{prop:selfextensional}
  The only non-trivial selfextensional super-Belnap logics are $\BD$, $\KO$, and $\CL$.
\end{proposition}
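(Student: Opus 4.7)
The plan is to reduce the result to Proposition~\ref{prop:contraposition} by showing that any non-trivial selfextensional super-Belnap logic satisfies contraposition, and then to verify that $\BD$, $\KO$, and $\CL$ are themselves selfextensional.

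Let $\logic{L}$ be a non-trivial selfextensional extension of $\BD$ and suppose $\varphi \vdash_{\logic{L}} \psi$. Since $\wedge$ behaves as a meet in every extension of $\BD$, we have $\varphi \vdash_{\logic{L}} \varphi \wedge \psi$ and $\varphi \wedge \psi \vdash_{\logic{L}} \varphi$, so $\varphi \dashv \vdash_{\logic{L}} \varphi \wedge \psi$. Applying selfextensionality to the outermost occurrence of $\dmneg$ yields $\dmneg \varphi \dashv \vdash_{\logic{L}} \dmneg (\varphi \wedge \psi)$. Combining this with the De~Morgan interderivability $\dmneg (\varphi \wedge \psi) \dashv \vdash_{\BD} \dmneg \varphi \vee \dmneg \psi$, we obtain $\dmneg \varphi \dashv \vdash_{\logic{L}} \dmneg \varphi \vee \dmneg \psi$. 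In particular, $\dmneg \varphi \vee \dmneg \psi \vdash_{\logic{L}} \dmneg \varphi$, which together with the trivial $\dmneg \psi \vdash_{\logic{L}} \dmneg \varphi \vee \dmneg \psi$ gives $\dmneg \psi \vdash_{\logic{L}} \dmneg \varphi$. Thus $\logic{L}$ enjoys contraposition, and Proposition~\ref{prop:contraposition} forces $\logic{L}$ to be one of $\BD$, $\KO$, or $\CL$.

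It remains to verify that each of $\BD$, $\KO$, and $\CL$ is in fact selfextensional. By Fact~\ref{fact: semilattice based}, for any of these three logics the relation $\varphi \vdash_{\logic{L}} \psi$ coincides with $\varphi \leq \psi$ in the corresponding free algebra -- De~Morgan, Kleene, or Boolean, respectively. Consequently $\varphi \dashv \vdash_{\logic{L}} \psi$ coincides with equality in that free algebra, which is by construction a congruence on the formula algebra.

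The essential idea is the replacement trick $\varphi \mapsto \varphi \wedge \psi$, which funnels a one-connective selfextensionality application through $\dmneg$ to yield the full contrapositive implication. With this observation in hand the proof reduces entirely to the previously established Proposition~\ref{prop:contraposition}, and no substantial obstacle is anticipated.
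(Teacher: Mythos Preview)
Your proof is correct and follows essentially the same approach as the paper's first proof: both verify selfextensionality of $\BD$, $\KO$, $\CL$ via Fact~\ref{fact: semilattice based}, and both derive contraposition from selfextensionality via the trick $\varphi \dashv\vdash_{\logic{L}} \varphi \wedge \psi \Rightarrow \dmneg\varphi \dashv\vdash_{\logic{L}} \dmneg(\varphi\wedge\psi)$ before invoking Proposition~\ref{prop:contraposition}. The only cosmetic difference is that the paper concludes directly from $\dmneg\psi \vdash_{\BD} \dmneg(\varphi\wedge\psi)$, whereas you pass through the De~Morgan equivalence $\dmneg(\varphi\wedge\psi) \dashv\vdash_{\BD} \dmneg\varphi \vee \dmneg\psi$ first.
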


\begin{proof}
  These logics are selfextensional by virtue of their relation to varieties of Boolean, Kleene, and De~Morgan algebras (Proposition~\ref{fact: semilattice based}). Conversely, if $\logic{L}$ is a selfextensional super-Belnap logic, then it enjoys the contra\-position property: $\varphi \vdash_{\logic{L}} \psi$ implies $\varphi \dashv \vdash_{\logic{L}} \varphi \wedge \psi$, hence $\dmneg (\varphi \wedge \psi) \dashv \vdash_{\logic{L}} \dmneg \varphi$ and $\dmneg \psi \vdash_{\logic{L}} \dmneg (\varphi \wedge \psi) \vdash_{\logic{L}} \dmneg \varphi$. Therefore $\logic{L}$ is one of the logics $\BD$, $\KO$, or $\CL$ by Proposition~\ref{prop:contraposition}.

  Alternatively, suppose that $\logic{L}$ is a selfextensional proper extension of~$\BD$. Then $\LP \cap \ECQ \logleq \logic{L}$, so $(p \wedge \dmneg p) \dashv \vdash_{\logic{L}} (p \wedge \dmneg p) \wedge (q \vee \dmneg q)$. By selfextensionality, $(p \wedge \dmneg p) \vee r \dashv \vdash_{\logic{L}} ((p \wedge \dmneg p) \wedge (q \vee \dmneg q)) \vee r$, therefore $(p \wedge \dmneg p) \vee r \vdash_{\logic{L}} q \vee \dmneg q \vee r$ and $\KO \logleq \logic{L}$. By a similar argument, $\ECQ \logleq \logic{L}$ implies~$\K \logleq \logic{L}$.

  It follows that $\logic{L}$ (if non-trivial) is one of the logics $\KO$, $\LP$, $\K$, $\CL$. It remains to prove that neither $\LP$ nor $\K$ is selfextensional: we have $q \dashv \vdash_{\LP} (p \vee \dmneg p) \vee q$ but $(p \wedge \dmneg p) \vee \dmneg q \nvdash_{\LP} \dmneg q$. Likewise, $q \dashv \vdash_{\K} (p \wedge \dmneg p) \vee q$ but $\dmneg q \nvdash_{\K} p \vee \dmneg p$.
\end{proof}

  A rule $\Gamma \vdash \varphi$ is called \emph{admissible} in a logic $\logic{L}$ if adding it to $\logic{L}$ does not change the set of theorems of $\logic{L}$, or equivalently if for each substitution $\sigma$
\begin{align*}
  \emptyset \vdash_{\logic{L}} \sigma(\gamma) \text{ for each } \gamma \in \Gamma \implies \emptyset \vdash_{\logic{L}} \sigma(\varphi).
\end{align*}
\vskip -0.15pt
\noindent A logic $\logic{L}$ is \emph{structurally complete} if each admissible rule of $\logic{L}$ is valid (derivable) in~$\logic{L}$, or equivalently if each proper extension of $\logic{L}$ adds some new theorems to~$\logic{L}$.

\begin{proposition}
  The only non-trivial structurally complete super-Belnap logics are $\K$ and $\CL$.
\end{proposition}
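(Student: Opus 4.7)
The plan is to exploit Proposition~\ref{prop: k lp splitting}, which partitions the non-trivial super-Belnap logics into the two disjoint intervals $[\BD, \K]$ and $[\LP, \CL]$ (disjoint because $\LP \nlogleq \K$) and tells us that all logics in $[\BD, \K]$ share the theorems of $\BD$, while all logics in $[\LP, \CL]$ share the classical tautologies. Since a logic is structurally complete iff every proper extension adds new theorems, this splitting immediately yields, for every candidate $\logic{L}$ other than $\K$ and $\CL$, a strictly larger logic with the same theorems.

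For the negative direction, suppose $\logic{L}$ is non-trivial and $\logic{L} \neq \K, \CL$. By the splitting, either $\logic{L} \in [\BD, \K]$ with $\logic{L} \neq \K$, or $\logic{L} \in [\LP, \CL]$ with $\logic{L} \neq \CL$. In the first case $\K$ is a proper extension of $\logic{L}$ with the same theorems as $\logic{L}$; in the second case $\CL$ is. Either way, $\logic{L}$ is not structurally complete.

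For the positive direction, I would verify that every proper extension of $\K$ or $\CL$ genuinely adds theorems. Since $\CL$ is the largest non-trivial super-Belnap logic, its only proper extension is the trivial logic, which clearly adds $\False$ as a theorem. For $\K$, any non-trivial proper extension $\logic{L} > \K$ must lie in $[\LP, \CL]$ by the splitting (it strictly exceeds $\K$, so it cannot lie in $[\BD, \K]$), hence $\LP \logleq \logic{L}$ and therefore $\logic{L} \loggeq \K \vee \LP = \CL$; so the only non-trivial proper extension of $\K$ is $\CL$, which adds the theorem $p \vee \dmneg p$ (this fails in $\Kmatrix$ at the middle value).

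I do not anticipate a real obstacle: once the equivalent characterisation of structural completeness in terms of theorems is invoked, everything follows from the two-block splitting of Proposition~\ref{prop: k lp splitting} together with the identity $\CL = \K \vee \LP$.
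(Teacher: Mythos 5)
Your proof is correct and follows essentially the same route as the paper: both rest on Proposition~\ref{prop: k lp splitting}, which places every non-trivial super-Belnap logic either in $[\BD,\K]$ (all sharing the theorems of $\BD$) or in $[\LP,\CL]$ (all sharing the classical tautologies), so that only $\K$ and $\CL$ can fail to have a proper extension with the same theorems. Your explicit verification of the positive direction (that $\K$ and $\CL$ really are structurally complete, via $\K \vee \LP = \CL$) is a detail the paper leaves implicit, but it is the same argument.
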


\begin{proof}
  By Proposition~\ref{prop: k lp splitting} each non-trivial super-Belnap logic lies below $\K$ (in which case it has the same theorems as $\BD$) or above $\LP$ (in which case it has the same theorems as $\CL$). Thus $\K$ and $\CL$ are the only non-trivial super-Belnap logics which cannot be properly extended without adding new theorems.
\end{proof}

  While the proof by cases property among super-Belnap logics is only enjoyed by the five logics listed above, we shall see that other super-Belnap logics may satisfy a weaker form of this property. We say that a super-Belnap logic $\logic{L}$ enjoys the \emph{restricted} proof by cases property in case
\begin{align*}
  \Gamma, \varphi \vee \dmneg \varphi \vdash_{\logic{L}} \psi & \iff \Gamma, \varphi \vdash_{\logic{L}} \psi \text{ and } \Gamma, \dmneg \varphi \vdash_{\logic{L}} \psi.
\end{align*}
  For example, $\ETL$ and $\Kminus$ enjoy this property by virtue of the fact that $a \vee \dmneg a = \True$ if and only if $a = \True$ or $\dmneg a = \True$ in $\ETLmatrix$ and $\Kminusmatrix$. We now show that this extends to every extension of $\ETL$ axiomatized by rules of a suitable form.

  The key observation here is that if $a \vee \dmneg a = \True$ in a De~Morgan algebra $\alg{A}$, then $\alg{A}$ is isomorphic to $[\False, a] \times [\False, \dmneg a]$, where the De~Morgan negations on the two intervals are $a \wedge \dmneg x$ and $\dmneg a \wedge \dmneg x$. The isomorphism is given by the maps $x \mapsto \pair{a \wedge x}{\dmneg a \wedge x}$ and $\pair{x}{y} \mapsto x \vee y$.

\begin{proposition}
  Let $\logic{L}$ be an extension of $\ETL$ axiomatized by rules of the form $\Gamma \vdash \varphi$ where $\Gamma$ is not an antitheorem of $\CL$. Then $\logic{L}$ enjoys the restricted proof by cases property.
\end{proposition}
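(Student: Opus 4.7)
The left-to-right direction is immediate from monotonicity, so the content lies in the converse, which I would prove by contraposition. Suppose a reduced model $\matr{A} = \pair{\alg{A}}{F}$ of $\logic{L}$ and a valuation $v$ on it witness $\Gamma, \varphi \vee \dmneg\varphi \nvdash_{\logic{L}} \psi$, so that $v[\Gamma] \subseteq F$ and $v(\varphi \vee \dmneg\varphi) \in F$ but $v(\psi) \notin F$. Since $\logic{L}$ extends $\ETL$, Proposition~\ref{prop: reduced models of etl} forces $F = \{\True\}$, hence setting $a \assign v(\varphi)$ yields $a \vee \dmneg a = \True$.

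The observation preceding the proposition then decomposes $\alg{A}$ as a De~Morgan product $[\False, a] \times [\False, \dmneg a]$, with $\True \in \alg{A}$ corresponding to $\pair{a}{\dmneg a}$. Thus $\matr{A}$ is isomorphic to the matrix product $\matr{A}_{1} \times \matr{A}_{2}$, where $\matr{A}_{1} \assign \pair{[\False, a]}{\{a\}}$ and $\matr{A}_{2} \assign \pair{[\False, \dmneg a]}{\{\dmneg a\}}$. Each factor is a De~Morgan algebra with only its top element designated, hence a model of $\ETL$ by Proposition~\ref{prop: reduced models of etl}. The projected valuations $v_{1}, v_{2}$ then satisfy $v_{1}[\Gamma] \subseteq \{a\}$ and $v_{2}[\Gamma] \subseteq \{\dmneg a\}$, as well as $v_{1}(\varphi) = a$ and $v_{2}(\dmneg\varphi) = \dmneg a$; and since $v(\psi) \neq \True$, at least one of $v_{1}(\psi), v_{2}(\psi)$ is undesignated in its respective factor.

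The pivotal step is upgrading ``$\matr{A}_{i}$ is a model of $\ETL$'' to ``$\matr{A}_{i}$ is a model of $\logic{L}$''; this is where the hypothesis on the axiomatization enters, through Corollary~\ref{cor: potential antitheorems}. That corollary demands that no premise of a defining rule of $\logic{L}$ be a potential antitheorem of $\ETL$. But a potential antitheorem of $\ETL$ is, by definition, an antitheorem of some non-trivial extension $\logic{M}$ of $\ETL$; every such $\logic{M}$ lies below $\CL$, so $\CLmatrix$ is a non-trivial model of $\logic{M}$ and any antitheorem of $\logic{M}$ is in particular an antitheorem of $\CL$. Hence the hypothesis ``no premise is an antitheorem of $\CL$'' supplies exactly what Corollary~\ref{cor: potential antitheorems} requires, and each $\matr{A}_{i}$ is indeed a model of $\logic{L}$. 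The factor $\matr{A}_{i}$ on which $v_{i}(\psi)$ is undesignated then contradicts either $\Gamma, \varphi \vdash_{\logic{L}} \psi$ or $\Gamma, \dmneg\varphi \vdash_{\logic{L}} \psi$.

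The main obstacle is precisely this syntactic-to-semantic translation: verifying that the axiomatization condition ``no premise is a classical antitheorem'' is strong enough to invoke Corollary~\ref{cor: potential antitheorems} over the base logic $\ETL$. Once that comparison is in hand, the rest is a routine unpacking of the product decomposition of a De~Morgan algebra in which $a \vee \dmneg a = \True$.
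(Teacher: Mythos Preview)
Your proposal is correct and follows essentially the same route as the paper: decompose a countermodel with $a \vee \dmneg a = \True$ as the product $\pair{[\False,a]}{\{a\}} \times \pair{[\False,\dmneg a]}{\{\dmneg a\}}$, invoke Corollary~\ref{cor: potential antitheorems} to see that each factor is a model of $\logic{L}$, and project the valuation. You are in fact more explicit than the paper on two points the paper leaves implicit: why one may assume $F = \{\True\}$ (via reduced models and Proposition~\ref{prop: reduced models of etl}), and why ``$\Gamma$ is not an antitheorem of $\CL$'' suffices for the hypothesis of Corollary~\ref{cor: potential antitheorems} over base $\ETL$ (using that $\CL$ is the largest non-trivial super-Belnap logic, so any potential antitheorem of $\ETL$ is already an antitheorem of $\CL$).
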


\begin{proof}
   Suppose that $\Gamma, \varphi \vee \dmneg \varphi \nvdash_{\logic{L}} \psi$, as witnessed by a valuation $v$ on a model $\pair{\alg{A}}{\{ \True \}}$ of $\logic{L}$ where $\alg{A}$ is a De~Morgan algebra. Let $a \assign v(\varphi)$. Then $a \vee \dmneg a = \True$, so $\pair{\alg{A}}{\{ \True \}}$ is isomorphic to the binary product of the matrices $\pair{[\False, a]}{\{ a \}}$ and $\pair{[\False, \dmneg a]}{\{ \dmneg a \}}$. By Corollary~\ref{cor: potential antitheorems}, both of these matrices are models of $\logic{L}$ (using the assumption about the axiomatization of $\logic{L}$). This yields two valuations $w_{i} = \pi_{i} \circ v$, where $\pi_{1}(x) \assign a \wedge x$ and $\pi_{2}(x) \assign \dmneg a \wedge x$. Because $v(\psi)$ is not designated in $\pair{\alg{A}}{\{ \True \}}$, it fails to be designated by at least one of these two valuations, say by $w_{1}$. Because each formula in $\Gamma$ is designated in $\pair{\alg{A}}{\{ \True \}}$, it is also designated by $w_{1}$. Finally, $w_{1}(\varphi) = \pi_{1}(v(\varphi)) = \pi_{1}(a) = a$. The~valuation $w_{1}$ thus witnesses that $\Gamma, \varphi \nvdash_{\logic{L}} \psi$.
\end{proof}
\nopagebreak
  In particular, this proposition applies to the logics $\ETLplus_{n}$.

\section{Different frameworks}
\label{sec: different frameworks}

  We now consider what happens if we modify the definition of super-Belnap logics adopted above. The picture only changes marginally if we drop the constants $\True$ and $\False$ from the signature. By contrast, if we use a multiple-conclusion framework instead of the single-conclusion one, then the super-Belnap family reduces to $\BD$, $\KO$, $\K$, $\LP$, and $\CL$.

  Let us first consider \emph{constant-free super-Belnap logics}: extensions of the fragment of $\BD$ without constants. This fragment has no theorems or antitheorems.

\begin{proposition}
  Each super-Belnap logic is axiomatized relative to $\BD$ by a set of rules which do not contain $\True$ and $\False$.
\end{proposition}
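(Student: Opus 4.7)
Plan. Let $R$ denote the set of all rules valid in $\logic{L}$ whose formulas contain neither $\True$ nor $\False$. Trivially $\BD \cup R \leq \logic{L}$, so the goal is to show the converse, namely that every rule $\Gamma \vdash_{\logic{L}} \varphi$ is derivable from $\BD \cup R$. The principal tool will be disjunctive normal forms: every formula of $\BD$ is $\BD$-equivalent to $\True$, $\False$, or a non-empty disjunction of non-empty conjunctions of literals, and formulas of the third kind are constant-free.

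I would begin by replacing each formula in $\Gamma \cup \{\varphi\}$ by its DNF, obtaining an interderivable-modulo-$\BD$ rule $\Gamma_{0} \vdash \varphi_{0}$ in which every formula is either $\True$, $\False$, or constant-free. Several easy cases then dispose of themselves: if $\False \in \Gamma_{0}$, the rule is derivable in $\BD$ alone because $\False \vdash_{\BD} \psi$ for every $\psi$; and if $\varphi_{0} = \True$, it follows from $\emptyset \vdash_{\BD} \True$. Otherwise I would delete the copies of $\True$ from $\Gamma_{0}$ (this does not affect derivability in $\BD$, again because $\emptyset \vdash_{\BD} \True$), obtaining a constant-free set $\Gamma_{1}$ with the rule $\Gamma_{1} \vdash \varphi_{0}$ still equivalent modulo $\BD$ to the original. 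When $\varphi_{0}$ is itself constant-free, the rule $\Gamma_{1} \vdash \varphi_{0}$ is already a member of $R$, and nothing more is needed.

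The main obstacle is the remaining subcase $\varphi_{0} = \False$, since the rule $\Gamma_{1} \vdash \False$ we must derive still mentions $\False$ explicitly. Here $\Gamma_{1}$ is an antitheorem of $\logic{L}$, so the Fact on antitheorems quoted above supplies a fresh variable $p$ together with a substitution $\sigmapushp$ (admitting a left inverse $\sigmapopp$ fixing $p$) such that $\sigmapushp[\Gamma_{1}] \vdash_{\logic{L}} p$. Choosing $\sigmapushp$ to be a genuine renaming of variables ensures that $\sigmapushp[\Gamma_{1}]$ remains constant-free, so this rule lies in $R$. Applying the substitution that agrees with $\sigmapopp$ on variables other than $p$ and sends $p$ to $\False$ then converts $\sigmapushp[\Gamma_{1}] \vdash p$ by structurality into $\Gamma_{1} \vdash \False$, completing the derivation in $\BD \cup R$ and establishing $\logic{L} = \BD \cup R$. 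I expect the only subtlety to be keeping track of the renaming substitutions, but this has already been set up earlier in the paper.
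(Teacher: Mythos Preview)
Your proposal is correct and follows essentially the same approach as the paper: reduce every formula to $\True$, $\False$, or a constant-free formula via normal forms, then eliminate the constants case by case. The paper's proof is terser---it simply observes that $\Gamma \vdash \False$ is equivalent over $\BD$ to $\Gamma \vdash p$ for a fresh variable $p$ (after renaming if necessary)---whereas you spell out this equivalence using the $\sigmapushp$/$\sigmapopp$ machinery from Section~\ref{sec: explosive}; but the underlying argument is the same.
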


\begin{proof}
  Each formula is equivalent in $\BD$ to $\True$ or to $\False$ or to a constant-free formula. (This is an immediate consequence of the fact that each formula can be transformed into an equivalent formula in conjunctive normal form.) But the rule $\Gamma, \True \vdash \varphi$ is equivalent to the rule $\Gamma \vdash \varphi$, the rule $\Gamma \vdash \False$ is equivalent to $\Gamma \vdash p$ for some $p$ not occurring in $\Gamma$ (renaming the variables in $\Gamma$ if necessary), and the rules $\Gamma \vdash \True$ and $\Gamma, \False \vdash \varphi$ hold in $\BD$.
\end{proof}

  Dropping the constants from the signature of $\BD$ means that the undesignated singleton matrix now becomes a submatrix of $\BDmatrix$. Such matrices, where the set of designated elements is empty, will be called \emph{almost trivial}. Each almost trivial matrix determines the \emph{almost trivial logic} axiomatized by $p \vdash q$.

\begin{proposition} \label{prop:constant-free}
  An extension of the constant-free fragment of $\BD$ is the constant-free fragment of an extension of $\BD$ if and only if it is complete with respect to a class of matrices which are not almost trivial.
\end{proposition}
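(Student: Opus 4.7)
The plan is to prove both directions by moving between the extension $\logic{L}^{-}$ (of $\BD$'s constant-free fragment) and appropriate classes of matrices.

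For $(\Rightarrow)$: Suppose $\logic{L}^{-}$ is the constant-free fragment of some extension $\logic{L}$ of $\BD$. Since $\BD$ validates $\emptyset \vdash \True$, every reduced model of $\logic{L}$ has the interpretation of $\True$ as a designated element, so the constant-free reducts of the matrices in $\Mod^{*} \logic{L}$ all have non-empty designated sets. These reducts form a class of non-almost-trivial matrices that determines $\logic{L}^{-}$, because the validity of a constant-free rule in a matrix depends only on how valuations act on the propositional variables.

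For $(\Leftarrow)$: Let $\logic{L}^{-} = \Log \class{K}$ with $\class{K}$ non-almost-trivial. Without loss of generality $\class{K}$ also contains no trivial matrices, as these validate every rule and contribute nothing to $\Log \class{K}$. For each $\matr{A} = \pair{\alg{A}}{F} \in \class{K}$ we shall construct a matrix $\matr{A}^{+}$ in the signature of $\BD$ such that the constant-free fragment of $\Log \matr{A}^{+}$ coincides with $\Log \matr{A}$; the desired extension of $\BD$ is then $\logic{L} = \Log \{\matr{A}^{+} : \matr{A} \in \class{K}\}$, whose constant-free fragment will equal $\bigcap_{\matr{A} \in \class{K}} \Log \matr{A} = \logic{L}^{-}$. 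To build $\matr{A}^{+}$ we pass to the bounded De~Morgan hull of $\alg{A}$: if $\alg{A}$ already possesses lattice bounds $\top, \bot$ (which then necessarily satisfy $\top \in F$, $\bot \notin F$, and $\dmneg \top = \bot$), interpret $\True$ as $\top$ and $\False$ as $\bot$; otherwise adjoin fresh elements $\True, \False$ as a new top and bottom with $\dmneg \True = \False$, and set $F^{+} = F \cup \{\True\}$. In either case $\matr{A}^{+}$ is a model of $\BD$: $\emptyset \vdash \True$ holds because $\True \in F^{+}$, while $\False \vee p \vdash p$ is trivial because $\False$ is the bottom of the underlying algebra.

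The main obstacle is the key claim that the constant-free fragment of $\Log \matr{A}^{+}$ equals $\Log \matr{A}$. One inclusion is immediate from $\matr{A}$ being a constant-free submatrix of $\matr{A}^{+}$. The reverse inclusion is trivial in the bounded subcase, since the underlying algebra and hence all valuations on constant-free formulas are identical. In the unbounded subcase one exhibits $\matr{A}^{+}$ as an element of $\AlgS \AlgPu (\matr{A})$ by embedding $\matr{A}^{+}$ into a suitable ultrapower of $\matr{A}$, realizing $\True$ and $\False$ via non-standard equivalence classes that dominate (respectively are dominated by) every standard element of $\alg{A}$; Theorem~\ref{thm: equality-free quasivarieties} then places $\matr{A}^{+}$ inside $\Mod \Log_{\omega} \matr{A}$, giving the desired inclusion for finitary rules and with routine adjustments in the non-finitary case.
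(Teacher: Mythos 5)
Your forward direction is fine and coincides with the paper's (one\nobreakdash-line) argument: models of $\BD$ interpret $\True$ as a designated element, so none of them is almost trivial.

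The backward direction is where the problem lies, and the difficulty is concentrated exactly where you park it as ``routine adjustments.'' Your construction hinges on the claim that the constant-free reduct of $\matr{A}^{+}$ lies in $\AlgS \AlgPu(\matr{A})$ and that this forces every constant-free rule of $\Log \matr{A}$ to hold in $\matr{A}^{+}$. But Theorem~\ref{thm: equality-free quasivarieties} and the \L{}o\'{s}-type preservation you are invoking only govern $\Log_{\omega}$: ultraproducts preserve \emph{finitary} rules and nothing more, so membership in $\AlgS\AlgPu(\matr{A})$ only places the reduct of $\matr{A}^{+}$ in $\Mod \Log_{\omega} \matr{A}$, not in $\Mod \Log \matr{A}$. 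The proposition, however, quantifies over arbitrary extensions of the constant-free fragment of $\BD$, and the paper later shows that non-finitary super-Belnap logics exist, so the infinitary case cannot be waved away. Concretely, to transfer a counterexample from $\matr{A}^{+}$ back into $\matr{A}$ one must replace the adjoined top by an element $a \in F$ lying above $v(\gamma)$ for \emph{every} premise $\gamma \in \Gamma$ simultaneously; when $\Gamma$ is infinite and $\alg{A}$ is unbounded no such $a$ need exist, and the ultrapower element that ``dominates all standard elements'' is precisely what is not available for infinitary rules. There are also smaller unstated repairs: the matrices in $\class{K}$ are arbitrary models in the constant-free signature, so before speaking of lattice bounds, of $\dmneg\top = \bot$, or of $\top \in F$ you must pass to Leibniz reducts (which are De~Morgan lattices with lattice filters) and discard trivial matrices; this is harmless but needs saying, since non-almost-triviality is exactly what makes the new top designated.

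The paper avoids all of this by arguing proof-theoretically rather than model-theoretically: it forms the least extension $\logic{L}_{\True\False}$ of $\logic{L}$ by the constant rules and proves conservativity by induction over well-founded proof trees, eliminating applications of $\False \vee p \vdash p$ one at a time. At each such step only a \emph{single} formula $\psi$ and a single fresh variable $p$ are in play, and the required witness is produced explicitly as $b = a \wedge \dmneg c$ with $c \in F$ (this is the one place non-almost-triviality enters). Because the induction is over individual proofs, the argument is indifferent to whether $\logic{L}$ is finitary. If you want to salvage your semantic route, you would need a direct argument that adjoining bounds preserves arbitrary (not just finitary) constant-free rules, or else restrict the statement to finitary logics --- which is not what is being proved.
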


\begin{proof}
  Left to right, no model of $\BD$ is almost trivial. Conversely, let $\logic{L}$ be an extension of the constant-free fragment of $\BD$ complete with respect to a class of matrices which are not almost trivial. Let $\logic{L}_{\True\False}$ be the super-Belnap logic obtained by adding constants $\True$ and $\False$ and the rules $\emptyset \vdash \True$ and $\emptyset \vdash \dmneg \False$ and $\False \vee p \vdash p$ and $\dmneg \True \vee p \vdash p$ to $\logic{L}$. The logic $\logic{L}_{\True\False}$ is a conservative extension of the logic $\logic{L}_{\False}$ which only adds the constant $\False$ and the rules $\emptyset \vdash \dmneg \False$ and $\False \vee p \vdash p$. This is because in each proof in $\logic{L}_{\True\False}$ we can substitute $\dmneg \False$ for $\True$ throughout to obtain a proof in $\logic{L}_{\False}$. We now show that $\logic{L}_{\False}$ is a conservative extension of~$\logic{L}$.

  Let us fix a variable $p$. It will suffice to prove that $\Gamma \vdash_{\logic{L}_{\False}} \varphi$ implies $\Gamma \vdash_{\logic{L}} \varphi$ for $\Gamma$ and $\varphi$ where $p$ does not occur as a subformula. To see this, consider substitutions $\sigmapushp$ and $\sigmapopp$ such that $(\sigmapopp \circ \sigmapushp) (\varphi) = \varphi$ for each $\varphi$ and moreover $\sigmapushp(\varphi)$ never contains $p$ as a subformula. If $\Gamma \vdash_{\logic{L}_{\False}} \varphi$, then $\sigmapushp[\Gamma] \vdash_{\logic{L}_{\False}} \sigmapushp (\varphi)$, hence, supposing that conservativity holds in the special case where $p$ does not occur in $\Gamma$ and $\varphi$, $\sigmapushp[\Gamma] \vdash_{\logic{L}} \sigmapushp(\varphi)$ and $\Gamma = (\sigmapopp \circ \sigmapushp)[\Gamma] \vdash_{\logic{L}} (\sigmapopp \circ \sigmapushp) (\varphi) = \varphi$.

  Now consider a proof of $\varphi$ from $\Gamma$ in $\logic{L}_{\False}$, where $\Gamma$ and $\varphi$ are constant-free and the variable $p$ does not occur in $\Gamma$ and $\varphi$. Then $\varphi$ has a proof from $\Gamma \cup \{ \dmneg \False \}$ which only uses the (constant-free) rules of $\logic{L}$ and instances the rule $\False \vee p \vdash p$. We now prove that all applications of this rule are redundant.

  Suppose therefore that $\False \vee \psi$ has a proof from $\Gamma \cup \{ \dmneg \False \}$ in $\logic{L}_{\False}$ which only uses the rules of~$\logic{L}$. Then $\dmneg p, \Gamma \vdash_{\logic{L}} p \vee \psi$, where $p$ does not occur in $\Gamma$ or~$\psi$, since we can uniformly replace $\False$ by $p$ in the proof (each step of the proof is a substitution instance of a constant-free rule). We need to show that in fact $\Gamma \vdash_{\logic{L}} \psi$.

  If $\Gamma \nvdash_{\logic{L}} \psi$, there is a non-trivial model $\pair{\alg{A}}{F}$ of $\logic{L}$ and a valuation $v$ on $\alg{A}$ such that $v[\Gamma] \subseteq F$ and $v(\psi) \notin F$. Since $\logic{L}$ is complete with respect to a class of matrices which are not almost trivial, we may assume that $\pair{\alg{A}}{F}$ is not almost trivial. Let $v(\psi) = a$. It suffices to find $b \in \alg{A}$ such that $\dmneg b \in F$ and $b \vee a \notin F$, since taking $v(p) = b$ then witnesses that $\dmneg p, \Gamma \nvdash_{\logic{L}} p \vee \psi$. But we can always take $b = a \wedge \dmneg c$ for some $c \in F$ (which exists because $\pair{\alg{A}}{F}$ is not almost trivial).

  This proves, by induction over well-founded trees, that every proof of $\varphi$ from $\Gamma$ in $\logic{L}_{\False}$ can be transformed into a proof of $\varphi$ from $\Gamma$ in $\logic{L}$.
\end{proof}

 Intersecting the almost trivial logic (axiomatized by the rule $p \vdash q$) with the constant-free fragments of $\LP$, $\LP \vee \ECQ$, and $\CL$ yields the following three logics: $\LP^{-}$, axiomatized by $p \vdash q \vee \dmneg q$ relative to the constant-free fragment of~$\BD$, $\LP^{-} \vee \ECQ$, and $\CL^{-} = \LP^{-} \vee \ETL$. We can observe that $\Gamma \vdash_{\LP^{-}} \varphi$ if and only if $\Gamma$ is non-empty and $\Gamma \vdash_{\LP} \varphi$, and likewise for $\LP^{-} \vee \ECQ$ and $\CL^{-}$.

\begin{theorem} \label{thm:constant-free}
  There are exactly four extensions of the constant-free fragment of $\BD$ which are not constant-free fragments of extensions of $\BD$, namely the logics $\LP^{-}$, $\LP^{-} \vee \ECQ$, $\CL^{-} = \LP^{-} \vee \ETL$, and the almost trivial logic.
\end{theorem}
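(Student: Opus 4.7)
The plan is to use Proposition~\ref{prop:constant-free}: an extension $\logic{L}$ of the constant-free fragment of $\BD$ fails to be the constant-free fragment of any super-Belnap logic precisely when every class of matrices defining $\logic{L}$ must include an almost trivial one. I would partition the reduced models of such an $\logic{L}$ into the almost trivial matrices $\class{K}_{0}$ and the rest $\class{K}_{1}$, so that $\logic{L} = \Log \class{K}_{0} \cap \Log \class{K}_{1}$, with $\class{K}_{0} \neq \emptyset$ forced by the hypothesis. Since $\Log \class{K}_{0}$ equals the almost trivial logic, the case $\class{K}_{1} = \emptyset$ immediately gives $\logic{L}$ equal to the almost trivial logic, accounting for the fourth logic in the list.

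In the remaining case, Proposition~\ref{prop:constant-free} tells us that $\Log \class{K}_{1}$ is the constant-free fragment of some super-Belnap logic $\logic{M}$, and unfolding the intersection shows that $\Gamma \vdash_{\logic{L}} \varphi$ iff $\Gamma$ is non-empty and $\Gamma \vdash_{\logic{M}} \varphi$ holds in the constant-free fragment. For $\logic{L}$ itself not to be a constant-free fragment, $\logic{M}$ must possess at least one constant-free theorem. By Proposition~\ref{prop: k lp splitting}, all logics in $[\BD, \K]$ share the theorems of $\BD$, which are easily seen to be empty in the constant-free signature via the valuation assigning the middle truth value to every atom of $\Kmatrix$; logics in $[\LP, \CL]$, by contrast, have $p \vee \dmneg p$ as a theorem. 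Theorem~\ref{thm: upper part} then leaves exactly three possibilities for $\logic{M}$, namely $\LP$, $\LP \vee \ECQ$, and $\CL$, producing the three logics $\LP^{-}$, $(\LP \vee \ECQ)^{-}$, and $\CL^{-}$.

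The remaining task is to identify $(\LP \vee \ECQ)^{-}$ with $\LP^{-} \vee \ECQ$ and $\CL^{-}$ with $\LP^{-} \vee \ETL$, and to verify pairwise distinctness. The crucial observation is that a matrix is a model of $\LP^{-}$ if and only if it is either almost trivial or a model of $\LP$ (since on a matrix with a non-empty designated set the rule $p \vdash q \vee \dmneg q$ forces $q \vee \dmneg q$ to be a theorem); because $\ECQ$ and $\ETL$ both lie in $[\BD, \K]$ and have no constant-free theorems, the class $\Mod (\LP^{-} \vee \ECQ)$ decomposes as the almost trivial matrices together with $\Mod (\LP \vee \ECQ)$, yielding $\LP^{-} \vee \ECQ = (\LP \vee \ECQ)^{-}$, and similarly the case $\CL = \LP \vee \ETL$ gives $\LP^{-} \vee \ETL = \CL^{-}$. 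Distinctness of the four logics is immediate: only the almost trivial logic validates $p \vdash q$, while the rules $p, \dmneg p \vdash q \vee \dmneg q$ and $p, \dmneg p \vdash q$ separate the three logics of the form $\logic{M}^{-}$. The only mildly delicate point is the implicit bijective correspondence between super-Belnap logics and their constant-free fragments, but this is already packaged by Proposition~\ref{prop:constant-free} together with the axiomatizability-by-constant-free-rules result preceding the theorem.
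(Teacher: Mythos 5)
Your argument is correct in substance and is organized differently from the paper's. The paper proceeds by a splitting: either $\logic{L}$ lies below the constant-free fragment of $\K$ (in which case the almost trivial singleton is a submatrix of the constant-free reduct of $\Kmatrix$, so $\logic{L}$ is complete with respect to non-almost-trivial matrices and hence is a fragment by Proposition~\ref{prop:constant-free}), or $\LP^{-} \logleq \logic{L}$, and then the extensions of $\LP^{-}$ are read off from Pynko's classification of the extensions of the constant-free fragment of $\LP$. You instead split the class of reduced models into its almost trivial part $\class{K}_{0}$ and the rest $\class{K}_{1}$, observe that $\Log \class{K}_{1}$ is \emph{always} a constant-free fragment $\logic{M}^{-}$ by Proposition~\ref{prop:constant-free}, and reduce the whole question to whether $\logic{M}$ has a constant-free theorem, which by Proposition~\ref{prop: k lp splitting} and Theorem~\ref{thm: upper part} pins $\logic{M}$ down to $\LP$, $\LP \vee \ECQ$, $\CL$ (or the trivial logic). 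This buys a uniform normal form $\logic{L} = \logic{M}^{-} \cap (\text{almost trivial logic})$ for every non-fragment, and it avoids the paper's appeal to the structure of $\Kmatrix$ in the first case; the price is that you lean on the splitting theorems for logics with constants rather than arguing directly in the constant-free signature. Both routes ultimately rest on Proposition~\ref{prop:constant-free} and on the classification of $[\LP, \CL]$.

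Two small points to tidy up. First, your separating rules at the end do not quite work: $p, \dmneg p \vdash q \vee \dmneg q$ holds in all three of $\LP^{-}$, $\LP^{-} \vee \ECQ$, and $\CL^{-}$ (indeed $\{p, \dmneg p\}$ is an antitheorem of $\LP \vee \ECQ$), so it separates nothing; you need disjunctive syllogism $p, \dmneg p \vee q \vdash q$ to distinguish $\LP^{-} \vee \ECQ$ from $\CL^{-}$, since it fails in $\CLmatrix \times \LPmatrix$. Second, $\class{K}_{1}$ always contains the designated singleton, so the case you label ``$\class{K}_{1} = \emptyset$'' should be ``$\Log \class{K}_{1}$ is trivial'' (equivalently $\logic{M}$ is trivial), which still yields the almost trivial logic; and for the word ``exactly'' one should also note explicitly that each of the four listed logics really fails to be a fragment — this follows from your own observation that a non-almost-trivial matrix validating $p \vdash q \vee \dmneg q$ (resp.\ $p \vdash q$) validates $\emptyset \vdash q \vee \dmneg q$ (resp.\ $\emptyset \vdash q$), so no class of non-almost-trivial matrices can be complete for them.
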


\begin{proof}
  Let $\logic{L}$ be a constant-free super-Belnap logic which is neither trivial nor almost trivial. If $\logic{L}$ lies below the constant-free reduct of $\K$, then the constant-free reduct of $\Kmatrix$ is a model of $\logic{L}$. An almost trivial matrix is a submatrix of this reduct. Thus each constant-free super-Belnap logic $\logic{L}$ below the constant-free fragment of $\K$ is complete with respect to a class of matrices which are not almost trivial. It~thus constitutes the constant-free fragment of some super-Belnap logic by the previous proposition.

  On the other hand, if $\logic{L}$ does not lie below the constant-free reduct of $\K$, then $\LP^{-} \logleq \logic{L}$. This is because if $\logic{L}$ invalidates the rule $p \vdash_{\logic{L}} q \vee \dmneg q$, then the constant-free reduct of $\Kmatrix$ is a model of $\logic{L}$. (The argument is identical to the proof that $\LP$ and $\K$ form a splitting pair.) But each model of $\LP^{-}$ which is not almost trivial validates the rule $\emptyset \vdash p \vee \dmneg p$, i.e.\ it is a model of the constant-free fragment of $\LP$. Each extension of $\LP^{-}$ is thus either an extension of the constant-free fragment of $\LP$ or its intersection with the almost trivial logic. But it was shown by Pynko~\cite{pynko00} that the only non-trivial extensions of the constant-free fragment of $\LP$ are the constant-free fragments of $\LP$, $\LP \vee \ECQ$, and $\CL$.
\end{proof}

  In the constant-free framework, the lattice of super-Belnap logics therefore has two co-atoms, namely classical logic and the almost trivial logic.

  Moving to a multiple-conclusion setting has more profound consequences. \mbox{Recall} (e.g.~from~\cite{shoesmith+smiley78}) that a \emph{multiple-conclusion consequence relation} is a \mbox{relation} between sets of formulas, written $\Gamma \vdash \Delta$, which satisfies the following:
\begin{itemize}
\item $\varphi \vdash_{\logic{L}} \varphi$ (reflexivity),
\item if $\Gamma \vdash_{\logic{L}} \Delta$, then $\Gamma, \Gamma' \vdash_{\logic{L}} \Delta, \Delta'$ (monotonicity),
\item if $\Gamma, \Phi_{1} \vdash_{\logic{L}} \Delta$ and $\Gamma \vdash_{\logic{L}} \Delta, \Phi_{2}$ whenever $\Phi_{1} \cup \Phi_{2} = \Phi$, then $\Gamma \vdash_{\logic{L}} \Delta$ (cut),
\item if $\Gamma \vdash_{\logic{L}} \Delta$, then $\sigma[\Gamma] \vdash_{\logic{L}} \sigma[\Delta]$ for each substitution $\sigma$ (structurality).
\end{itemize}
  The multiple-conclusion logic determined by a class of matrices $\class{K}$ is defined as expected. That is, $\Gamma \vdash \Delta$ if and only if for each valuation $v$ on a matrix $\pair{\alg{A}}{F} \in \class{K}$ we have that $v(\delta) \in F$ for some $\delta \in \Delta$ whenever $v[\Gamma] \subseteq F$.

  By the multiple-conclusion versions of $\BD$, $\LP$, $\K$, $\CL$, denoted $\BDmc$, $\LPmc$, $\Kmc$, $\CLmc$, we mean the multiple-conclusion logics defined semantically via the matrices $\BDmatrix$, $\LPmatrix$, $\Kmatrix$, and $\CLmatrix$. The multiple-conclusion version of $\KO$ is defined as $\KOmc = \LPmc \cap \Kmc$. The multiple-conclusion version of the trivial logic is defined as the logic axiomatized by the rule $\emptyset \vdash \emptyset$. (Note that this logic is only complete with respect to the empty class of matrices.)

  The designated sets of the above finite matrices form prime filters, therefore $\Gamma \vdash \Delta$ holds in the multiple-conclusion version of one of the logics above if and only if $\Gamma \vdash \bigvee \Delta'$ holds in the single-conclusion version for some finite $\Delta' \subseteq \Delta$. We can infer that the logic $\BDmc$ is axiomatized by the (``positive'') rules
\begin{align*}
  & p, q \vdash p \wedge q,  & & p \wedge q \vdash p, & & p \wedge q \vdash q, \\
  & p \vee q \vdash p, q, & & p \vdash p \vee q, & & q \vdash p \vee q,
\end{align*}
  and the (``negative'') rules
\begin{align*}
  & \dmneg p, \dmneg q \vdash \dmneg (p \vee q), & & \dmneg (p \vee q) \vdash \dmneg p, & & \dmneg (p \vee q) \vdash \dmneg q, \\
  & \dmneg (p \wedge q) \vdash \dmneg p, \dmneg q,  & & \dmneg p \vdash \dmneg (p \wedge q), & & \dmneg q \vdash \dmneg (p \wedge q),
\end{align*}
  and the four rules
\begin{align*}
  & p \vdash \dmneg \dmneg p, & & \dmneg \dmneg p \vdash p, & & \emptyset \vdash \True, & & \False \vdash \emptyset.
\end{align*}
  The logic $\LP_{mc}$ extends $\BDmc$ by the rule $\emptyset \vdash p, \dmneg p$, the logic $\K_{mc}$ extends it by $p, \dmneg p \vdash \emptyset$, the logic $\KO_{mc}$ by $p, \dmneg p \vdash q, \dmneg q$, and $\CL = \LP_{mc} \vee \K_{mc}$ (see~\cite{beall13}).

\begin{theorem}
  The only proper non-trivial multiple-conclusion extensions of $\BDmc$ are $\KO_{mc}$, $\K_{mc}$, $\LP_{mc}$, and $\CL_{mc}$.
\end{theorem}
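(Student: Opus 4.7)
The plan is to use Proposition~\ref{prop: pcp}, which identifies the five non-trivial single-conclusion super-Belnap logics with the proof by cases property, together with the fact that $\vee$ already behaves as a genuine multiple-conclusion disjunction in $\BDmc$.

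First I would record that the displayed axiomatization of $\BDmc$ contains the rules $\varphi \vee \psi \vdash \varphi, \psi$, $\varphi \vdash \varphi \vee \psi$, and $\psi \vdash \varphi \vee \psi$, so that $\bigvee_{i=1}^{n} \varphi_i \dashv\vdash_{\BDmc} \varphi_1, \dots, \varphi_n$ for every $n \geq 1$, and also $\emptyset \dashv\vdash_{\BDmc} \False$ via the rule $\False \vdash \emptyset$. Now let $\logic{L}_{\mathrm{mc}}$ be any multiple-conclusion extension of $\BDmc$ and let $\logic{L}$ denote its single-conclusion part defined by $\Gamma \vdash_{\logic{L}} \varphi$ iff $\Gamma \vdash_{\logic{L}_{\mathrm{mc}}} \{\varphi\}$. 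Using the interderivabilities above together with cut, $\logic{L}_{\mathrm{mc}}$ is recovered from $\logic{L}$ by the equivalence $\Gamma \vdash_{\logic{L}_{\mathrm{mc}}} \Delta$ iff $\Gamma \vdash_{\logic{L}} \bigvee \Delta_0$ for some finite $\Delta_0 \subseteq \Delta$ (with the empty disjunction understood as $\False$); in particular, $\logic{L}_{\mathrm{mc}}$ is completely determined by its single-conclusion part.

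The key step is to show that $\logic{L}$ enjoys the proof by cases property. The forward direction is immediate from monotonicity. For the converse, assume $\Gamma, \varphi \vdash_{\logic{L}} \chi$ and $\Gamma, \psi \vdash_{\logic{L}} \chi$. Lifting both hypotheses to $\logic{L}_{\mathrm{mc}}$ by monotonicity, and adding to these the sequent $\Gamma, \varphi \vee \psi \vdash \chi, \varphi, \psi$ obtained from $\varphi \vee \psi \vdash \varphi, \psi$ and monotonicity, one checks that the sequent $\Gamma, \varphi \vee \psi, \Phi_1 \vdash \chi, \Phi_2$ is valid in $\logic{L}_{\mathrm{mc}}$ for every partition $\Phi_1 \cup \Phi_2 = \{\varphi, \psi\}$. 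A single application of cut on $\Phi = \{\varphi, \psi\}$ then yields $\Gamma, \varphi \vee \psi \vdash_{\logic{L}_{\mathrm{mc}}} \chi$, hence $\Gamma, \varphi \vee \psi \vdash_{\logic{L}} \chi$ as required.

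By Proposition~\ref{prop: pcp} it follows that $\logic{L}$ is either trivial or one of $\BD$, $\KO$, $\LP$, $\K$, $\CL$. Since $\logic{L}_{\mathrm{mc}}$ is determined by $\logic{L}$, this leaves at most six options: the trivial multiple-conclusion logic together with $\BDmc$, $\KOmc$, $\LPmc$, $\Kmc$, and $\CLmc$, which is exactly the four proper non-trivial extensions claimed. The main obstacle I expect is rigorously justifying the compression formula $\Gamma \vdash_{\logic{L}_{\mathrm{mc}}} \Delta$ iff $\Gamma \vdash_{\logic{L}} \bigvee \Delta_0$ for some finite $\Delta_0 \subseteq \Delta$ when $\Delta$ is infinite; this requires a Shoesmith--Smiley-style compactness argument using the disjunction behavior of $\vee$, or else a direct finitarity argument, which in each surviving case follows from completeness with respect to a finite matrix.
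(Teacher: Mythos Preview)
Your approach is genuinely different from the paper's and the core idea is sound: you correctly derive the proof by cases property for the single-conclusion part $\logic{L}$ from the disjunction rules of $\BDmc$ via multiple-conclusion cut, and then invoke Proposition~\ref{prop: pcp} to pin down $\logic{L}$. The compression identity $\Gamma \vdash_{\logic{L}_{\mathrm{mc}}} \Delta_0 \iff \Gamma \vdash_{\logic{L}} \bigvee \Delta_0$ is also correct for \emph{finite} $\Delta_0$, so you have indeed shown that any two multiple-conclusion extensions of $\BDmc$ with the same single-conclusion part agree on all finitary sequents.

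The gap you flag is real, and your proposed fix does not close it. Knowing that $\logic{L}$ is one of the five single-conclusion logics tells you that the \emph{expected} multiple-conclusion companion $\logic{L}^{*}_{\mathrm{mc}}$ (determined by the relevant finite matrix or matrices) is finitary; it does not tell you that $\logic{L}_{\mathrm{mc}}$ itself is finitary, nor that $\logic{L}_{\mathrm{mc}} \subseteq \logic{L}^{*}_{\mathrm{mc}}$. You would still need to rule out the possibility that $\logic{L}_{\mathrm{mc}}$ validates some genuinely infinitary sequent $\Gamma \vdash \Delta$ which fails in every finite matrix determining $\logic{L}$. Appealing to ``completeness with respect to a finite matrix in each surviving case'' presupposes exactly what you are trying to prove, namely that $\logic{L}_{\mathrm{mc}}$ coincides with one of the five matrix-defined logics. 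A Shoesmith--Smiley compactness argument could work in principle, but it is not automatic and you have not supplied one.

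By contrast, the paper's proof sidesteps this issue entirely: it argues directly, via normal forms and carefully chosen substitutions, that every multiple-conclusion rule $\Gamma \vdash \Delta$ (with $\Gamma$, $\Delta$ possibly infinite) is interderivable over $\BDmc$ with one of the four finitary rules $p \vdash p$, $p, \dmneg p \vdash q, \dmneg q$, $p, \dmneg p \vdash \emptyset$, $\emptyset \vdash q, \dmneg q$, or with $\emptyset \vdash \emptyset$. This syntactic reduction yields finitarity of every extension as a byproduct. Your route through Proposition~\ref{prop: pcp} is conceptually attractive because it explains \emph{why} the multiple-conclusion lattice collapses (namely, multiple conclusions force proof by cases), but to make it rigorous you must either prove finitarity of arbitrary multiple-conclusion extensions of $\BDmc$ independently, or carry out something close to the paper's reduction anyway.
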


\begin{proof}
  We show that each multiple-conclusion rule $\Gamma \vdash \Delta$ is equivalent over $\BDmc$ to one of the rules
\begin{align*}
  \emptyset & \vdash \emptyset, & p, \dmneg p & \vdash \emptyset, & p, \dmneg p & \vdash q, \dmneg q, & p & \vdash p.
\end{align*}
   Since each formula is equivalent over $\BD$ to a formula in conjunctive normal form and a formula in disjunctive normal form, we may assume by appeal to cut that all formulas in $\Gamma$ and $\Delta$ are either atoms or negated atoms. Consider the substitution which assigns $\True$ to each $p$ such that $p \in \Gamma$, $p \notin \Delta$, $\dmneg p \in \Delta$, $\dmneg p \notin \Gamma$ and $\False$ to each $q$ such that $\dmneg q \in \Gamma$, $\dmneg q \notin \Delta$, $q \in \Delta$, $q \notin \Gamma$. The~effect of this substitution is to erase all such atoms $p$ from the premises and all such atoms $q$ from the conclusions. Moreover, if there is some $p \in \Gamma \cap \Delta$ or some $\dmneg q \in \Gamma \cap \Delta$, then the rule $\Gamma \vdash \Delta$ already holds in $\BD$. We may therefore assume without loss of generality that if $p \in \Gamma$ ($q \in \Delta$), then $p \notin \Delta$ and $\dmneg p \notin \Delta$ ($q \notin \Gamma$ and $\dmneg q \notin \Gamma$).

  If $p, \dmneg p \in \Gamma$ for some $p$ and $q, \dmneg q \in \Delta$ for some $q$, then the substitution which assigns $p$ to each variable in $\Gamma$ and $q$ to each variable in $\Delta$ shows that the rule $\Gamma \vdash \Delta$ is equivalent to $p, \dmneg p \vdash q, \dmneg q$. If $p, \dmneg p \in \Gamma$ for some $p$ and there is no $q$ such that $q, \dmneg \in \Delta$, then the substitution which assigns $p$ to each variable in $\Gamma$ and $\False$ ($\True$) to each (negated) atom in $\Delta$ shows that the rule $\Gamma \vdash \Delta$ is equivalent to $p, \dmneg p \vdash \emptyset$. Dually, if $q, \dmneg q \in \Delta$ for some $q$ and there is no $p$ such that $p, \dmneg p \in \Gamma$, then the rule $\Gamma \vdash \Delta$ is equivalent to $\emptyset \vdash q, \dmneg q$. Finally, if at most one of the formulas $p$, $\dmneg p$ occurs in $\Gamma$ and at most one the formulas $q, \dmneg q$ occurs in $\Delta$, a~suitable substitution again shows that $\Gamma \vdash \Delta$ is equivalent to $\emptyset \vdash \emptyset$.
\end{proof}

  The above argument does not depend essentially on the presence of $\True$ and~$\False$. Dropping the constants from the signature would merely complicate the picture by forcing us to distinguish (i) between the rules $\emptyset \vdash \emptyset$, $p \vdash \emptyset$, $\emptyset \vdash q$, and $p \vdash q$, (ii) between the rules $\emptyset \vdash p, \dmneg p$ and $q \vdash p, \dmneg p$, and (iii) between the rules $p, \dmneg p \vdash \emptyset$ and $p, \dmneg p \vdash q$. It would not, however, yield any substantially new logic.

\section{Constructing finite reduced models of \texorpdfstring{$\BD$}{BD} from graphs}
\label{sec: graph duality}

  In the second half of this paper, we study the relationship between finitary extensions of~$\BD$ and finite graphs. It turns out that graphs naturally come into play when we restrict the duality for De~Morgan algebras due to Cornish \& Fowler~\cite{cornish+fowler77} to finite reduced models of $\BD$. This allows us to describe each finite reduced model of $\BD$ up to isomorphism by a triple $\langle G, H, k \rangle$ where $G$ and $H$ are graphs and $k \in \omega$. Conversely, each such triple gives rise to a finite reduced model $\boldmu(G, H, k)$ of $\BD$. Moreover, $\boldmu(G, H, k)$ is a model of $\ETL$ if and only if $H = \emptyset$.

  Our first task will be to review the duality for finite De~Morgan algebras and extend it to finite \emph{De~Morgan matrices}, i.e.\ De~Morgan algebras equipped with a lattice filter. In particular, we need to describe the dual counter\-parts of strict homo\-morphisms and reduced matrices.

  The duality for finite De~Morgan algebras expands the Birkhoff duality for finite distributive lattices by an order-inverting involution on both sides. On the one side, we have \emph{involutive posets}, i.e.\ posets $\langle P, \leq \rangle$ equipped with an order-inverting involution $\dual$. Their homo\-morphisms (embeddings) are monotone maps (order embeddings) which commute with the involutions.

  To obtain a De~Morgan algebra from an involutive poset, we take the bounded distributive lattice of upsets and expand it by the operation $\dmneg U = P \setminus \dual[U]$. This results in the \emph{complex algebra} of the involutive poset $P$, denoted $P^{+}$ here. Each homomorphism of involutive posets $f\colon P \to Q$ then yields a homomorphism of De~Morgan algebras $f^{+} \assign f^{-1}\colon Q^{+} \to P^{+}$.

  On the other side, we have De~Morgan algebras and their homomorphisms. To obtain an involutive poset from a De~Morgan algebra~$\alg{A}$, we take the poset of prime filters ordered by inclusion and expand it by the operation $\dual \filter{U} = \alg{A} \setminus \dmneg[\filter{U}]$. (Recall that a prime filter is a proper lattice filter $\filter{U}$ such that $a \vee b \in \filter{U}$ if and only if $a \in \filter{U}$ or $b \in \filter{U}$.) This results in the \emph{dual poset} of $\alg{A}$, denoted $\alg{A}_{+}$ here. A~homomorphism of De~Morgan algebras $h\colon \alg{A} \to \alg{B}$ then yields a homomorphism of involutive posets $h_{+} \assign h^{-1}\colon \alg{B}_{+} \to \alg{A}_{+}$.

  The map $\unit(a) = \set{a \in \filter{U}}{\filter{U} \in \alg{A}_{+}}$ embeds the De~Morgan algebra $\alg{A}$ into $(\alg{A}_{+})^{+}$. This embedding is an isomorphism if $\alg{A}$ is finite. Conversely, the map $\counit(u) = \set{u \in U}{U \in P^{+}}$ embeds the involutive poset $P$ into $(P^{+})_{+}$. This embedding is also an isomorphism if $P$ is finite.

\begin{theorem} \label{thm: algebra duality}
  The complex algebra and dual involutive poset constructions are functors which form a dual equivalence between the categories of finite De Morgan algebras and finite involutive posets, with unit $\unit$ and counit $\counit$.
\end{theorem}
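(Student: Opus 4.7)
The plan is to bootstrap the result from Birkhoff's duality between finite bounded distributive lattices and finite posets (via upsets and prime filters/join-irreducibles), and then verify that the two extra pieces of structure --- De~Morgan negation on the algebra side and the order-inverting involution $\dual$ on the poset side --- are preserved by everything in sight. Forgetting $\dmneg$ and $\dual$, the constructions $(-)^{+}$ and $(-)_{+}$ restrict to Birkhoff's dual equivalence, and the maps $\unit$, $\counit$ are the Birkhoff unit and counit, which are already known to be isomorphisms in the finite case. So the work reduces to four routine checks concerning the extra structure.

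First I would check that the constructions are well-defined on objects. Given an involutive poset $\langle P, \leq, \dual \rangle$, the set $\dmneg U = P \setminus \dual[U]$ is an upset because $\dual$ is order-inverting, the map $U \mapsto \dmneg U$ is clearly order-inverting, and $\dmneg \dmneg U = U$ follows from $\dual \circ \dual = \mathrm{id}$; the two De~Morgan laws then fall out from the set-theoretic identity $\dual[U \cup V] = \dual[U] \cup \dual[V]$. Conversely, given a De~Morgan algebra $\alg{A}$, the set $\dual \filter{U} = \alg{A} \setminus \dmneg[\filter{U}]$ is again a prime filter: closure upward and under meets uses that $\dmneg$ reverses the order and $\dmneg (a \wedge b) = \dmneg a \vee \dmneg b$, while primality is the dual statement using $\dmneg(a \vee b) = \dmneg a \wedge \dmneg b$. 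That $\dual$ is order-inverting and involutive on $\alg{A}_{+}$ follows from $\dmneg \dmneg a = a$. Functoriality on morphisms amounts to verifying that $f^{+} = f^{-1}$ commutes with $\dmneg$ when $f$ commutes with $\dual$, and dually that $h_{+} = h^{-1}$ commutes with $\dual$ when $h$ commutes with $\dmneg$; both are one-line computations.

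Next I would verify that $\unit$ and $\counit$ respect the additional structure. The core calculation is the identity $a \in \dual \filter{U} \iff \dmneg a \notin \filter{U}$, immediate from the definition of $\dual \filter{U}$ together with $\dmneg \dmneg a = a$. Using this and the bijectivity of $\dual$ on $\alg{A}_{+}$,
\begin{align*}
  \filter{U} \in \dmneg \unit(a) & \iff \dual \filter{U} \notin \unit(a) \iff a \notin \dual \filter{U} \iff \dmneg a \in \filter{U} \iff \filter{U} \in \unit(\dmneg a),
\end{align*}
so $\unit$ is a De~Morgan homomorphism. The analogous verification for $\counit$ --- that $\counit(\dual u)$ equals $\dual \counit(u)$ as upsets of $P^{+}$ --- is the completely parallel computation starting from $U \in \dmneg V \iff u \in U \Rightarrow \dual u \notin V$. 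Naturality of $\unit$ and $\counit$ in the enriched categories reduces to their naturality in the underlying Birkhoff setting, which is standard.

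The main obstacle is essentially bookkeeping rather than a conceptual difficulty: one must keep straight the three different ``negations'' in play --- the original $\dmneg$ on $\alg{A}$, the induced $\dmneg$ on upsets of $\alg{A}_{+}$, and the involution $\dual$ on prime filters --- and confirm that they interact correctly. Once the single identity $a \in \dual \filter{U} \iff \dmneg a \notin \filter{U}$ is in hand, every remaining verification is mechanical, and the theorem follows by combining these checks with the known Birkhoff duality for finite distributive lattices.
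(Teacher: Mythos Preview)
Your proof sketch is correct and follows the standard approach. Note, however, that the paper does not actually prove this theorem: it is stated as a review of the known Cornish--Fowler duality~\cite{cornish+fowler77}, with no proof given in the paper itself. Your outline --- bootstrap from Birkhoff duality and verify that the involutions are carried along by the functors and by the unit and counit --- is exactly how one establishes such an enriched duality, and the key identity $a \in \dual \filter{U} \iff \dmneg a \notin \filter{U}$ is the right pivot for all the remaining checks.
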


\begin{fact} \label{fact: embeddings of algebras}
  Embeddings (surjective homomorphisms) between finite De~Morgan algebras are precisely the duals of sur\-jective homomorphisms (embeddings) between finite involutive posets.
\end{fact}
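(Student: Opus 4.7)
The plan is to prove both equivalences directly and then derive the converses from the duality of Theorem~\ref{thm: algebra duality}. My two forward targets are ``$h$ surjective $\Rightarrow$ $h_{+}$ is an embedding of involutive posets'' and ``$h$ is an embedding $\Rightarrow$ $h_{+}$ is surjective''. The converse of each then follows by applying the forward direction of the \emph{other} equivalence to $h_{+}$ on the poset side (the dual equivalence has exactly the same form) and transporting back via the natural isomorphism $\unit \colon \alg{A} \to (\alg{A}_{+})^{+}$, using that $(h_{+})^{+}$ corresponds to $h$ under $\unit$.

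For the first forward implication, $h_{+}$ is monotone and commutes with $\dual$ automatically by functoriality. To verify order-reflection, suppose $h^{-1}[G_{1}] \subseteq h^{-1}[G_{2}]$ for prime filters $G_{1}, G_{2}$ of $\alg{B}$. Any $b \in G_{1}$ may be written as $b = h(a)$ by surjectivity of $h$, so $a \in h^{-1}[G_{1}] \subseteq h^{-1}[G_{2}]$, giving $b = h(a) \in G_{2}$. Hence $G_{1} \subseteq G_{2}$, and $h_{+}$ is an order embedding.

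For the second forward implication, fix a prime filter $F$ of $\alg{A}$. The filter $\uparrow h[F]$ of $\alg{B}$ is disjoint from the ideal $\downarrow h[\alg{A} \setminus F]$: if $h(a) \leq b \leq h(a')$ with $a \in F$ and $a' \notin F$, then $h(a) \leq h(a')$ gives $h(a \wedge a') = h(a) \wedge h(a') = h(a)$, so injectivity of $h$ forces $a \leq a'$, hence $a' \in F$, a contradiction. The finite prime-filter separation theorem then produces $G \in \alg{B}_{+}$ containing $h[F]$ and disjoint from $h[\alg{A} \setminus F]$, which yields $h_{+}(G) = h^{-1}[G] = F$.

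The only step requiring genuine thought is the extension of the filter-ideal pair to a prime filter in the second implication, but in the finite setting this is routine distributive-lattice bookkeeping. With the two forward directions in place, the converses follow at once by applying them to the morphism $h_{+}$ in the category of involutive posets and invoking the natural isomorphism $(h_{+})^{+} \cong h$: surjectivity of $h_{+}$ yields injectivity of $h$, and order-reflection of $h_{+}$ yields surjectivity of $h$.
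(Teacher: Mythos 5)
Your argument is correct; the paper states this Fact without proof (it is standard bookkeeping from the Cornish--Fowler duality), and what you write is essentially the canonical verification. Two small points deserve attention. First, in the second forward implication you need $\downarrow h[\alg{A}\setminus F]$ to actually be an ideal before invoking prime-filter separation, and this uses the primality of $F$ (if $a, a' \notin F$ then $a \vee a' \notin F$, so the downward closure of $h[\alg{A}\setminus F]$ is join-closed); you use disjointness from the mere downward closure, which suffices only because that downward closure is already the generated ideal. Second, the converse step is slightly looser than you suggest: the two implications you prove concern the functor $(-)_{+}$ from algebras to posets, whereas applying anything to $h_{+}$ requires the mirrored implications for the functor $(-)^{+}$ from posets to algebras (a surjection $f$ of finite involutive posets has $f^{+} = f^{-1}$ injective, and an embedding $f$ has $f^{+}$ surjective via $U \mapsto {\uparrow} f[U]$). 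These are not literally the statements you established, but they are one-line verifications, after which the transport along $\unit$ and $(h_{+})^{+} \cong h$ works exactly as you describe. With those two clarifications the proof is complete.
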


  The duals of De~Morgan matrices are involutive posets expanded by an upset of \emph{designated points}. For the sake of brevity, let us simply call such structures \emph{frames}. Frames will be denoted by $P$ or $Q$ and the upset of designated points of $P$ ($Q$) will be denoted $D_{P}$ ($D_{Q}$). Homomorphisms of frames are defined as homomorphisms of involutive posets which preserve designation. That is, $f\colon P \to Q$ is a homomorphism of frames if $f$ is a homomorphism of involutive posets and $D_{P} \subseteq h^{-1}[D_{Q}]$.

  The \emph{complex matrix} of a frame $P$ is the complex algebra $P^{+}$ of the involutive poset reduct of $P$ equipped with the principal filter generated by the upset $D \in P^{+}$. Conversely, the \emph{dual} of a De~Morgan matrix $\pair{\alg{A}}{F}$ is the involutive poset $\alg{A}_{+}$ expanded by the upset $D \assign \set{G \supseteq F}{G \in \alg{A}_{+}}$.

\begin{theorem} \label{thm: matrix duality}
  The complex matrix and dual frame constructions are functors which form a dual equivalence between the categories of finite De Morgan matrices and finite frames, with unit $\unit$ and counit $\counit$.
\end{theorem}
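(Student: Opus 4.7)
The plan is to reduce everything to Theorem~\ref{thm: algebra duality}, which already supplies a dual equivalence at the level of De~Morgan algebras and involutive posets. All that remains is to verify that the additional data --- a lattice filter $F$ on the algebra side and a designated upset $D$ on the frame side --- is transported correctly by the two constructions, on objects, on morphisms, and under the unit and counit. On objects this is immediate from the definitions: the complex matrix of a frame equips $P^{+}$ with the principal filter generated by $D_{P}$, and the dual of a matrix equips $\alg{A}_{+}$ with the upset $\set{\filter{U} \in \alg{A}_{+}}{\filter{U} \supseteq F}$.

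Next I would check functoriality on morphisms. For a frame homomorphism $f\colon P \to Q$, the map $f^{+} = f^{-1}$ is already a De~Morgan algebra homomorphism, and preservation of the designated filter reduces to the observation that $U \supseteq D_{Q}$ implies $f^{-1}[U] \supseteq f^{-1}[D_{Q}] \supseteq D_{P}$. Dually, given a matrix homomorphism $h\colon \pair{\alg{A}}{F} \to \pair{\alg{B}}{G}$, the condition $h[F] \subseteq G$ immediately yields that any prime filter $\filter{V} \supseteq G$ satisfies $h^{-1}[\filter{V}] \supseteq F$, so $h_{+}$ sends designated points to designated points.

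For the unit and counit, Theorem~\ref{thm: algebra duality} already gives that $\unit$ and $\counit$ are isomorphisms of algebras and of involutive posets respectively, so I only need to check that they preserve and reflect designation. The element $\unit(a) \in (\alg{A}_{+})^{+}$ is designated precisely when every prime filter of $\alg{A}$ extending $F$ contains $a$, and this is equivalent to $a \in F$ because in a finite distributive lattice every lattice filter is the intersection of the prime filters extending it. Dually, $\counit(u)$ is designated in $(P^{+})_{+}$ if and only if $u$ belongs to every upset of $P$ extending $D_{P}$, which reduces to $u \in D_{P}$ since $D_{P}$ is itself such an upset. No real obstacle arises: the proof is pure bookkeeping on top of Theorem~\ref{thm: algebra duality}, and the only point requiring care is keeping the contravariance of the two functors straight when verifying preservation of the designated data.
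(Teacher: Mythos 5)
Your proposal is correct and takes essentially the same route as the paper: reduce everything to Theorem~\ref{thm: algebra duality} and observe that the two inverse-image constructions transport the designated data correctly in both directions. The only difference is that you also spell out the check that $\unit$ and $\counit$ preserve and reflect designation (via the fact that a filter in a finite distributive lattice is the intersection of the prime filters above it), a verification the paper leaves implicit.
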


\begin{proof}
  Given Theorem~\ref{thm: algebra duality}, it suffices to make the following two observations. If $h\colon \pair{\alg{A}}{F} \rightarrow \pair{\alg{B}}{G}$ is a homomorphism of De~Morgan matrices and $\filter{V}$ is a prime filter of $\alg{B}$ such that $\filter{V} \supseteq G$, then $h^{-1}[\filter{V}]$ is a prime filter of $\alg{A}$ and $h^{-1}[\filter{V}] \supseteq h^{-1}[G] \supseteq F$, therefore the map $h^{-1}$ is a homomorphism of frames.

  Conversely, if $f\colon P \to Q$ is a homomorphism of frames and $U$ is an upset of $Q$ such that $U \supseteq D_{Q}$, then $f^{-1}[U] \supseteq f^{-1}[D_{Q}] \supseteq D_{P}$, therefore the map $ f^{-1}$ is a homomorphism of De~Morgan matrices.
\end{proof}

  We now describe the duals of strict homomorphisms. The following notation will be useful: ${\uparrow} X$ will denote the upward closure of a set $X$ in some given poset, and $\min X$ ($\max X$) will denote the set of minimal (maximal) elements of $X$.

\begin{proposition} \label{prop: strict duality}
  Let $f\colon P \to Q$ be a homomorphism of finite frames. Then $f^{+}$ is strict if and only if $D_{Q} = {\uparrow} f[D_{P}]$, or equivalently $D_{Q} \subseteq {\uparrow} f[D_{P}]$.
\end{proposition}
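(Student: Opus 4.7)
The plan is to unpack the definitions on both sides of the duality and observe that the condition for strictness translates directly into a separation property for upsets of $Q$ that pins down $D_{Q}$ as ${\uparrow} f[D_{P}]$.

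First I would recall what the dual matrices look like. On $P^{+}$ the designated set is the principal filter $F_{P} = \set{U \in P^{+}}{U \supseteq D_{P}}$, and similarly $F_{Q} = \set{V \in Q^{+}}{V \supseteq D_{Q}}$ on $Q^{+}$. The map $f^{+} = f^{-1}\colon Q^{+} \to P^{+}$ is strict precisely when $F_{Q} = (f^{+})^{-1}[F_{P}]$, i.e.\ for every upset $V$ of $Q$,
\begin{equation*}
  D_{Q} \subseteq V \iff D_{P} \subseteq f^{-1}[V] \iff f[D_{P}] \subseteq V \iff {\uparrow} f[D_{P}] \subseteq V,
\end{equation*}
where the last equivalence uses that $V$ is an upset.

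Next I would note that because $f$ is a frame homomorphism, $f[D_{P}] \subseteq D_{Q}$, and since $D_{Q}$ is itself an upset, this already gives the inclusion ${\uparrow} f[D_{P}] \subseteq D_{Q}$. This is what makes the two forms of the criterion equivalent: the nontrivial content lies entirely in the reverse inclusion $D_{Q} \subseteq {\uparrow} f[D_{P}]$.

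With these two observations the equivalence is immediate. For the forward direction, assume $f^{+}$ is strict and apply the criterion above to $V = {\uparrow} f[D_{P}]$, which is an upset of $Q$; since trivially ${\uparrow} f[D_{P}] \subseteq V$, strictness yields $D_{Q} \subseteq V = {\uparrow} f[D_{P}]$. For the converse, assume $D_{Q} \subseteq {\uparrow} f[D_{P}]$ (hence equality, by the previous paragraph), and verify the biconditional displayed above: if $D_{Q} \subseteq V$ then ${\uparrow} f[D_{P}] = D_{Q} \subseteq V$, and if ${\uparrow} f[D_{P}] \subseteq V$ then $D_{Q} = {\uparrow} f[D_{P}] \subseteq V$. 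There is no real obstacle here; the only thing to be careful about is keeping straight which functor reverses direction, so that the principal-filter description of designated elements in $P^{+}$ and $Q^{+}$ is used correctly when translating the matrix-theoretic strictness condition into a statement about upsets.
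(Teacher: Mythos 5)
Your proof is correct and follows essentially the same route as the paper's: both translate strictness of $f^{+}$ into the condition that an upset $V$ of $Q$ contains $D_{Q}$ exactly when it contains $f[D_{P}]$, note that ${\uparrow} f[D_{P}] \subseteq D_{Q}$ holds automatically for any frame homomorphism, and then test the condition on the particular upset ${\uparrow} f[D_{P}]$ to extract the reverse inclusion. Nothing is missing.
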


\begin{proof}
  The inclusion ${\uparrow} f[D_{P}] \subseteq D_{Q}$ holds for each homomorphism of frames. Suppose therefore that $D_{Q} \subseteq {\uparrow} f[D_{P}]$ and consider an upset $U$ of $Q$ such that $f^{+}(U)$ is designated in $P^{+}$. Then $f^{-1}[U] \supseteq D_{P}$, hence $U \supseteq f[D_{P}]$ and $U = {\uparrow} U \supseteq {\uparrow} f[D_{P}] \supseteq D_{Q}$. The homomorphism $f^{+}$ is therefore strict.

  Conversely, let $U \assign {\uparrow} f[D_{P}]$. Then $f^{+}(U) = f^{-1}[{\uparrow} f[D_{P}]] \supseteq f^{-1}[f[D_{P}]] \supseteq D_{P}$, hence $f^{+}(U)$ is designated in $P^{+}$. If $f$ is strict, then $U$ is designated in~$Q^{+}$, therefore ${\uparrow} f[D_{P}] \supseteq D_{Q}$.
\end{proof}

  In view of the above proposition, let us call a homomorphism $f\colon P \to Q$ of finite frames \emph{strict} if $D_{Q} = {\uparrow} f[D_{P}]$, or equivalently if $D_{Q} \subseteq {\uparrow} f[D_{P}]$. 

  The duals of strict homomorphic images of $P^{+}$ can be identified with certain substructures of $P$. We say that a \emph{subframe} of a finite frame $P$ is a subposet $Q$ closed under $\dual$ with $D_{Q} \assign D_{P} \cap Q$. A \emph{strict subframe} of $P$ is then a subframe $Q$ such that $D_{P} = {\uparrow} D_{Q}$, or equivalently ${\min D_{P} \subseteq Q}$.

\begin{proposition}
  Let $P$ be a finite frame. Up to isomorphism, the strict homomorphic images of $P^{+}$ are the complex algebras of strict subframes of~$P$.
\end{proposition}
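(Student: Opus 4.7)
The plan is to read this off directly from the duality (Theorem~\ref{thm: matrix duality}), combined with Fact~\ref{fact: embeddings of algebras} (surjective homomorphisms dualize to embeddings) and Proposition~\ref{prop: strict duality} (strict morphisms dualize to strict ones). So the claim amounts to identifying, up to isomorphism, the codomains of strict embeddings into $P$ with the strict subframes of $P$.

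First I would unpack the direction from subframes to homomorphic images. Let $Q$ be a strict subframe of $P$, and let $\iota\colon Q \hookrightarrow P$ be the inclusion. This is a homomorphism of involutive posets because $Q$ is closed under $\dual$, and it preserves designation because $D_{Q} = D_{P} \cap Q \subseteq D_{P}$. Moreover $\iota$ is strict in the sense introduced after Proposition~\ref{prop: strict duality}: strictness of $Q$ as a subframe says ${\uparrow}D_{Q} = D_{P}$, i.e.\ ${\uparrow}\iota[D_{Q}] = D_{P}$. Applying the dual equivalence and Proposition~\ref{prop: strict duality}, the morphism $\iota^{+}\colon P^{+} \twoheadrightarrow Q^{+}$ is a surjective (by Fact~\ref{fact: embeddings of algebras}) strict homomorphism of De~Morgan matrices, exhibiting $Q^{+}$ as a strict homomorphic image of $P^{+}$.

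Next I would handle the converse. Suppose $h\colon P^{+} \twoheadrightarrow \matr{B}$ is a strict surjective homomorphism of finite De~Morgan matrices. Applying the dual functor and the unit $\unit$ identifies $h$, up to isomorphism, with $f^{+}$ for some homomorphism of frames $f\colon \matr{B}_{+} \to P$. By Fact~\ref{fact: embeddings of algebras} (transported to the matrix level via Theorem~\ref{thm: matrix duality}) the map $f$ is an embedding of involutive posets that preserves designation, and by Proposition~\ref{prop: strict duality} it is strict, so $D_{P} = {\uparrow}f[D_{\matr{B}_{+}}]$. Let $Q \assign f[\matr{B}_{+}]$ with $D_{Q} \assign D_{P} \cap Q$. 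Since $f$ is an order embedding commuting with $\dual$, $Q$ is a subposet of $P$ closed under~$\dual$, and $f$ restricts to an isomorphism of frames $\matr{B}_{+} \cong Q$ (using that $f$ preserves designation and is injective, so $f[D_{\matr{B}_{+}}] \subseteq D_{P} \cap Q = D_{Q}$, and conversely any $x \in D_{Q}$ is $f(y)$ for some $y$ with $f(y) \in D_{P}$, whence $y \in f^{-1}[D_P]$; but one also has $D_P \cap Q = f[D_{\matr{B}_{+}}]$ because $f$ is a frame homomorphism that is injective and order-preserving, using $f^{-1}[D_P] = D_{\matr{B}_+}$ which follows from strictness applied in the dual direction — this small check is the one routine verification). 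The strictness condition ${\uparrow}f[D_{\matr{B}_{+}}] = D_{P}$ becomes ${\uparrow}D_{Q} = D_{P}$, so $Q$ is a strict subframe of $P$, and $\matr{B} \cong (\matr{B}_{+})^{+} \cong Q^{+}$.

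The only real content beyond invoking the duality is bookkeeping about what ``strict subframe'' means on the two sides, which is exactly what Proposition~\ref{prop: strict duality} packages. I do not anticipate a genuine obstacle; the one place where a little care is needed is checking that when we replace the abstract embedding $f\colon \matr{B}_{+} \to P$ by its image $Q$, the designated set inherited from $P$ really agrees with the transport of $D_{\matr{B}_{+}}$, i.e.\ that $f^{-1}[D_{P}] = D_{\matr{B}_{+}}$. This is the ``strictness in the opposite direction'' that follows from $h$ being a homomorphism (not merely $h$ being strict): a designated element of $P^{+}$ is sent to a designated element of $\matr{B}$, which dually gives $f[D_{\matr{B}_{+}}] \subseteq D_{P}$, while the reverse inclusion on the image $f^{-1}[D_{P}] \subseteq D_{\matr{B}_{+}}$ is precisely the content of Proposition~\ref{prop: strict duality}.
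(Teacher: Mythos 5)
Your argument follows the paper's proof essentially verbatim: dualize via Theorem~\ref{thm: matrix duality}, use Fact~\ref{fact: embeddings of algebras} and Proposition~\ref{prop: strict duality} to identify strict surjective homomorphisms out of $P^{+}$ with strict order embeddings of frames into $P$, and then identify the image of such an embedding with a strict subframe. The one point where the paper is more explicit is exactly the step you flag as the ``routine verification'': the inclusion $f^{-1}[D_{P}] \subseteq D_{\matr{B}_{+}}$ is not ``precisely the content of Proposition~\ref{prop: strict duality}'' nor a consequence of strictness alone, but follows from strictness \emph{combined with} the order-embedding property --- if $f(u) \in D_{P} = {\uparrow} f[D_{\matr{B}_{+}}]$, then $f(v) \leq f(u)$ for some $v \in D_{\matr{B}_{+}}$, hence $v \leq u$ because $f$ is an order embedding, and $u \in D_{\matr{B}_{+}}$ because designated sets are upsets.
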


\begin{proof}
  By Proposition~\ref{prop: strict duality} and Fact~\ref{fact: embeddings of algebras}, $Q^{+}$ is a strict homomorphic image of $P^{+}$ if and only if there is a homomorphism of frames $f\colon Q \to P$ such that $f$ is an order embedding and $D_{P} = {\uparrow} f [D_{Q}]$. Such a map $f$ reflects designation: if $f(u) \in D_{P}$, then there is $v \in D_{Q}$ such that $f(v) \leq f(u)$, thus $v \leq u$ because $f$ is an order embedding, and $u \in D_{Q}$ because $D_{Q}$ is an upset. The map $f$ is therefore an isomorphism between $Q$ and a strict subframe of $P$.
\end{proof}

\begin{proposition} \label{prop: leibniz subframes}
  Let $P$ be a finite frame and $Q$ be the subframe $P$ over $\min D_{P} \cup \dual[\min D_{P}]$. Then the Leibniz reduct of $P^{+}$ is isomorphic to~$Q^{+}$.
\end{proposition}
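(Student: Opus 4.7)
The plan is to realize the Leibniz reduct of $P^{+}$ concretely as $Q^{+}$ by exhibiting a strict surjective matrix homomorphism $r \colon P^{+} \twoheadrightarrow Q^{+}$ whose kernel is precisely the Leibniz congruence of $P^{+}$; the homomorphism theorem will then yield the asserted isomorphism.

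First I would verify that $Q$ really is a strict subframe: its underlying set $\min D_{P} \cup \dual[\min D_{P}]$ is closed under $\dual$ since $\dual$ is an involution, and it contains $\min D_{P}$, which is the criterion for strictness recorded above. I would then define $r(U) \assign U \cap Q$ and check the four relevant properties. The map $r$ is manifestly a bounded lattice homomorphism; it commutes with De~Morgan negation because $\dual[U] \cap Q = \dual[U \cap Q]$ whenever $Q$ is $\dual$-invariant; it is strict because a $P$-upset $U$ contains $D_{P}$ iff it contains $\min D_{P} \subseteq Q$, which happens iff $U \cap Q$ contains $\min D_{P} = \min D_{Q}$; and it is surjective because for every upset $V$ of $Q$ the upset ${\uparrow}_{P} V$ of $P$ satisfies ${\uparrow}_{P} V \cap Q = V$, using that $Q$ carries the order inherited from $P$.

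The main work is to show that $\Ker r$ coincides with $\Leibniz{P^{+}}{F}$, where $F$ denotes the designated filter. One inclusion is immediate: strictness of $r$ makes $\Ker r$ a congruence compatible with $F$, and $\Leibniz{P^{+}}{F}$ is by definition the largest such congruence. The reverse inclusion is the main obstacle, and I would handle it directly. Assume $U \cap Q \neq V \cap Q$ and pick, without loss of generality, some $p \in (U \setminus V) \cap Q$. If $p \in \min D_{P}$, set $W \assign {\uparrow}_{P}(\min D_{P} \setminus \{p\})$; since $\min D_{P}$ is an antichain, $W$ avoids $p$ but contains every other minimal element of $D_{P}$, so $U \cup W \supseteq D_{P}$ while $V \cup W$ omits $p \in D_{P}$. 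Hence the lattice term $x \vee W$ separates $U$ from $V$ modulo $F$, showing $\pair{U}{V} \notin \Leibniz{P^{+}}{F}$. If instead $p = \dual p'$ with $p' \in \min D_{P}$, then since $\dmneg U = P \setminus \dual[U]$ one checks that $p' \in \dmneg V \setminus \dmneg U$, and the previous case applied to $\dmneg V$ and $\dmneg U$ produces a $W$ for which the context $\dmneg x \vee W$ separates $U$ from $V$. The key trick in this step is to exploit the antichain nature of $\min D_{P}$, which lets removal of a single minimal element still yield an upset, and to reduce points in $\dual[\min D_{P}]$ to points in $\min D_{P}$ via the involution.
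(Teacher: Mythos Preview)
Your proof is correct, but it takes a different route from the paper's. The paper argues in two sentences: the Leibniz reduct of any matrix is its smallest strict homomorphic image (a standard fact from abstract algebraic logic, since strict homomorphic images are exactly quotients by congruences compatible with the filter, and the Leibniz congruence is the largest such); then the previous proposition identifies strict homomorphic images of $P^{+}$ with complex algebras of strict subframes of $P$, and the smallest strict subframe is visibly $\min D_{P} \cup \dual[\min D_{P}]$.

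Your approach bypasses both of these ingredients by building the restriction map $r$ explicitly and computing the Leibniz congruence by hand. The antichain argument for $\min D_{P}$, together with the reduction via $\dmneg$ for points in $\dual[\min D_{P}]$, is a clean and entirely self-contained way to show that $\Ker r$ exhausts $\Leibniz{P^{+}}{F}$. The trade-off is that the paper's proof is much shorter and highlights the structural reason (minimality under the duality), whereas yours is more elementary and would work even for a reader who has not internalized the correspondence between strict subframes and strict quotients. One small terminological point: what you call ``the lattice term $x \vee W$'' is really a polynomial (a term with a parameter from the algebra), which is exactly what the Leibniz congruence is insensitive to; you might want to say ``polynomial'' there to avoid any ambiguity.
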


\begin{proof}
  The Leibniz reduct of $P^{+}$ is the smallest strict homomorphic image of $P^{+}$. The claim now follows from the previous proposition and the observation that the subframe $Q$ is strict if and only if $\min D_{P} \subseteq Q$.
\end{proof}

\begin{corollary} \label{cor: reduced frames}
  Let $P$ be a finite frame. Then $P^{+}$ is a reduced matrix if and only if $P = \min D_{P} \cup \dual [\min D_{P}]$.
\end{corollary}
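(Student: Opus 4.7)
The plan is to derive the corollary directly from Proposition~\ref{prop: leibniz subframes} together with the duality of Theorem~\ref{thm: matrix duality}. By definition, a matrix is reduced precisely when its Leibniz congruence is the identity, or equivalently when the canonical quotient map onto its Leibniz reduct is an isomorphism. So I would reformulate the claim as: $P^{+}$ is reduced iff $P^{+}$ is isomorphic (via the canonical reduction) to $Q^{+}$, where $Q$ denotes the subframe of $P$ on $\min D_{P} \cup \dual[\min D_{P}]$ provided by Proposition~\ref{prop: leibniz subframes}.

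From here the two directions are essentially immediate. For the easy direction, if $P = \min D_{P} \cup \dual[\min D_{P}]$, then $Q = P$ as frames, hence $Q^{+} = P^{+}$, so the Leibniz reduct of $P^{+}$ is $P^{+}$ itself and the matrix is reduced. For the converse, I would invoke the duality functorially: the canonical surjection $P^{+} \twoheadrightarrow Q^{+}$ from Proposition~\ref{prop: leibniz subframes} is dual to the inclusion $Q \hookrightarrow P$, which by Proposition~\ref{prop: strict duality} is a strict homomorphism of frames. If $P^{+}$ is reduced then this surjection is an isomorphism of matrices, so by Theorem~\ref{thm: matrix duality} its dual $Q \hookrightarrow P$ is an isomorphism of frames, forcing $Q = P$ on underlying sets, i.e.\ $P = \min D_{P} \cup \dual[\min D_{P}]$.

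There is no real obstacle here; the only thing to be mildly careful about is bookkeeping in the dualization step, namely checking that the surjection $P^{+} \twoheadrightarrow Q^{+}$ produced by the proof of Proposition~\ref{prop: leibniz subframes} is indeed (naturally identified with) the dual of the subframe inclusion, so that Theorem~\ref{thm: matrix duality} applies to yield $Q = P$ rather than merely $Q^{+} \cong P^{+}$ abstractly. A short size comparison could equally well do the job: since $Q \subseteq P$ and the complex-algebra functor is strictly monotone on finite frames (more points yield more upsets), $|P^{+}| = |Q^{+}|$ already entails $Q = P$, which is a cleaner way to close the argument if one wishes to avoid invoking functoriality directly.
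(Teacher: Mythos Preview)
Your proposal is correct and is essentially the argument the paper leaves implicit: the paper states the corollary without proof, treating it as an immediate consequence of Proposition~\ref{prop: leibniz subframes}, and what you have written is exactly the natural way to spell that out. Your alternative cardinality argument ($Q \subseteq P$ and $|Q^{+}| = |P^{+}|$ force $Q = P$) is a perfectly clean shortcut for the converse direction.
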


  Accordingly, we call a finite frame $P$ \emph{reduced} if $P = \min D_{P} \cup \dual [\min D_{P}]$, and we call the subframe $Q$ of $P$ over $\min D_{P} \cup \dual[\min D_{P}]$ the \emph{Leibniz subframe} of~$P$.

  Each frame~$P$ is a disjoint union of its components, where a \emph{component} of $P$ is a subframe whose underlying set is closed upward as well as downward in $P$ (in~addition to being closed under $\dual$). Each component of $P$ is a frame in its own right. We call $P$ \emph{connected} if it has no non-empty proper components.

\begin{proposition} \label{prop: reduced models}
  If $P$ is a finite reduced frame, then $P = \min P \cup \max P$. If~$P$ is moreover connected, then either $P = \{ u, \dual u \}$ for some $u \in P$ or $\min P$ and $\max P$ are disjoint. In the latter case either $D_{P} = P$ or $D_{P} = \max P$.
\end{proposition}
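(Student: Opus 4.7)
The main tool is the identity $P = \min D_P \cup \dual[\min D_P]$ supplied by Corollary~\ref{cor: reduced frames}, which I will use repeatedly in conjunction with the upset property of $D_P$ and the order-inverting involution $\dual$.

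For part~1, given $u \in P$, I reduce (by applying $\dual$ if necessary) to the case $u \in \min D_P$ and assume toward contradiction that $u$ is neither minimal nor maximal, with witnesses $y < u < w$. Both $y$ and $w$ must lie in $\dual[\min D_P]$: $y \in \min D_P$ would put $y \in D_P$ strictly below $u \in \min D_P$, and $w \in \min D_P$ would put $u \in D_P$ strictly below $w \in \min D_P$. Hence $\dual w \in \min D_P \subseteq D_P$ sits strictly below $\dual u$, forcing $\dual u \in D_P$ by upward closure; but then $\dual y \in \min D_P$ has $\dual u \in D_P$ strictly below it, contradicting the minimality of $\dual y$ in $D_P$.

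For part~2, any $u \in \min P \cap \max P$ is order-isolated in $P$, and applying $\dual$ yields the same for $\dual u$. Hence $\{u, \dual u\}$ is closed under both $\leq$ and $\dual$ and is a component of $P$; connectedness gives $P = \{u, \dual u\}$.

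For part~3, assume $\min P$ and $\max P$ disjoint and partition $P$ into $X \assign \min P \setminus D_P$, $Y \assign \min P \cap D_P$, $Z \assign \max P \cap D_P$, and $W \assign \max P \setminus D_P$. Split $Z = Z_0 \sqcup Z_1$ according to whether no element of $Y$ lies below; then $\min D_P = Y \cup Z_0$, and projecting $P = \min D_P \cup \dual[\min D_P]$ onto $\min P$ and $\max P$ separately yields the inclusions $X \subseteq \dual[Z_0]$, $W \subseteq \dual[Y]$, and $Z_1 \subseteq \dual[Y]$. The main technical obstacle is showing that $U \assign Y \cup \dual[Y]$ is closed under $\leq$: for down-closure, an element of $X$ below some $\dual w \in \dual[Y]$ would be the $\dual$-image of a $Z_0$-element sitting above $w \in Y$, contradicting the defining property of $Z_0$; for up-closure, any strict upper bound of a $Y$-element lies in $Z$ and necessarily in $Z_1$ (since a $Y$-element lies below), hence in $\dual[Y]$. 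Once $U$ is seen to be a union of components, connectedness gives $U = \emptyset$ or $U = P$. In the first case $Y = \emptyset$, whence $W \subseteq \dual[Y] = \emptyset$ and $D_P = Z = \max P$. In the second case $Y = \min P$ and $X = \emptyset$; to conclude $D_P = P$ I rule out $W \neq \emptyset$ by observing that any $w \in W$ has no $D_P$-element below (as the complement of $D_P$ is a downset), hence no $Y = \min P$-element below, and combined with $w \in \max P$ this makes $w$ order-isolated; dually so is $\dual w$, so $\{w, \dual w\}$ is a component of $P$, forcing $P = \{w, \dual w\}$, which contradicts $\min P \cap \max P = \emptyset$.
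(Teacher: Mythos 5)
Your proof is correct. For the first two claims you take essentially the paper's route: the identity $P = \min D_{P} \cup \dual[\min D_{P}]$ from Corollary~\ref{cor: reduced frames}, combined with the order-reversal of $\dual$, rules out an element that is neither minimal nor maximal, and an element of $\min P \cap \max P$ generates a two-element component. For the third claim your organization genuinely differs. The paper first proves $\max P \subseteq D_{P}$ outright, without connectedness (if $u \notin D_{P}$ then $\dual u \in \min D_{P}$, and any $v > \dual u$ would force $\dual v \in \min D_{P}$ with $\dual v < u \notin D_{P}$, contradicting that $D_{P}$ is an upset), and then uses connectedness to propagate designation across $\min P$ by an explicit zigzag argument ($u < \dual w$ and $u < v > w$ steps). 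You instead set up the partition $X, Y, Z_{0}, Z_{1}, W$, read off the containments $X \subseteq \dual[Z_{0}]$ and $Z_{1} \cup W \subseteq \dual[Y]$ from $\min D_{P} = Y \cup Z_{0}$, and check that $Y \cup \dual[Y]$ is a union of components, so that connectedness is invoked exactly once and abstractly. The ingredients are the same, but your version trades the paper's concrete walk through the frame for a ``clopen set'' argument: the role of connectedness becomes cleaner, at the cost of somewhat heavier bookkeeping (and your final contradiction, routing through $P = \{w, \dual w\}$, could be shortened to the immediate observation that an order-isolated $w$ lies in $\min P \cap \max P$). All steps check out.
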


\begin{proof}
  Because $\dual u \in \max P$ if and only if $u \in \min P$, for the first claim it suffices to prove that for each $u \in P$ either $u \in \max P$ or $\dual u \in \max P$. Suppose therefore that there are $v > u$ and $w > \dual u$. If~$u \notin \min D_{P}$, then $\dual u \in \min D_{P}$, so $w \notin \min D_{P}$ and $\dual w \in \min D_{P}$. But $\dual w < u < v$, therefore $v \notin \min D_{P}$ and $\dual v \in \min D_{P}$. This is a contradiction: $\dual v \in \min D_{P}$ and $\dual u \in \min D_{P}$ but $\dual v < \dual u$. This proves that $P = \min P  \cup \max P$.

    If $u \in \min P \cap \max P$, then there is no $v > u$ and no $v < u$. Because $P$ is connected, $P = \{ u, \dual u \}$. Let us therefore assume that $\min P \cap \max P = \emptyset$.

  Suppose that $u \notin D_{P}$ for some $u \in P$. Then $\dual u \in \min D_{P}$, therefore $v > \dual u$ implies $v \notin \min D_{P}$ and $\dual v \in \min D_{P}$. But $\dual v < u \notin D_{P}$, contradicting the fact that $D_{P}$ is an upset. It follows that there can be no $v > \dual u$ if $u \notin D_{P}$. In other words, if $u \notin D_{P}$, then $u \in \min P$. Equivalently, $\max P \subseteq D_{P}$.

  Conversely, suppose that $u \in D_{P}$ for some $u \in \min P$. We wish to show that $w \in D_{P}$ for each $w \in \min P$. By connectedness, it suffices to prove that (i)~$w \in D_{P}$ if $u < \dual w \in D_{P}$, and (ii) $w \in D_{P}$ if $u < v > w$ for some $v \in P$. The~first claim holds because $u < \dual w$ implies $\dual w \notin \min D_{P}$, so $w \in D_{P}$. To~prove the second claim, we apply (i) twice: $u < v$ implies $\dual v \in D_{P}$, and now $\dual v < \dual w$ implies $w = \dual \dual w \in D_{P}$.
\end{proof}

  We can now determine every finite reduced frame up to isomorphism by a pair of graphs $G$, $H$ and a natural number $k$. The graph $G$ will describe the non-singleton components where each element is designated, the graph $H$ will describe the non-singleton components where exactly one of the elements $u$, $\dual u$ is designated, and $k$ will specify the number of singleton components.

  Let us first clarify our terminology. By a \emph{graph} we mean a finite symmetric graph with loops allowed, i.e.\ a finite (possibly empty) set $X$ equipped with a symmetric binary relation $R$. Vertices $u \in X$ and $v \in X$ will be called \emph{neighbors} or \emph{adjacent} vertices if $u R v$ (allowing for $u = v$). A vertex $u \in X$ is \emph{reflexive} if $u R u$, otherwise it is \emph{irreflexive}. The vertex $u$ is \emph{isolated} if it has no neighbors. (No reflexive vertex is isolated.) The \emph{complete graph} $K_{n}$ on $n$ vertices is the set $\{ 1, \dots, n \}$ equipped with the inequality relation. Thus $K_{1}$ is the irreflexive singleton graph and $K_{2}$ is a single edge between two vertices. We prefer the more suggestive notation ${\bullet} \assign K_{1}$. The \emph{disjoint union} of two graphs will be denoted $G \sqcup H$. A \emph{homomorphism} $h\colon G \to H$ of graphs $G = \pair{X}{R}$ and $H = \pair{Y}{S}$ is a map $h\colon X \to Y$ such that $u R v$ implies $f(u) S f(v)$.

  A De~Morgan matrix can be constructed from a finite graph $G = \pair{X}{R}$ as follows. Let $X \sqcup \dual X$ be the disjoint union of two copies of $X$, denoted $X$ and $\dual X$, with $\dual$ being an involution in $X \sqcup \dual X$ switching between the two copies. That is, each element of $X \sqcup \dual X$ has one of the forms $u$ or $\dual u$ for some $u \in X$. We define a partial order on this set:
\begin{align*}
  u \leq_{G} v \iff \text{either } u = v \text{ or }v = \dual w \text{ for some } w \in X \text{ such that }u R w.
\end{align*}
  This partial order with the involution $\dual$ defines the involutive poset $P(G)$.

  We may now equip $P(G)$ with two different sets of designated elements:
\begin{align*}
  D_{+}(G) & \assign X \cup \dual X, & D_{-}(G) & \assign \dual X.
\end{align*}
  This results in the two frames $P_{+}(G)$ and $P_{-}(G)$, respectively. The frame $P(G, H, k)$ is then defined as the disjoint union of the frames $P_{+}(G)$ and $P_{-}(H)$ and $k$ singleton reduced frames. An example is shown in Figure~\ref{fig: mu construction}, where $G_{2}$ is the graph consisting of a reflexive vertex $u$, an irreflexive vertex $v$, and an edge between $u$ and $v$, and $K_{2}$ is the complete graph on $2$ vertices, i.e.\ it consists of two irreflexive vertices connected by an edge. Observe that the complex matrix of $P_{+}(G_{2})$ is precisely the matrix $\Kminusmatrix$ of Figure~\ref{fig: kminus matrix}.

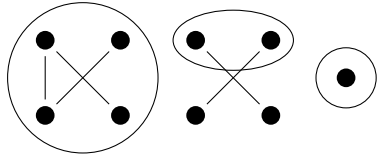
\begin{figure}
\caption{The frame $P(G_{2}, K_{2}, 1)$}
\label{fig: mu construction}

\bigskip

\begin{center}
\begin{tikzpicture}[scale=1,
  dot/.style={circle,fill,inner sep=2.5pt,outer sep=2.5pt}]
  \node (u) at (0,0) [dot] {};
  \node (du) at (0,1) [dot] {};
  \node (v) at (1,0) [dot] {};
  \node (dv) at (1,1) [dot] {};
  \node (p) at (2,0) [dot] {};
  \node (dp) at (2,1) [dot] {};
  \node (q) at (3,0) [dot] {};
  \node (dq) at (3,1) [dot] {};
  \node (b) at (4,0.5) [dot] {};
  \draw[-] (u) edge (du) edge (dv);
  \draw[-] (v) edge (du);
  \draw[-] (p) edge (dq);
  \draw[-] (q) edge (dp);
  \draw (0.5,0.5) ellipse (1 and 1);
  \draw (2.5,1) ellipse (0.8 and 0.4);
  \draw (4, 0.5) ellipse (0.4 and 0.4);
\end{tikzpicture}
\end{center}
\end{figure}

  The complex matrices of the frames $P_{+}(G)$, $P_{-}(H)$, and $P(G, H, k)$ will be denoted $\boldmu_{+}(G)$, $\boldmu_{-}(H)$, and $\boldmu(G, H, k)$, respectively. That is:
\begin{align*}
  \boldmu(G, H, k) \cong \boldmu_{+}(G) \times \boldmu_{-}(H) \times \CLmatrix^{k}.
\end{align*}
  In particular, the matrix $\boldmu(\emptyset, \emptyset, k)$ is isomorphic to $\CLmatrix^{k}$. The matrices $\boldmu(G, H, 1)$ and $\boldmu(G, H, n)$ are logically equivalent for~$n \geq 1$.\footnote{The notation $\boldmu_{+}(G)$ is unrelated to the notation $\matr{A}_{+}$ for the dual frame of a De~Morgan matrix. No confusion threatens: the notation $\boldmu(G)$ does not mea anything here.}

\begin{fact} \label{fact: disjoint unions}
  $\boldmu(G \sqcup G', H \sqcup H', i + i')$ is isomorphic to $\boldmu(G, H, i) \times \boldmu(G', H', i')$.
\end{fact}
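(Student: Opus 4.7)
The plan is to reduce the claim to two routine observations: first, that the frame constructions $P_{+}$ and $P_{-}$ each commute with disjoint unions; second, that the complex-matrix functor $(\cdot)^{+}$ sends disjoint unions of frames to direct products of matrices. Taken together these immediately imply the stated isomorphism, and neither step is likely to present a real obstacle.

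For the first step, I would simply unfold the definition of $P_{+}(G \sqcup G')$. If $G = \langle X, R \rangle$ and $G' = \langle X', R' \rangle$, then the carrier of $P_{+}(G \sqcup G')$ is $(X \sqcup X') \sqcup \dual(X \sqcup X')$, which has an evident bijection with $(X \sqcup \dual X) \sqcup (X' \sqcup \dual X')$. Since there are no edges between $X$ and $X'$ in $G \sqcup G'$, the order $\leq_{G \sqcup G'}$ compares no element of $X \sqcup \dual X$ with any element of $X' \sqcup \dual X'$; the involution $\dual$ respects the partition; and $D_{+}(G \sqcup G') = (X \sqcup X') \cup \dual(X \sqcup X')$ splits as $D_{+}(G) \sqcup D_{+}(G')$. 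The~same reasoning gives $P_{-}(H \sqcup H') \cong P_{-}(H) \sqcup P_{-}(H')$. Consequently $P(G \sqcup G', H \sqcup H', i + i')$ is isomorphic, as a frame, to the disjoint union of $P(G, H, i)$ and $P(G', H', i')$ (regrouping the $i + i'$ singleton reduced components in the obvious way).

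For the second step, let $P$ and $Q$ be arbitrary finite frames. Upsets of $P \sqcup Q$ are in bijection with pairs $\langle U, V \rangle$ where $U$ is an upset of $P$ and $V$ an upset of $Q$, via $W \mapsto \langle W \cap P, W \cap Q \rangle$; this bijection is a lattice isomorphism between $(P \sqcup Q)^{+}$ and $P^{+} \times Q^{+}$. Because the involution on $P \sqcup Q$ preserves each component, the De~Morgan negation is computed componentwise, so the isomorphism is an isomorphism of De~Morgan algebras. Finally, $D_{P \sqcup Q} = D_{P} \sqcup D_{Q}$, so under the isomorphism the principal filter generated by $D_{P \sqcup Q}$ corresponds to the product of the principal filters generated by $D_{P}$ and $D_{Q}$. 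Hence $(P \sqcup Q)^{+} \cong P^{+} \times Q^{+}$ as De~Morgan matrices.

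Combining the two steps, and using the defining identity $\boldmu(G, H, k) \cong \boldmu_{+}(G) \times \boldmu_{-}(H) \times \CLmatrix^{k}$ together with the trivial identity $\CLmatrix^{i+i'} \cong \CLmatrix^{i} \times \CLmatrix^{i'}$, yields the desired isomorphism. If any step warrants care it is checking that the involution and designated filter truly act componentwise in the complex-matrix construction, but this is forced by the definitions. I therefore expect no substantive obstacle.
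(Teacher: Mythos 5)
Your proof is correct; the paper states this fact without proof, and your two-step argument---that disjoint unions of graphs yield disjoint unions of frames under $P_{+}$ and $P_{-}$, and that the complex-matrix construction turns disjoint unions of frames into direct products of matrices---is exactly the verification the paper leaves implicit (compare the displayed identity $\boldmu(G, H, k) \cong \boldmu_{+}(G) \times \boldmu_{-}(H) \times \CLmatrix^{k}$ immediately preceding the Fact). There is nothing to add.
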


\begin{fact}
  $\boldmu_{+}(G)$ is a model of $\ETL$. $\boldmu_{-}(G)$ is not a model of $\ECQ$.
\end{fact}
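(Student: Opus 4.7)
The plan is to verify both claims by direct inspection of the complex matrix construction; no deep machinery is needed beyond what has already been set up. For $\boldmu_{+}(G)$, I would invoke Proposition~\ref{prop: reduced models of etl}, which guarantees that a De~Morgan matrix is a model of $\ETL$ as soon as its designated set is $\{\True\}$. Unwinding the definition of $\boldmu_{+}(G)$, the designated set is the principal lattice filter in the algebra of upsets generated by $D_{+}(G) = X \cup \dual X$; but this upset is all of $P(G)$, hence the top element of the complex algebra, so the filter it generates is $\{\True\}$.

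For the second claim, assuming $G$ is non-empty (otherwise $\boldmu_{-}(\emptyset)$ is a one-element trivial matrix that would satisfy $\ECQ$), I would exhibit a valuation violating the rule $p, \dmneg p \vdash q$. The key observation is that the upset $a \assign \dual X \in P_{-}(G)^{+}$ is a fixpoint of De~Morgan negation: a direct computation using $\dmneg U = (X \cup \dual X) \setminus \dual[U]$ gives
\begin{align*}
  \dmneg a = (X \cup \dual X) \setminus \dual[\dual X] = (X \cup \dual X) \setminus X = \dual X = a.
\end{align*}
Since $a = D_{-}(G)$ is the generator of the designated filter, both $a$ and $\dmneg a$ are designated, while the empty upset is undesignated because $\emptyset \not\supseteq \dual X$ when $X$ is non-empty. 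Setting $v(p) \assign a$ and $v(q) \assign \emptyset$ then witnesses the failure of $\ECQ$.

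There is essentially no obstacle here; both claims reduce to unwinding the definitions of $D_{+}(G)$, $D_{-}(G)$, and the De~Morgan operation on upsets. The only minor bookkeeping is checking that $\dual X$ is genuinely an upset of $P(G)$, which is immediate from the definition of $\leq_{G}$ since every element of $\dual X$ is maximal in $P(G)$. The one subtlety worth flagging is the degenerate case $G = \emptyset$, which ought to be excluded by convention (the intended reading in the rest of the paper, as suggested by Figure~\ref{fig: mu construction} and subsequent use of $\boldmu_{-}$).
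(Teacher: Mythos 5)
Your proof is correct and follows essentially the same route as the paper: the first claim comes from observing that $D_{+}(G)$ is the whole poset, so only the top element of the complex algebra is designated, and the second from interpreting $p$ as the negation-fixed upset $D_{-}(G) = \dual X$. Your explicit computation of $\dmneg(\dual X) = \dual X$ and your flagging of the degenerate case $G = \emptyset$ (which the paper leaves implicit) are both accurate.
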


\begin{proof}
  $\boldmu_{+}(G)$ is a model of $\ETL$ because only its top element is designated. The rule $p, \dmneg p \vdash \emptyset$ fails in $\boldmu_{-}(G)$ if $p$ is interpreted by $D_{-}(G)$.
\end{proof}

  In particular, $\Log \boldmu_{-}(G) \logleq \LP$ because $[\BD, \CL]$ splits into $[\BD, \LP]$ and $[\ECQ, \CL]$. Let us explicitly compute the logics of these matrices:
\begin{align*}
  \Log \boldmu(G, H, 0) & = (\Log \boldmu_{+}(G) \cap \Log \boldmu_{-}(H)) \cup \Exp_{\BD} \Log \boldmu_{+}(G), \\
  \Log \boldmu(G, H, 1) & = (\Log \boldmu_{+}(G) \cap \Log \boldmu_{-}(H)) \cup \ECQ_{\omega}.
\end{align*}
  Because $\Exp_{\BD} \Log \boldmu_{-}(H) \logleq \Exp_{\BD} \LP = \BD$, we have
\begin{align*}
  \Exp_{\BD} \Log \boldmu(G, H, 0) & = \Exp_{\BD} \Log \boldmu_{+}(G), \\
  \Exp_{\BD} \Log \boldmu(G, H, 1) & = \ECQ_{\omega}.
\end{align*}

\begin{fact} \label{fact: reduced models of exp ext}
  Let $\logic{L} \in \Exp \Ext \BD$ be non-trivial. Then $\boldmu(G, H, 0)$ is a model of $\logic{L}$ if and only if $\boldmu_{+}(G)$ is a model of $\logic{L}$. Each $\boldmu(G, H, 1)$ is a model of $\logic{L}$.
\end{fact}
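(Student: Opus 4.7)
The plan is to invoke the corollary (stated immediately after Proposition~\ref{prop: log of product}) asserting that, for an explosive extension $\logic{L}$ of $\BD$, a product $\prod_{i \in I} \matr{A}_{i}$ of non-trivial models of $\BD$ is a model of $\logic{L}$ if and only if some $\matr{A}_{i}$ is a model of $\logic{L}$. Both halves of the Fact will follow quickly once this is combined with the decomposition $\boldmu(G, H, k) \cong \boldmu_{+}(G) \times \boldmu_{-}(H) \times \CLmatrix^{k}$ from Fact~\ref{fact: disjoint unions}.

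For the first claim, the corollary gives that $\boldmu(G, H, 0) \in \Mod \logic{L}$ iff $\boldmu_{+}(G) \in \Mod \logic{L}$ or $\boldmu_{-}(H) \in \Mod \logic{L}$, so the right-to-left direction of the biconditional is immediate. For the other direction, I would suppose $\boldmu_{-}(H)$ is a model of $\logic{L}$. The Fact immediately preceding the one under consideration tells us that $\boldmu_{-}(H)$ is not a model of $\ECQ$, so Proposition~\ref{prop: ecq lp splitting} forces $\logic{L} \logleq \Log \boldmu_{-}(H) \logleq \LP$. Since $\logic{L}$ is explosive, $\logic{L} = \Exp_{\BD} \logic{L} \logleq \Exp_{\BD} \LP = \BD$, so in fact $\logic{L} = \BD$. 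But $\boldmu_{+}(G)$ is always a model of $\ETL$, hence of $\BD = \logic{L}$, completing the argument.

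For the second claim, the non-triviality of $\logic{L}$ combined with the fact that $\CL$ is the largest non-trivial extension of $\BD$ gives $\logic{L} \logleq \CL$, whence $\CLmatrix \in \Mod \logic{L}$. Applying the product corollary to $\boldmu(G, H, 1) \cong \boldmu_{+}(G) \times \boldmu_{-}(H) \times \CLmatrix$ then delivers $\boldmu(G, H, 1) \in \Mod \logic{L}$ at once.

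The main obstacle, if one deserves the name, is merely the bookkeeping around potentially trivial factors when $G$ or $H$ happens to be empty; such degeneracies only help, since a trivial matrix is a model of every logic and can be absorbed into a product harmlessly. Beyond that, the Fact is essentially a direct corollary of the explosive-part machinery of Section~\ref{sec: explosive}, and no deeper technical difficulty arises.
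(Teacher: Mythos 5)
Your argument is correct and is essentially the paper's own: the Fact is stated there as an immediate consequence of Proposition~\ref{prop: log of product}, via the computation $\Exp_{\BD} \Log \boldmu(G,H,0) = \Exp_{\BD} \Log \boldmu_{+}(G)$ and $\Exp_{\BD} \Log \boldmu(G,H,1) = \ECQ_{\omega}$, and these computations rest on exactly the ingredients you use ($\boldmu_{-}(H) \notin \Mod \ECQ$, hence $\Log \boldmu_{-}(H) \logleq \LP$ and $\Exp_{\BD} \Log \boldmu_{-}(H) = \BD$; and $\logic{L} \logleq \CL$, so $\CLmatrix \in \Mod \logic{L}$). One caveat: your closing remark that the degenerate cases ``only help'' is not right. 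When $G = \emptyset$ and $H \neq \emptyset$, the matrix $\boldmu_{+}(\emptyset)$ is the trivial one-element matrix, so the right-hand side of the biconditional is vacuously true, while $\boldmu(\emptyset, H, 0) = \boldmu_{-}(H)$ need not be a model of $\logic{L}$ (take $\logic{L} = \ECQ$). This is really a defect of the statement as printed --- it tacitly assumes $G \neq \emptyset$, consistently with its later applications, where only non-empty $G$ occur --- rather than of your main argument, but it is the one degenerate configuration that does not absorb harmlessly into the product.
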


  The above construction covers all finite reduced models of $\BD$. 

\begin{theorem}
  The finite reduced models of~$\BD$ are, up to isomorphism, precisely the matrices $\boldmu(G, H, k)$ for some graphs $G$ and $H$ and some~$k \in \omega$.
\end{theorem}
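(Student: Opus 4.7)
The plan is to apply the duality of Theorem~\ref{thm: matrix duality} to translate the claim into a classification of finite reduced \emph{frames}, and then assemble any such frame from its connected components using the building blocks $P_+(G)$, $P_-(G)$, and the singleton fixpoint frame. For the easy direction, each $\boldmu(G,H,k)$ is visibly finite, is a model of $\BD$ since its underlying algebra is a De~Morgan algebra equipped with a principal lattice filter, and is reduced by Corollary~\ref{cor: reduced frames}: the condition $P = \min D_{P} \cup \dual[\min D_{P}]$ passes through disjoint unions, and in each of the three component types the underlying set is generated from $\min D$ by applying $\dual$ (in $P_+(G)$ every element of $X$ is minimal and designated, so $X \subseteq \min D$ and $\dual X \subseteq \dual[\min D]$, and similarly for $P_-(G)$).

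For the converse, let $\matr A$ be a finite reduced model of $\BD$. It is a De~Morgan algebra with a lattice filter, so by Theorem~\ref{thm: matrix duality} it is isomorphic to $P^{+}$ for some finite frame $P$, which is reduced by Corollary~\ref{cor: reduced frames}. I decompose $P$ into its connected components. Proposition~\ref{prop: reduced models} supplies the trichotomy: a connected reduced component $C$ is either (1)~a singleton $\{u\}$ with $u = \dual u$, or (2)~a two-element antichain $\{u, \dual u\}$ with $u \neq \dual u$, or (3)~satisfies $\min C \cap \max C = \emptyset$ with $D_{C} \in \{C, \max C\}$. Singletons must have $D_{C} = \{u\}$ (otherwise $C$ is empty) and their complex matrices are isomorphic to $\CLmatrix$; these will be absorbed into the counter $k$.

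For each non-singleton component $C$ I will choose a transversal $X \subseteq C$ for the $\dual$-action so that $X \sqcup \dual X = C$: in case~(3) the canonical choice is $X = \min C$, since Proposition~\ref{prop: reduced models} and its proof show that $\dual$ bijects $\min C$ with $\max C$; in case~(2) I take $X = \{u\}$ (the other choice gives an isomorphic graph). I then define the graph $G_{C} = (X, R)$ by $u \mathrel{R} v$ iff $u \leq \dual v$ in $C$, with symmetry of $R$ following from $\dual$ being an order-reversing involution. Comparing the definitions shows that $C \cong P_{+}(G_{C})$ if $D_{C} = C$ and $C \cong P_{-}(G_{C})$ if $D_{C} = \dual X$. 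Bundling the positively designated components into a graph $G$, the negatively designated ones into $H$, and letting $k$ count the singleton components, Fact~\ref{fact: disjoint unions} applied iteratively gives $P \cong P(G, H, k)$, whence $\matr A \cong P^{+} \cong \boldmu(G, H, k)$.

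The main technical obstacle is the connected-component case analysis, and especially the antichain case~(2). There $\min C \cap \max C = C$, so the canonical recipe $X = \min C$ used in case~(3) fails to cut out a transversal and one must invoke the $\dual$-orbit structure directly to choose a representative; correspondingly, the two essentially different designations $D_{C} = C$ and $D_{C} = \{\dual u\}$ must be matched by direct inspection against $P_{+}(K_{1})$ and $P_{-}(K_{1})$ (the single irreflexive vertex). Once this separate verification is dispatched and the generic argument for case~(3) is in place, the reassembly via Fact~\ref{fact: disjoint unions} is routine.
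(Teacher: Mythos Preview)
Your proposal is correct and follows essentially the same route as the paper: dualize via Theorem~\ref{thm: matrix duality} and Corollary~\ref{cor: reduced frames}, decompose the reduced frame into connected components, invoke Proposition~\ref{prop: reduced models} to classify each component, and reassemble via Fact~\ref{fact: disjoint unions}. The paper's proof is terser: it simply takes $X \assign \min P$ for any connected non-singleton reduced frame and asserts that $P$ is isomorphic to $P_{+}(G)$ or $P_{-}(G)$; you are right to single out the two-element antichain $\{u,\dual u\}$ for separate treatment, since there $\min P = P$ has two elements and the paper's recipe, read literally, would produce $K_{2}$ rather than $K_{1}$---your explicit handling of this boundary case is a genuine (if minor) improvement in care. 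You also supply the easy direction, which the paper omits.
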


\begin{proof}
  Let $P$ be a connected finite reduced frame which is not a singleton. Take $X \assign \min P$ and define the binary relation $R$ on $X$ as folows: $u R v$ if and only if $u \leq \dual v$. By Proposition~\ref{prop: reduced models} the frame $P$ is isomorphic either to $P_{+}(G)$ or $P_{-}(G)$ where $G = \pair{X}{R}$.

  If $P$ is an arbitrary finite reduced frame, then $P$ is the disjoint union of its components, which either have the forms $P_{+}(G)$ or $P_{-}(G)$ or they are designated singletons. Taking $G$ to be the disjoint union of all graphs $G$ such that $P_{+}(G)$ is a component of $P$, $H$ to be the disjoint union of all graphs $H$ such that $P_{-}(H)$ is a component of $P$, and $k$ to be the number of designated singleton components, we see that $P$ is isomorphic to $P(G, H, k)$.
\end{proof}

\begin{theorem}
  The finite reduced models of $\ETL$ are, up to isomorphism, precisely the matrices $\boldmu(G, \emptyset, k)$ for some graph $G$ and some $k \in \omega$.
\end{theorem}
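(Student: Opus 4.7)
The plan is to leverage the theorem for $\BD$ just proved and reduce the problem to checking which of the matrices $\boldmu(G, H, k)$ satisfy the characterization of reduced $\ETL$-models given by Proposition~\ref{prop: reduced models of etl}, namely that $F = \{ \True \}$.

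First I would observe that since $\ETL$ extends $\BD$, every finite reduced model of $\ETL$ is in particular a finite reduced model of $\BD$, hence isomorphic to some $\boldmu(G, H, k)$ by the preceding theorem. Conversely, any De Morgan algebra equipped with $F = \{ \True \}$ is a model of $\ETL$ by Proposition~\ref{prop: reduced models of etl}, so it only remains to determine when $\boldmu(G, H, k)$ has $F = \{ \True \}$.

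For this I would work on the dual side. By Theorem~\ref{thm: matrix duality}, the designated filter of the complex matrix $P^{+}$ is the principal filter generated by the upset $D_{P} \in P^{+}$. The top element of $P^{+}$ is the upset $P$ itself, so $F = \{ \True \}$ if and only if the principal filter $\{ U \in P^{+} \mid U \supseteq D_{P} \}$ equals $\{ P \}$, which happens exactly when $D_{P} = P$. For $P = P(G, H, k)$, which decomposes as the disjoint union of $P_{+}(G)$, $P_{-}(H)$, and $k$ designated singletons, one has $D_{P} = D_{+}(G) \sqcup D_{-}(H) \sqcup (\text{all } k \text{ singletons})$. Here $D_{+}(G)$ already covers all of $P_{+}(G)$ and the singleton components are designated, so the only undesignated points are the elements of $X$ (the lower copy of the vertex set of $H$) inside $P_{-}(H)$. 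Hence $D_{P} = P$ precisely when $H$ has no vertices, i.e.\ $H = \emptyset$.

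Putting these observations together yields the theorem: if $\boldmu(G, H, k)$ is a reduced model of $\ETL$, then $H = \emptyset$, and conversely each $\boldmu(G, \emptyset, k)$ is a reduced model of $\BD$ satisfying $F = \{ \True \}$, hence a reduced model of $\ETL$. I do not anticipate any substantial obstacle here, since the heavy lifting -- both the frame-theoretic duality and the classification of finite reduced models of $\BD$ -- has already been done; this result is essentially a corollary extracted by reading off when the designated upset of $P(G, H, k)$ fills the entire frame.
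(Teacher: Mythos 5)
Your proposal is correct and follows the paper's own route: the paper likewise derives this as an immediate corollary of the classification of finite reduced models of $\BD$ together with Proposition~\ref{prop: reduced models of etl} ($F = \{\True\}$), the only content being that $\boldmu(G,H,k)$ has $F = \{\True\}$ exactly when $H = \emptyset$. Your dual-side computation of this last point (that $D_P = P$ fails precisely on the lower copy of the vertices of $H$) just makes explicit what the paper leaves implicit.
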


\begin{proof}
  Recall Proposition~\ref{prop: reduced models of etl}: a reduced model $\pair{\alg{A}}{F}$ of $\BD$ is a model of $\ETL$ if and only if $F = \{ \True \}$.
\end{proof}

\section{Graph-theoretic completeness theorems}
\label{sec: graph completeness}

  Because the finite reduced models of $\BD$ correspond precisely to triples $\langle G, H, k \rangle$ where $G$ and $H$ are graphs and $k \in \omega$, a completeness theorem for a super-Belnap logic may take the form of specifying a class of such triples. Fortunately, for many logics, including the logics $\ECQ_{n}$, $\ETL_{n}$, and $\ETLplus_{n}$, it will suffice to only consider triples of the form $\langle G, \emptyset, 0 \rangle$. This will yield genuine graph-theoretic completeness theorems for such logics. For example, $\ETL_{n}$ turns out to be complete, in a suitable sense, with respect to the class of all non-$n$-colorable graphs.

  Recall that a logic $\logic{L}$ is said to be $\omega$-complete with respect to a class of matrices $\class{K}$ if it is complete with respect to $\class{K}$ as a finitary logic, i.e.\ if $\logic{L} = \Log_{\omega} \class{K}$.

\begin{theorem} \label{thm: completeness for exp ext etl}
  Let $\logic{L} < \ETL_{\omega}$ be a finitary explosive extension of $\ETL$. Then $\logic{L}$ is $\omega$-complete with respect to all matrices of the form $\boldmu_{+}(G) \times \ETLmatrix$ where $\boldmu_{+}(G)$ is a model of $\logic{L}$ and $G$ has no isolated vertices.
\end{theorem}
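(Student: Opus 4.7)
The plan is to invoke Theorem~\ref{thm: gratzer-quackenbush} with base logic $\BD$, whose algebraic counterpart — the variety of De~Morgan algebras — is locally finite. It then suffices to verify that $\class{K} \subseteq \Mod_{\omega}^{*} \logic{L}$ and, conversely, that every finite reduced model of $\logic{L}$ lies in $\AlgS^{*} \AlgP_{\omega}^{*}(\class{K})$. The first inclusion is essentially free: each $\boldmu_{+}(G) \times \ETLmatrix$ is a finite reduced model of $\BD$ by the classification of such models as triples $\langle G, H, k \rangle$, and since $\logic{L}$ is an explosive extension of $\ETL$ with $\boldmu_{+}(G)$ already a model by assumption, the product is also a model of $\logic{L}$ by the corollary that an explosive product is a model precisely when some factor is.

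The crucial auxiliary observation for the other inclusion is that $\Kmatrix$ is itself a model of $\logic{L}$. I would first check directly that $\ETL_{\omega} \logleq \K$: the disjunctive syllogism visibly holds in $\Kmatrix$, and every classical contradiction takes only non-designated values in $\Kmatrix$, so each rule $\chi_{n} \vdash p$ holds vacuously there. Combined with the hypothesis $\logic{L} < \ETL_{\omega}$ this gives $\logic{L} \logleq \K$, hence $\Kmatrix \models \logic{L}$. Observe moreover that $\Kmatrix \cong \boldmu_{+}(G_{r})$, where $G_{r}$ denotes the graph consisting of a single reflexive vertex (and is therefore \emph{not} isolated), and that $\CLmatrix$ embeds as a submatrix of $\Kmatrix$.

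Now let $\matr{A}$ be a finite reduced model of $\logic{L}$. Since $\logic{L}$ extends $\ETL$, it has the shape $\matr{A} \cong \boldmu_{+}(G) \times \CLmatrix^{k}$ for some graph $G$ and some $k \geq 0$. Decompose $G = G_{0} \sqcup m \cdot K_{1}$ with $G_{0}$ having no isolated vertices, so that $\boldmu_{+}(G) \cong \boldmu_{+}(G_{0}) \times \ETLmatrix^{m}$. When $k = 0$, the assumption $\boldmu_{+}(G) \models \logic{L}$ and the explosive-product corollary force $\boldmu_{+}(G_{0}) \models \logic{L}$ (either $\logic{L} = \ETL$, whence this is trivial, or $\ETLmatrix$ is not a model of~$\logic{L}$, forcing the other factor to be). Hence $\boldmu_{+}(G_{0}) \times \ETLmatrix \in \class{K}$, and together with $\ETLmatrix \cong \boldmu_{+}(\emptyset) \times \ETLmatrix \in \class{K}$, the strict embedding $x \mapsto (x, \True)$ realises $\matr{A}$ as a submatrix of $(\boldmu_{+}(G_{0}) \times \ETLmatrix) \times \ETLmatrix^{m} \in \AlgP_{\omega}^{*}(\class{K})$. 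When $k \geq 1$ I would instead use $\CLmatrix \leq \Kmatrix$ to embed $\matr{A} \leq \boldmu_{+}(G) \times \Kmatrix^{k} \cong \boldmu_{+}(G_{0} \sqcup k \cdot G_{r}) \times \ETLmatrix^{m}$, noting that $G_{0} \sqcup k \cdot G_{r}$ has no isolated vertices. Here the $\Kmatrix$ factor of $\boldmu_{+}(G_{0} \sqcup k \cdot G_{r}) \cong \boldmu_{+}(G_{0}) \times \Kmatrix^{k}$ already models $\logic{L}$, so the whole matrix does, placing $\boldmu_{+}(G_{0} \sqcup k \cdot G_{r}) \times \ETLmatrix$ in $\class{K}$; the argument then concludes exactly as in the $k = 0$ case.

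The main obstacle lies in the case $k \geq 1$: here $\matr{A}$ is automatically a model of $\logic{L}$ (because $\CLmatrix$ is), so its presence in $\Mod_{\omega}^{*} \logic{L}$ carries no direct information about whether $\boldmu_{+}(G_{0})$ is itself a model. The device that resolves this is that, while $\CLmatrix$ cannot be realised as $\boldmu_{+}(H)$ for any graph~$H$, it \emph{is} a submatrix of $\Kmatrix \cong \boldmu_{+}(G_{r})$; and the hypothesis $\logic{L} < \ETL_{\omega}$ is precisely what guarantees $\Kmatrix \models \logic{L}$, allowing each $\CLmatrix$ factor to be absorbed into an extra reflexive-vertex component of the graph.
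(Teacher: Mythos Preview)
Your strategy---use Theorem~\ref{thm: gratzer-quackenbush} and show $\Mod^{*}_{\omega}\logic{L}\subseteq\AlgS^{*}\AlgP^{*}_{\omega}(\class{K})$---is different from the paper's, which never invokes $\AlgS^{*}$ or $\AlgP^{*}_{\omega}$ at all. The paper argues purely at the level of the logics $\Log\matr{A}$: using Proposition~\ref{prop: log of product} it shows that for each non-trivial $\matr{A}\in\Mod^{*}_{\omega}\logic{L}$ one can find $\matr{B}\in\class{K}$ with $\Log\matr{B}\logleq\Log\matr{A}$ (by normalising the number of isolated vertices to one, and by observing that every $\CLmatrix$-factor is redundant since $\Log(\boldmu_{+}(G)\times\CLmatrix)\loggeq\ETL_{\omega}>\logic{L}$). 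Your structural route is legitimate in principle, but the execution has a concrete gap.

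The map $x\mapsto(x,\True)$ is \emph{not} a homomorphism of De~Morgan matrices: the set $\{\True\}$ is not closed under negation in $\ETLmatrix$, since $\dmneg\True=\False$. Hence its image is not a subalgebra, and you do not realise $\matr{A}$ as a submatrix of $(\boldmu_{+}(G_{0})\times\ETLmatrix)\times\ETLmatrix^{m}$ in this way. For $m\geq1$ there is an easy repair---$\matr{A}\cong(\boldmu_{+}(G_{0})\times\ETLmatrix)\times\ETLmatrix^{m-1}$ is already literally a product of matrices you want in $\class{K}$---but for $m=0$ you must genuinely exhibit $\boldmu_{+}(G_{0})$ inside $\AlgS^{*}(\boldmu_{+}(G_{0})\times\ETLmatrix)$, and this needs a real argument (for instance, embed via $a\mapsto(a,f(a))$ for any homomorphism $f$ from the underlying algebra into $\DMfour$; such $f$ exists whenever $G_{0}\neq\emptyset$).

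A related snag: you claim $\ETLmatrix\cong\boldmu_{+}(\emptyset)\times\ETLmatrix\in\class{K}$. Read literally this is correct (the trivial matrix models everything), but it would make the theorem \emph{false} whenever $\logic{L}>\ETL$, since then $\ETLmatrix\notin\Mod\logic{L}$ and hence $\Log_{\omega}\class{K}\logleq\ETL<\logic{L}$. The theorem is intended with $\boldmu_{+}(G)$ non-trivial (equivalently: the \emph{product} $\boldmu_{+}(G)\times\ETLmatrix$, not just the first factor, is required to be a model of $\logic{L}$), so you cannot place $\ETLmatrix$ in $\class{K}$ for free; the extra $\ETLmatrix$ factors must be absorbed via the submatrix argument above.
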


\begin{proof}
  Each non-trivial reduced model of $\logic{L}$ is logically equivalent to a model of the form $\boldmu_{+}(G)$ for $G$ non-empty or a model of the form $\boldmu_{+}(G) \times \CLmatrix$.

  Among models of $\logic{L}$ of the form $\boldmu_{+}(G)$, we may restrict to those where $G$ has exactly one isolated vertex, since $\Log \boldmu_{+}(G \sqcup \bullet \sqcup \bullet) = \Log \boldmu_{+}(G \sqcup \bullet) \logleq \Log \boldmu_{+}(G)$. This holds because $\Log \boldmu_{+}(G) \times \CLmatrix = \Log \boldmu_{+}(G) \cup \ETL_{\omega}$ and $\Log \boldmu_{+}(G \sqcup \bullet) = \Log \boldmu_{+}(G) \times \ETLmatrix = \ETL \cup \Exp_{\ETL} \Log \boldmu_{+}(G)$. But the models $\boldmu_{+}(G)$ where $G$ has exactly one isolated vertex are up to isomorphism precisely the models $\boldmu_{+}(G) \times \ETLmatrix$ where $G$ has no isolated vertices.

  Among models of $\logic{L}$ of the form $\boldmu_{+}(G) \times \CLmatrix$, we may restrict to $\ETLmatrix \times \CLmatrix = \boldmu_{+}(\bullet) \times \CLmatrix$. This is because $\Log \boldmu_{+}(G) \times \CLmatrix = \Log \boldmu_{+}(G) \cup \ETL_{\omega} \loggeq \ETL_{\omega} = \Log \ETLmatrix \times \CLmatrix$. Finally, $\logic{L} < \ETL_{\omega}$, so $\logic{L}$ must have at least one model of the form $\boldmu_{+}(G) \times \ETLmatrix$. But then $\Log \boldmu_{+}(G) \times \ETLmatrix = \ETL \cup \Exp_{\ETL} \Log \boldmu_{+}(G) \logleq \ETL_{\omega}$, therefore we may also disregard the model $\ETLmatrix \times \CLmatrix$.
\end{proof}

\begin{theorem} \label{thm: completeness for exp ext bd}
  Let $\logic{L}$ be a proper finitary explosive extension of $\BD$ such that ${\logic{L} < \ECQ_{\omega}}$. Then $\logic{L}$ is $\omega$-complete with respect to all matrices of the form $\boldmu_{+}(G) \times \BDmatrix$ where $\boldmu_{+}(G)$ is a model of $\logic{L}$ and $G$ has no isolated vertices.
\end{theorem}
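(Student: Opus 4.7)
The plan is to deduce this from Theorem~\ref{thm: completeness for exp ext etl} by transporting its conclusion along the isomorphism of Theorem~\ref{thm: iso exp ext etl}. Set $\logic{L}' \assign \ETL \vee \logic{L}$; by that isomorphism $\logic{L}' = \ETL \cup \logic{L}$ is a finitary explosive extension of $\ETL$ satisfying $\Exp_{\BD} \logic{L}' = \logic{L}$.

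First I would verify the hypothesis $\logic{L}' < \ETL_{\omega}$ needed for Theorem~\ref{thm: completeness for exp ext etl}. Recall $\ETL_{\omega} = \bigcup_{n} \ETL_{n} = \ETL \cup \ECQ_{\omega}$, using $\ETL_n = \ETL \cup \ECQ_n$. Since $\logic{L} < \ECQ_{\omega}$ there is some rule $\rho \in \ECQ_{\omega} \setminus \logic{L}$; such a $\rho$ cannot lie in $\ETL$, for otherwise $\rho \in \ECQ_{\omega} \cap \ETL = \ECQ \logleq \logic{L}$, contradicting $\rho \notin \logic{L}$. Hence $\rho \notin \ETL \cup \logic{L} = \logic{L}'$, so $\logic{L}' < \ETL_{\omega}$.

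Theorem~\ref{thm: completeness for exp ext etl} then yields $\logic{L}' = \Log_{\omega} \mathcal{K}'$, where $\mathcal{K}'$ consists of the matrices $\boldmu_{+}(G) \times \ETLmatrix$ with $\boldmu_{+}(G)$ a model of $\logic{L}'$ and $G$ having no isolated vertices. Because every $\boldmu_{+}(G)$ is automatically a model of $\ETL$, it is a model of $\logic{L}' = \ETL \cup \logic{L}$ precisely when it is a model of $\logic{L}$, so $\mathcal{K}'$ is parametrised by exactly the same graphs as the class $\mathcal{K}$ in the present theorem. A direct application of Proposition~\ref{prop: log of product}, combined with $\Exp_{\BD} \ETL = \ECQ \logleq \Exp_{\BD} \Log \boldmu_{+}(G)$, gives
\begin{align*}
  \Exp_{\BD} \Log(\boldmu_{+}(G) \times \ETLmatrix) = \Exp_{\BD} \Log \boldmu_{+}(G) = \Log(\boldmu_{+}(G) \times \BDmatrix).
\end{align*}

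Applying $\Exp_{\BD}$ to both sides of $\logic{L}' = \Log_{\omega} \mathcal{K}'$ and using that $\Exp_{\BD}$ commutes with arbitrary intersections of logics (the Fact in Section~\ref{sec: explosive}) and preserves finitarity, I obtain $\logic{L} = \Exp_{\BD} \logic{L}' = \bigcap_{\matr{B}' \in \mathcal{K}'} \Exp_{\BD} \Log \matr{B}' = \bigcap_{\matr{B} \in \mathcal{K}} \Log \matr{B} = \Log_{\omega} \mathcal{K}$, at the level of finitary rules. The technically most delicate step is the verification that $\logic{L}' < \ETL_{\omega}$, which hinges on the identity $\ECQ_{\omega} \cap \ETL = \ECQ$; the interaction of $\Exp_{\BD}$ with the finitary-part operator is routine because every matrix in $\mathcal{K}'$ is finite.
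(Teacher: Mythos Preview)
Your overall strategy matches the paper's proof: transfer Theorem~\ref{thm: completeness for exp ext etl} along the isomorphism of Theorem~\ref{thm: iso exp ext etl}, then push $\Exp_{\BD}$ through the intersection and compute $\Exp_{\BD}\Log(\boldmu_{+}(G)\times\ETLmatrix)=\Log(\boldmu_{+}(G)\times\BDmatrix)$. That part is fine.

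The gap is precisely in the step you flag as most delicate. The identity $\ECQ_{\omega}\cap\ETL=\ECQ$ is \emph{false}. A concrete counterexample already appears in the paper (in the proof that $\Ext_{\omega}\BD$ is non-modular): the rule
\[
(p_1\wedge\dmneg p_1)\vee(p_2\wedge\dmneg p_2),\ q,\ \dmneg q\vee r\ \vdash\ r
\]
holds in both $\ECQ_{\omega}$ (its premise set contains a classical antitheorem) and $\ETL$ (by disjunctive syllogism from $q$ and $\dmneg q\vee r$), but it fails in $\ECQ=\Log(\ETLmatrix\times\BDmatrix)$, since the premise $(p_1\wedge\dmneg p_1)\vee(p_2\wedge\dmneg p_2)$ is designable there while $q,\dmneg q\vee r\nvdash_{\ECQ} r$. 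So your argument that ``$\rho\in\ECQ_{\omega}$ and $\rho\in\ETL$ forces $\rho\in\ECQ\logleq\logic{L}$'' does not go through.

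The intended (and much simpler) justification of $\logic{L}'<\ETL_{\omega}$ is the one the isomorphism hands you for free: the map $\logic{L}\mapsto\ETL\cup\logic{L}$ from $\Exp\Ext\ECQ$ to $\Exp\Ext\ETL$ is an order isomorphism (Theorem~\ref{thm: iso exp ext etl}), it sends $\ECQ_{\omega}$ to $\ETL_{\omega}$, and it is injective. Hence $\logic{L}<\ECQ_{\omega}$ immediately gives $\logic{L}'=\ETL\cup\logic{L}<\ETL\cup\ECQ_{\omega}=\ETL_{\omega}$. With this correction, the rest of your proof is essentially the paper's.
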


\begin{proof}
  Recall that $\logic{L} = \Exp_{\BD} (\ETL \vee \logic{L})$ and $\ETL \vee \logic{L} = \ETL \cup \logic{L}$ for $\logic{L}$ in $\Exp \Ext \ECQ$ by Theorem~\ref{thm: explosive isomorphism}. The inequality $\logic{L} < \ECQ_{\omega}$ implies $\logic{L} < \ETL_{\omega}$, since $\ETL \vee \logic{L} = \ETL \cup \logic{L}$. The logic $\ETL \vee \logic{L}$ is $\omega$-complete with respect to all of its models of the form $\boldmu_{+}(G) \times \ETLmatrix$ where $G$ has no isolated vertices, or equivalently with respect to matrices $\boldmu_{+}(G) \times \ETLmatrix$ where $\boldmu_{+}(G)$ is a model of $\logic{L}$ and $G$ has no isolated vertices. Let us call this class of graphs $\class{K}$. Then
\begin{align*}
  \logic{L} & = \Exp_{\BD} (\ETL \vee \logic{L}) \approx \Exp_{\BD} \bigcap_{G \in \class{K}} \Log \boldmu_{+}(G) \times \ETLmatrix \\
  & = \bigcap_{G \in \class{K}} \Exp_{\BD} \Log \boldmu_{+}(G) \times \ETLmatrix = \bigcap_{G \in \class{K}} \Log \boldmu_{+}(G) \times \BDmatrix,
\end{align*}
  where the last equality holds because
\begin{align*}
  \Exp_{\BD} \Log \boldmu_{+}(G) \times \ETLmatrix & = \Exp_{\BD} (\ETL \cup \Exp_{\BD} \boldmu_{+}(G)) \\
  & = \ECQ \cup \Exp_{\BD} \boldmu_{+}(G) \\
  & = \Exp_{\BD} \boldmu_{+}(G) \\
  & = \Log \boldmu_{+}(G) \times \BDmatrix,
\end{align*}
  and by $\logic{L}_{1} \approx \logic{L}_{2}$ we mean that the finitary parts of $\logic{L}_{1}$ and $\logic{L}_{2}$ coincide.
\end{proof}

  To obtain a completeness theorem for a finitary explosive extension of $\BD$ or $\ETL$, it thus suffices to describe the class of graphs $G$ without isolated vertices for which $\boldmu_{+}(G)$ is a model of $\logic{L}$. The following theorem records another situation where satisfactory graph-theoretic completeness results may be obtained.

\begin{theorem} \label{thm: completeness not below etl omega}
  Let $\logic{L}$ be a finitary non-classical extension of $\ETL$ (of $\BD$) by a set of rules of the form $\Gamma \vdash \varphi$ where $\Gamma$ is not an antitheorem of $\CL$. Then $\logic{L}$ is $\omega$-complete with respect a class of matrices of the form $\boldmu_{+}(G)$ (or $\boldmu_{-}(H)$).
\end{theorem}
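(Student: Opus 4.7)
The plan is to combine Theorem~\ref{thm: gratzer-quackenbush} with the classification of finite reduced models of $\BD$ established in Section~\ref{sec: graph duality} and with Corollary~\ref{cor: potential antitheorems}. Since the variety of De~Morgan algebras is generated by the finite algebra $\DMfour$, it is locally finite, so Theorem~\ref{thm: gratzer-quackenbush} gives $\logic{L} = \Log_{\omega} \Mod^{*}_{\omega} \logic{L}$. Every member of $\Mod^{*}_{\omega} \logic{L}$ is isomorphic to some $\boldmu(G, H, k) \cong \boldmu_{+}(G) \times \boldmu_{-}(H) \times \CLmatrix^{k}$; in the $\ETL$-version Proposition~\ref{prop: reduced models of etl} forces $H = \emptyset$, so the middle factor disappears.

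Next I would invoke Corollary~\ref{cor: potential antitheorems} to split these product models into their factors. The hypothesis that the premise $\Gamma$ of each axiomatizing rule is not an antitheorem of $\CL$ means precisely that $\Gamma$ is classically satisfiable. Because $\True$ and $\False$ generate a copy of $\CLmatrix$ inside every non-trivial model of $\BD$, a classical valuation witnessing satisfiability of $\Gamma$ lifts to a valuation on every such model; hence $\Gamma \vdash \emptyset$ fails in every non-trivial model of the base, so $\Gamma$ is not a potential antitheorem of $\BD$ (nor of $\ETL$). Corollary~\ref{cor: potential antitheorems} then guarantees that $\boldmu_{+}(G) \times \boldmu_{-}(H) \times \CLmatrix^{k}$ is a model of $\logic{L}$ if and only if each of its factors is.

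Finally, refutations transfer to factors: a valuation on a product decomposes component-wise, so if a finitary rule fails on $\boldmu(G, H, k)$ it already fails on one of the factors $\boldmu_{+}(G)$, $\boldmu_{-}(H)$, or $\CLmatrix$. Hence $\logic{L}$ is $\omega$-complete with respect to the class consisting of those $\boldmu_{+}(G)$ and $\boldmu_{-}(H)$ that are themselves models of $\logic{L}$, together with $\CLmatrix$. To conclude, absorb $\CLmatrix$: it is a submatrix of $\ETLmatrix = \boldmu_{+}(\bullet)$ (a model of $\logic{L}$ whenever $\logic{L}$ extends $\ETL$) and of $\BDmatrix = \boldmu_{-}(\bullet)$ (a model whenever $\logic{L}$ extends $\BD$), so every rule refuted by $\CLmatrix$ is already refuted by one of these matrices already in the class, and $\CLmatrix$ can be dropped.

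The only delicate step is the implication ``not an antitheorem of $\CL$'' $\Rightarrow$ ``not a potential antitheorem of the base'', which relies essentially on the presence of $\True$ and $\False$ in the signature, echoing the caveats discussed in Section~\ref{sec: different frameworks}; everything else is routine bookkeeping around Corollary~\ref{cor: potential antitheorems} and the structure theorem for finite reduced models.
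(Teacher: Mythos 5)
Your overall strategy is exactly the paper's: the entire published proof consists of observing that Corollary~\ref{cor: potential antitheorems} applies, so that $\boldmu(G,H,k)$ is a model of $\logic{L}$ iff its factors $\boldmu_{+}(G)$, $\boldmu_{-}(H)$, $\CLmatrix$ are, and then $\omega$-completeness with respect to the factors follows from Theorem~\ref{thm: gratzer-quackenbush}. The step you flag as delicate is indeed the crux and your argument for it is correct: a classical assignment satisfying $\Gamma$ takes values in the subuniverse $\{\False,\True\}$ of any non-trivial reduced model of the base and designates $\Gamma$ there, so the extension of the base by $\Gamma\vdash\emptyset$ is trivial, i.e.\ $\Gamma$ is not a potential antitheorem of $\BD$ or $\ETL$.

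The one step that is wrong as written is the final absorption of $\CLmatrix$. You claim that $\ETLmatrix=\boldmu_{+}(\bullet)$ is a model of $\logic{L}$ whenever $\logic{L}$ extends $\ETL$, and $\BDmatrix=\boldmu_{-}(\bullet)$ whenever $\logic{L}$ extends $\BD$. Neither holds, even under the hypotheses of the theorem: $\K$ is a finitary non-classical extension of $\ETL$ by $(p\wedge\dmneg p)\vee q\vdash q$, whose premise is classically satisfiable, yet $\ETLmatrix$ is not a model of $\K$ (interpret $p$ by the negation fixpoint Both and $q$ by Neither); likewise $\LP$ satisfies the hypotheses over $\BD$ but $\BDmatrix\notin\Mod\LP$. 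The repair is short: $\CLmatrix$ is isomorphic to the submatrix on $\{\False,\True\}$ of \emph{every} non-trivial reduced model of $\BD$, so $\Log\matr{M}\logleq\CL=\Log\CLmatrix$ for every non-trivial $\matr{M}$ in your class; and since $\logic{L}$ is non-classical, $\Mod_{\omega}^{*}\logic{L}$ contains some $\boldmu(G,H,k)$ with $G\neq\emptyset$ or $H\neq\emptyset$, whose corresponding non-trivial factor lies in the class by Corollary~\ref{cor: potential antitheorems}. Hence $\CLmatrix$ contributes nothing to the intersection and can be dropped. Note that this is precisely where the hypothesis that $\logic{L}$ is non-classical enters; your shortcut silently bypassed it.
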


\begin{proof}
  It suffices to observe that Corollary~\ref{cor: potential antitheorems} applies to $\logic{L}$, so $\boldmu(G, H, k)$ is a model of $\logic{L}$ if and only if $\boldmu_{+}(G)$, $\boldmu_{-}(H)$, and $\CLmatrix$ (if $k \geq 1$) are.
\end{proof}

  This applies in particular to the logic introduced in Proposition~\ref{prop: etl omega splitting} as the smallest super-Belnap logic which does not lie below $\ETL_{\omega}$, axiomatized by the rule $(p \wedge \dmneg p) \vee q \vee \dmneg q, (q \wedge \dmneg q) \vee p \vee \dmneg p \vdash p \vee \dmneg p$. Proving a non-trivial completeness theorem for this logic is a problem that we leave open.

  To understand which matrices of the form $\boldmu_{+}(G)$ are models of a given logic, it will be helpful to consider a different, simpler matrix based on the graph $G$. This matrix $\boldgamma(G)$ is the bounded distributive lattice of subsets of $X$ equipped with the operation $\dmneg U \assign X \setminus R[U]$ for $U \subseteq X$ and with the set of designated values $\{ X \}$, where $R[U] \assign \set{v \in X}{u R v \text{ for some } u \in U}$. In other words, $\dmneg U$~is the set of all vertices of $G$ which are not neighbors of any vertex in $U$.\footnote{The reader will observe that the operation $\dmneg x$ can be expressed as $\Box \neg x$ where $\neg$ is the Boolean negation and $\Box$ is the usual box operator of classical modal logic. The matrices $\boldgamma(G)$ are thus reducts of matrices which arise in the context of classical modal logic.} 

  The matrices $\boldgamma(G)$ themselves are almost never models of $\BD$: the equality $\dmneg (U \cap V) = \dmneg U \cup \dmneg V$ will fail unless each vertex has at most one neighbor. However, the following lemma demonstrates the value of these matrices for under\-standing which rules hold in the matrices $\boldmu_{+}(G)$.

\begin{lemma} \label{lemma: gamma construction}
  Let $\Gamma \cup \{ \varphi \}$ be a set of formulas where negation is only applied directly to atoms, and let $G = \pair{X}{R}$ be a graph without isolated vertices. If the rule $\Gamma \vdash \varphi$ is valid in $\boldmu_{+}(G)$, then it is valid in $\boldgamma(G)$. If $\varphi$ does not contain negation, then the opposite implication also holds.
\end{lemma}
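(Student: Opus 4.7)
My plan is to exploit the ``two-sorted'' presentation of $\boldmu_{+}(G)$: every upset of $P_{+}(G)$ can be written as a pair $(S, T) \in 2^{X} \times 2^{X}$ with $R[S] \subseteq T$, where $S = U \cap X$ and $T = \set{w \in X}{\dual w \in U}$. Under this identification, $\wedge$ and $\vee$ act componentwise, De~Morgan negation is $\dmneg (S, T) = (X \setminus T, X \setminus S)$, and the unique designated element is $(X, X)$. The hypothesis that $G$ has no isolated vertices enters at exactly one point, which I will invoke in both directions: since $R[X] = X$, any upset $(S, T)$ with $S = X$ is automatically the top $(X, X)$.

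For the forward direction, suppose $\Gamma \vdash \varphi$ fails in $\boldgamma(G)$ via a valuation $w$, so $w(\gamma) = X$ for each $\gamma \in \Gamma$ but $w(\varphi) \neq X$. I will lift $w$ to $\boldmu_{+}(G)$ by setting $v(p) \assign (w(p), R[w(p)])$, which is a legitimate upset. A routine induction on formulas with negation only in front of atoms then shows $v(\alpha)_{1} = w(\alpha)$; the only non-trivial case is $\alpha = \dmneg p$, where $v(\dmneg p) = (X \setminus R[w(p)], X \setminus w(p))$ has first coordinate $w(\dmneg p)$. For each $\gamma \in \Gamma$ one gets $v(\gamma)_{1} = X$, hence $v(\gamma) = (X, X)$ by the key observation, while $v(\varphi)_{1} = w(\varphi) \neq X$ forces $v(\varphi) \neq (X, X)$, witnessing failure in $\boldmu_{+}(G)$.

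For the converse, assume $\varphi$ is negation-free and $\Gamma \vdash \varphi$ fails in $\boldmu_{+}(G)$ via $v$. Writing $v(p) = (S_{p}, T_{p})$ with $R[S_{p}] \subseteq T_{p}$, I define $w(p) \assign S_{p}$ on $\boldgamma(G)$. The crucial point is the one-sided inequality $w(\dmneg p) = X \setminus R[S_{p}] \supseteq X \setminus T_{p} = v(\dmneg p)_{1}$, which propagates through $\wedge$ and $\vee$ to yield $w(\alpha) \supseteq v(\alpha)_{1}$ for every NNF formula $\alpha$; since this over-approximation is only incurred at negated atoms, the induction sharpens to $w(\alpha) = v(\alpha)_{1}$ whenever $\alpha$ itself contains no negation. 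Thus $w(\gamma) \supseteq X = v(\gamma)_{1}$ yields $w(\gamma) = X$ for each $\gamma \in \Gamma$, while $w(\varphi) = v(\varphi)_{1}$; the latter cannot be $X$, for otherwise $v(\varphi) = (X, X)$ by the key observation, contradicting our choice of $v$. I do not foresee any serious obstacles beyond tracking the componentwise behaviour; the conceptual subtlety is simply that $w(\dmneg p)$ is only an over-approximation of $v(\dmneg p)_{1}$, which is precisely why the converse implication needs the conclusion to be negation-free.
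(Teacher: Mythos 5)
Your proof is correct and is essentially the paper's own argument in coordinate form: your pair representation $(S,T)$ with $R[S]\subseteq T$, the lift $v(p)\assign(w(p),R[w(p)])$, and the projection $w(p)\assign S_{p}$ are exactly the paper's maps ${\uparrow}$ and ${\downarrow}$ between $\boldgamma(G)$ and $\boldmu_{+}(G)$, with the same inductive claims (equality of first coordinates in one direction, the one-sided inclusion at negated atoms in the other) and the same single use of the no-isolated-vertices hypothesis to identify designation in the two matrices. No gaps.
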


\begin{proof}
  Recall that we identify the set $X$ with a subset of $P_{+}(G)$. We define the maps ${\uparrow}\colon \boldgamma(G) \rightarrow \boldmu_{+}(G)$ and ${\downarrow}\colon \boldmu_{+}(G) \rightarrow \boldgamma(G)$ as follows: ${\uparrow} U$ for $U \subseteq X$ is the upward closure of $U$ in $P_{+}(G)$, while ${\downarrow} V$ for $V \subseteq P_{+}(G)$ is the restriction of $V$ to $X$ as a subset of $P_{+}(G)$, i.e.\ $V \cap X$. These are not homomorphisms, but they enjoy some of the useful properties of homomorphisms:
\begin{align*}
  {\downarrow} (U \cup V) & = {\downarrow} U \cup {\downarrow V}, & {\downarrow} \dmneg {\uparrow} U & = \dmneg U, \\
  {\downarrow} (U \cap V) & = {\downarrow} U \cap {\downarrow V}, & {\downarrow} \dmneg U & \subseteq \dmneg {\downarrow} U.
\end{align*}
  Moreover, $U$ is designated in $\boldgamma(G)$ if and only if ${\uparrow} U$ is designated in $\boldmu_{+}(G)$. Conversely, $V$ is designated in $\boldmu_{+}(G)$ if and only if ${\downarrow} V$ is designated in $\boldgamma(G)$. (Here we use the assumption that $G$ does not contain isolated vertices.)

  Given a valuation $v$ on $\boldgamma(G)$, we define a valuation $u$ on $\boldmu_{+}(G)$ such that $u(p) \assign {\uparrow} v(p)$. We prove by induction over the complexity of the formula $\psi$ that ${\downarrow} u(\psi) = v(\psi)$. The inductive steps for meets and joins are trivial, as are the base cases for $\True$ and $\False$. The only non-trivial cases are
\begin{align*}
  & {\downarrow} u(p) = {\downarrow} {\uparrow} v(p) = v(p), & & {\downarrow} u(\dmneg p) = {\downarrow} \dmneg {\uparrow} v(p) = \dmneg v(p) = v(\dmneg p).
\end{align*}
  Consequently, $v(\psi)$ is designated in $\boldgamma(G)$ if and only if $u(\psi)$ is designated in $\boldmu_{+}(G)$. A counterexample $v$ to the rule $\Gamma \vdash \varphi$ on $\boldgamma(G)$ thus yields a counter\-example $u$ to this rule on $\boldmu_{+}(G)$.

  Conversely, let $v$ be a valuation on $\boldmu_{+}(G)$ which invalidates the rule $\Gamma \vdash \varphi$. Let $w$ be the valuation on $\boldgamma(G)$ such that $w(p) = {\downarrow} v(p)$. We prove by induction over the complexity of the formula $\psi$ that ${\downarrow} v(\psi) \subseteq w(\psi)$. The inductive steps for meets and joins are again trivial, as are the base cases for $\True$ and $\False$ and for atoms $p$. The only non-trivial case is ${\downarrow} \dmneg v(p) \subseteq \dmneg {\downarrow} v(p)$. Similarly, if $\psi$ does not contain negation, we prove by induction over the complexity of $\psi$ that ${\downarrow} v(\psi) = w(\psi)$. Consequently, if $v(\psi)$ is designated in $\boldmu_{+}(G)$, then $w(\psi)$ is designated in $\boldgamma(G)$, and the converse implication holds if $\psi$ does not contain negation. A~counterexample $v$ to the rule $\Gamma \vdash \varphi$ on $\boldmu_{+}(G)$ thus yields a counterexample $w$ to this rule on $\boldgamma(G)$, provided that negation does not occur in $\psi$.
\end{proof}

  With the help of this lemma, we can easily identify the graph-theoretic counter\-parts of various logical rules considered so far. Recall that a graph $G$ is called \emph{$n$-colorable} if there is an \emph{$n$-coloring} of $G$, i.e.\ a homomorphism $G \to K_{n}$.

\begin{fact} \label{fact: n-colorable matrix}
  The graph $G$ is not $n$-colorable (for $n \geq 2$) if and only if the rule $(p_{1} \wedge \dmneg p_{1}) \vee \dots \vee (p_{n} \wedge \dmneg p_{n}) \vdash \emptyset$ holds in $\boldgamma(G)$.
\end{fact}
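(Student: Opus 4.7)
The plan is to unpack what it means for the rule $\chi_n \vdash \emptyset$ to hold in $\boldgamma(G)$ and recognize it as a direct translation of the existence of an $n$-coloring. Recall that the only designated element of $\boldgamma(G)$ is $X$, so the rule fails if and only if there exist sets $U_1, \ldots, U_n \subseteq X$ such that $\bigcup_{i=1}^n (U_i \cap \dmneg U_i) = X$. Since $\dmneg U_i = X \setminus R[U_i]$, the set $U_i \cap \dmneg U_i$ is precisely the set of vertices $x \in U_i$ having no neighbor inside $U_i$. Thus the rule fails iff there is an assignment of subsets $U_1,\dots,U_n$ of $X$ such that every vertex $x$ lies in some $U_i$ and has no neighbor in that same $U_i$.

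For the direction \emph{$G$ is $n$-colorable $\Rightarrow$ the rule fails}: given a homomorphism $c\colon G \to K_n$, set $U_i \assign c^{-1}(i)$. If $c(x) = i$, then $x \in U_i$, and any neighbor $y$ of $x$ satisfies $c(y) \neq c(x) = i$ (since $K_n$ is irreflexive), hence $y \notin U_i$. Therefore $x \in U_i \cap \dmneg U_i$ for every $x$, so the valuation $v(p_i) \assign U_i$ yields $v(\chi_n) = X$, witnessing that the rule fails.

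For the converse direction: suppose a valuation $v$ with $v(p_i) = U_i$ satisfies $v(\chi_n) = X$. For each vertex $x$, pick some $i$ such that $x \in U_i \cap \dmneg U_i$ and define $c(x) \assign i$. If $xRy$, then $y$ is a neighbor of $x$ in the set $U_{c(x)}$, which contradicts $x \in \dmneg U_{c(x)}$ unless $y \notin U_{c(x)}$. In particular, $c(y) \neq c(x)$ (and this argument also rules out any reflexive vertex, consistent with the fact that $K_n$ is irreflexive). Hence $c\colon G \to K_n$ is a graph homomorphism, so $G$ is $n$-colorable.

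There is no serious obstacle here; the only mild subtlety is making sure reflexive vertices are handled correctly, which happens automatically because any reflexive $x$ in $U_i$ lies in $R[U_i]$ and is therefore excluded from $U_i \cap \dmneg U_i$. The argument does not actually use $n \geq 2$, but that hypothesis ensures the statement is non-vacuous ($K_1$ being a single irreflexive vertex would trivialize one side for $n=1$).
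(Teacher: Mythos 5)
Your proof is correct and is exactly the paper's argument spelled out in detail: the paper's entire proof is the observation that valuations invalidating the rule correspond bijectively to $n$-colorings, which is precisely the correspondence $U_i = c^{-1}(i)$ you make explicit in both directions (including the correct handling of loops). Your remark that $n \geq 2$ is not actually needed for the equivalence is also accurate.
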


\begin{proof}
  There is an immediate bijective correspondence between valuations on $\boldgamma(G)$ which invalidate the rule and $n$-colorings of $G$.
\end{proof}

  The rules axiomatizing the logics $\ETLplus_{n}$ correspond to a stronger property. We~define a \emph{partial homomorphism} $h\colon G \to H$ as a homomorphism $h\colon G' \to H$ where $G'$ is a subgraph of $G$. That is, $G' = \pair{Y}{S}$ where $Y \subseteq X$ and $S = R \cap Y^2$. A \emph{weak $n$-coloring} of $G$ is a partial homomorphism $h\colon G \to K_{n}$ such that for at least one vertex $u$ of $G$ the map $h$ is defined on all the neighbors of $u$. In other words, the set of vertices where $h$ is undefined is small in the very modest sense that not every vertex of $G$ is adjacent to this set.

\begin{fact} \label{fact: weakly n-colorable matrix}
  The graph $G$ is not weakly $n$-colorable (for $n \geq 1$) if and only if the rule $(p_{1} \wedge \dmneg p_{1}) \vee \dots \vee (p_{n} \wedge \dmneg p_{n}) \vee q, \dmneg q \vee r \vdash r$ holds in $\boldgamma(G)$.
\end{fact}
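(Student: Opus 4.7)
The plan is to analyze both directions by translating valuations on $\boldgamma(G)$ into combinatorial data on $G$. Write $X$ for the vertex set of $G$. A valuation $v$ on $\boldgamma(G)$ assigns each atom a subset of $X$, and the key semantic observation is that for any subset $A \subseteq X$, the set $A \cap \dmneg A = A \setminus R[A]$ consists exactly of those vertices in $A$ with no neighbour in $A$; in particular $A \cap \dmneg A$ is always an \emph{independent set} of $G$ (a set of pairwise non-adjacent, irreflexive vertices). Moreover $A$ is designated in $\boldgamma(G)$ iff $A = X$.

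For the forward direction (contrapositive), assume $G$ is weakly $n$-colourable, witnessed by a subset $Y \subseteq X$, a partition $Y = A_1 \sqcup \dots \sqcup A_n$ into independent sets (the colour classes), and a vertex $u \in X$ all of whose neighbours lie in $Y$. Define a valuation on $\boldgamma(G)$ by
\begin{align*}
  v(p_i) & \assign A_i, & v(q) & \assign X \setminus Y, & v(r) & \assign R[X \setminus Y].
\end{align*}
Since each $A_i$ is independent, one checks $v(p_i) \cap \dmneg v(p_i) = A_i$, so $\bigcup_i (v(p_i) \wedge \dmneg v(p_i)) \cup v(q) = Y \cup (X \setminus Y) = X$, i.e.\ the first premise is designated. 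The second premise is designated because $\dmneg v(q) \cup v(r) = (X \setminus R[X \setminus Y]) \cup R[X \setminus Y] = X$. Finally $u \notin v(r)$, since $u$ has no neighbour in $X \setminus Y$ by hypothesis; hence $v(r) \neq X$ and the rule fails.

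For the converse, suppose the rule fails under some valuation $v$ with $u \notin v(r)$. Set $A_i \assign v(p_i) \cap \dmneg v(p_i)$; by the observation above, each $A_i$ is independent. The first premise being designated gives $\bigcup_i A_i \cup v(q) = X$, so $Y \assign \bigcup_i A_i$ together with its partition obtained by replacing $A_i$ by $A_i \setminus \bigcup_{j<i} A_j$ yields a proper $n$-colouring of the induced subgraph on $Y$. The second premise being designated yields $u \in \dmneg v(q) \cup v(r)$; since $u \notin v(r)$ we get $u \in \dmneg v(q)$, i.e.\ $u$ has no neighbour in $v(q)$. Because every vertex lies in $Y \cup v(q)$, all neighbours of $u$ lie in $Y$. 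This exhibits a weak $n$-colouring of $G$.

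The two constructions are essentially routine once one has the basic dictionary $v(p_i) \cap \dmneg v(p_i) = $ independent set and ``$\dmneg v(q) \cup v(r)$ is designated at $u$'' = ``all neighbours of $u$ outside $v(r)$ miss $v(q)$''. The only mild subtlety, which is worth writing out carefully, is the case $n = 1$ and the edge case where $u$ may itself lie in $v(q)$ (possible only if $u$ is irreflexive and no neighbour of $u$ is in $v(q)$), which does not interfere with the construction of the weak colouring since the property required of $u$ is purely about its neighbours. The argument parallels the proof of Fact~\ref{fact: n-colorable matrix} but with the extra bookkeeping of the ``free'' vertex $u$ witnessing weakness.
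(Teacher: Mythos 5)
Your proof is correct and takes essentially the same approach as the paper: the paper's own proof is just the one-sentence assertion that valuations on $\boldgamma(G)$ invalidating the rule correspond to weak $n$-colourings, and your argument is exactly that correspondence written out in detail (with the colour classes $v(p_i) \cap \dmneg v(p_i)$ and the witnessing vertex $u \notin v(r)$ identified correctly in both directions).
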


\begin{proof}
  There is again an immediate correspondence between valuations on $\boldgamma(G)$ which invalidate the rule and weak $n$-colorings of $G$.
\end{proof}

  A matrix separating $\ETLplus_{n}$ and $\ETLplus_{n+1}$ may now be supplied.

\begin{fact} \label{fact: etlplus separation}
  $\ETL_{n+2} \nlogleq \ETLplus_{n}$. Consequently, $\ETLplus_{n} < \ETLplus_{n+1}$.
\end{fact}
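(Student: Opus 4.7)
The plan is to separate $\ETL_{n+2}$ from $\ETLplus_n$ by exhibiting a single matrix $\boldmu_+(G)$ which is a model of $\ETLplus_n$ but fails the rule $\chi_{n+2} \vdash \emptyset$ of $\ECQ_{n+2}$. Since any $\boldmu_+(G)$ is automatically a model of $\ETL$, such a matrix will witness $\ETL_{n+2} \nlogleq \ETLplus_n$. Both of the relevant rules, namely $\chi_n \vee q, \dmneg q \vee r \vdash r$ and $\chi_{n+2} \vdash p$ (for a fresh variable $p$), have negations applied only directly to atoms and their conclusions are negation-free, so Lemma~\ref{lemma: gamma construction} applies in both directions. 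Combined with Facts~\ref{fact: n-colorable matrix} and~\ref{fact: weakly n-colorable matrix}, the task therefore reduces to finding a graph $G$ which is not weakly $n$-colorable but is $(n+2)$-colorable.

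My candidate is $G = K_{n+2}$. Trivially $K_{n+2}$ is $(n+2)$-colorable via the identity map. To see that it is not weakly $n$-colorable, observe that for every vertex $u$ the neighborhood $N(u)$ induces a copy of $K_{n+1}$, which has chromatic number $n+1$. Any weak $n$-coloring would, by definition, be a partial homomorphism $K_{n+2} \to K_n$ defined on $N(u)$ for at least one $u$, yielding a proper $n$-coloring of $K_{n+1}$, which is impossible. Consequently $\boldmu_+(K_{n+2})$ validates the defining rule of $\ETLplus_n$ (via Fact~\ref{fact: weakly n-colorable matrix} and Lemma~\ref{lemma: gamma construction}), while $\chi_{n+2} \vdash p$ fails in $\boldmu_+(K_{n+2})$ (via Fact~\ref{fact: n-colorable matrix} and Lemma~\ref{lemma: gamma construction}), so $\boldmu_+(K_{n+2})$ is a model of $\ETLplus_n$ but not of $\ETL_{n+2}$, establishing $\ETL_{n+2} \nlogleq \ETLplus_n$.

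The consequence $\ETLplus_n < \ETLplus_{n+1}$ then follows at once: the inclusion $\ETLplus_n \logleq \ETLplus_{n+1}$ was observed above (since $\chi_n \vee q \vdash_{\BD} \chi_{n+1} \vee q$), and the preceding fact (shifted by one) gives $\ETL_{n+2} \logleq \ETLplus_{n+1}$. If we had $\ETLplus_n = \ETLplus_{n+1}$, this would contradict $\ETL_{n+2} \nlogleq \ETLplus_n$.

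I do not foresee a real obstacle; the only point requiring a moment's care is checking that Lemma~\ref{lemma: gamma construction} applies in both directions to the two rules in question, which it does because negations occur only on atoms and both conclusions are negation-free.
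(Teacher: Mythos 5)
Your proof is correct and follows essentially the same route as the paper: the paper's own argument is precisely that $K_{n+2}$ is $(n+2)$-colorable but not weakly $n$-colorable, so $\boldmu_{+}(K_{n+2})$ is a model of $\ETLplus_{n}$ but not of $\ETL_{n+2}$, whence $\ETL_{n+2}\logleq\ETLplus_{n+1}$ forces $\ETLplus_{n}<\ETLplus_{n+1}$. Your additional care in checking that Lemma~\ref{lemma: gamma construction} applies in both directions (negations only on atoms, negation-free conclusions) is exactly the justification the paper leaves implicit.
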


\begin{proof}
  The graph $K_{n+2}$ is $(n+2)$-colorable but not weakly $n$-colorable, so $\boldmu_{+}(K_{n+2})$ is a model of $\ETLplus_{n}$ but not $\ETL_{n+2}$. Because $\ETL_{n+2} \logleq \ETLplus_{n+1}$, it follows that $\ETLplus_{n} \nlogleq \ETLplus_{n+1}$.
\end{proof}

  The graph-theoretic counterparts of $\ETL_{\omega}$ and $\ETLplus_{\omega}$ can immediately be inferred from the counterparts of $\ETL_{n}$ and $\ETLplus_{n}$.

\begin{fact} \label{fact: contains reflexive vertex}
  The graph $G$ contains a reflexive vertex if and only if the rules $(p_{1} \wedge \dmneg p_{1}) \vee \dots \vee (p_{n} \wedge \dmneg p_{n}) \vdash \emptyset$ hold in $\boldgamma(G)$ for all $n \in \omega$.
\end{fact}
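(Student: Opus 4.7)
The plan is to reduce immediately to the preceding Fact~\ref{fact: n-colorable matrix}, which for $n \geq 2$ equates the validity of $\chi_{n} \vdash \emptyset$ in $\boldgamma(G)$ with the non-existence of a homomorphism $G \to K_{n}$. Moreover, substituting $p_{i} \assign p_{1}$ for all $i > 1$ converts $\chi_{n} \vdash \emptyset$ into $\chi_{1} \vdash \emptyset$, so validity of the rule for some $n \geq 2$ entails validity for every smaller index. Hence the right-hand side of the equivalence collapses to the single graph-theoretic statement that $G$ admits no homomorphism into $K_{n}$ for any $n \geq 2$.

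It therefore suffices to prove that $G$ contains a reflexive vertex if and only if $G$ is not $n$-colorable for any $n \geq 2$. For the forward direction, suppose $u R u$ in $G$; any homomorphism $h\colon G \to K_{n}$ would then have to satisfy $h(u) \neq h(u)$ (since $K_{n}$ is loop-free by definition), which is absurd. So no such $h$ exists for any $n$.

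For the converse, assume $G = \langle X, R \rangle$ has no reflexive vertex, so $R$ is irreflexive. Set $n \assign \max(|X|, 2)$ and choose any injection $h\colon X \to \{1, \dots, n\}$. Each edge $u R v$ of $G$ has $u \neq v$ by irreflexivity, whence $h(u) \neq h(v)$; thus $h$ is a homomorphism $G \to K_{n}$, and $G$ is $n$-colorable. By Fact~\ref{fact: n-colorable matrix} the rule $\chi_{n} \vdash \emptyset$ fails in $\boldgamma(G)$, so the right-hand side of the claimed equivalence fails for this particular~$n$.

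Since the result is essentially an immediate corollary of the previous fact, there is no real obstacle; the only point requiring attention is the trivial observation that the colorability number only matters in the limit, which is precisely what distinguishes the presence of a loop from its absence in a finite graph.
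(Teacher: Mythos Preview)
Your proof is correct and follows essentially the same approach as the paper, which simply observes that $G$ is not $n$-colorable for any $n$ if and only if it contains a reflexive vertex. You add more detail than the paper: you explicitly handle the small indices $n < 2$ not covered by Fact~\ref{fact: n-colorable matrix} via the substitution argument, and you spell out both directions of the colorability equivalence, whereas the paper leaves all of this implicit in a single sentence.
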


\begin{proof}
  The graph $G$ is not $n$-colorable for any $n$ if and only if it contains a reflexive vertex.
\end{proof}

\begin{fact} \label{fact: graphs for k minus}
  Each irreflexive vertex of $G$ has a reflexive neighbor if and only if $\boldgamma(G)$ validates the rules $(p_{1} \wedge \dmneg p_{1}) \vee \dots \vee (p_{n} \wedge \dmneg p_{n}) \vee q, \dmneg q \vee r \vdash r$ for all $n \in \omega$.
\end{fact}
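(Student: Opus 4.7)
The plan is to reduce the statement to a purely graph-theoretic equivalence via Fact~\ref{fact: weakly n-colorable matrix}: the rules $\chi_n \vee q, \dmneg q \vee r \vdash r$ hold in $\boldgamma(G)$ for all $n \in \omega$ if and only if $G$ admits no weak $n$-coloring for any $n \geq 1$. So it suffices to prove that every irreflexive vertex of $G$ has a reflexive neighbor iff $G$ has no weak $n$-coloring.

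For the forward direction, I would argue by contradiction. Suppose every irreflexive vertex has a reflexive neighbor, yet there exists a weak $n$-coloring $h\colon G' \to K_n$, witnessed by a vertex $u$ at which $h$ is defined on all neighbors of $u$. The key observation is that $K_n$ is irreflexive. If $u$ itself is reflexive, then $u$ is a neighbor of $u$, hence $h(u)$ is defined; but $u \mathrel{R} u$ forces $h(u) \mathrel{R_{K_n}} h(u)$, contradicting irreflexivity. If instead $u$ is irreflexive, then by hypothesis $u$ has a reflexive neighbor $v$. Since $h$ is defined on all neighbors of $u$, $h(v)$ is defined, and $v \mathrel{R} v$ again forces the impossible identity $h(v) \mathrel{R_{K_n}} h(v)$.

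For the backward direction, suppose there is an irreflexive vertex $u$ of $G$ which has no reflexive neighbor. Let $Y$ be the set of all irreflexive vertices of $G$ and let $G' = \pair{Y}{R \cap Y^2}$ be the induced subgraph; since $G'$ contains no loops, it is properly colorable, e.g., trivially by $n = |Y|$ colors, giving a homomorphism $h\colon G' \to K_n$. This $h$ is a partial homomorphism $G \to K_n$. Since $u$ has no reflexive neighbor, all neighbors of $u$ lie in $Y$, so $h$ is defined on them. Thus $h$ is a weak $n$-coloring of $G$, and by Fact~\ref{fact: weakly n-colorable matrix} the rule $\chi_n \vee q, \dmneg q \vee r \vdash r$ fails in $\boldgamma(G)$.

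There is no real obstacle here: the argument is essentially bookkeeping once one notices that $K_n$ has no loops, which is what turns the presence of a reflexive vertex (either as the witnessing $u$ itself, or as one of its neighbors) into an immediate obstruction to any partial homomorphism into $K_n$ that covers $u$'s neighborhood.
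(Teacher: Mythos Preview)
Your proposal is correct and takes essentially the same approach as the paper: both reduce the claim via Fact~\ref{fact: weakly n-colorable matrix} to the graph-theoretic equivalence between ``every irreflexive vertex has a reflexive neighbor'' and ``no weak $n$-coloring exists,'' and both directions hinge on the observation that a partial homomorphism into $K_{n}$ is necessarily undefined on reflexive vertices. Your write-up is simply more explicit in separating the two subcases for the witnessing vertex $u$ and in specifying the coloring in the converse direction.
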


\begin{proof}
  A graph in which each irreflexive vertex has a reflexive neighbor is not weakly $n$-colorable for any $n$, since a weak coloring is undefined on reflexive vertices. Conversely, if $u$ is a vertex of a graph $G$ with no reflexive neighbors, then for large enough $n$ there is a partial homomorphism $h\colon G \rightarrow K_{n}$ defined on all neighbors of $u$. Such a partial homomorphism is a weak $n$-coloring.
\end{proof}

  Given Theorems~\ref{thm: completeness for exp ext etl} and \ref{thm: completeness for exp ext bd} and Lemma~\ref{lemma: gamma construction}, the above facts yield graph-theoretic completeness theorems for the logics $\ECQ_{n}$, $\ETL_{n}$, and $\ETLplus_{n}$.

\begin{theorem}
  The logic $\ECQ_{n}$ (for $n \geq 2$) is $\omega$-complete with respect to the class of all matrices of the form $\boldmu_{+}(G) \times \BDmatrix$, where $G$ is a graph without isolated vertices which is not $n$-colorable.
\end{theorem}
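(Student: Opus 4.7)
The plan is to apply Theorem \ref{thm: completeness for exp ext bd} with $\logic{L} \assign \ECQ_{n}$ and then identify, by purely graph-theoretic means, which matrices $\boldmu_{+}(G)$ (for $G$ without isolated vertices) qualify as models of $\ECQ_{n}$. To invoke Theorem \ref{thm: completeness for exp ext bd}, I must verify that $\ECQ_{n}$ is a proper finitary explosive extension of $\BD$ strictly below $\ECQ_{\omega}$. Finitarity and explosiveness are immediate from the definition (a single explosive rule $\chi_{n} \vdash \emptyset$), and the strict inequality $\ECQ_{n} < \ECQ_{n+1} \logleq \ECQ_{\omega}$ has already been noted in the paper following the definition of the $\ECQ_{n}$ sequence.

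Having reduced the problem to identifying the models of $\ECQ_{n}$ of the form $\boldmu_{+}(G)$ (for $G$ without isolated vertices), the next step is to convert the question about $\boldmu_{+}(G)$ into a question about the simpler matrix $\boldgamma(G)$. The defining rule of $\ECQ_{n}$ may be written as $\chi_{n} \vdash q$ where $q$ is a fresh variable; in this presentation negation occurs in $\chi_{n}$ only directly on atoms and the conclusion $q$ is negation-free. The hypotheses of Lemma \ref{lemma: gamma construction} are therefore satisfied, and that lemma yields the equivalence
\begin{equation*}
  \chi_{n} \vdash q \text{ holds in } \boldmu_{+}(G) \iff \chi_{n} \vdash q \text{ holds in } \boldgamma(G).
\end{equation*}
Fact \ref{fact: n-colorable matrix} (which requires $n \geq 2$, matching the hypothesis of the theorem) then translates the right-hand side into the purely combinatorial condition that $G$ admits no homomorphism into $K_{n}$, i.e.\ that $G$ is not $n$-colorable.

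Combining these observations, the class of matrices of the form $\boldmu_{+}(G)$ (with $G$ having no isolated vertices) that model $\ECQ_{n}$ is precisely the class of such matrices for which $G$ is not $n$-colorable. Substituting this description into the conclusion of Theorem \ref{thm: completeness for exp ext bd} delivers the desired $\omega$-completeness of $\ECQ_{n}$ with respect to the class of matrices $\boldmu_{+}(G) \times \BDmatrix$ with $G$ a non-$n$-colorable graph without isolated vertices. No step presents a genuine obstacle; the only mild subtlety worth flagging is the rewriting of the explosive rule $\chi_{n} \vdash \emptyset$ as $\chi_{n} \vdash q$ with $q$ fresh, which is what allows Lemma \ref{lemma: gamma construction} to be applied and explains why the restriction $n \geq 2$ of Fact \ref{fact: n-colorable matrix} is transmitted to the statement of the theorem.
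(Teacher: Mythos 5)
Your proposal is correct and follows exactly the route the paper intends: the paper derives this theorem by combining Theorem~\ref{thm: completeness for exp ext bd}, Lemma~\ref{lemma: gamma construction}, and Fact~\ref{fact: n-colorable matrix}, which is precisely your argument. The details you supply (rewriting $\chi_{n} \vdash \emptyset$ as $\chi_{n} \vdash q$ with $q$ fresh and negation-free so that both directions of the lemma apply, and checking $\ECQ_{n} < \ECQ_{\omega}$) are the right ones.
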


\begin{theorem}
  The logic $\ETL_{n}$ (for $n \geq 2$) is $\omega$-complete with respect to the class of all matrices of the form $\boldmu_{+}(G) \times \ETLmatrix$, where $G$ is a graph without isolated vertices which is not $n$-colorable.
\end{theorem}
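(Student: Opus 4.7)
The plan is to combine Theorem~\ref{thm: completeness for exp ext etl} with the matrix/graph translation of Lemma~\ref{lemma: gamma construction} and the coloring characterization of Fact~\ref{fact: n-colorable matrix}. First, observe that $\ETL_{n}$ is a finitary explosive extension of $\ETL$, since it is obtained from $\ETL$ by adding the single explosive rule $\chi_{n} \vdash \emptyset$ where $\chi_{n} = (p_{1} \wedge \dmneg p_{1}) \vee \dots \vee (p_{n} \wedge \dmneg p_{n})$. Moreover, $\ETL_{n} < \ETL_{\omega}$ because the strict chain $\ETL_{n} < \ETL_{n+1}$ was already noted, so Theorem~\ref{thm: completeness for exp ext etl} applies and yields that $\ETL_{n}$ is $\omega$-complete with respect to the class of matrices $\boldmu_{+}(G) \times \ETLmatrix$ where $G$ has no isolated vertices and $\boldmu_{+}(G) \models \ETL_{n}$.

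It remains to show that for a graph $G$ without isolated vertices, $\boldmu_{+}(G)$ is a model of $\ETL_{n}$ if and only if $G$ is not $n$-colorable. Since $\boldmu_{+}(G)$ is already a model of $\ETL$, the only condition to verify is that the explosive rule $\chi_{n} \vdash \emptyset$ holds, which (replacing the empty conclusion by a fresh variable $q$) amounts to the validity of the rule $\chi_{n} \vdash q$ in $\boldmu_{+}(G)$. The premise $\chi_{n}$ is in negation normal form and the conclusion $q$ contains no negation, so Lemma~\ref{lemma: gamma construction} gives both directions of the equivalence between validity in $\boldmu_{+}(G)$ and validity in $\boldgamma(G)$. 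Finally, Fact~\ref{fact: n-colorable matrix} identifies validity of $\chi_{n} \vdash \emptyset$ in $\boldgamma(G)$ with the non-$n$-colorability of $G$.

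Chaining these equivalences together shows that the models $\boldmu_{+}(G) \times \ETLmatrix$ singled out by Theorem~\ref{thm: completeness for exp ext etl} are exactly those arising from graphs $G$ without isolated vertices that fail to be $n$-colorable, which is what the theorem asserts. No step looks especially obstructive here: this is essentially an assembly of already-established results, the only mild care being needed to see that $\ETL_{n}$ really is a proper finitary explosive extension of $\ETL$ strictly below $\ETL_{\omega}$ (to license the application of Theorem~\ref{thm: completeness for exp ext etl}) and that the rule $\chi_{n} \vdash q$ has the syntactic shape required by the negation-free direction of Lemma~\ref{lemma: gamma construction}. The proof is structurally identical to the $\ECQ_{n}$ case stated just above, with Theorem~\ref{thm: completeness for exp ext bd} replaced by Theorem~\ref{thm: completeness for exp ext etl} and $\BDmatrix$ replaced by $\ETLmatrix$ in the resulting completeness class.
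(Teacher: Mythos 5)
Your proposal is correct and follows exactly the route the paper intends: the paper states this theorem without a separate proof, noting only that it follows from Theorem~\ref{thm: completeness for exp ext etl}, Lemma~\ref{lemma: gamma construction}, and Fact~\ref{fact: n-colorable matrix}, which is precisely the assembly you carry out. Your checks that $\ETL_{n}$ is a finitary explosive extension of $\ETL$ strictly below $\ETL_{\omega}$ and that $\chi_{n} \vdash q$ has the syntactic form needed for both directions of Lemma~\ref{lemma: gamma construction} are exactly the points that need verifying.
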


\begin{theorem} \label{thm: completeness for etlplus n}
  The logic $\ETLplus_{n}$ (for $n \geq 1$) is $\omega$-complete with respect to the class of all matrices of the form $\boldmu_{+}(G)$, where $G$ is a graph without isolated vertices which is not weakly $n$-colorable.
\end{theorem}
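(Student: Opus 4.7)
The plan is to invoke Theorem~\ref{thm: completeness not below etl omega} for $\logic{L} = \ETLplus_{n}$ and then characterize its completeness class via Lemma~\ref{lemma: gamma construction} and Fact~\ref{fact: weakly n-colorable matrix}. First, I~would verify the hypotheses of that theorem. The logic $\ETLplus_{n}$ extends $\ETL$: from $p$ one derives $\chi_{n} \vee p$ in $\BD$, whence the defining rule of $\ETLplus_{n}$ with $q \assign p$ yields the disjunctive syllogism $p, \dmneg p \vee r \vdash r$. It is non-classical since $\ETLplus_{n} \logleq \Kminus < \CL$. Finally, the premise set $\{\chi_{n} \vee q, \dmneg q \vee r\}$ is satisfied in $\CLmatrix$ by the assignment sending every variable to $\True$, so it is not an antitheorem of $\CL$. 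Theorem~\ref{thm: completeness not below etl omega} then states that $\ETLplus_{n}$ is $\omega$-complete with respect to the class of those matrices $\boldmu_{+}(G)$ which happen to be models of $\ETLplus_{n}$.

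The second step is to identify that class. For a graph $G$ without isolated vertices, the defining rule of $\ETLplus_{n}$ satisfies the hypotheses of Lemma~\ref{lemma: gamma construction} in both directions, since its premises have negation only on atoms and its conclusion $r$ is negation-free. The rule is therefore valid in $\boldmu_{+}(G)$ if and only if it is valid in the simpler matrix $\boldgamma(G)$, which by Fact~\ref{fact: weakly n-colorable matrix} happens precisely when $G$ is not weakly $n$-colorable.

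The final step is to observe that graphs with isolated vertices are already excluded by the condition of not being weakly $n$-colorable: if $u$ is an isolated vertex of $G$, then (as isolated vertices are irreflexive) the partial homomorphism $G \to K_{n}$ with domain $\{u\}$ sending $u$ to any vertex of $K_{n}$ is a weak $n$-coloring, because $u$ has no neighbors so $h$ is trivially defined on all of them. A purely logical argument is also available as a sanity check: writing $G = G' \sqcup {\bullet}$ one has $\boldmu_{+}(G) \cong \boldmu_{+}(G') \times \ETLmatrix$ by Fact~\ref{fact: disjoint unions}, and Proposition~\ref{prop: log of product} together with $\Exp_{\BD} \Log \boldmu_{+}(G') \logleq \Exp_{\BD} \CL = \ECQ_{\omega}$ forces $\Log \boldmu_{+}(G) \logleq \ETL \cup \ECQ_{\omega} = \ETL_{\omega}$, whereas $\ETLplus_{n} \nlogleq \ETL_{\omega}$ (since $\ETLplus_{1} \nlogleq \ETL_{\omega}$ and $\ETLplus_{1} \logleq \ETLplus_{n}$). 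I anticipate no serious technical obstacle, as the real work has already been done in establishing Theorem~\ref{thm: completeness not below etl omega}, Lemma~\ref{lemma: gamma construction}, and Fact~\ref{fact: weakly n-colorable matrix}; the only point that requires any thought is verifying that the bidirectional form of Lemma~\ref{lemma: gamma construction} genuinely applies here.
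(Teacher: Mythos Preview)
Your proposal is correct and follows essentially the paper's intended approach: invoke a general completeness result to reduce to matrices of the form $\boldmu_{+}(G)$, then use Lemma~\ref{lemma: gamma construction} together with Fact~\ref{fact: weakly n-colorable matrix} to characterize that class graph-theoretically. The paper's brief sentence preceding Theorems~8.5--8.7 cites only Theorems~\ref{thm: completeness for exp ext etl} and~\ref{thm: completeness for exp ext bd}, but those apply to explosive extensions and hence to $\ECQ_{n}$ and $\ETL_{n}$; for $\ETLplus_{n}$, which is not explosive, you correctly identify Theorem~\ref{thm: completeness not below etl omega} as the appropriate tool (note that the completeness class in Theorem~\ref{thm: completeness for etlplus n} consists of matrices $\boldmu_{+}(G)$ rather than $\boldmu_{+}(G) \times \ETLmatrix$, matching the conclusion of Theorem~\ref{thm: completeness not below etl omega}).
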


  This last batch of completeness theorems is perhaps less satisfying than the completeness theorems of $\ECQ$ and $\ETL$, but we shall see in the following section that for the logics $\ECQ_{n}$ and $\ETL_{n}$ this is unavoidable: apart from $\ECQ$ and $\ETL$, none of these logics are complete with respect to a finite set of finite matrices. We do not know whether this holds for $\ETLplus_{n}$.

\section{Super-Belnap logics and the homomorphism order on graphs}
\label{sec: hom order}

  Our description of the finite reduced models of $\BD$ yields a connection between finitary explosive extensions of~$\BD$ and homomorphisms of finite graphs: the lattice of finitary explosive extensions of $\BD$ turns out to be dually isomorphic to the lattice of \emph{homomorphic classes} of non-empty graphs, i.e.\ classes $\class{K}$ such that $H \in \class{K}$ whenever $G \in \class{K}$ and there is a homomorphism of graphs $G \to H$. In~the following, we write simply $G \to H$ to abbreviate the claim that there is a homomorphism of graphs $G \to H$.

  Such classes correspond to upsets in the so-called \emph{homomorphism order} on finite graphs. The relation $G \to H$ between finite non-empty graphs yields a pre-order on the class of all finite graphs. The homomorphism order on finite graphs is obtained by factoring this pre-order on a proper class down to a partially ordered set. The least element of this order is the equivalence class of $\bullet$, consisting of graphs without any edges. The least element above the equivalence class of $\bullet$ is the equivalence class of $K_{2}$, consisting of all bipartite graphs. The top element of this order is the class of all graphs with a loop. The fact that we allow for loops therefore does not have a substantial effect on the homomorphism order.

 The homomorphism order on graphs has been the object of much mathematical attention, see in particular the monograph of Hell \& Ne\v{s}et\v{r}il~\cite{hell+nesetril04}. We shall only need the following property of this partial order, called \emph{countable universality}.

\begin{theorem} \label{thm: countable universality}
  Each countable partial order embeds into the homo\-morphism order on finite graphs.
\end{theorem}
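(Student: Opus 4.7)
The plan is to embed $(P, \leq)$ into the homomorphism order inductively. Enumerate $P = \{p_1, p_2, \ldots\}$ and construct finite graphs $G_1, G_2, \ldots$ one at a time, maintaining the invariant that $p_i \leq p_j$ iff $G_i \to G_j$ for all already-processed indices. Each induction step reduces to a \emph{one-point extension property} for the hom order on finite graphs: given a finite collection of placed graphs and a consistent prescription of which should map into the new graph, which it should map into, and which should be incomparable with it, realize this prescription by some finite graph. Because both $P$ and the collection of isomorphism types of finite graphs are countable, successfully doing the extension step at every stage yields the desired embedding.

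The concrete task at each step is: given finite families $X_1, \ldots, X_a$, $Y_1, \ldots, Y_b$, $Z_1, \ldots, Z_c$ of finite graphs with $X_i \to Y_j$ for all $i,j$, and assuming the prescription is consistent with the existing hom relations, construct a finite graph $G$ with $X_i \to G \to Y_j$ for all $i,j$, with $G$ not homomorphically equivalent to any $X_i$ or $Y_j$, and with $Z_k \not\to G$ and $G \not\to Z_k$ for each $k$. Two graph-theoretic tools are essential: (i) \emph{density} of the hom order (Welzl; Ne\v{s}et\v{r}il and Tardif)---between any two non-equivalent finite graphs $A \to B$, outside a small exceptional family, there is a strictly intermediate graph; and (ii) Erd\H{o}s's theorem on the existence of finite graphs of arbitrarily large girth and chromatic number, which furnishes large antichains and supplies ``blocking gadgets'' with prescribed combinatorial invariants.

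A candidate $G$ would be built in two stages. First, produce a graph sitting properly between the $X_i$'s and the $Y_j$'s: a natural starting point is the disjoint union $\bigsqcup_i X_i$ (which automatically receives homs from each $X_i$), then adjust downward into each $Y_j$ by taking a categorical product with $\prod_j Y_j$, or by iteratively applying density to descend below each $Y_j$ in turn. Second, modify $G$ by grafting on rigid gadgets of carefully chosen girth and chromatic number, drawn from the Erd\H{o}s construction, to block homomorphisms in both directions between $G$ and each $Z_k$: the grafted parts are chosen to have invariants unrealised in each $Z_k$ (blocking $G \to Z_k$) and chromatic number exceeding that of $Z_k$ (blocking $Z_k \to G$).

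The main obstacle is verifying that the blocking modifications do not inadvertently destroy the already-established homomorphisms to and from the $X_i$ and $Y_j$, and that the various (possibly competing) constraints can be met simultaneously. One has to check that attaching a rigid high-girth high-chromatic gadget to a graph already lying in the required interval still keeps it in that interval, which uses that $X_i \to Y_j$ for all $i,j$ and standard facts about how girth/chromatic invariants behave under disjoint union and product. A separate routine case analysis handles the degenerate configurations at the bottom of the hom order, where density is known to fail; there, only finitely many graphs are involved and the extension can be arranged by hand.
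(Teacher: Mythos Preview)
The paper does not prove this theorem. It is quoted as a known fact from the graph-theory literature (the paper cites Hell and Ne\v{s}et\v{r}il's monograph and simply says ``We shall only need the following property of this partial order, called \emph{countable universality}''), and no argument is given. So there is no proof in the paper to compare your proposal against.

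As for the proposal itself: the overall strategy---build the embedding one point at a time and reduce to a one-point extension property for the homomorphism order---is reasonable in spirit, and density plus Erd\H{o}s-type graphs are indeed the standard ingredients. But the sketch does not actually close the central gap it identifies. Concretely, your blocking step is in tension with your interval step: to force $G \not\to Z_k$ you propose attaching a gadget of chromatic number exceeding $\chi(Z_k)$, but to keep $G \to Y_j$ that same gadget must admit a homomorphism into each $Y_j$, hence have chromatic number at most $\chi(Y_j)$. Nothing in the hypotheses prevents $\chi(Z_k) \geq \chi(Y_j)$, so these demands can conflict, and ``standard facts about how girth/chromatic invariants behave under disjoint union and product'' does not dissolve the conflict. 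The published proofs of universality (Hedrl\'{\i}n; Pultr--Trnkov\'a; Hubi\v{c}ka--Ne\v{s}et\v{r}il; the treatment in Hell--Ne\v{s}et\v{r}il) avoid this by either embedding a specific, already-known-to-be-universal countable order via an explicit family of graphs, or by a more delicate gadget construction in which the interaction between constraints is controlled from the outset rather than repaired after the fact. Your outline would need a substantially sharper extension lemma to go through.
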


  We now prove that almost every homomorphic class of non-empty graphs arises as the class of all non-empty graphs $G$ such that $\boldmu_{+}(G) \in \Mod \logic{L}$ of some $\logic{L} \in \Exp \Ext_{\omega} \ETL$. Moreover, our proof will be constructive: for each non-empty graph $G$ without isolated vertices we construct a certain rule $(\alpha_{G})$ and we take $\logic{L}$ to be the extension of $\ETL$ by the rules $(\alpha_{G})$ for such $G \notin \class{K}$.

  There is only exception, namely the class of all graphs $G$ such that $K_{2} \to G$, or equivalently the class of all graphs with at least one edge. We denote this class by ${\uparrow} K_{2}$. To~see that it does not correspond to any $\logic{L} \in \Exp \Ext_{\omega} \ETL$, observe that $\ETL$ corresponds to the class of all non-empty graphs, and $\ETL_{2}$ corresponds to the class of all non-$2$-colorable graphs. The logic $\logic{L}$ corresponding to ${\uparrow} K_{2}$ would thus have to lie between $\ETL$ and $\ETL_{2}$. But no such logic exists.

  We now define the rules $(\alpha_{G})$. Consider a non-empty graph $G = \pair{X}{R}$ without isolated vertices. We want to describe the non-empty graphs $H$ without isolated vertices such that $H \notto G$ by means of an explosive rule. Let us assign a propositional atom $p_{u}$ to each $u \in X$ and define the formula $\varphi_{u}$ for $u \in X$ as
\begin{align*}
  \varphi_{u} & \assign p_u \wedge \bigwedge_{{\substack{v \in X \\ \neg u R v}}} \dmneg p_v.
\end{align*}
  In other words, the conjunctive clause $\varphi_{u}$ contains the atom $p_{u}$ and a negated atom for each vertex which is not adjacent to $u$. The explosive rule $(\alpha_{G})$ will be defined as
\begin{align*}
  \bigvee_{u \in X} \varphi_{u} & \vdash \emptyset. \tag{$\alpha_{G}$} 
\end{align*}

\begin{fact} \label{fact: simple homomorphisms}
  Each homomorphism of graphs $G \to H$ yields a homomorphism of matrices $\boldmu_{+}(H) \to \boldmu_{+}(G)$.
\end{fact}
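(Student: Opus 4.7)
The plan is to lift the graph homomorphism $h \colon G \to H$ to a frame homomorphism $\tilde{h} \colon P_{+}(G) \to P_{+}(H)$ and then appeal to the matrix duality of Theorem~\ref{thm: matrix duality}, which turns a frame homomorphism into a matrix homomorphism in the opposite direction (namely $U \mapsto \tilde{h}^{-1}[U]$).

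Concretely, I would define $\tilde{h}$ on the two copies making up $P_{+}(G) = X \sqcup \dual X$ by
\begin{align*}
  \tilde{h}(u) \assign h(u) \quad \text{for } u \in X, \qquad \tilde{h}(\dual u) \assign \dual h(u) \quad \text{for } \dual u \in \dual X.
\end{align*}
Commutation with $\dual$ is immediate from this definition. For monotonicity, I would unpack the definition of $\leq_{G}$: the only non-trivial case is $v = \dual w$ for some $w \in X$ with $u R_G w$, and here I need $h(u) \leq_H \dual h(w)$, which by definition of $\leq_H$ amounts to $h(u) R_H h(w)$. This is precisely the condition that $h$ be a graph homomorphism. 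Since every element of $P_{+}(G)$ and $P_{+}(H)$ is designated, preservation of designation is automatic, so $\tilde{h}$ is a frame homomorphism.

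Applying Theorem~\ref{thm: matrix duality} to $\tilde{h}$ then yields the required matrix homomorphism $\tilde{h}^{+} = \tilde{h}^{-1} \colon \boldmu_{+}(H) \to \boldmu_{+}(G)$. There is no genuine obstacle here; the only thing to double-check is that the (only non-trivial) case in the monotonicity argument is exactly the edge-preservation condition for $h$, which is what makes this statement true and explains why graphs (rather than arbitrary relational structures) are the right dual objects.
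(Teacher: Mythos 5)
Your proof is correct and follows exactly the paper's argument: both lift the graph homomorphism to the frame homomorphism determined by $\dual u \mapsto \dual h(u)$ and then invoke the contravariant matrix duality of Theorem~\ref{thm: matrix duality}. The only difference is that you spell out the routine verifications (monotonicity, commutation with $\dual$, preservation of designation) that the paper leaves implicit.
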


\begin{proof}
  Each graph homomorphism $g\colon G \to H$ extends to a homo\-morphism of frames $\hat{g}\colon P_{+}(G) \rightarrow P_{+}(H)$ such that $\hat{g}(\dual u) = \dual g(u)$, which corresponds dually to a homomorphism of matrices $\boldmu_{+}(H) \rightarrow \boldmu_{+}(G)$ by Theorem~\ref{thm: matrix duality}.
\end{proof}

\begin{lemma} \label{lemma: alpha g}
  Let $G$ and $H$ be non-empty graphs without isolated vertices. Then $(\alpha_{G})$ holds in $\boldmu_{+}(H)$ if and only if $H \notto G$.
\end{lemma}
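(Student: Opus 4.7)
The plan is to prove the two directions separately by translating between graph homomorphisms and valuations that invalidate the rule $(\alpha_G)$. Throughout, recall that in the matrix $\boldmu_{+}(G)$, elements are upsets of $P_{+}(G) = X \cup \dual X$, and only the whole poset $P_{+}(G)$ is designated (since $D_{P_{+}(G)} = P_{+}(G)$).

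For the right-to-left direction, I would first exhibit a canonical valuation $v$ on $\boldmu_{+}(G)$ that invalidates $(\alpha_G)$, namely $v(p_u) := {\uparrow}\{u\} = \{u\} \cup \{\dual w : u R w\}$ for each $u \in X$. A direct computation using $\dmneg U = P_{+}(G) \setminus \dual[U]$ shows that $v(\dmneg p_v) = \{w \in X : \neg v R w\} \cup \{\dual w : w \neq v\}$, and intersecting $v(p_u)$ with $v(\dmneg p_v)$ over all non-neighbors $v$ of $u$ leaves $v(p_u)$ unchanged, so $v(\varphi_u) = {\uparrow}\{u\}$. Since $G$ has no isolated vertices, $\bigcup_{u \in X} {\uparrow}\{u\} = X \cup \dual X = P_{+}(G)$, which is designated; so $(\alpha_G)$ fails in $\boldmu_{+}(G)$. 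If $H \to G$, Fact~\ref{fact: simple homomorphisms} yields a matrix homomorphism $h\colon \boldmu_{+}(G) \to \boldmu_{+}(H)$, and the composite valuation $h \circ v$ witnesses the failure of $(\alpha_G)$ in $\boldmu_{+}(H)$, since matrix homomorphisms preserve designated elements.

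For the left-to-right direction, suppose that $w$ is a valuation on $\boldmu_{+}(H)$ (where $H = (Y, S)$) making $w(\bigvee_{u} \varphi_u) = P_{+}(H)$. For each $y \in Y$ pick some $h(y) \in X$ with $y \in w(\varphi_{h(y)})$. I claim $h\colon H \to G$ is a graph homomorphism. Suppose $y S z$, so $y \leq \dual z$ in $P_{+}(H)$. Since $y \in w(p_{h(y)})$ and $w(p_{h(y)})$ is an upset, we get $\dual z \in w(p_{h(y)})$. On the other hand, $z \in w(\varphi_{h(z)})$ implies $z \in w(\dmneg p_v) = P_{+}(H) \setminus \dual[w(p_v)]$ for every $v$ with $\neg h(z) R v$, i.e.\ $\dual z \notin w(p_v)$. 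Taking $v := h(y)$, the possibility $\neg h(z) R h(y)$ is ruled out; thus $h(y) R h(z)$, as required.

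The main obstacle is the second direction: it requires carefully unravelling the definition of $\dmneg$ in the complex algebra of upsets (namely $\dmneg U = P \setminus \dual[U]$) and exploiting the fact that the designated elements in $\boldmu_{+}(H)$ are exactly the top of the lattice of upsets, so designation of the disjunction gives pointwise coverage of $Y$. The rest is bookkeeping once this translation is set up. Note that the hypothesis that $G$ has no isolated vertices is used only in the right-to-left direction (to conclude that the canonical valuation sends the disjunction to the top), while the hypothesis on $H$ is not actually needed for the argument above; still, both are retained to match the framework of Section~\ref{sec: graph duality}.
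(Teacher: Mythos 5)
Your proof is correct, and both implications go through. It is essentially the paper's argument with one structural difference: the paper first passes to the auxiliary matrices $\boldgamma(G)$ and $\boldgamma(H)$ via Lemma~\ref{lemma: gamma construction} and runs both constructions there, whereas you inline that reduction and compute directly with upsets in $\boldmu_{+}(G)$ and $\boldmu_{+}(H)$. Concretely, your canonical valuation $v(p_u) = {\uparrow}\{u\}$ is exactly the image under the map ${\uparrow}$ (from the proof of that lemma) of the paper's valuation $v(p_u) = \{u\}$ on $\boldgamma(G)$, and your choice function $h(y)$ with $y \in w(\varphi_{h(y)})$ is the paper's relation $Q$ with a selection already made; the edge-preservation check via the clause $\dmneg p_v$ is the same in both versions. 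What your route buys is self-containedness and, as you observe, independence from the isolated-vertex hypothesis on $H$ (which the paper needs only to invoke Lemma~\ref{lemma: gamma construction} for $\boldgamma(H)$); the cost is redoing by hand the $\dmneg U = P \setminus \dual[U]$ bookkeeping that the lemma encapsulates. One cosmetic point: your direction labels are swapped — proving ``$H \to G$ implies the rule fails'' is the contrapositive of the left-to-right implication of the stated equivalence, and ``the rule fails implies $H \to G$'' is the contrapositive of the right-to-left one — but the two contrapositives together do establish the biconditional.
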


\begin{proof}
  The valuation $v$ on $\boldgamma(G)$ such that $v(p_{u}) \assign \{ u \}$ witnesses that the rule fails in $\boldgamma(G)$, since $u \in v(\varphi_{u})$. By Lemma \ref{lemma: gamma construction} the rule thus fails in~$\boldmu_{+}(G)$. If there is a graph homomorphism $H \rightarrow G$, then there is a homomorphism of matrices $\boldmu_{+}(G) \rightarrow \boldmu_{+}(H)$. But $(\alpha_{G})$ is an explosive rule, therefore if it fails in~$\boldmu_{+}(G)$ and $\boldmu_{+}(H)$ is non-trivial, then it fails in $\boldmu_{+}(H)$.

  Conversely, suppose that the rule $(\alpha_{G})$ fails in $\boldmu_{+}(H)$. By Lemma~\ref{lemma: gamma construction} it also fails in $\boldgamma(H)$, as witnessed by a valuation $w$. Let $G = \pair{X}{R}$ and $H = \pair{Y}{S}$. Consider the relation $Q \subseteq Y \times X$ so that $u' Q u$ if and only if $u' \in w(\varphi_{u})$ for $u \in X$ and $u' \in Y$. Firstly, each $u' \in Y$ is related to some $u \in X$ by $Q$ because $w(\bigvee_{u \in X} \varphi_{u}) = Y$. Secondly, we claim that $u' Q u$, $v' Q v$, and $u' S v'$ imply $u R v$.

  Suppose therefore that $u' Q u$ and $v' Q v$. If $u$ and $v$ are not adjacent in $G$, then $w(\varphi_{u}) \subseteq w(\dmneg p_{v})$, therefore $u' \in w(\varphi_{u}) \subseteq w(\dmneg p_{v}) = \dmneg w(p_{v})$. On the other hand, $v' \in w(\varphi_{v}) \subseteq w(p_{v})$. Thus $u'$ and $v'$ are not adjacent in $H$: no vertex in $\dmneg w(p_{v})$ can be adjacent to a vertex in $w(p_{v})$.

  Now consider any function $f\colon Y \to X$ whose graph is contained in $Q$, i.e.\ $u'$ and $f(u')$ are related by $Q$. By the first claim made above, such a function exists. By the second claim, it is a graph homomorphism $f\colon H \to G$.
\end{proof}

\pagebreak

\begin{theorem} \label{thm: homomorphic classes etl}
  The lattice $\Exp \Ext_{\omega} \ETL$ of finitary explosive extensions of $\ETL$ is dually isomorphic to the lattice of homomorphic classes of non-empty graphs $\class{K}$ other than ${\uparrow} K_{2}$ via the maps
\begin{align*}
  \logic{L} & \mapsto \class{K}_{\logic{L}} \assign \set{G}{\boldmu_{+}(G) \in \Mod \logic{L}}, & 
  \class{K} & \mapsto \logic{L}_{\class{K}} \assign \ETL + \set{(\alpha_{G})}{G \notin \class{K}}.
\end{align*}
\end{theorem}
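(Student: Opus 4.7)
The plan is to verify that the two maps are mutually inverse dually isotone maps, which gives the dual isomorphism directly. Dual isotonicity is immediate: enlarging $\class{K}$ removes rules from $\logic{L}_{\class{K}}$. The map $\class{K} \mapsto \logic{L}_{\class{K}}$ is clearly well-defined, and for $\logic{L} \mapsto \class{K}_{\logic{L}}$ I would combine Fact~\ref{fact: simple homomorphisms}---a graph homomorphism $G \to H$ yields a matrix homomorphism $h \colon \boldmu_{+}(H) \to \boldmu_{+}(G)$---with the observation that matrix homomorphisms pull back the validity of explosive rules: if a valuation $v$ on the source designates all of $\Delta$, then $h \circ v$ designates $\Delta$ on the target, so the rule $\Delta \vdash \emptyset$ holding in the target forces it to hold in the source. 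Since any $\logic{L} \in \Exp \Ext_{\omega} \ETL$ is axiomatized over $\ETL$ by explosive rules and both $\boldmu_{+}(G)$ and $\boldmu_{+}(H)$ are models of $\ETL$, this yields that $\class{K}_{\logic{L}}$ is upward closed in the homomorphism order. To exclude $\class{K}_{\logic{L}} = {\uparrow} K_{2}$, suppose otherwise: then $\ETLmatrix = \boldmu_{+}(\bullet) \notin \Mod \logic{L}$, so $\logic{L} > \ETL$, whence $\ETL_{2} \leq \logic{L}$ by Proposition~\ref{prop:lp-cap-ecq2-splitting}; but then $\class{K}_{\logic{L}} \subseteq \class{K}_{\ETL_{2}}$, which consists only of non-$2$-colorable graphs by Fact~\ref{fact: n-colorable matrix} together with Lemma~\ref{lemma: gamma construction}, contradicting $K_{2} \in {\uparrow} K_{2}$.

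The identity $\class{K}_{\logic{L}_{\class{K}}} = \class{K}$ follows from Lemma~\ref{lemma: alpha g}: for a non-empty graph $H$ without isolated vertices, $\boldmu_{+}(H) \in \Mod \logic{L}_{\class{K}}$ iff $H \notto G$ for every non-empty $G \notin \class{K}$ without isolated vertices, which by homomorphic closure of $\class{K}$ is equivalent to $H \in \class{K}$ (taking representatives without isolated vertices). The extremal case of edgeless graphs, homomorphism-equivalent to $\bullet$, must be checked separately: $\bullet \in \class{K}$ iff $\class{K}$ is the entire class of non-empty graphs, while $\ETLmatrix \in \Mod \logic{L}_{\class{K}}$ iff no rule $(\alpha_{G})$ is added, the latter because $(\alpha_{G})$ fails classically on $\ETLmatrix$ for every non-empty $G$ without isolated vertices.

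For $\logic{L}_{\class{K}_{\logic{L}}} = \logic{L}$, I would first show that every non-trivial $\logic{L} \in \Exp \Ext_{\omega} \ETL$ lies in $[\ETL, \ETL_{\omega}]$: since $\CLmatrix$ is a model of every non-trivial super-Belnap logic, every antitheorem of $\logic{L}$ is a classical contradiction and hence an antitheorem of $\ETL_{\omega}$, and as explosive extensions of $\ETL$ are determined by their antitheorems this gives $\logic{L} \leq \ETL_{\omega}$. For $\logic{L} < \ETL_{\omega}$, Theorem~\ref{thm: completeness for exp ext etl} yields that both $\logic{L}$ and $\logic{L}_{\class{K}_{\logic{L}}}$ are $\omega$-complete with respect to the same class of matrices $\boldmu_{+}(G) \times \ETLmatrix$, so they coincide as finitary logics. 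The boundary case $\logic{L} = \ETL_{\omega}$ must be handled directly: here $\class{K}_{\ETL_{\omega}}$ consists of graphs containing a reflexive vertex (Fact~\ref{fact: contains reflexive vertex}), and $\logic{L}_{\class{K}_{\ETL_{\omega}}}$ is axiomatized by all $(\alpha_{G})$ for irreflexive~$G$; since every such $G$ admits $G \to K_{n}$ for $n = |V(G)|$, the rules $(\alpha_{K_{n}})$, which are exactly $\chi_{n} \vdash \emptyset$, imply every $(\alpha_{G})$, giving equality with $\ETL_{\omega}$. Finally, the trivial logic corresponds to $\class{K} = \emptyset$ and is recovered because $\logic{L}_{\emptyset}$ contains the rule $p \vdash \emptyset$, namely $(\alpha_{G})$ for $G$ a reflexive singleton, which forces triviality in the presence of~$\True$.

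The main obstacle I expect is the careful handling of the boundary cases---especially $\logic{L} = \ETL_{\omega}$, which is not directly covered by Theorem~\ref{thm: completeness for exp ext etl}, and the interplay between graphs with and without isolated vertices---together with verifying that the rules $(\alpha_{K_{n}})$ really coincide with the standard axioms $\chi_{n} \vdash \emptyset$ of $\ETL_{n}$, which works because the only non-neighbor of any vertex of $K_{n}$ is the vertex itself.
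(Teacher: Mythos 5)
Your proposal is correct and follows essentially the same route as the paper: uniqueness of $\logic{L}$ given $\class{K}_{\logic{L}}$ via Theorem~\ref{thm: completeness for exp ext etl}, homomorphic closure of $\class{K}_{\logic{L}}$ via Fact~\ref{fact: simple homomorphisms} and pullback of explosive rules, and surjectivity via Lemma~\ref{lemma: alpha g} with the same case analysis for isolated vertices and the $\bullet$ versus $K_{2}$ boundary. The only point worth tightening is the $\ETL_{\omega}$ case: the cleanest reason each $(\alpha_{G})$ for loopless $G$ holds in $\ETL_{\omega}$ is that every disjunct $\varphi_{u}$ of its premise contains $p_{u} \wedge \dmneg p_{u}$, so the premise is a classical contradiction and the rule already holds in $\ECQ_{\omega} \logleq \ETL_{\omega}$ (rather than arguing that $(\alpha_{K_{n}})$ ``implies'' $(\alpha_{G})$, which is only a semantic entailment over the matrices $\boldmu_{+}(H)$).
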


\begin{proof}
  Consider a non-trivial finitary explosive extension $\logic{L}$ of $\ETL$. If $\logic{L} < \ETL_{\omega}$, then Theorem~\ref{thm: completeness for exp ext etl} states that $\logic{L}$ is complete with respect to models of the form $\boldmu_{+}(G) \times \ETLmatrix$ where $G$ is non-empty. Moreover, $\ETL_{\omega}$ is the largest non-trivial explosive extension of $\ETL$ and it is complete with respect to the matrix $\Kmatrix \times \ETLmatrix \cong \boldmu_{+}(H) \times \ETLmatrix$ where $H$ is a reflexive singleton graph. Each non-trivial finitary explosive extension of $\ETL$ is therefore uniquely determined by its models of the form $\boldmu_{+}(G) \times \ETLmatrix$. But if $G$ is non-empty, then $\boldmu_{+}(G) \times \ETLmatrix$ is a model of an explosive extension of $\ETL$ if and only if $\boldmu_{+}(G)$ is. Moreover, the matrix $\boldmu_{+}(\emptyset) \times \ETLmatrix$ is logically equivalent to $\boldmu_{+}(\bullet) \times \ETLmatrix$. Each non-trivial finitary explosive extension of $\ETL$ is thus uniquely determined by the class of all non-empty graphs $G$ such that $\boldmu_{+}(G) \in \Mod \logic{L}$. Let us call this class~$\class{K}_{\logic{L}}$. Observe that $\class{K}_{\logic{L}} = \emptyset$ if and only if $\logic{L}$ is the trivial logic.

  The previous paragraph shows that $\logic{L}_{1} \logleq \logic{L}_{2}$ if and only if $\class{K}_{\logic{L}_{2}} \subseteq \class{K}_{\logic{L}_{1}}$, where $\logic{L}_{1}$ and $\logic{L}_{2}$ are finitary explosive extensions of $\ETL$. Moreover, $\class{K}_{\logic{L}}$ is a homo\-morphic class of graphs: each homo\-morphism of non-empty graphs $G \to H$ yields a homomorphism of non-trivial matrices $\boldmu_{+}(H) \to \boldmu_{+}(G)$, so $\boldmu_{+}(G) \in \Mod \logic{L}$ implies $\boldmu_{+}(H) \in \Mod \logic{L}$ for each explosive extension $\logic{L}$ of $\ETL$.

  It remains to prove that each homomorphic class $\class{K}$ of non-empty graphs other than ${\uparrow} K_{2}$ has the form $\class{K}_{\logic{L}}$ for some suitable $\logic{L}$. For each such $\class{K}$ either $K_{2} \notin \class{K}$ or $\bullet \in \class{K}$. In the latter case, we take $\logic{L} = \ETL$. Let us thus assume that $K_{2} \notin \class{K}$.

  Now consider the extension $\logic{L}_{\class{K}}$ of $\ETL$ by the rules $(\alpha_{G})$ for each non-empty graph $G \notin \class{K}$ without isolated vertices. Let $\overline{G}$ denote the result of removing all isolated vertices from $G$. If $\overline{G}$ is non-empty, then $\boldmu_{+}(G) \in \Mod \logic{L}_{\class{K}}$ if and only if $\boldmu_{+}(\overline{G}) \in \Mod \logic{L}_{\class{K}}$. Moreover, $G \to H$ if and only if $\overline{G} \to H$, and $H \to G$ if and only if $H \to \overline{G}$. Thus $G \in \class{K}$ if and only if $\overline{G} \in \class{K}$, provided that $\overline{G} \neq \emptyset$.

  Suppose first that $\overline{G}$ is non-empty. Then $\boldmu_{+}(G) \in \Mod \logic{L}_{\class{K}}$ is equivalent to $\boldmu_{+}(\overline{G}) \in \Mod \logic{L}_{\class{K}}$, which is equivalent by Lemma~\ref{lemma: alpha g} to the claim that $\overline{G} \notto H$, or equivalently $G \notto H$, for each non-empty $H \notin \class{K}$ without isolated vertices. This is equivalent to the claim that if $\overline{H}$ is non-empty for $H \notin \class{K}$, then $G \notto \overline{H}$, or equivalently $G \notto H$. Because $\overline{G}$ is non-empty, $\overline{H} = \emptyset$ implies that $G \notto H$, therefore we may simplify this claim to: there is no homomorphism $G \to H$ for $H \notin \class{K}$. Because $\class{K}$ is a homomorphic class, this is equivalent simply to $G \in \class{K}$. Thus $\boldmu_{+}(G) \in \Mod \logic{L}_{\class{K}}$ if and only if $G \in \class{K}$, provided that $\overline{G}$ is non-empty.

  On the other hand, suppose that $\overline{G} = \emptyset$. Then $G \notin \class{K}$ because $\bullet \notin \class{K}$. The matrix $\boldmu_{+}(G)$ is logically equivalent to $\ETLmatrix$, therefore it remains to show that $\ETLmatrix \notin \Mod \logic{L}_{\class{K}}$. But $K_{2} \notin \class{K}$, so $\logic{L}_{\class{K}}$ validates the rule $(\alpha_{G})$ for $G = K_{2}$. This rule is precisely the rule $(p \wedge \dmneg p) \vee (q \wedge \dmneg q) \vdash \emptyset$ which axiomatizes $\ETL_{2}$.
\end{proof}

\begin{theorem} \label{thm: homomorphic classes ecq}
  The lattice $\Exp \Ext_{\omega} \ECQ$ of finitary explosive extensions of $\ECQ$ is dually isomorphic to the lattice of homomorphic classes of non-empty graphs $\class{K}$ other than ${\uparrow} K_{2}$ via the maps
\begin{align*}
  \logic{L} & \mapsto \set{G}{\boldmu_{+}(G) \in \Mod \logic{L}}, & 
  \class{K} & \mapsto \ECQ + \set{(\alpha_{G})}{G \notin \class{K}}.
\end{align*}
\end{theorem}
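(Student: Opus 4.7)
The plan is to derive this theorem directly from Theorem~\ref{thm: homomorphic classes etl} by transporting it along the lattice isomorphism $\Exp \Ext \ETL \cong \Exp \Ext \ECQ$ of Theorem~\ref{thm: iso exp ext etl}, given by $\logic{L} \mapsto \Exp_{\BD} \logic{L}$ and $\logic{L} \mapsto \ETL \cup \logic{L}$. First I verify that this isomorphism restricts to the finitary sublattices $\Exp \Ext_{\omega} \ECQ \cong \Exp \Ext_{\omega} \ETL$: in one direction, $\ETL \cup \logic{L}$ is finitary whenever $\logic{L}$ is, since $\ETL$ is finitary and finitely axiomatizable over $\BD$; in the other, $\Exp_{\BD} \logic{L}$ is finitary whenever $\logic{L}$ is, because by definition its rules are either $\BD$-consequences or antitheorems of $\logic{L}$, and both $\BD$ and the antitheorem relation of a finitary logic are finitary.

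Next I check that the two maps in the statement are obtained by composing the maps of Theorem~\ref{thm: homomorphic classes etl} with this isomorphism. For $\logic{L} \mapsto \class{K}_{\logic{L}}$: given $\logic{L} \in \Exp \Ext_{\omega} \ECQ$, the corresponding logic in $\Exp \Ext_{\omega} \ETL$ is $\ETL \cup \logic{L}$, and Theorem~\ref{thm: homomorphic classes etl} assigns to it the class $\set{G}{\boldmu_{+}(G) \in \Mod(\ETL \cup \logic{L})}$. Since every $\boldmu_{+}(G)$ is already a model of $\ETL$, this coincides with $\set{G}{\boldmu_{+}(G) \in \Mod \logic{L}}$, as claimed.

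For the inverse direction $\class{K} \mapsto \logic{L}_{\class{K}}$, set $\rho \assign \set{(\alpha_{G})}{G \notin \class{K}}$. Theorem~\ref{thm: homomorphic classes etl} assigns to $\class{K}$ the logic $\ETL + \rho \in \Exp \Ext_{\omega} \ETL$, and applying $\Exp_{\BD}$ must yield the logic in $\Exp \Ext_{\omega} \ECQ$ corresponding to~$\class{K}$. Since $\ECQ \logleq \ETL$ and $\rho$ consists of explosive rules, we have $\ETL + \rho = \ETL \cup (\ECQ + \rho)$, and $\ECQ + \rho$ is an explosive extension of $\ECQ$; so applying the inverse half of the isomorphism in Theorem~\ref{thm: iso exp ext etl} gives $\Exp_{\BD}(\ETL + \rho) = \Exp_{\BD}(\ETL \cup (\ECQ + \rho)) = \ECQ + \rho$, matching the asserted map.

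The step that requires most care is the last one, namely keeping straight the algebra of explosive parts so that the axiomatizing rules $\rho$ really do transfer unchanged from $\ETL + \rho$ to $\ECQ + \rho$ under $\Exp_{\BD}$; the exclusion of ${\uparrow} K_{2}$ then carries over automatically because the isomorphism of Theorem~\ref{thm: iso exp ext etl} matches $\ETL \leftrightarrow \ECQ$ and $\ETL_{2} \leftrightarrow \ECQ_{2}$, so no finitary explosive extension of $\ECQ$ lies strictly between them either. Everything else is a direct translation, so no genuinely new combinatorial or algebraic input is required beyond the two cited theorems.
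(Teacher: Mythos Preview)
Your proof is correct and follows essentially the same approach as the paper: compose the isomorphism $\Exp \Ext_{\omega} \ECQ \cong \Exp \Ext_{\omega} \ETL$ from Theorem~\ref{thm: iso exp ext etl} with the dual isomorphism of Theorem~\ref{thm: homomorphic classes etl}, then verify that the resulting maps agree with the ones in the statement. You spell out a few details (finitarity restriction, the identity $\Exp_{\BD}(\ETL + \rho) = \ECQ + \rho$) that the paper leaves implicit, but the argument is the same.
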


\begin{proof}
  $\Exp \Ext_{\omega} \ECQ$ and $\Exp \Ext_{\omega} \ETL$ are isomorphic via $\logic{L} \mapsto \logic{L} \vee \ETL$ and $\logic{L} \mapsto \Exp_{\BD} \logic{L}$. Concatenating these maps with the isomorphism from the previous theorem yields the isomorphism $\logic{L} \mapsto \class{K}_{\logic{L}}$ and $\class{K} \mapsto \Exp_{\BD} \logic{L}_{\class{K}} = \ECQ + \set{(\alpha_{G})}{G \notin \class{K}}$.
\end{proof}

  The lattice $\Exp \Ext_{\omega} \BD$ consists of $\BD$ and $\Exp \Ext_{\omega} \ECQ$ (Theorem~\ref{thm: iso exp ext etl}), therefore it can be described even more easily.

\begin{theorem} \label{thm: homomorphic classes bd}
  The lattice of finitary explosive extensions of $\BD$ is dually isomorphic to the lattice of homomorphic classes of non-empty graphs.
\end{theorem}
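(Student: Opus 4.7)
The plan is to extend the dual isomorphism of Theorem~\ref{thm: homomorphic classes ecq} by handling the single extra element on each side. First, I invoke the proposition just before Theorem~\ref{thm: iso exp ext etl} establishing that $\ECQ$ is the smallest proper explosive extension of $\BD$. This shows $\Exp \Ext_{\omega} \BD = \{\BD\} \cup \Exp \Ext_{\omega} \ECQ$, with $\BD$ adjoined as a new minimum whose unique upper cover is $\ECQ$.

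Next I analyze the graph side. The one-vertex irreflexive graph $\bullet$ is the unique minimum of the homomorphism order on finite non-empty graphs, since $\bullet \to G$ for every non-empty $G$. Consequently ${\uparrow} K_2$ (obtained by deleting the equivalence class of $\bullet$ from the class of all non-empty graphs) is the unique lower cover of the top in the lattice of homomorphic classes: any upset strictly below the top cannot contain $\bullet$ (else it would be forced to equal the full upset), hence it must be contained in ${\uparrow} K_2$. Thus the lattice of all homomorphic classes is obtained from the lattice $L \setminus \{{\uparrow} K_2\}$ appearing in Theorem~\ref{thm: homomorphic classes ecq} by inserting ${\uparrow} K_2$ just below its top.

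Combining these structural observations, I construct the required dual iso by shifting the iso $\Phi_0$ of Theorem~\ref{thm: homomorphic classes ecq}, which sends $\ECQ$ (the bottom of $\Exp \Ext_{\omega} \ECQ$) to the class of all non-empty graphs (the top of $L \setminus \{{\uparrow} K_2\}$). I will define $\Phi(\BD)$ to be the class of all non-empty graphs, $\Phi(\ECQ) := {\uparrow} K_2$, and $\Phi(\logic{L}) := \Phi_0(\logic{L})$ for each $\logic{L} \in \Exp \Ext_{\omega} \ECQ$ with $\logic{L} > \ECQ$. The shift is well-defined because any such $\logic{L}$ satisfies $\Phi_0(\logic{L}) \subsetneq {\uparrow} K_2$: indeed, $\Phi_0(\logic{L})$ is an upset of non-empty graphs that is distinct from ${\uparrow} K_2$ and strictly contained in the top, and any such upset must omit $\bullet$ (by the argument above), so it lies strictly inside ${\uparrow} K_2$. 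The main obstacle will be the bookkeeping that verifies order-reversal and meet/join preservation; this reduces to routine cases, since meets and joins involving the extrema $\BD$ and the top class are determined by extremality, and pairs of proper extensions of $\ECQ$ fall directly under Theorem~\ref{thm: homomorphic classes ecq}.
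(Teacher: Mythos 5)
Your proposal is correct and follows exactly the route the paper takes (and leaves implicit): since $\ECQ$ is the smallest proper explosive extension of $\BD$, the lattice $\Exp \Ext_{\omega} \BD$ is just $\Exp \Ext_{\omega} \ECQ$ with $\BD$ adjoined as a new bottom, while the lattice of all homomorphic classes is the lattice from Theorem~\ref{thm: homomorphic classes ecq} with the unique coatom ${\uparrow} K_{2}$ reinserted, and the shifted map you describe matches these up. Your justification of the shift -- every proper upset omits $\bullet$ and hence lies inside ${\uparrow} K_{2}$ -- is precisely the point the paper glosses over, and it is right.
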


  The countable universality of the homomorphism order (Theorem~\ref{thm: countable universality}), or more precisely the fact that it contains an infinite antichain, yields a continuum of finitary explosive extensions of $\ETL$ and $\BD$.

\begin{corollary}
  The lattices $\Exp \Ext_{\omega} \ETL$ and $\Exp \Ext_{\omega} \BD$ both have the cardinality of the continuum.
\end{corollary}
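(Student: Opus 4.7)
The plan is to combine the countable universality of the homomorphism order (Theorem~\ref{thm: countable universality}) with the dual isomorphisms established in Theorems~\ref{thm: homomorphic classes etl} and~\ref{thm: homomorphic classes bd}. To get a lower bound of $2^{\aleph_0}$, I would first extract an infinite antichain $\{G_i\}_{i\in\omega}$ in the homomorphism order on finite graphs; this exists because Theorem~\ref{thm: countable universality} embeds every countable poset into that order, in particular the discrete one on $\omega$. Then, for each subset $S \subseteq \omega$, the upward closure $\class{K}_S \assign {\uparrow}\set{G_i}{i \in S}$ is a homomorphic class of non-empty graphs, and distinct subsets give distinct classes: if $j \in S \setminus S'$, then $G_j \in \class{K}_S$ while $G_j \notin \class{K}_{S'}$ (otherwise some $G_i$ with $i \in S'$ would satisfy $G_i \to G_j$, contradicting the antichain property).

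Next I would check that we can arrange all $\class{K}_S$ to be distinct from the exceptional class ${\uparrow} K_2$. This is easy: either every $G_i$ is already bipartite-free (so each $\class{K}_S$ contains no bipartite graph while ${\uparrow} K_2$ does not contain $\bullet$), or, if necessary, we replace the antichain by the countably many classes $\class{K}_S$ with $0 \notin S$, which still yields $2^{\aleph_0}$ many. Applying the dual isomorphism of Theorem~\ref{thm: homomorphic classes etl} then produces $2^{\aleph_0}$ distinct finitary explosive extensions of $\ETL$. For the upper bound, there are only countably many finite graphs up to isomorphism, hence at most $2^{\aleph_0}$ homomorphic classes, hence at most $2^{\aleph_0}$ finitary explosive extensions of $\ETL$; together this gives $|{\Exp \Ext_{\omega} \ETL}| = 2^{\aleph_0}$.

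For $\Exp \Ext_{\omega} \BD$, I would invoke Theorem~\ref{thm: iso exp ext etl}, which gives a lattice isomorphism $\Exp \Ext \ECQ \cong \Exp \Ext \ETL$ via $\logic{L} \mapsto \ETL \cup \logic{L}$ and $\logic{L} \mapsto \Exp_{\BD} \logic{L}$; this restricts to an iso\-morphism of the finitary sublattices, so $|{\Exp \Ext_{\omega} \ECQ}| = 2^{\aleph_0}$. Since $\ECQ$ is the smallest proper explosive extension of $\BD$ (Proposition above in Section~\ref{sec: completeness}), we have $\Exp \Ext_{\omega} \BD = \{\BD\} \cup \Exp \Ext_{\omega} \ECQ$, whence $|{\Exp \Ext_{\omega} \BD}| = 2^{\aleph_0}$ as well.

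There is no real obstacle here, since all the hard lifting has been done: the duality translates the cardinality question into a question about upsets in the homomorphism order, and countable universality immediately supplies the needed antichain. The only point requiring a moment of care is the exclusion of the exceptional class ${\uparrow} K_2$ from Theorem~\ref{thm: homomorphic classes etl}, but since this removes only one class it has no effect on cardinality.
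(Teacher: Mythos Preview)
Your proposal is correct and follows essentially the same approach as the paper: extract an infinite antichain from the countable universality of the homomorphism order, pass to upsets to obtain continuum many homomorphic classes, and transfer via the dual isomorphisms. The paper's own justification is just the one sentence preceding the corollary, so you have in fact supplied more detail than the paper does; the only minor difference is that for $\Exp \Ext_{\omega} \BD$ you go through $\ECQ$ and Theorem~\ref{thm: iso exp ext etl}, whereas Theorem~\ref{thm: homomorphic classes bd} gives the result directly (with no exceptional class to worry about).
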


\begin{corollary} \label{cor: continuum in three intervals}
  Each of the three intervals $[\BD, \LP]$, $[\ECQ, \LP \vee \ECQ]$, and $[\ETL, \CL]$ contains a continuum of finitary logics.
\end{corollary}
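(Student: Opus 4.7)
The plan is to start from the continuum of finitary explosive extensions of $\ECQ$ supplied by the previous corollary, and transport this continuum into each of the three intervals by means of the splitting in Theorem~\ref{thm: three blocks} together with the isomorphism theorems of Section~\ref{sec: completeness}.

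First I would show that the middle interval already contains this continuum directly. Every proper non-trivial explosive extension $\logic{L}$ of $\BD$ lies in $[\ECQ, \LP \vee \ECQ]$: on the one hand $\ECQ \leq \logic{L}$ because $\ECQ$ is the smallest proper explosive extension of $\BD$, and on the other hand $\ETL \nleq \logic{L}$, because by Lemma~\ref{lemma: consequence in antiaxiomatic extensions} the disjunctive syllogism $p, \dmneg p \vee q \vdash q$ could hold in such an $\logic{L}$ only if there were a substitution $\sigma$ and an $\logic{L}$-antitheorem $\Delta$ with $\{p, \dmneg p \vee q\} \vdash_{\BD} \sigma[\Delta]$; since $\logic{L} \leq \CL$, the set $\sigma[\Delta]$ would be a $\CL$-antitheorem, forcing $\{p, \dmneg p \vee q\}$ to be a $\CL$-antitheorem too, which fails under the valuation $p = q = \True$ on $\CLmatrix$. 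Theorem~\ref{thm: three blocks} then places $\logic{L}$ in the middle interval, and the previous corollary supplies continuum many such $\logic{L}$.

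Finally I would transport this continuum into the other two intervals via the isomorphisms $\logic{L} \mapsto \ETL \vee \logic{L}$ of Theorem~\ref{thm: iso exp ext etl} and $\logic{L} \mapsto \LP \cap \logic{L}$ of Theorem~\ref{thm: iso exp ext ecq}, obtaining continuum many distinct logics in $[\ETL, \CL]$ and in $[\LP \cap \ECQ, \LP] \subseteq [\BD, \LP]$ respectively. The main technical check is preservation of finitarity under these maps. For $\ETL \vee \logic{L}$ this is immediate from Lemma~\ref{lemma: consequence in antiaxiomatic extensions}, since the explosive rules axiomatizing $\logic{L}$ over $\ECQ$ also axiomatize $\ETL \vee \logic{L}$ over $\ETL$ and all have finite premises. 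For $\LP \cap \logic{L}$ it follows from Proposition~\ref{prop: axiomatizing cap exp}: since $\LP$ is axiomatized over $\BD$ by the single finite rule $\emptyset \vdash p \vee \dmneg p$ and each $\logic{L}$ by explosive rules with finite premises, the axiomatization of $\LP \cap \logic{L}$ furnished by that proposition consists entirely of rules with finite premises, so $\LP \cap \logic{L}$ is finitary.
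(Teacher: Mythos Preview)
Your proposal is correct and follows essentially the same approach as the paper: use the continuum in $\Exp\Ext_{\omega}\ECQ$ for the middle interval, then transport it via the isomorphisms of Theorems~\ref{thm: iso exp ext etl} and~\ref{thm: iso exp ext ecq} into $[\ETL,\CL]$ and $[\BD,\LP]$. You supply more detail than the paper does --- in particular your explicit argument that non-trivial explosive extensions lie below $\LP\vee\ECQ$ (the paper simply invokes $\Exp\Ext_{\omega}\ECQ\subseteq[\ECQ,\LP\vee\ECQ]$, which follows more directly from $\Exp_{\BD}\CL=\ECQ_{\omega}\leq\LP\vee\ECQ$) and your finitarity checks for the two transport maps (which the paper leaves to the remark following Theorem~\ref{thm: explosive isomorphism} and to the explicit axiomatization in Proposition~\ref{prop: axiomatizing cap exp}) --- but the structure of the argument is the same.
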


\begin{proof}
  The lattices $\Exp \Ext_{\omega} \ETL$, $\Exp \Ext_{\omega} \ECQ \subseteq [\ECQ, \LP \vee \ECQ]$, and $\LP \cap \Exp \Ext_{\omega} \ECQ \subseteq [\BD, \LP]$ are isomorphic by Theorems~\ref{thm: iso exp ext etl} and~\ref{thm: iso exp ext ecq}.
\end{proof}

  We can now answer positively the question posed in~\cite{rivieccio12} whether there are logics strictly between $\ETL_{n}$ and $\ETL_{n+1}$ for some $n$.

\begin{corollary}
  There is an infinite increasing chain of explosive extensions of $\ETL$ strictly between $\ETL_{2}$ and $\ETL_{3}$.
\end{corollary}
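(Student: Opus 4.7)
The strategy is to translate the problem to graphs via Theorem~\ref{thm: homomorphic classes etl}. Under the dual isomorphism described there, the interval $[\ETL_2, \ETL_3]$ in $\Exp \Ext_{\omega} \ETL$ corresponds to the interval of homomorphic classes $\class{K}$ satisfying $\class{K}_{\ETL_3} \subsetneq \class{K} \subsetneq \class{K}_{\ETL_2}$, where $\class{K}_{\ETL_2}$ is the class of non-$2$-colorable graphs and $\class{K}_{\ETL_3}$ is the class of non-$3$-colorable graphs. An infinite strictly increasing chain of logics in $[\ETL_2, \ETL_3]$ therefore corresponds to an infinite strictly decreasing chain of upsets in that interval.

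To produce such a chain, I would find an infinite strictly increasing sequence $G_1 < G_2 < G_3 < \cdots$ of finite graphs in the homomorphism order, all $3$-colorable and non-bipartite, and set $\class{K}_n \assign {\uparrow} G_n \cup \class{K}_{\ETL_3}$. Each $\class{K}_n$ is an upset, being the union of two upsets. Because $G_n$ is $3$-colorable and $G_{n+1} \not\to G_n$, the graph $G_n$ lies in $\class{K}_n \setminus \class{K}_{n+1}$, so the inclusion $\class{K}_{n+1} \subsetneq \class{K}_n$ is strict. The graph $G_n$ itself, being $3$-colorable but non-bipartite, witnesses $\class{K}_n \supsetneq \class{K}_{\ETL_3}$; conversely, any odd cycle $C_{2m+1}$ with $m$ large enough that $G_n \not\to C_{2m+1}$ (such $m$ exists because $G_n$ strictly exceeds all sufficiently long odd cycles in the hom order) witnesses $\class{K}_n \subsetneq \class{K}_{\ETL_2}$. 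Each $\class{K}_n$ therefore corresponds by Theorem~\ref{thm: homomorphic classes etl} to an explosive extension $\logic{L}_n$ strictly between $\ETL_2$ and $\ETL_3$, and the chain $\logic{L}_1 < \logic{L}_2 < \logic{L}_3 < \cdots$ is strictly ascending.

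The main obstacle is producing the strictly ascending chain of graphs itself: the odd cycles $C_3 > C_5 > C_7 > \cdots$ form a \emph{descending} chain in the homomorphism order and only supply a descending chain of logics, which is not what is wanted. To overcome this, I would invoke the density theorem for the homomorphism order (Welzl, Ne\v{s}et\v{r}il--Zhu): for any two graphs $G < H$ with $G$ non-bipartite and $G \not\equiv H$, there exists a graph $K$ with $G < K < H$. Applying this iteratively inside the open interval $(C_5, K_3)$ of the hom order produces an infinite chain $C_5 < G_1 < G_2 < \cdots < K_3$ of non-bipartite $3$-colorable graphs. Alternatively, one can appeal to a refined form of Theorem~\ref{thm: countable universality}, which embeds $\omega$ into the homomorphism order on $3$-colorable non-bipartite graphs. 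Either way, the translation back via Theorem~\ref{thm: homomorphic classes etl} delivers the required infinite increasing chain of explosive extensions strictly between $\ETL_2$ and $\ETL_3$.
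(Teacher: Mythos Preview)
Your argument is correct, and you have been more careful about direction than the paper itself. The paper's own proof is a two-sentence appeal to the decreasing chain of odd cycles $C_3 > C_5 > C_7 > \cdots$ in the homomorphism order. You rightly observe that under the dual isomorphism of Theorem~\ref{thm: homomorphic classes etl} this chain of graphs yields a strictly \emph{descending} chain of explosive extensions inside the open interval $(\ETL_2, \ETL_3)$: for instance the homomorphic classes $\{H : H \not\to C_{2n+1}\}$ strictly increase with~$n$ from $\class{K}_{\ETL_3}$ towards $\class{K}_{\ETL_2}$, so the corresponding logics strictly decrease. The paper therefore establishes that the interval contains an infinite chain of logics --- which is the substantive point, answering Rivieccio's question --- but does not literally produce an \emph{increasing} one.

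Your route via density of the homomorphism order (or a refined form of countable universality) does deliver a genuinely ascending chain, at the price of invoking a deeper fact about graph homomorphisms than the mere existence of the odd cycles. The verification that your classes $\class{K}_n = {\uparrow} G_n \cup \class{K}_{\ETL_3}$ are strictly decreasing and sit strictly between $\class{K}_{\ETL_3}$ and $\class{K}_{\ETL_2}$ is correct as written (the odd-girth argument for $\class{K}_n \subsetneq \class{K}_{\ETL_2}$ is the right one). In short: your proof matches the corollary as literally stated; the paper's proof is more elementary but tacitly reads ``increasing chain'' as ``infinite chain'', which is harmless for the intended application.
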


\begin{proof}
  It suffices to find in the homomorphism order a decreasing chain of finite $3$-colorable graphs which are not $2$-colorable. The sequence of cycles of lengths $2n+1$ for $n \geq 1$ is an example of such a chain.
\end{proof}

  We can also use the countable universality of the homomorphism order in a more sophisticated way to construct a non-finitary explosive extension of $\ETL$.

\begin{proposition}
  There is a non-finitary explosive extension of $\ETL$.
\end{proposition}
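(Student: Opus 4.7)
The plan is to use Theorem~\ref{thm: countable universality} to produce a family of finite graphs whose homomorphism structure lets us construct an infinitary explosive rule that is valid in a suitable semantic class but whose finite sub-rules are not, so that the resulting explosive extension of $\ETL$ cannot be finitary.

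Concretely, I would first apply Theorem~\ref{thm: countable universality} to embed into the homomorphism order the countable partial order $P$ on $\set{a_{i}, b_{i}}{i \in \omega}$ whose only non-trivial order relations are $a_{i} \leq b_{j}$ for $i \neq j$. This produces finite graphs $A_{i}$ and $B_{j}$ with $A_{i} \to B_{j}$ iff $i \neq j$; after discarding isolated vertices, which does not affect homomorphisms into non-empty targets, I may assume all $A_{i}, B_{j}$ are non-empty and have no isolated vertices, so Lemma~\ref{lemma: alpha g} applies to them. I then let $\logic{L}$ be the extension of $\ETL$ by the set of all (possibly infinitary) explosive rules valid in every matrix of $\class{K} \assign \set{\boldmu_{+}(A_{i})}{i \in \omega}$; this is an explosive extension of $\ETL$ by construction, with $\logic{L} \leq \Log \class{K}$.

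For each $n \in \omega$ I take a fresh disjoint block of variables $\set{p_{n,u}}{u \in V(B_{n})}$ and let $\chi_{B_{n}}^{(n)}$ denote the premise of the rule $(\alpha_{B_{n}})$ rewritten in those variables. By Lemma~\ref{lemma: alpha g} (applied under this renaming), a valuation on $\boldmu_{+}(H)$ designates $\chi_{B_{n}}^{(n)}$ iff $H \to B_{n}$; since the variable blocks for distinct $n$ are disjoint, a single valuation simultaneously designates $\set{\chi_{B_{n}}^{(n)}}{n \in F}$ iff $H \to B_{n}$ for all $n \in F$. Set $\Gamma \assign \set{\chi_{B_{n}}^{(n)}}{n \in \omega}$. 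Since $A_{i} \notto B_{i}$, the formula $\chi_{B_{i}}^{(i)}$ is unsatisfiable in $\boldmu_{+}(A_{i})$, so $\Gamma \vdash \emptyset$ holds throughout $\class{K}$ and hence in $\logic{L}$. Conversely, given any finite $F \subseteq \omega$ I pick $i \in \omega \setminus F$ (possible since $\omega$ is infinite) and use $A_{i} \to B_{n}$ for each $n \in F$ to obtain a valuation on $\boldmu_{+}(A_{i}) \in \class{K}$ designating $\set{\chi_{B_{n}}^{(n)}}{n \in F}$, witnessing that this finite subset of $\Gamma$ is not an antitheorem of $\Log \class{K}$, let alone of $\logic{L}$. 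If $\logic{L}$ were finitary, the antitheorem $\Gamma$ would reduce to some finite antitheorem $\Gamma' \subseteq \Gamma$, contradicting the previous sentence.

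The principal obstacle is the very first step: arranging Theorem~\ref{thm: countable universality} to yield graphs that realize precisely the bipartite poset $P$ with the required pairwise incomparabilities among the $a_{i}$'s and among the $b_{j}$'s, and cleaning them up so that Lemma~\ref{lemma: alpha g} applies directly. Once this graph-theoretic scaffolding is set up, the rest is bookkeeping with fresh variables and a routine application of the definition of finitarity.
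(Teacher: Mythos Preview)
Your proposal is correct and follows essentially the same approach as the paper: both use the countable universality of the homomorphism order together with the rules $(\alpha_{G})$ written in disjoint variable blocks to produce an infinitary antitheorem no finite part of which is an antitheorem. The only differences are cosmetic: the paper embeds the free countably generated meet-semilattice and takes $\class{K}$ to be its set of generators, defining the logic syntactically as $\bigcap_{G \in \class{K}} (\ETL + (\alpha_{G}))$, whereas you embed your bipartite poset $P$ and define the logic semantically as the explosive part of $\Log \set{\boldmu_{+}(A_{i})}{i \in \omega}$; your $A_{i}$'s play exactly the role that the finite meets of generators play in the paper's argument.
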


\begin{proof}
  Given a graph $G$, let $\logic{L}'_{G}$ be the extension of $\ETL$ by the rule $(\alpha_{G})$. Given a countable set of graphs $\class{K}$, let $\logic{L}'_{\class{K}} \assign \bigcap_{G \in \class{K}} \logic{L}'_{G}$. By the remarks preceding Propositions~\ref{prop: axiomatizing cap exp} and~\ref{prop: mod cap lexp} this logic is axiomatized by the rule $\set{\varphi_{G}}{G \in \class{K}} \vdash \emptyset$, provided that we use distinct variables in each of the formulas $\varphi_{G}$, and moreover $\Mod \logic{L}'_{\class{K}} = \bigcup_{G \in \class{K}} \Mod \logic{L}'_{G}$. By Lemma~\ref{lemma: alpha g} the matrix $\boldmu_{+}(H)$ fails to be a model of $\logic{L}'_{\class{K}}$ if and only if $H \to G$ for each $G \in \class{K}$. 

  If the logic $\logic{L}'_{\class{K}}$ is finitary, then there is some finite $\class{K}' \subseteq \class{K}$ such that the rule $\set{\varphi_{G}}{G \in \class{K}'} \vdash \emptyset$ axiomatizes $\logic{L}'_{\class{K}}$. In other words, there is some finite $\class{K}'$ such that $\logic{L}'_{\class{K}} = \logic{L}'_{\class{K}'}$. But then these two logics agree on models of the form $\boldmu_{+}(H)$. That is, a graph lies below all the graphs of $\class{K}$ in the homomorphism order whenever it lies below all the graphs in the finite set $\class{K}'$. All we have to do now is use the countable universality of the homomorphism order to pick some $\class{K}$ such that this equivalence does not hold for any finite $\class{K}' \subseteq \class{K}$. For example, consider an embedding of the free countably generated meet-semilattice into the homomorphism order and take $\class{K}$ to be the set of its maximal elements.
\end{proof}

  Some algebraic corollaries concerning antivarieties of De~Morgan algebras may be inferred from the above description of $\Exp \Ext_{\omega} \ETL$. The finitary extensions of $\ETL$ are precisely those finitary logics which are $\omega$-complete with respect to classes of De~Morgan matrices of the form $\pair{\alg{A}}{\{ \True \}}$. They are thus in bijective correspondence with quasivarieties of De~Morgan algebras axiomatized by quasiequations where each equality takes the form $\True \approx u$ for some term $u$. Similarly, the finitary explosive extensions of $\ETL$ are almost in bijective correspondence with antivarieties of De~Morgan algebras axiomatized by negative clauses where each equality takes the form $\True \approx u$: the only difference is that the trivial singleton matrix is a model of each extension of $\ETL$, but it can be excluded by the negative clause $\True \napprox \False$. Finally, observe that the negative clause $\True \napprox u_{1} \text{ or } \dots \text{ or } \True \napprox u_{n}$ is equivalent to $\True \napprox u_{1} \wedge \dots \wedge u_{n}$.

\begin{corollary}
  There are continuum many antivarieties of De~Morgan algebras (axiomatized by negative clauses of the form $\True \napprox u$).
\end{corollary}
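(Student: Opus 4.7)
The plan is to transfer the continuum cardinality of $\Exp \Ext_{\omega} \ETL$, established in the corollary following Theorem~\ref{thm: homomorphic classes etl}, across the correspondence sketched in the paragraph immediately preceding this corollary. So the real work is to make the informal bijective correspondence between finitary explosive extensions of $\ETL$ and antivarieties of De~Morgan algebras (of the prescribed syntactic form) precise enough to carry cardinality information.

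First, I would spell out the correspondence. Each reduced model of $\ETL$ has the shape $\pair{\alg{A}}{\{\True\}}$ with $\alg{A}$ a De~Morgan algebra, by Proposition~\ref{prop: reduced models of etl}. Hence a finitary rule of the form $\Gamma \vdash \emptyset$ translates, under the dictionary $\varphi \mapsto (\True \approx \varphi)$, to a negative clause in which every equality has the form $\True \approx u$; and conversely every such negative clause translates back to an explosive rule on $\ETL$-matrices. Using the observation recorded in the excerpt that $\True \napprox u_1 \vee \dots \vee \True \napprox u_n$ is equivalent in De~Morgan algebras to the single-inequality clause $\True \napprox u_1 \wedge \dots \wedge u_n$, we may even restrict to \emph{one-inequality} negative clauses $\True \napprox u$.

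Second, I would check that this translation sends finitary explosive extensions to antivarieties and vice versa. Given $\logic{L} \in \Exp \Ext_{\omega} \ETL$, let $\class{A}_{\logic{L}}$ be the class of all De~Morgan algebras $\alg{A}$ such that $\pair{\alg{A}}{\{\True\}} \in \Mod \logic{L}$, minus the trivial algebra. This class is axiomatized by the negative clauses corresponding to the explosive axioms of $\logic{L}$ together with $\True \napprox \False$ (to rule out the trivial algebra, which is the only obstacle to a clean bijection, as noted in the excerpt). Conversely, each antivariety $\class{A}$ of De~Morgan algebras axiomatized by negative clauses with equalities of the form $\True \approx u$ gives rise to the explosive extension of $\ETL$ axiomatized by the corresponding explosive rules, and the two operations are mutually inverse modulo this trivial-algebra adjustment. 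In particular, distinct finitary explosive extensions of $\ETL$ yield distinct antivarieties.

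Finally, the preceding corollary gives $|\Exp \Ext_{\omega} \ETL| = 2^{\aleph_0}$, so the map $\logic{L} \mapsto \class{A}_{\logic{L}}$ produces continuum many distinct antivarieties of the required syntactic shape, and there are only continuum many antivarieties altogether (since the set of finite sets of negative clauses over a countable signature is countable, but infinite sets of them yield continuum many at most). The main obstacle is simply bookkeeping around the trivial algebra: one must verify that adjoining $\True \napprox \False$ to the axioms is harmless and that the injectivity of $\logic{L} \mapsto \class{A}_{\logic{L}}$ survives this adjustment; everything else is a direct consequence of Proposition~\ref{prop: reduced models of etl} and the continuum result.
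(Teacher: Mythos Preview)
Your proposal is correct and follows essentially the same approach as the paper: the corollary is stated without a separate proof, relying on the discussion paragraph immediately preceding it, and your write-up is a faithful elaboration of that paragraph---transferring the continuum cardinality of $\Exp \Ext_{\omega} \ETL$ across the (almost) bijective correspondence with antivarieties axiomatized by clauses of the form $\True \napprox u$, with the same bookkeeping around the trivial algebra via $\True \napprox \False$.
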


\begin{corollary}
  There is a class of De~Morgan algebras axiomatized by an infinitary negative clause which is not an antivariety of De~Morgan algebras.
\end{corollary}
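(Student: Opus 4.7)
The plan is to realize the desired class as the algebraic shadow of the non-finitary explosive extension $\logic{L} := \logic{L}'_{\class{K}}$ of $\ETL$ produced in the previous proposition. Recall that $\logic{L}$ is axiomatized over $\ETL$ by the single infinitary rule $\set{\varphi_G}{G \in \class{K}} \vdash \emptyset$ under the convention that the variables of $\varphi_G$ and $\varphi_{G'}$ are disjoint for distinct $G, G'$, with $\class{K}$ chosen so that no finite sub-rule is equivalent to the full rule. I will take the candidate class to be
\begin{align*}
  \class{C} := \set{\alg{A} \text{ a De~Morgan algebra}}{\alg{A} \models \textstyle\bigvee_{G \in \class{K}} \varphi_G \napprox \True},
\end{align*}
which is by construction axiomatized within the variety of De~Morgan algebras by one infinitary negative clause. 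Thanks to the disjoint-variables convention, this clause is equivalent to requiring that for some $G \in \class{K}$ every valuation sends $\varphi_G$ to a value other than $\True$, so $\class{C}$ coincides with the class of non-trivial De~Morgan algebras $\alg{A}$ for which $\pair{\alg{A}}{\{\True\}}$ is a model of~$\logic{L}$.

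To show $\class{C}$ is not an antivariety I will argue by contradiction. Suppose it were; then by Theorem~\ref{thm: generating antivarieties} the class $\class{C}$ would be closed under $\AlgPuStar$, and the strategy is to produce a family in $\class{C}$ whose ultraproduct escapes~$\class{C}$. By the non-finitariness established in the previous proposition, for every finite $\class{K}' \subseteq \class{K}$ there is a graph $H_{\class{K}'}$ without isolated vertices satisfying $H_{\class{K}'} \to G'$ for every $G' \in \class{K}'$ yet $H_{\class{K}'} \notto G$ for some $G \in \class{K}$. By Lemma~\ref{lemma: alpha g} the second condition places the underlying algebra $\alg{A}_{\class{K}'}$ of $\boldmu_+(H_{\class{K}'})$ in $\class{C}$, while the first condition yields, for each $G' \in \class{K}'$, a valuation on $\boldmu_+(H_{\class{K}'})$ designating $\varphi_{G'}$; since distinct $\varphi_{G'}$ share no variables, these partial valuations amalgamate into a single valuation $v_{\class{K}'}$ with $v_{\class{K}'}(\varphi_{G'}) = \True$ for every $G' \in \class{K}'$.

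To finish, I will pick an ultrafilter $\mathcal{U}$ on $[\class{K}]^{<\omega}$ containing every set $\set{\class{K}' \in [\class{K}]^{<\omega}}{G \in \class{K}'}$ for $G \in \class{K}$, which exists because these sets enjoy the finite intersection property, and form $\alg{B} := \prod_{\class{K}'} \alg{A}_{\class{K}'} / \mathcal{U}$. Closure of $\class{C}$ under $\AlgPuStar$ forces $\alg{B} \in \class{C}$, but the valuation $v(p) := [v_{\class{K}'}(p)]_{\mathcal{U}}$ on $\alg{B}$ satisfies $v(\varphi_G) = \True$ for every $G \in \class{K}$, because $v_{\class{K}'}(\varphi_G) = \True$ holds on the $\mathcal{U}$-large index set $\set{\class{K}'}{G \in \class{K}'}$. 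Hence $\alg{B}$ violates the defining infinitary clause of $\class{C}$, a contradiction. The main technical obstacle is keeping the valuations $v_{\class{K}'}$ on different algebras coherent enough for their ultraproduct to provide a single valuation designating every $\varphi_G$ simultaneously; this is precisely the role of the disjoint-variables convention inherited from the axiomatization of $\logic{L}$.
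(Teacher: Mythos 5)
Your proof is correct and follows the route the paper intends: it realizes the non-finitary explosive extension $\logic{L}'_{\class{K}}$ of $\ETL$ as the class of De~Morgan algebras cut out by the infinitary negative clause $\bigvee_{G\in\class{K}} \True \napprox \varphi_G$, and then rules out antivariety-hood by exhibiting a failure of closure under ultraproducts. The paper leaves that last step implicit, and your ultrafilter argument over $[\class{K}]^{<\omega}$ --- using the disjoint-variable convention to amalgamate the witnessing valuations and Ło\'s-style componentwise evaluation to designate every $\varphi_G$ at once --- supplies exactly the missing detail and is sound.
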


  These results depend essentially on the fact that the constants $\True$ and $\False$ are part of the signature of De~Morgan algebras. If~we drop them from this signature, we obtain the variety of \emph{De Morgan lattices}, which has only finitely many sub\-quasivarieties as shown by Pynko~\cite{pynko99c}. In particular, the only non-empty proper antivariety of De~Morgan lattices is axiomatized by $x \approx \dmneg x$.

  The existence of continuum many antivarieties of De~Morgan algebras complements the result of Adams \& Dziobiak~\cite{adams+dziobiak94} that there are continuum many quasi\-varieties of Kleene algebras. Let us observe, for the sake of completeness, that by contrast the lattice of proper anti\-varieties of Kleene algebras is rather trivial. Here by a \emph{proper} antivariety of Kleene algebras we mean an antivariety strictly included in the whole variety.

\begin{proposition}
  There are only two non-empty proper antivarieties of Kleene algebras, namely those axiomatized by $\True \napprox \False$ and by $x \napprox \dmneg x$.
\end{proposition}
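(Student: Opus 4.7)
The plan is to classify the non-empty proper antivarieties $\class{A}$ of Kleene algebras by a dichotomy on whether $\Kthree \in \class{A}$.

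First I would record three easy preliminaries. No proper antivariety may contain the trivial one-element algebra $\alg{T}$, since every Kleene algebra admits a homomorphism onto $\alg{T}$ and hence $\AlgInvH(\alg{T})$ is the whole variety; so every member of $\class{A}$ satisfies $\True \neq \False$ and contains $\Btwo$ as the subalgebra generated by the constants, whence $\Btwo \in \class{A}$ by closure under $\AlgS$. Moreover, the Kleene inequality $x \wedge \dmneg x \leq y \vee \dmneg y$ forces $\dmneg$-fixpoints to be unique (two fixpoints $m_{1}, m_{2}$ satisfy $m_{1} = m_{1} \wedge \dmneg m_{1} \leq m_{2} \vee \dmneg m_{2} = m_{2}$, and symmetrically), and any fixpoint $m$ generates a subalgebra $\{\False, m, \True\} \cong \Kthree$ inside its ambient algebra.

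Now I split into two cases depending on whether some member of $\class{A}$ carries a $\dmneg$-fixpoint. If so, then $\Kthree \in \class{A}$ by the preceding observation and closure under $\AlgS$; and since $\AlgS(\Kthree) = \{\Btwo, \Kthree\}$ and $\AlgPuStar(\Kthree) = \{\Kthree\}$, Theorem~\ref{thm: generating antivarieties}(3) identifies the antivariety generated by $\Kthree$ with $\AlgInvH(\{\Btwo, \Kthree\})$. Every non-trivial Kleene algebra surjects onto one of its subdirectly irreducible Birkhoff factors, necessarily $\Btwo$ or $\Kthree$, so $\AlgInvH(\{\Btwo, \Kthree\})$ is exactly the class of non-trivial Kleene algebras, which is axiomatized by $\True \napprox \False$; combined with the preliminary bound, $\class{A}$ equals this class.

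If instead no member of $\class{A}$ has a fixpoint, then $\class{A}$ is contained in the antivariety axiomatized by $x \napprox \dmneg x$; showing equality reduces to establishing that every no-fixpoint Kleene algebra admits a homomorphism to $\Btwo$ (and hence lies in $\AlgInvH(\Btwo) \subseteq \class{A}$). For finite $\alg{A}$, its Birkhoff subdirect decomposition presents it as a subdirect product of $\Btwo$'s and $\Kthree$'s; if every factor were $\Kthree$, I would choose $a_{i}$ in the $i$-th coordinate with $\pi_{i}(a_{i}) = m$ and form the finite join $b \assign \bigvee_{i} (a_{i} \wedge \dmneg a_{i})$, noting $\pi_{j}(a_{i} \wedge \dmneg a_{i}) \in \{\False, m\}$ always and $\pi_{i}(a_{i} \wedge \dmneg a_{i}) = m$, so that $\pi_{i}(b) = m$ in every coordinate, producing the forbidden fixpoint. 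Hence some factor is $\Btwo$ and projection supplies the desired $\alg{A} \to \Btwo$. The infinite case follows via local finiteness of the variety of De~Morgan algebras: $\alg{A}$ is the directed union of its finite (hence no-fixpoint) subalgebras $\alg{A}_{i}$, and pasting the maps $\alg{A}_{i} \to \Btwo$ through an ultraproduct $\prod_{i} \alg{A}_{i} / \filter{U}$ yields, by {\L}o\'s's theorem and the finiteness of $\Btwo$ (so $\prod_{i} \Btwo / \filter{U} \cong \Btwo$), the desired homomorphism $\alg{A} \to \Btwo$.

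The main obstacle will be this finite-to-infinite lifting in the second case. A topological alternative is to invoke Cornish--Fowler duality: homomorphisms $\alg{A} \to \Btwo$ correspond to self-dual points of the dual Priestley space with involution $\dual$, while $\dmneg$-fixpoints in $\alg{A}$ correspond to clopen upsets $U$ with $U \sqcup \dual[U] = X$; absence of self-dual points then produces such a $U$ by a coloring-plus-compactness argument, closing the case by contrapositive.
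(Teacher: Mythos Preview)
Your argument is correct. Both you and the paper organize the proof around the same dichotomy (does the antivariety contain an algebra with a $\dmneg$-fixpoint?), and both reduce Case~A to the observation that $\Kthree \in \class{A}$ forces $\class{A}$ to contain every non-trivial Kleene algebra. The genuine difference lies in Case~B, where one must show that the no-fixpoint class coincides with the antivariety generated by~$\Btwo$. The paper dispatches this by citing Pynko's result that $\Btwo \times \Kthree$ generates the relevant quasivariety, which immediately yields the antivariety description. You instead prove from scratch that every no-fixpoint Kleene algebra surjects onto~$\Btwo$: for finite algebras via the subdirect decomposition (a purely $\Kthree$ representation would produce a fixpoint), and for infinite algebras by a compactness lift through local finiteness. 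Your route is longer but fully self-contained and avoids the external citation; the paper's is a two-line appeal to a known classification. Your duality sketch at the end also works cleanly in the Kleene case, since comparability of $x$ and $\dual x$ makes $\{x : \dual x < x\}$ the desired clopen upset directly.
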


\begin{proof}
  The antivariety axiomatized by $\True \napprox \False$ is the largest proper antivariety of Kleene algebras, since it only excludes the trivial algebra. Conversely, let $\class{K}$ be a non-empty antivariety of Kleene algebras. Then $\class{K}$ contains a non-trivial algebra, therefore $\Btwo \in \class{K}$ and $\Btwo \times \Kthree \in \class{K}$ by closure under homomorphic preimages. Pynko~\cite[Proposition~4.5]{pynko99c} shows that $\Btwo \times \Kthree$ generates the quasivariety of Kleene algebras axiomatized by $x \approx \dmneg x$. The antivariety axiomatized by $x \approx \dmneg x$ is therefore the smallest non-empty antivariety of Kleene algebras. On the other hand, if $\class{K}$ contains a non-trivial Kleene algebra which fails $x \napprox \dmneg x$, then it contains $\Kthree$ and therefore all Kleene algebras.
\end{proof}

  Finally, we can use the isomorphism between finitary explosive super-Belnap logics and homomorphic classes of non-empty graphs to show that certain logics are not complete with respect to any finite set of finite matrices.

  For the purposes of the following theorems, a homomorphic class of non-empty graphs is called \emph{non-exceptional} if it is not ${\uparrow} K_{2}$. Given a class $\class{K}$ of non-empty graphs, the non-exceptional homomorphic class generated by $\class{K}$ is the homo\-morphic class generated by $\class{K}$, except when this class would be ${\uparrow} K_{2}$. In that case, we take it to be the class of all non-empty graphs instead.

\begin{theorem}
  Consider a finitary explosive extension $\logic{L}$ of $\ETL$ and a class of non-empty graphs $\class{K}$. Then $\logic{L}$ is $\omega$-complete with respect to $\boldmu_{+}[\class{K}] \times \ETLmatrix \assign \set{\boldmu_{+}(G) \times \ETLmatrix}{G \in \class{K}}$ if and only if $\class{K}_{\logic{L}}$ is generated as a non-exceptional homomorphic class of non-empty graphs by $\class{K}$.
\end{theorem}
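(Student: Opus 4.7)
The plan is to pass through the dual lattice isomorphism of Theorem~\ref{thm: homomorphic classes etl} between $\Exp \Ext_{\omega} \ETL$ and the lattice of non-exceptional homomorphic classes of non-empty graphs. Writing $\logic{L}_{G} \assign \Log (\boldmu_{+}(G) \times \ETLmatrix)$ for a non-empty graph $G$, the statement that $\logic{L}$ is $\omega$-complete with respect to $\boldmu_{+}[\class{K}] \times \ETLmatrix$ says precisely that $\logic{L}$ is the meet, in $\Exp \Ext_{\omega} \ETL$, of the family $\{ \logic{L}_{G} : G \in \class{K} \}$. Since the isomorphism $\logic{L} \mapsto \class{K}_{\logic{L}}$ reverses order, this meet translates into the join of $\{ \class{K}_{\logic{L}_{G}} : G \in \class{K} \}$ in the lattice of non-exceptional homomorphic classes, so the task reduces to identifying $\class{K}_{\logic{L}_{G}}$ for each $G$ and then forming the resulting join.

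The key step is to show that $\class{K}_{\logic{L}_{G}}$ equals the non-exceptional homomorphic class generated by $\{ G \}$. By Proposition~\ref{prop: log of product} one has $\logic{L}_{G} = \ETL \cup \Exp_{\BD} \Log \boldmu_{+}(G)$, using that $\Exp_{\BD} \Log \boldmu_{+}(G) \supseteq \ECQ = \Exp_{\BD} \ETL$ since $\boldmu_{+}(G)$ is a non-trivial model of $\ETL$. Unwinding the definition of the explosive part, $\boldmu_{+}(H) \in \Mod \logic{L}_{G}$ is equivalent to every antitheorem of $\boldmu_{+}(G)$ being an antitheorem of $\boldmu_{+}(H)$. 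If $G \to H$, Fact~\ref{fact: simple homomorphisms} supplies a matrix homomorphism $h \colon \boldmu_{+}(H) \to \boldmu_{+}(G)$, and composing any $\Gamma$-designating valuation on $\boldmu_{+}(H)$ with $h$ yields a $\Gamma$-designating valuation on $\boldmu_{+}(G)$, so antitheorems pull back. Conversely, if $G \not\to H$, I would pass to the isolated-vertex-free reducts $\overline{G}$ and $\overline{H}$ (the extra $\ETLmatrix^{k}$-factors arising from isolated vertices contribute only antitheorems of $\ETL$, which every non-empty $\boldmu_{+}(H)$ already validates); then Lemma~\ref{lemma: alpha g} exhibits $\varphi_{H}$ as an antitheorem of $\boldmu_{+}(\overline{G})$, while the canonical valuation $p_{u} \mapsto \{ u \}$ witnesses that $\varphi_{H}$ is not an antitheorem of $\boldmu_{+}(\overline{H})$.

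The main obstacle is the exceptional case, in which every $G \in \class{K}$ is bipartite with at least one edge, so that each ${\uparrow} G$ collapses to the excluded class ${\uparrow} K_{2}$. Here one must verify separately that the join in the lattice of non-exceptional classes is the class of all non-empty graphs rather than ${\uparrow} K_{2}$. The mutual graph homomorphisms $G \to K_{2}$ and $K_{2} \to G$ yield matrix homomorphisms between $\boldmu_{+}(G)$ and $\boldmu_{+}(K_{2})$ in both directions, forcing their antitheorems to coincide; and $\boldmu_{+}(K_{2})$ contains a four-element submatrix isomorphic to $\ETLmatrix$ (generated by the complementary self-dual elements $\{ u, \dual v \}$ and $\{ v, \dual u \}$), so $\Log \boldmu_{+}(K_{2}) = \ETL$ and hence $\Exp_{\BD} \Log \boldmu_{+}(G) = \ECQ$ and $\logic{L}_{G} = \ETL$. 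Therefore $\logic{L} = \ETL$ in this case, which corresponds under the isomorphism to the class of all non-empty graphs---precisely the non-exceptional homomorphic class generated by $\class{K}$. Combining with the generic case, $\class{K}_{\logic{L}}$ equals the non-exceptional homomorphic class generated by $\class{K}$, yielding the desired equivalence.
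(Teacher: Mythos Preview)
Your argument is correct, and both your proof and the paper's rest on the dual isomorphism of Theorem~\ref{thm: homomorphic classes etl}. However, the paper reaches the conclusion in a single stroke whereas you take a detour.

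The paper observes that $\logic{L}$ being $\omega$-complete with respect to $\boldmu_{+}[\class{K}] \times \ETLmatrix$ is equivalent to $\logic{L}$ being the \emph{largest} element of $\Exp \Ext_{\omega} \ETL$ with $\boldmu_{+}[\class{K}] \subseteq \Mod \logic{L}$ (the factor $\ETLmatrix$ being irrelevant for explosive extensions of $\ETL$). Since by definition $\boldmu_{+}[\class{K}] \subseteq \Mod \logic{L}$ is the same as $\class{K} \subseteq \class{K}_{\logic{L}}$, the dual isomorphism converts ``largest $\logic{L}$ with $\class{K} \subseteq \class{K}_{\logic{L}}$'' directly into ``smallest non-exceptional homomorphic class $\class{K}_{\logic{L}}$ containing $\class{K}$''. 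No computation of individual $\class{K}_{\logic{L}_{G}}$ is needed, and the exceptional case is absorbed automatically into the phrase ``non-exceptional homomorphic class''.

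Your route---decomposing the meet into the $\logic{L}_{G}$, computing each $\class{K}_{\logic{L}_{G}}$ via Lemma~\ref{lemma: alpha g}, and then taking the join---effectively re-derives part of the content of Theorem~\ref{thm: homomorphic classes etl} from scratch. It works, but it does redundant labour: the theorem already tells you that $\class{K}_{\logic{L}_{G}}$ is the non-exceptional class generated by $G$, since $\logic{L}_{G}$ is the largest explosive extension of $\ETL$ with $\boldmu_{+}(G)$ as a model. One small gap in your write-up: Lemma~\ref{lemma: alpha g} requires both graphs to be non-empty without isolated vertices, so your ``converse'' step does not literally cover the case where $\overline{H} = \emptyset$ (edgeless $H$); this case needs a separate one-line check that $\logic{L}_{G} > \ETL$ whenever $G$ is non-bipartite, e.g.\ via the rule $(\alpha_{K_{2}})$.
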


\begin{proof}
  By Theorem~\ref{thm: homomorphic classes etl}, the logic $\logic{L}$ is the largest finitary explosive extension of $\logic{L}$ such that $\boldmu_{+}[\class{K}] \subseteq \Mod \logic{L}$ (or equivalently, $\boldmu_{+}[\class{K}] \times \ETLmatrix \subseteq \Mod \logic{L}$) if and only if $\class{K}_{\logic{L}}$ is the smallest non-exceptional homomorphic class of non-empty graphs such that $\class{K} \subseteq \class{K}_{\logic{L}}$.
\end{proof}

\begin{theorem}
  Consider a proper finitary explosive extension $\logic{L}$ of $\BD$ and a class of non-empty graphs $\class{K}$. Then $\logic{L}$ is $\omega$-complete with respect to $\boldmu_{+}[\class{K}] \times \BDmatrix \assign \set{\boldmu_{+}(G) \times \BDmatrix}{G \in \class{K}}$ if and only if $\class{K}_{\logic{L}}$ is generated as a non-exceptional homomorphic class of non-empty graphs by $\class{K}$.
\end{theorem}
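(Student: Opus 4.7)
The plan is to reduce this to the preceding theorem about $\omega$-completeness with respect to $\boldmu_{+}[\class{K}] \times \ETLmatrix$, exploiting the isomorphism of Theorem~\ref{thm: iso exp ext etl} between proper finitary explosive extensions of $\BD$ and finitary explosive extensions of $\ETL$.

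First I would run the product-formula computation from Proposition~\ref{prop: log of product}. Since $\Log \BDmatrix = \BD$ is the base logic and $\Log \boldmu_{+}(G) \geq \BD$, this collapses to $\Log (\boldmu_{+}(G) \times \BDmatrix) = \Exp_{\BD} \Log \boldmu_{+}(G)$, whence $\logic{M} \assign \Log_{\omega} (\boldmu_{+}[\class{K}] \times \BDmatrix)$ is automatically a finitary explosive extension of $\BD$. An analogous computation, using that every $\boldmu_{+}(G)$ is a model of $\ETL$ and validates $p, \dmneg p \vdash \emptyset$ (so $\Exp_{\BD} \Log \boldmu_{+}(G) \geq \Exp_{\BD} \ETL = \ECQ$), yields $\Log (\boldmu_{+}(G) \times \ETLmatrix) = \ETL \cup \Exp_{\BD} \Log \boldmu_{+}(G)$.

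Taking intersections over $G \in \class{K}$ and finitary parts, I then obtain $\logic{L}' \assign \Log_{\omega} (\boldmu_{+}[\class{K}] \times \ETLmatrix) = \ETL \vee \logic{M}$, and hence $\Exp_{\BD} \logic{L}' = \logic{M}$ by the isomorphism of Theorem~\ref{thm: iso exp ext etl}. The preceding theorem now identifies $\class{K}_{\logic{L}'}$ as the non-exceptional homomorphic class of non-empty graphs generated by $\class{K}$. Since every $\boldmu_{+}(G)$ with $G$ non-empty is already a model of $\ETL$, the conditions $\boldmu_{+}(G) \in \Mod \logic{L}'$ and $\boldmu_{+}(G) \in \Mod \logic{M}$ coincide, whence $\class{K}_{\logic{M}} = \class{K}_{\logic{L}'}$ is that same generated class.

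Finally, $\logic{L}$ is $\omega$-complete with respect to $\boldmu_{+}[\class{K}] \times \BDmatrix$ precisely when $\logic{L} = \logic{M}$, which by the dual isomorphism of Theorem~\ref{thm: homomorphic classes ecq} translates to $\class{K}_{\logic{L}} = \class{K}_{\logic{M}}$, i.e.\ to the stated condition. The subtlety I expect to require the most attention is the exceptional case: when $\class{K}$ generates ${\uparrow} K_{2}$ as an ordinary homomorphic class, $\logic{L}'$ collapses to $\ETL$ and $\logic{M}$ collapses to $\ECQ$, and the non-exceptional closure convention---passing to the class of all non-empty graphs---is precisely calibrated to keep the equality $\class{K}_{\logic{M}} = \class{K}_{\logic{L}'}$ aligned with this condition uniformly.
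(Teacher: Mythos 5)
Your proposal is correct, and every step checks out: the collapse $\Log(\boldmu_{+}(G) \times \BDmatrix) = \Exp_{\BD}\Log\boldmu_{+}(G)$, the identity $\Log(\boldmu_{+}(G)\times\ETLmatrix) = \ETL \cup \Exp_{\BD}\Log\boldmu_{+}(G)$, the passage to intersections and finitary parts, and the use of injectivity of $\logic{L} \mapsto \class{K}_{\logic{L}}$ on $\Exp\Ext_{\omega}\ECQ$ are all legitimate, and your handling of the ${\uparrow}K_{2}$ exception is right. The paper's own proof is different in organization, though not in substance: it simply reruns the proof of the $\ETLmatrix$-version with $\BDmatrix$ in place of $\ETLmatrix$, i.e.\ it observes directly that $\Log_{\omega}(\boldmu_{+}[\class{K}]\times\BDmatrix)$ is the largest member of $\Exp\Ext_{\omega}\ECQ$ having all of $\boldmu_{+}[\class{K}]$ among its models, and then translates ``largest such logic'' into ``smallest non-exceptional homomorphic class containing $\class{K}$'' via the dual isomorphism of Theorem~\ref{thm: homomorphic classes ecq}. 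You instead take the $\ETL$-version as a black box and transport it along the isomorphism $\logic{L} \mapsto \ETL \vee \logic{L}$, $\logic{L} \mapsto \Exp_{\BD}\logic{L}$ of Theorem~\ref{thm: iso exp ext etl} --- which is exactly how the paper itself derives Theorem~\ref{thm: homomorphic classes ecq} from Theorem~\ref{thm: homomorphic classes etl}, so your route is very much in the paper's spirit. What your version buys is that the bookkeeping the paper leaves implicit in ``replace $\ETLmatrix$ by $\BDmatrix$'' (namely that $\logic{L}' = \ETL \vee \logic{M}$, that $\Exp_{\BD}\logic{L}' = \logic{M}$, and that $\class{K}_{\logic{M}} = \class{K}_{\logic{L}'}$ because every $\boldmu_{+}(G)$ is already a model of $\ETL$) is spelled out; what it costs is a detour through the $\ETL$ side that the direct argument avoids.
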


\begin{proof}
  Replace $\ETLmatrix$ by $\BDmatrix$ and Theorem~\ref{thm: homomorphic classes etl} by Theorem~\ref{thm: homomorphic classes ecq} in the previous proof.
\end{proof}

\begin{fact}
  $\ECQ_{n}$ and $\ETL_{n}$ are not complete with respect to any finite set of finite matrices for $n \geq 2$.
\end{fact}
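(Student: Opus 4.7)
The plan is to combine the order-reversing bijection between finitary explosive extensions of $\ETL$ (respectively $\BD$) and homomorphic classes of non-empty finite graphs with a classical Erd\H{o}s-style graph-theoretic obstruction. Fix $n \geq 2$. By Fact \ref{fact: n-colorable matrix} together with Lemma \ref{lemma: gamma construction}, for a non-empty graph $H$ without isolated vertices the matrix $\boldmu_{+}(H)$ is a model of $\ETL_{n}$ (and equivalently of $\ECQ_{n}$) if and only if $H$ is not $n$-colorable; taking isolated vertices into account one sees that $\class{K}_{\ETL_{n}} = \class{K}_{\ECQ_{n}}$ coincides with the class of non-empty graphs of chromatic number exceeding $n$. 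It therefore suffices to show (i) that $\omega$-completeness with respect to a finite set of finite matrices would force this class to be generated by finitely many graphs in the homomorphism order, and (ii) that this class is not finitely generated.

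For (i), suppose $\ETL_{n} = \Log_{\omega} \{M_{1}, \dots, M_{r}\}$ with each $M_{i}$ finite. Replacing each $M_{i}$ by its Leibniz reduct preserves the logic and yields $M_{i} \cong \boldmu(G_{i}, \emptyset, k_{i})$, since reduced models of $\ETL_{n}$ are reduced models of $\ETL$. Since $\ETL_{n}$ is an explosive extension of $\ETL$ and $\Exp_{\ETL}$ commutes with intersections, $\ETL_{n} = \bigcap_{i} \Exp_{\ETL} \Log M_{i}$, which under the order-reversing isomorphism of Theorem \ref{thm: homomorphic classes etl} translates to $\class{K}_{\ETL_{n}} = \bigcup_{i} \class{K}_{\Exp_{\ETL} \Log M_{i}}$. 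I will then verify that each summand is a principal upset in the homomorphism order, using Lemma \ref{lemma: alpha g} and Fact \ref{fact: simple homomorphisms}: when $k_{i} = 0$, a graph homomorphism $G_{i} \to H$ yields a matrix homomorphism $\boldmu_{+}(H) \to \boldmu_{+}(G_{i})$ reflecting antitheorems, and the converse follows by instantiating Lemma \ref{lemma: alpha g} at $G' = H$; when $k_{i} \geq 1$, the additional constraint that $\boldmu_{+}(H)$ inherit the antitheorems of $\CLmatrix$ (all classical contradictions, equivalently $\chi_{m} \vdash \emptyset$ for every $m$) forces $H$ to contain a reflexive vertex by Fact \ref{fact: contains reflexive vertex}. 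Hence $\class{K}_{\ETL_{n}}$ is a finite union of principal upsets, i.e.\ finitely generated. The same argument handles $\ECQ_{n}$ via Theorem \ref{thm: homomorphic classes bd}; any $\boldmu_{-}(H_{i})$ factors appearing in the Leibniz reducts contribute nothing to the explosive part, since $\Exp_{\BD} \Log \boldmu_{-}(H_{i}) \logleq \Exp_{\BD} \LP = \BD$.

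For (ii), I will invoke the classical theorem of Erd\H{o}s: for every $k, g \in \omega$ there exists a finite loopless graph of chromatic number $\geq k$ and girth $\geq g$. Suppose $\class{K}_{\ETL_{n}}$ is generated by finitely many graphs $F_{1}, \dots, F_{s}$, each satisfying $\chi(F_{j}) \geq n+1 \geq 3$. Pick a loopless graph $H$ with $\chi(H) \geq n+1$ and girth exceeding $\max_{j} |V(F_{j})|$. Then $H \in \class{K}_{\ETL_{n}}$, so some homomorphism $F_{j} \to H$ exists; its image in $H$ has fewer vertices than the girth of $H$, hence contains no cycle, hence is a forest, hence $2$-colorable. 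Composition gives $F_{j} \to K_{2}$, whence $\chi(F_{j}) \leq 2$, contradicting $\chi(F_{j}) \geq 3$. The main obstacle is the case analysis in step (i) required to pin down each $\class{K}_{\Exp_{\ETL} \Log M_{i}}$ as a principal upset; once this reduction is established, the Erd\H{o}s argument closes the proof at once.
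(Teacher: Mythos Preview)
Your proposal is correct and follows essentially the same route as the paper: both reduce the question to showing that the homomorphic class $\class{K}_{\ETL_n} = \class{K}_{\ECQ_n}$ of non-$n$-colorable graphs is not finitely generated, and both close with Erd\H{o}s's theorem on graphs of high girth and high chromatic number.

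Your step (i) is considerably more explicit than the paper's. The paper simply asserts ``it suffices to prove that $\class{K}_{\logic{L}}$ is not finitely generated as a homomorphic class'', leaning on the two theorems immediately preceding the Fact without spelling out how completeness with respect to \emph{arbitrary} finite matrices (rather than matrices of the special form $\boldmu_{+}(G) \times \ETLmatrix$) forces finite generation. Your decomposition $\ETL_n = \bigcap_i \Exp_{\ETL}\Log M_i$ together with the case analysis on $k_i$ is a legitimate way to fill this in. One small point you gloss over: in the case $k_i = 0$, showing that $\class{K}_{\Exp_{\ETL}\Log\boldmu_{+}(G_i)}$ is exactly the principal upset ${\uparrow} G_i$ via Lemma~\ref{lemma: alpha g} requires $G_i$ (after stripping isolated vertices) not to be bipartite, since ${\uparrow} K_2$ is the exceptional class excluded from the correspondence of Theorem~\ref{thm: homomorphic classes etl}. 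This is harmless here, because each $M_i$ is a model of $\ETL_n$ with $n \geq 2$, so $G_i$ is not $n$-colorable and in particular not $2$-colorable. Your step (ii) is the same girth argument as the paper's, phrased in a slightly more hands-on way (bounding the image by $|V(F_j)|$ rather than directly comparing girths).
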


\begin{proof}
  If $\logic{L} = \ETL_{n}$ or $\logic{L} = \ECQ_{n}$, then $\class{K}_{\logic{L}}$ is the class of all non-$n$-colorable graphs. It~suffices to prove that $\class{K}_{\logic{L}}$ is not finitely generated as a homomorphic class of graphs. This is a corollary of the classical theorem of Erd\H{o}s~\cite{erdos59} which states that for each positive $n$ and $g$ there is a graph of girth at least~$g$ which is not $n$-colorable. Here the \emph{girth} of a graph is the length of its shortest cycle ($1$~if the graph contains a loop). If $G \to H$, then the girth of $H$ is at most equal to the girth of $G$. Thus if $\class{K}_{\logic{L}}$ were finitely generated, there would be an upper bound on the girth of graphs in $\class{K}_{\logic{L}}$, contradicting the theorem of Erd\H{o}s.
\end{proof}

\section{A graph-theoretic description of \texorpdfstring{$\Ext_{\omega} \ETL$}{Ext ETL}}
\label{sec: graphs to logics}

  In theory, the whole lattice $\Ext_{\omega} \BD$ may be described in graph-theoretic terms. In practice, such a description is rather cumber\-some and in\-elegant, owing to the fact that we need to deal with triples $\langle G, H, k \rangle$ rather than merely with individual graphs. Fortunately, restricting to $\Ext_{\omega} \ETL$ will allow us to disregard the $H$ component of these triples, and further restricting to the interval $[\ETL, \ETL_{\omega}]$ will allow us to disregard the $k$ component as well. In this section, we work out the graph-theoretic description of $\Ext_{\omega} \ETL$ and its restriction to $[\ETL, \ETL_{\omega}]$.

  The key observation to recall here is that the lattice $\Ext_{\omega} \BD$ ($\Ext_{\omega} \ETL$) is iso\-morphic, by Theorem~\ref{thm: gratzer-quackenbush}, to the lattice of classes of finite reduced models of $\BD$ ($\ETL$) closed under isomorphisms, Leibniz reducts of submatrices ($\AlgS^{*}$), and Leibniz reducts of finite products ($\AlgP_{\omega}^{*}$). Because the class of finite reduced models of $\BD$ ($\ETL$) is closed under finite products, the last condition amounts to closure under finite products ($\AlgP_{\omega}$).

\begin{theorem}
  $\Ext_{\omega} \BD$ ($\Ext_{\omega} \ETL$) is isomorphic via $\logic{L} \mapsto \Mod^{*}_{\omega} \logic{L}$ to the lattice of classes of matrices $\boldmu(G, H, k)$ (with $H = \emptyset$) closed under finite direct products and Leibniz reducts of submatrices.
\end{theorem}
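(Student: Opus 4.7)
The plan is to derive this theorem as a direct specialization of Theorem~\ref{thm: gratzer-quackenbush} combined with the structural characterization of finite reduced models of $\BD$ and $\ETL$ established in the preceding section.

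First, I would verify that the hypotheses of Theorem~\ref{thm: gratzer-quackenbush} are satisfied. Both $\BD$ and $\ETL$ are finitary by definition. Since every finite reduced model of $\BD$ is a De~Morgan algebra equipped with a lattice filter, the class $\Alg^{*} \BD$ is contained in the variety of De~Morgan algebras, which is locally finite because it is generated by the finite algebra $\DMfour$. The inclusion $\Alg^{*} \ETL \subseteq \Alg^{*} \BD$ handles the case of $\ETL$. Invoking Theorem~\ref{thm: gratzer-quackenbush} then yields that $\Ext_{\omega} \BD$ is dually isomorphic to the lattice of subclasses of $\Mod_{\omega}^{*} \BD$ closed under $\AlgS^{*}$ and $\AlgP_{\omega}^{*}$ via the maps in the statement, and analogously for $\ETL$.

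Second, by the structural classification of the preceding section, the class $\Mod_{\omega}^{*} \BD$ consists, up to isomorphism, precisely of the matrices $\boldmu(G, H, k)$, while $\Mod_{\omega}^{*} \ETL$ consists of the matrices $\boldmu(G, \emptyset, k)$. Substituting these identifications rewrites the underlying set of each lattice in the desired form, and closure under $\AlgS^{*}$ becomes literally closure under Leibniz reducts of submatrices.

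The one remaining point is to replace closure under $\AlgP_{\omega}^{*}$ by closure under plain finite direct products. \emph{A priori} a finite product of reduced matrices need not be reduced, so $\AlgP_{\omega}^{*}$ must in general be understood as taking finite products possibly followed by a Leibniz reduction. In our setting this complication evaporates thanks to Fact~\ref{fact: disjoint unions}, which gives
\begin{equation*}
  \boldmu(G, H, k) \times \boldmu(G', H', k') \cong \boldmu(G \sqcup G', H \sqcup H', k + k').
\end{equation*}
Every finite product of matrices of the form $\boldmu(G, H, k)$ is therefore already reduced and of the same form, and the same holds within the sub-class with $H = \emptyset$. Hence closure under $\AlgP_{\omega}^{*}$ collapses to closure under plain finite direct products, yielding the description stated in the theorem. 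No genuine obstacle is encountered: the argument simply assembles Theorem~\ref{thm: gratzer-quackenbush}, the graph-theoretic classification of $\Mod_{\omega}^{*} \BD$ and $\Mod_{\omega}^{*} \ETL$, and the product formula of Fact~\ref{fact: disjoint unions}.
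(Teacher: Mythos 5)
Your proposal is correct and follows essentially the same route as the paper: the paper likewise obtains this theorem by specializing Theorem~\ref{thm: gratzer-quackenbush} to $\BD$ and $\ETL$, identifying $\Mod_{\omega}^{*}\BD$ and $\Mod_{\omega}^{*}\ETL$ with the matrices $\boldmu(G,H,k)$ and $\boldmu(G,\emptyset,k)$, and noting that these classes are closed under finite products so that $\AlgP_{\omega}^{*}$ reduces to plain finite products. Your explicit appeal to Fact~\ref{fact: disjoint unions} to justify that last closure step is exactly the right justification, which the paper leaves implicit.
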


  We already know that finite products correspond dually to finite disjoint unions. We also know that the Leibniz reduct $\matr{A}^{*}$ of a finite De~Morgan matrix $\matr{A}$ corresponds to the Leibniz subframe of the dual frame $\matr{A}_{+}$ of $\matr{A}$ (Proposition~\ref{prop: leibniz subframes}), obtained by restricting to $\min \matr{A}_{+} \cup \dual [\min \matr{A}_{+}]$. The only remaining task is to describe in dual terms the submatrices of a given finite De~Morgan matrix $\matr{A}$.

  Subalgebras correspond dually to quotients of involutive posets in the duality of Cornish \& Fowler~\cite{cornish+fowler77} for De~Morgan algebras. If $\lesssim$ is a \emph{compatible} preorder on $P$, i.e.\ $u \leq v$ implies $u \lesssim v$ and moreover $u \lesssim v$ implies $\dual v \lesssim \dual u$, then the quotient $P / {\lesssim}$ is the involutive poset of equivalence classes of ${\lesssim}$ equipped with the natural order and involution. If the relation ${\lesssim}$ is the smallest compatible preorder on $P$ such that $u \lesssim v$, we say that ${\lesssim}$ is the principal preorder \emph{generated} by $\pair{u}{v}$, or less formally that ${\lesssim}$ is obtained by adding $u \leq v$ to $P$. A \emph{principal quotient} of $P$ is a quotient by a preorder generated by some pair $\pair{u}{v}$.

  If $P$ is moreover a frame and $\lesssim$ is a compatible preorder on $P$, then we can turn $P / {\lesssim}$ into a frame by taking the designated set to be the upward closure of $D_{P}$ with respect to ${\lesssim}$. This is the only way to define a designated set on $P / {\lesssim}$ which makes the canonical map $\pi\colon P \to P / {\lesssim}$ strict. Submatrices of $P^{+}$ thus correspond dually to quotients of~$P$ in this sense, and up to isomorphism $\AlgS^{*}(P^{+})$ consists of complex algebras of Leibniz subframes of quotients of $P$.

  It will be convenient to describe the result of taking the Leibniz reduct of a submatrix, or dually the Leibniz subframe of a quotient frame, by a sequence of simpler constructions.  A \emph{proper immediate submatrix} of a matrix $\matr{A}$ is a sub\-matrix of $\matr{A}$ which is a co-atom in the lattice of sub\-matrices of $\matr{A}$. An \emph{immediate submatrix} of $\matr{A}$ is either $\matr{A}$ itself or a proper immediate submatrix of $\matr{A}$.

  Dually, a \emph{proper immediate quotient} of a finite frame $P$ is a quotient of $P$ with respect to a compatible preorder which is an atom in the lattice of compatible preorders on $P$. Each proper immediate quotient is principal.

\begin{fact}
  If $\matr{A}$ is an immediate submatrix of $\matr{B}$, then $\matr{A}^{*}$ is iso\-morphic to $\matr{C}^{*}$ for some immediate submatrix $\matr{C}$ of $\matr{B}^{*}$.
\end{fact}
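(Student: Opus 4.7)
First I would set $\matr{C} \assign \pi[\matr{A}]$, where $\pi\colon \matr{B} \twoheadrightarrow \matr{B}^*$ is the canonical Leibniz reduction map. Since $\pi$ is strict and the matrix embedding $\matr{A} \hookrightarrow \matr{B}$ is strict, their composition is a strict homomorphism, whose image $\matr{C}$ is a submatrix of $\matr{B}^*$; the corresponding factorisation $\matr{A} \twoheadrightarrow \matr{C}$ is then a strict surjection.

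To see that $\matr{A}^* \cong \matr{C}^*$, I would invoke the standard fact that any matrix admits, up to isomorphism, a unique reduced strict homomorphic image, namely its Leibniz reduct. (If $f\colon \matr{D} \twoheadrightarrow \matr{E}$ is strict and $\matr{E}$ is reduced, then $\Ker f$ is compatible with the filter of $\matr{D}$, so $\Ker f \subseteq \Leibniz{\alg{D}}{F_{\matr{D}}}$; conversely, the image of $\Leibniz{\alg{D}}{F_{\matr{D}}}$ in $\matr{E}$ is a congruence compatible with the filter of $\matr{E}$, hence trivial, so $\Ker f \supseteq \Leibniz{\alg{D}}{F_{\matr{D}}}$.) The composition $\matr{A} \twoheadrightarrow \matr{C} \twoheadrightarrow \matr{C}^*$ exhibits the reduced matrix $\matr{C}^*$ as a strict homomorphic image of $\matr{A}$, hence $\matr{C}^* \cong \matr{A}^*$.

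Finally, to verify that $\matr{C}$ is an immediate submatrix of $\matr{B}^*$, I would take an arbitrary intermediate $\matr{C} \leq \matr{C}' \leq \matr{B}^*$ and consider its preimage $\pi^{-1}[\matr{C}']$. Because $\pi$ is a strict homomorphism, $\pi^{-1}[\matr{C}']$ is a submatrix of $\matr{B}$ (the preimage of a subalgebra under a homomorphism is a subalgebra, and strictness gives $F_{\matr{B}} \cap \pi^{-1}[\matr{C}'] = \pi^{-1}[F_{\matr{C}'}]$), and clearly $\matr{A} \leq \pi^{-1}[\matr{C}'] \leq \matr{B}$. Since $\matr{A}$ is an immediate submatrix of $\matr{B}$, either $\pi^{-1}[\matr{C}'] = \matr{A}$ or $\pi^{-1}[\matr{C}'] = \matr{B}$; applying $\pi$ and using $\pi[\pi^{-1}[\matr{C}']] = \matr{C}'$ (which holds because $\pi$ is surjective) then yields $\matr{C}' = \matr{C}$ or $\matr{C}' = \matr{B}^*$, respectively.

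I do not expect any step to be a significant obstacle: the argument is essentially a diagram chase once the right $\matr{C}$ has been chosen. The only input beyond routine checking is the uniqueness of reduced strict images, which is itself a short consequence of the definition of the Leibniz congruence and could alternatively be cited from the textbook references in Section~\ref{sec: preliminaries}.
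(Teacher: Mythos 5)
Your proposal is correct and follows essentially the same route as the paper: the paper also takes $\matr{C}$ to be the image of $\matr{A}$ under the Leibniz projection (written there as $\pair{\alg{A}/\theta}{F/\theta}$ with $\theta = \Leibniz{\alg{B}}{G}$), obtains $\matr{A}^{*} \cong \matr{C}^{*}$ from the fact that $\matr{A} \twoheadrightarrow \matr{C}$ is a strict surjection, and proves immediacy by pulling an intermediate submatrix back along the projection. Your spelled-out justification of the uniqueness of reduced strict images is a correct expansion of a step the paper leaves implicit.
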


\begin{proof}
  Let $\matr{B} = \pair{\alg{B}}{G}$ and $\matr{A} = \pair{\alg{A}}{F}$ with $\alg{A} \leq \alg{B}$ and $F = G \cap \alg{B}$. Let $\theta$ be the Leibniz congruence $\Leibniz{\alg{B}}{G}$. Then the restriction of $\theta$ to $\alg{A}$ is compatible with $F$, hence $\matr{A}^{*}$ is isomorphic to the Leibniz reduct of the submatrix $\matr{C} \assign \pair{\alg{A} / \theta}{F / \theta}$ of $\matr{B}^{*}$. But $\matr{C}$ is an immediate submatrix of $\matr{B}^{*}$: if $\alg{A} / \theta \leq \alg{D} \leq \alg{B} / \theta$, then $\alg{A} \leq \pi^{-1}[\alg{D}] \leq \alg{B}$, where $\pi$ is the projection map $\pi\colon \alg{B} \to \alg{B} / \theta$, so $\pi^{-1}[\alg{D}] = \alg{A}$ or $\pi^{-1}[\alg{B}]$, and thus $\alg{D} = \alg{A} / \theta$ or $\alg{D} = \alg{B} / \theta$.
\end{proof}

\begin{lemma}
  Let $\matr{A}$ and $\matr{B}$ be finite reduced models of $\BD$. Then $\matr{A} \in \AlgS^{*}(\matr{B})$ if and only if $\matr{A}$ can be obtained from $\matr{B}$ by repeatedly taking Leibniz reducts of proper immediate submatrices. Equivalently, $\matr{A} \in \AlgS^{*}(\matr{B})$ if and only if $\matr{A}_{+}$ can be obtained from $\matr{B}_{+}$ by taking Leibniz subframes of proper immediate quotients.
\end{lemma}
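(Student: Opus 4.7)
The plan is to prove the matrix-theoretic equivalence first and then deduce the frame-theoretic version by applying the duality of Theorem~\ref{thm: matrix duality}, Proposition~\ref{prop: leibniz subframes}, and the correspondence between submatrices of a complex matrix $P^{+}$ and quotients of $P$ by compatible preorders. Both directions proceed by induction, using the unlabelled fact immediately preceding the statement (that the Leibniz reduct of an immediate submatrix of $\matr{B}$ agrees up to isomorphism with the Leibniz reduct of some immediate submatrix of $\matr{B}^{*}$) as the crucial interchange between Leibniz reduction and passage to submatrices.

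For the right-to-left direction I would induct on the length $n$ of a sequence $\matr{B} = \matr{A}_{0}, \matr{A}_{1}, \dots, \matr{A}_{n} = \matr{A}$ in which each $\matr{A}_{i+1}$ is the Leibniz reduct of some proper immediate submatrix of $\matr{A}_{i}$. The case $n = 0$ is immediate since $\matr{B} \in \AlgS^{*}(\matr{B})$. For the induction step, write $\matr{A}_{n-1} \cong \matr{C}^{*}$ for some $\matr{C} \leq \matr{B}$ and let $\pi\colon \matr{C} \to \matr{C}^{*}$ be the Leibniz projection. If $\matr{D} \leq \matr{A}_{n-1}$ is an immediate submatrix with $\matr{D}^{*} \cong \matr{A}_{n}$, then $\pi^{-1}[\matr{D}]$ is a submatrix of $\matr{C} \leq \matr{B}$, and the strictness and surjectivity of $\pi$ force $(\pi^{-1}[\matr{D}])^{*} \cong \matr{D}^{*} \cong \matr{A}_{n}$, hence $\matr{A}_{n} \in \AlgS^{*}(\matr{B})$.

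For the converse I would induct on $|\matr{B}|$. Given a witness $\matr{E} \leq \matr{B}$ with $\matr{E}^{*} \cong \matr{A}$, if $\matr{E} = \matr{B}$ then $\matr{A} \cong \matr{B}^{*} \cong \matr{B}$ because $\matr{B}$ is already reduced, and the empty sequence suffices. Otherwise $\matr{E} < \matr{B}$ and I pick any proper immediate submatrix $\matr{C}$ of $\matr{B}$ with $\matr{E} \leq \matr{C}$; such a $\matr{C}$ exists as any co-atom above $\matr{E}$ in the finite lattice of submatrices of $\matr{B}$. Writing $\pi\colon \matr{C} \to \matr{C}^{*}$ for the Leibniz projection, the image $\pi[\matr{E}]$ is a submatrix of $\matr{C}^{*}$ with $(\pi[\matr{E}])^{*} \cong \matr{E}^{*} \cong \matr{A}$, so $\matr{A} \in \AlgS^{*}(\matr{C}^{*})$. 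Since $|\matr{C}^{*}| \leq |\matr{C}| < |\matr{B}|$ and $\matr{C}^{*}$ is reduced, the induction hypothesis applied to $\matr{C}^{*}$ yields a sequence from $\matr{C}^{*}$ to $\matr{A}$, and prepending the single step in which $\matr{C}$ is the proper immediate submatrix of $\matr{B}$ and $\matr{C}^{*}$ its Leibniz reduct produces the required sequence.

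The main delicate point is simply ensuring that Leibniz reduction and passage to submatrices interact as described, which is exactly the content of the unlabelled fact already proved. Once the matrix-theoretic version is in hand, the frame-theoretic formulation follows by transferring the argument step-by-step across the duality: the operation ``take a proper immediate submatrix'' corresponds to ``take a proper immediate quotient by a compatible preorder'' (because co-atoms in the lattice of submatrices of $P^{+}$ are dual to atoms in the lattice of compatible preorders on $P$, and the latter are principal and generate proper immediate quotients), while ``take the Leibniz reduct'' corresponds to ``restrict to the Leibniz subframe'' by Proposition~\ref{prop: leibniz subframes}.
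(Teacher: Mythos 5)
Your proof is correct and follows essentially the same route as the paper: both arguments reduce everything to the interchange between Leibniz reduction and passage to (immediate) submatrices provided by the preceding unlabelled fact, decompose a general submatrix inclusion into a chain of immediate ones, and obtain the frame version by straight dualization via Proposition~\ref{prop: leibniz subframes}. The only difference is organizational — you run explicit inductions (on sequence length, resp.\ on $|\matr{B}|$, peeling off one co-atom at a time and transporting the witness along the strict surjective Leibniz projection) where the paper invokes $\AlgS^{*}\AlgS^{*}=\AlgS^{*}$ and a maximal chain all at once — and this does not change the substance.
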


\begin{proof}
  The second claim is simply a dual translation of the first claim. The right-to-left direction holds because $\AlgS^{*} \AlgS^{*} (\class{K}) = \AlgS^{*} (\class{K})$. Conversely, $\matr{A} \in \AlgS^{*}(\matr{B})$ if and only if there is a matrix $\matr{C} \leq \matr{B}$ such that $\matr{A} = \matr{C}^{*}$. But $\matr{C} \leq \matr{B}$ if and only if there is a sequence of matrices $\matr{C}_{0} \assign \matr{C} \leq \matr{C}_{1} \leq \dots \leq \matr{C}_{k} \assign \matr{B}$ such that $\matr{C}_{i}$ is an immediate submatrix of $\matr{C}_{i+1}$. By the previous lemma, $\matr{C}^{*}_{i}$ is the Leibniz reduct of an immediate submatrix of $\matr{C}^{*}_{i+1}$. But $\matr{C}^{*}_{0} = \matr{A}$ and $\matr{C}^{*}_{k} = \matr{B}$.
\end{proof}

  The task of describing $\Ext_{\omega} \ETL$ has therefore been reduced to the task of describing the Leibniz subframes of immediate quotients of the frames $P(G, \emptyset, k)$. This can be achieved by a straightforward case analysis.

  Here by a \emph{homomorphic image} of a graph $G$ we mean a graph $H$ such that there is a surjective homomorphism $G \to H$. In other words, $H$ can be obtained from $G$ by identifying (collapsing) certain vertices and adding some edges. This should be contrasted with the previous section, where we considered closure in the homomorphism order, with no requirement of surjectivity.

\begin{lemma} \label{lemma: s star dual}
  $\boldmu(H, \emptyset, j) \in \AlgS^{*} (\boldmu(G, \emptyset, i))$ if and only if the pair $\pair{H}{j}$ can be obtained from $\pair{G}{i}$ by some sequence of operations of the following types:
\begin{enumerate}
\item $\pair{G}{i} \mapsto \pair{H}{i}$ if $H$ is a homomorphic image of $G$,
\item $\pair{G \sqcup K_{2}}{i} \mapsto \pair{G \sqcup \bullet}{i}$,
\item $\pair{G \sqcup H}{i} \mapsto \pair{G}{i+1}$,
\item $\pair{G}{i+1} \mapsto \pair{G}{i}$ if $i \geq 1$,
\item $\pair{G}{1} \mapsto \pair{G}{0}$ if there is a loop in $G$.
\end{enumerate}
\end{lemma}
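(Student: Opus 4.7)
The plan is to pass, via Theorem~\ref{thm: matrix duality} and the preceding lemma, to the dual statement: $\boldmu(H, \emptyset, j) \in \AlgS^{*}(\boldmu(G, \emptyset, i))$ holds if and only if the frame $P(H, \emptyset, j)$ can be reached from $P(G, \emptyset, i)$ by iterating the operation ``take the Leibniz subframe of a proper immediate quotient''. The task therefore reduces to checking that this class of dual frame operations is exactly the one generated by the five graph operations (1)--(5).

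For the forward direction, I verify each of (1)--(5) by exhibiting an explicit immediate quotient (or a short sequence of them) whose Leibniz subframe matches the target. For (1), a surjective graph homomorphism $g\colon G \to H$ lifts to a strict surjection of frames $P_{+}(G) \to P_{+}(H)$ via $u \mapsto g(u)$, $\dual u \mapsto \dual g(u)$, which factors into principal quotients. For (2), writing $K_{2}$ as the edge $u R v$, the principal preorder on $P_{+}(K_{2})$ generated by $\dual v \lesssim u$ forces $\dual u \lesssim v$ by compatibility, and combined with the original relations $u \leq \dual v$ and $v \leq \dual u$ identifies $u \sim \dual v$ and $v \sim \dual u$, yielding the two-element frame $P_{+}(\bullet)$. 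For (3), since $H$ is non-empty the two-element subalgebra $\{\bot, \top\}$ of $\boldmu_{+}(H)$ is a reduced submatrix isomorphic to $\CLmatrix$, so the Leibniz reduct of $\boldmu(G, \emptyset, i) \times \{\bot, \top\} \leq \boldmu(G \sqcup H, \emptyset, i)$ is $\boldmu(G, \emptyset, i+1)$. For (4), adding $s \lesssim s'$ between two self-dual singletons identifies them by compatibility, giving $P(G, \emptyset, i)$. For (5), if $u$ carries a loop in $G$ and $s$ is the extra singleton in $P(G, \emptyset, 1)$, adding $u \lesssim s$ gives $s \lesssim \dual u$ and, together with $u \leq \dual u$, the chain $u \lesssim s \lesssim \dual u$ without any identifications; taking the Leibniz subframe removes $s$ (now no longer minimal) and returns $P_{+}(G)$.

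The backward direction is proved by case analysis on an arbitrary immediate quotient of $P(G, \emptyset, i)$, generated by adding a single pair $(a, b)$ with $a \not\leq b$. If both $a, b$ are singletons, the quotient identifies them, giving case (4). If exactly one is a singleton, transitivity and compatibility interlock the singleton with a component of $P_{+}(G)$, and after taking the Leibniz subframe we obtain either the removal of a singleton aided by a loop (case (5)) or a component collapse (case (3), possibly preceded by (1)). If $a$ and $b$ both lie in a single component $P_{+}(G_{1})$, the induced identifications correspond to a surjective graph homomorphism on $G_{1}$ (case (1)), \emph{except} in the distinguished subcase where $G_{1} = K_{2}$ and the preorder identifies each minimal element with the dual of the other, which is case (2). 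If $a$ and $b$ lie in different components of $P_{+}(G)$, compatibility merges them and the result is a homomorphic image of the combined graph, again handled by~(1).

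The main obstacle is the exceptional case (2). The map $K_{2} \to \bullet$ is \emph{not} a graph homomorphism --- an edge has nowhere to go in an edgeless graph --- yet at the frame level the principal preorder generated by $\dual v \lesssim u$ genuinely identifies $u$ with $\dual v$ and $v$ with $\dual u$, producing a quotient that reduces to $P_{+}(\bullet)$. The delicate point is verifying that no further exceptional cases arise on larger components: one must check that every ``involution-sensitive'' identification on $P_{+}(G_{1})$ for $|G_{1}| \geq 3$ reduces, after the Leibniz subframe is taken, to an honest surjective homomorphism of~$G_{1}$. This amounts to arguing that the minimal designated elements of the quotient preorder, together with the relation $u R v \iff u \leq \dual v$, form a graph that is a homomorphic image of $G_{1}$, which is a careful but routine bookkeeping argument tracking how the preorder interacts with the vertex set $X \cup \dual X$.
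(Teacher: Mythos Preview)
Your plan is essentially the paper's approach: reduce via duality to Leibniz subframes of immediate quotients of $P(G,\emptyset,i)$, then perform a case analysis on the generating pair. The forward direction is handled well; in particular, your treatment of (2) and (5) is correct and slightly more explicit than the paper's. The paper does not separate the two directions but runs a single case analysis on the pair $\pair{x}{y}$ that simultaneously shows each immediate quotient yields a move of type (1)--(5) and that each such move arises.

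There is, however, a concrete mis-sorting in your backward case analysis. When both $a$ and $b$ lie in the same component $P_{+}(G_{1})$, you claim the Leibniz subframe of the quotient is always governed by (1) or the $K_{2}$ exception (2). This misses the case $a=\dual u$, $b=u$ where $G_{1}$ is a \emph{reflexive singleton}: adding $\dual u\lesssim u$ collapses $\{u,\dual u\}$ to a single self-dual designated point, and the Leibniz subframe is $P(G\setminus\{u\},\emptyset,i{+}1)$. This is operation (3), not (1), and no graph homomorphism can account for it since the component disappears from the graph side. (More generally, the immediacy condition for $\pair{\dual u}{v}$ is that every neighbour of $u$ is adjacent to every neighbour of $v$; only under that constraint does the resulting vertex removal become a genuine homomorphic image.) Conversely, your ``exactly one singleton'' case is mislabelled: when $a=u$ with a loop and $b=s=\dual s$, the Leibniz subframe removes $s$ and yields $\pair{G}{i-1}$, which is (5) if $i=1$ and (4) if $i\geq 2$, never (3). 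Your bookkeeping claim at the end therefore needs an extra clause to detect when a component collapses entirely to a self-dual point; once you add that, the argument goes through exactly as in the paper.
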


\begin{proof}
  The following case analysis shows that each of these operations can be obtained by taking Leibniz subframes of immediate quotients repeatedly, and conversely that taking the Leibniz subframe of an immediate quotient cor\-responds to some sequences of these operations.

  Consider points $x$ and $y$ in the frame $P(G, \emptyset, i)$ with $x \nleq y$ such that the pair $\pair{x}{y}$ generates an immediate quotient of $P(G, \emptyset, i)$. Let $P(H, \emptyset, j)$ be the Leibniz subframe of this immediate quotient.

  If $x = u$ and $y = \dual v$ for $u, v \in G$, then $H$ is obtained from $G$ by adding an edge between $u$ and $v$. If $x = u$ and $y = v$, then the quotient is immediate only if each neighbor of $v$ in $G$ is also a neighbor of $u$. If $v$ has no neighbors, then $H$ is obtained by adding an edge between $u$ and $v$. Otherwise, $H$ is obtained by removing $v$ from $G$. This is equivalent to identifing $u$ and $v$ in $G$ (even if $u$ and $v$ are neighbors or if $v$ has a loop).

  If $x = \dual u$ and $y = v$, then the quotient is immediate only if each neighbor of $u$ is adjacent to each neighbor of $v$. If $u = v$ is an isolated vertex, then $H$ is obtained from $G$ by adding a loop on this vertex. If $u = v$ has a loop but no neighbors other than itself, then $H$ is obtained from $G$ by removing this loop and $j = i + 1$. If $u = v$ has neighbors other than itself, then $H$ is obtained from $G$ by removing $u$. Otherwise, we may assume that $u \neq v$.

  If the only neighbor of $u$ is $v$ (if the only neighbor of $v$ is $u$), then $H$ is obtained by removing $u$ (removing $v$) from $G$. If $v$ ($u$) has neighbors other than $u$ ($v$), this amounts to identifying $u$ ($v$) with some neighbor of $v$ ($u$). Otherwise, this amounts to replacing a $K_{2}$ component by $\bullet$. Finally, if $u$ has a neighbor other than $v$ and vice versa, then $H$ is obtained by removing $u$ and $v$. This amounts to identifying $u$ with some neighbor of $v$ and vice versa.

  If $x = u$ for $u \in G$ and $y = \dual y$, then the quotient is immediate only if $u$ has a loop. Then $H = G$ and $j = i - 1$. If $x = \dual u$ and $y = \dual y$, then the quotient is never immediate, being included in the quotient generated by $\pair{\dual u}{u}$.

  Finally, if $x = \dual x$ and $y = \dual y$, then $H = G$ and $j = i - 1$.
\end{proof}

  We now have all the necessary ingredients to describe the lattice $\Ext_{\omega} \ETL$ in terms of graphs. In the following, we refer to the operation of replacing $G \sqcup K_{2}$ by $G \sqcup \bullet$ as \emph{contracting an isolated edge}.

\begin{theorem} \label{thm: ext etl graph theoretically}
  The map
\begin{align*}
  \logic{L} & \mapsto \pair{\set{G \neq \emptyset}{\boldmu_{+}(G) \in \Mod \logic{L}}}{\set{H \neq \emptyset}{\boldmu_{+}(H) \times \CLmatrix \in \Mod \logic{L}}}
\end{align*}
  is a dual isomorphism between $\Ext_{\omega} \ETL$ and the lattice (ordered by component\-wise inclusion) of pairs of classes of non-empty graphs $\pair{\class{K}_{0}}{\class{K}_{1}}$ with $\class{K}_{0} \subseteq \class{K}_{1}$ such that $\class{K}_{0}$ and $\class{K}_{1}$ are closed under taking homomorphic images and disjoint unions and under contracting isolated edges, and moreover
\begin{enumerate}
\item if $G \sqcup H \in \class{K}_{1}$, then $G \in \class{K}_{1}$,
\item if $G \in \class{K}_{1}$ and $G$ has a loop, then $G \in \class{K}_{0}$.
\end{enumerate}
\end{theorem}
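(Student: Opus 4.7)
The plan is to apply Theorem~\ref{thm: gratzer-quackenbush} to $\logic{B} = \ETL$. First I would verify the hypothesis: $\Alg^* \ETL$ is contained in the variety of De~Morgan algebras, and this variety is generated by the four-element algebra $\DMfour$, so it is locally finite (the free $n$-generated De~Morgan algebra embeds into $\DMfour^{\DMfour^{n}}$ and is therefore finite). Hence the map $\logic{L} \mapsto \Mod^{*}_{\omega} \logic{L}$ is a dual isomorphism between $\Ext_{\omega} \ETL$ and the lattice of subclasses of $\Mod^{*}_{\omega} \ETL$ closed under $\AlgS^{*}$ and $\AlgP^{*}_{\omega}$.

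The next step is to re-parametrise $\Mod^{*}_{\omega} \ETL$. By the results of Section~\ref{sec: graph duality}, the finite reduced models of $\ETL$ are, up to isomorphism, exactly the matrices $\boldmu(G,\emptyset,k)$ for $G$ a graph and $k \in \omega$. Since $\boldmu(G,\emptyset,k) \cong \boldmu_{+}(G) \times \CLmatrix^{k}$ and the matrices $\boldmu(G,\emptyset,k)$ for $k \geq 1$ are all logically equivalent to $\boldmu_{+}(G) \times \CLmatrix$, a non-trivial $\AlgS^{*}\AlgP_{\omega}$-closed subclass $\class{M}$ of $\Mod^{*}_{\omega} \ETL$ is completely determined by the two classes of non-empty graphs $\class{K}_{0} := \{G \neq \emptyset : \boldmu_{+}(G) \in \class{M}\}$ and $\class{K}_{1} := \{G \neq \emptyset : \boldmu_{+}(G) \times \CLmatrix \in \class{M}\}$; the trivial matrix is in every such $\class{M}$ and $\CLmatrix$ itself is in every non-trivial one (as $\{\True,\False\}$ is always a sub\-algebra of $\boldmu_{+}(G)$, giving $\CLmatrix \in \AlgS^{*}(\boldmu_{+}(G))$). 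The pair $\pair{\emptyset}{\emptyset}$ then corresponds to the trivial logic, making the map a bijection at the level of sets.

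The heart of the proof is to translate closure of $\class{M}$ under $\AlgS^{*}$ and $\AlgP_{\omega}$ into the stated closure conditions on $\pair{\class{K}_{0}}{\class{K}_{1}}$. Closure under $\AlgP_{\omega}$ corresponds, via Fact~\ref{fact: disjoint unions}, to closure of both $\class{K}_{0}$ and $\class{K}_{1}$ under disjoint unions (using that $\boldmu_{+}(G \sqcup H) \cong \boldmu_{+}(G) \times \boldmu_{+}(H)$ and that taking product with $\CLmatrix$ or $\CLmatrix^{2}$ yields logically equivalent matrices). Closure under $\AlgS^{*}$ is handled by Lemma~\ref{lemma: s star dual}: each of its five operations on pairs $\pair{G}{i}$ becomes a condition on the pair $\pair{\class{K}_{0}}{\class{K}_{1}}$. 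Operation (1) gives closure of both classes under homomorphic images, operation (2) gives closure under contracting isolated edges, operation (3) applied to $i \geq 1$ gives condition~(1) of the theorem (and applied to $i=0$ gives ``$G \sqcup H \in \class{K}_{0}$ implies $G \in \class{K}_{1}$'', which follows from condition~(1) together with $\class{K}_{0} \subseteq \class{K}_{1}$), operation (4) produces no further constraint (it merely reflects that all $k \geq 1$ behave the same), and operation (5) is exactly condition~(2). Finally, $\class{K}_{0} \subseteq \class{K}_{1}$ follows from $\CLmatrix \in \AlgS^{*}(\boldmu_{+}(G))$ and closure under products.

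Conversely, for any pair $\pair{\class{K}_{0}}{\class{K}_{1}}$ satisfying all the listed conditions I would define
\begin{equation*}
  \class{M} := \{\text{trivial matrix}\} \cup \{\boldmu_{+}(G) : G \in \class{K}_{0}\} \cup \{\boldmu_{+}(G) \times \CLmatrix : G \in \class{K}_{1}\}
\end{equation*}
(including $\CLmatrix$ as the $G = \bullet$ case of the second set once the pair is non-empty, absorbed via homomorphic images) and verify that $\class{M}$ is closed under $\AlgS^{*}$ and $\AlgP_{\omega}$, with $\logic{L} := \Log_{\omega} \class{M}$ then recovering the original pair. The main bookkeeping obstacle is the $\AlgS^{*}$ verification: one has to check that every sequence of the operations of Lemma~\ref{lemma: s star dual} applied to any $\pair{G}{k}$ with $G \in \class{K}_{0}$ (the case $k=0$) or $G \in \class{K}_{1}$ (the case $k \geq 1$) lands again inside $\class{M}$. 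This is essentially a case analysis over which of the five operations is used and which of the two classes the starting pair lies in; the listed conditions together with $\class{K}_{0} \subseteq \class{K}_{1}$ are exactly what is needed to keep the result inside $\class{M}$.
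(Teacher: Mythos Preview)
Your plan is correct and follows essentially the same route as the paper: invoke Theorem~\ref{thm: gratzer-quackenbush} (using local finiteness of De~Morgan algebras), parametrise $\Mod^{*}_{\omega}\ETL$ by pairs $\pair{G}{k}$, and translate closure under $\AlgS^{*}$ and $\AlgP_{\omega}$ into the graph-theoretic conditions via Lemma~\ref{lemma: s star dual} and Fact~\ref{fact: disjoint unions}. The paper organises the surjectivity step slightly differently (it defines the logic $\logic{L}$ generated by the given matrices and checks that $\AlgS^{*}\AlgP_{\omega}$ produces no new matrices of the two relevant forms), but the case analysis is the same as yours.

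One small correction: your parenthetical about $\CLmatrix$ is garbled. The matrix $\boldmu_{+}(\bullet)\times\CLmatrix$ is $\ETLmatrix\times\CLmatrix$, not $\CLmatrix$, and $\bullet$ need not lie in $\class{K}_{1}$ by homomorphic images (there is no graph homomorphism from a graph with an edge onto~$\bullet$). The right way to handle this is simply to include all $\CLmatrix^{k}$ for $k\geq 0$ explicitly in your $\class{M}$ whenever the pair is non-empty; these matrices are forced into any non-trivial $\AlgS^{*}$-closed class (since $\CLmatrix\in\AlgS^{*}(\boldmu_{+}(G))$ for every non-empty $G$) and they do not affect the recovered pair $\pair{\class{K}_{0}}{\class{K}_{1}}$.
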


\begin{proof}
  By Lemma~\ref{lemma: s star dual} this map yields a pair of classes of non-empty graphs satisfying the required conditions. (The condition $\class{K}_{0} \subseteq \class{K}_{1}$ follows from closure under products and the fact that $\CLmatrix$ is a model of each non-trivial super-Belnap logic.) Each finitary extension of $\ETL$ is complete with respect to its models of the forms $\boldmu_{+}(G)$ and $\boldmu_{+}(H) \times \CLmatrix$ for $G$ non-empty, therefore the map is an order embedding. (The trivial logic is complete with respect to the empty class of such models.) Finally, consider a pair of classes of non-empty graphs $\pair{\class{K}_{0}}{\class{K}_{1}}$ satisfying these conditions. We prove that this pair arises from the logic $\logic{L}$ determined by the matrices $\boldmu_{+}(G)$ for $G \in \class{K}_{0}$ and $\boldmu_{+}(H) \times \CLmatrix$ for $H \in \class{K}_{1}$.

  By Theorem~\ref{thm: gratzer-quackenbush} we know that the class of all finite reduced models of $\logic{L}$ is obtained by taking the Leibniz reducts of submatrices of finite direct products of the above matrices $\boldmu_{+}(G)$ and $\boldmu_{+}(H) \times \CLmatrix$. We must therefore show that this does not result in any new matrices of the forms $\boldmu_{+}(G)$ and $\boldmu_{+}(H) \times \CLmatrix$.

  If the direct product only contains matrices of the form $\boldmu_{+}(G)$, then the closure of $\class{K}_{0}$ under disjoint unions and the operations mentioned in Lemma~\ref{lemma: s star dual} ensures this. If the product contains at least one matrix of the form $\boldmu_{+}(H) \times \CLmatrix$, then by the inclusion $\class{K}_{0} \subseteq \class{K}_{1}$ and the closure of $\class{K}_{0}$ and $\class{K}_{1}$ under disjoint unions the product has the form $\boldmu_{+}(H) \times \CLmatrix^{k}$ for some $H \in \class{K}_{1}$ and some $k \geq 1$. The closure of $\pair{\class{K}_{0}}{\class{K}_{1}}$ under the conditions listed in Lemma~\ref{lemma: s star dual} again ensures that $\boldmu_{+}(G) \in \AlgS^{*}(\boldmu_{+}(H) \times \CLmatrix^{k})$ implies that $G \in \class{K}_{0}$, and $\boldmu_{+}(G) \times \CLmatrix \in \AlgS^{*}(\boldmu_{+}(H) \times \CLmatrix^{k})$ implies that $G \in \class{K}_{1}$.
\end{proof}

  We shall not explicitly state the analogue of Theorem~\ref{thm: ext etl graph theoretically} for the whole lattice $\Ext_{\omega} \BD$, on account of it being too cumbersome. However, it is clear how such a theorem would be obtained: one would merely extend the case analysis of Lemma~\ref{lemma: s star dual} to all matrices of the form $\boldmu(G, H, k)$. (There are no technical obstacles to be overcome here, merely some tedious case analysis.) Instead of talking about pairs of classes of graphs, one would talk about pairs of classes of pairs of graphs $\pair{G}{H}$.

  Instead of going in the direction of increased complexity, let show how this isomorphism can be simplified if we restrict to the interval $[\ETL, \ETL_{\omega}]$. For such logics $\logic{L}$ it suffices to record the non-empty \emph{loopless} graphs $G$ for which $\boldmu_{+}(G)$ is a model of $\logic{L}$. This yields a much neater description of $[\ETL, \ETL_{\omega}]$.

\pagebreak

\begin{theorem} \label{thm: etl to etl omega graph theoretically}
  The map
\begin{align*}
  \logic{L} & \mapsto \set{G \neq \emptyset}{\boldmu_{+}(G) \in \Mod \logic{L} \text{ and $G$ has no loops}}
\end{align*}
  is a dual isomorphism between the interval $[\ETL, \ETL_{\omega}] \subseteq \Ext_{\omega} \BD$ and the lattice (ordered by inclusion) of classes of non-empty graphs without loops closed under taking homo\-morphic images, disjoint unions, and contracting isolated edges.
\end{theorem}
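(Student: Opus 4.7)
The plan is to deduce this theorem from Theorem~\ref{thm: ext etl graph theoretically} by observing that, on the interval $[\ETL, \ETL_{\omega}]$, the second component $\class{K}_{1}^{\logic{L}}$ of the pair of graph classes associated to $\logic{L}$ is forced to be the class of \emph{all} non-empty graphs, while the first component $\class{K}_{0}^{\logic{L}}$ is completely determined by its loopless part. Once this is established, the claimed isomorphism is just the restriction of the isomorphism of Theorem~\ref{thm: ext etl graph theoretically}, passed through the bijection between admissible pairs $\pair{\class{K}_{0}}{\class{K}_{1}}$ in the interval and classes of loopless graphs.

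First I would show that for every $\logic{L} \in [\ETL, \ETL_{\omega}]$ and every non-empty graph $H$, the matrix $\boldmu_{+}(H) \times \CLmatrix$ is a model of $\logic{L}$. Since $\logic{L} \logleq \ETL_{\omega} = \ETL \cup \ECQ_{\omega}$, it suffices that both $\ETL$ and $\ECQ_{\omega} = \Exp_{\BD} \CL$ hold in $\boldmu_{+}(H) \times \CLmatrix$, which follows from Proposition~\ref{prop: log of product} together with the fact that $\CLmatrix$ validates every $\chi_{n} \vdash \emptyset$. Hence $\class{K}_{1}^{\logic{L}}$ is the class of all non-empty graphs. Dually, if $G$ contains a reflexive vertex, then Fact~\ref{fact: contains reflexive vertex} combined with Lemma~\ref{lemma: gamma construction} shows that $\boldmu_{+}(G)$ is a model of $\ETL_{\omega}$, hence of every $\logic{L} \logleq \ETL_{\omega}$; therefore $\class{K}_{0}^{\logic{L}}$ always contains every non-empty graph with a loop. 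It follows that $\logic{L}$ is completely determined by $\class{K}^{\circ} := \class{K}_{0}^{\logic{L}} \cap \set{G}{G \text{ loopless}}$.

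Next I would verify the closure conditions in both directions. In one direction, the three closure properties of $\class{K}_{0}^{\logic{L}}$ provided by Theorem~\ref{thm: ext etl graph theoretically} (homomorphic images, disjoint unions, contraction of isolated edges) restrict to $\class{K}^{\circ}$: disjoint unions and contraction of isolated edges visibly preserve looplessness, while closure under homomorphic images restricts to closure of $\class{K}^{\circ}$ under those surjective homomorphic images that happen to be loopless. In the converse direction, given any class $\class{K}$ of non-empty loopless graphs closed under these three operations, set $\class{K}_{0} := \class{K} \cup \set{G}{G \text{ has a loop}}$ and $\class{K}_{1} := \set{G}{G \neq \emptyset}$. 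One checks routinely that the pair $\pair{\class{K}_{0}}{\class{K}_{1}}$ satisfies all the conditions of Theorem~\ref{thm: ext etl graph theoretically}: surjective homomorphisms preserve loops, so loopy graphs in $\class{K}_{0}$ are trapped there, and conditions (1) and (2) of that theorem are immediate from the definitions of $\class{K}_{0}$ and $\class{K}_{1}$. The corresponding logic $\logic{L}$ lies in $[\ETL, \ETL_{\omega}]$ since $\class{K}_{0}$ contains every loopy graph (giving $\logic{L} \logleq \ETL_{\omega}$) and $\logic{L}$ is by construction an extension of $\ETL$.

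The only slightly delicate point is that a surjective homomorphism from a loopless graph may land in a graph with a loop; but any such target is automatically in $\class{K}_{0}$ regardless of whether it lies in $\class{K}^{\circ}$, so demanding closure of $\class{K}^{\circ}$ only under \emph{loopless} homomorphic images is genuinely enough to recover closure of $\class{K}_{0}$ under all homomorphic images. Once this is noted, the bijection and its inverse are each other's converses by construction, and the resulting map is an order-reversing lattice isomorphism because it is the restriction of the one provided by Theorem~\ref{thm: ext etl graph theoretically}.
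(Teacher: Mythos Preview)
Your proposal is correct and follows essentially the same route as the paper's own proof: both arguments deduce the result from Theorem~\ref{thm: ext etl graph theoretically} by showing that on the interval $[\ETL, \ETL_{\omega}]$ the class $\class{K}_{1}$ is forced to consist of all non-empty graphs and $\class{K}_{0}$ is forced to contain every graph with a loop, and then verify that restriction to the loopless part of $\class{K}_{0}$ gives the desired bijection. Your explicit handling of the case where a surjective homomorphism from a loopless graph creates a loop is a point the paper leaves implicit, but otherwise the two proofs coincide.
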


\begin{proof}
  Each of the matrices $\boldmu_{+}(H) \times \CLmatrix$ is a model of $\ETL_{\omega}$, while $\boldmu_{+}(G) \in \Mod \ETL_{\omega}$ if and only if $G$ contains a loop (Fact~\ref{fact: contains reflexive vertex}). Theorem~\ref{thm: ext etl graph theoretically} therefore yields a dual isomorphism between $[\ETL, \ETL_{\omega}]$ and the lattice of pairs $\pair{\class{K}_{0}}{\class{K}_{1}}$ satisfying the conditions of Theorem~\ref{thm: ext etl graph theoretically} such that $\class{K}_{0}$ contains each graph with a loop and $\class{K}_{1}$ contains each non-empty graph. Now consider the map which assigns to $\pair{\class{K}_{0}}{\class{K}_{1}}$ the restriction $\class{L}_{0}$ of $\class{K}_{0}$ to loopless graphs. The closure conditions of Theorem~\ref{thm: ext etl graph theoretically} for $\pair{\class{K}_{0}}{\class{K}_{1}}$ imply the required closure conditions for~$\class{L}_{0}$. Conversely, if $\class{L}_{0}$ satisfies the closure conditions of the current theorem and we take $\class{K}_{1}$ to be the class of all non-empty graphs and $\class{K}_{0}$ to be the union of $\class{L}_{0}$ and the class of all graphs with at least one loop, then $\pair{\class{K}_{0}}{\class{K}_{1}}$ rather trivially satisfies the closure conditions of Theorem~\ref{thm: ext etl graph theoretically}.
\end{proof}

  Of course, closure under homomorphic images is interpreted here as closure restricted to the class of loopless graphs.

  We end with an example of how looking at super-Belnap logics from this dual, graph-theoretic perspective can simplify our proofs. Namely, we provide an alternative, and perhaps more transparent, proof of the completeness theorem for the logic~$\Kminus$ (Proposition~\ref{prop: kminus}) defined semantically by the matrix $\Kminusmatrix$ shown in Figure~\ref{fig: kminus matrix}.

\begin{proposition}
  $\Kminus = \Log \Kminusmatrix$.
\end{proposition}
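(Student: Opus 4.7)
The plan is to exploit the graph-theoretic description of finite reduced models of $\BD$ together with Theorem~\ref{thm: gratzer-quackenbush} and Lemma~\ref{lemma: s star dual}. The key observation, already noted in Section~\ref{sec: graph duality}, is that $\Kminusmatrix \cong \boldmu_{+}(G_{2})$, where $G_{2}$ is the graph with a reflexive vertex $u$ and an irreflexive vertex $v$ joined by an edge. The inclusion $\Kminus \logleq \Log \Kminusmatrix$ reduces to verifying that each rule $\chi_{n} \vee q, \dmneg q \vee r \vdash r$ holds in $\Kminusmatrix$, which was already done in the right-to-left direction of Proposition~\ref{prop: consequence in kminus}.

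For the hard inclusion $\Log \Kminusmatrix \logleq \Kminus$, by Theorem~\ref{thm: gratzer-quackenbush} it suffices to show $\Mod_{\omega}^{*} \Kminus \subseteq \AlgS^{*} \AlgP_{\omega}(\Kminusmatrix)$. First I would determine the finite reduced models of $\Kminus$. Any such model has the form $\boldmu(G, H, k)$. Because $\Kminus \loggeq \ETL$ and any factor of a model is itself a model (a refuting valuation on a factor extends to the product by sending the remaining atoms to $\True$), I would check that $\boldmu_{-}(H)$ fails disjunctive syllogism whenever $H \neq \emptyset$ via the counterexample $p \assign \dual X_{H}$ and $q \assign \emptyset$, so that $H = \emptyset$ is forced. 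Lemma~\ref{lemma: gamma construction} combined with Fact~\ref{fact: graphs for k minus} then pins down the admissible graphs $G$: no isolated vertices, and each irreflexive vertex has a reflexive neighbor.

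The main step is the combinatorial construction showing that $\pair{G}{k}$ is obtainable from $\pair{G_{2}}{0}$ via the operations of Lemma~\ref{lemma: s star dual}. I would first assemble $m + k$ disjoint copies of $G_{2}$, where $m = |V(G)|$, corresponding to a finite product via $\AlgP_{\omega}$. For each vertex $w \in V(G)$, designate one copy of $G_{2}$: if $w$ is reflexive, collapse its copy onto $w$ (valid since $w R w$); if $w$ is irreflexive, map the reflexive vertex of its copy to some reflexive neighbor $w'$ of $w$ (which exists by hypothesis) and its irreflexive vertex to $w$. These maps glue to a surjective graph homomorphism $G_{2}^{m+k} \to G \sqcup G_{2}^{k}$. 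Operation~(1) of Lemma~\ref{lemma: s star dual} then transforms $\pair{G_{2}^{m+k}}{0}$ into $\pair{G \sqcup G_{2}^{k}}{0}$, and $k$-fold application of operation~(3) collapses the remaining copies of $G_{2}$ into singleton components, yielding $\pair{G}{k}$.

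The anticipated obstacle is the factor-of-model argument ruling out $H \neq \emptyset$: since the axioms of $\Kminus$ are not explosive, I cannot appeal directly to Corollary~\ref{cor: potential antitheorems}, and must instead exploit that $\True$ is designated in every non-trivial reduced model in order to extend refuting valuations. Once that structural restriction is in place, together with the characterization of admissible $G$ via Fact~\ref{fact: graphs for k minus}, the graph-theoretic covering of $G$ by copies of $G_{2}$ essentially writes itself from the hypothesis that every irreflexive vertex has a reflexive neighbor.
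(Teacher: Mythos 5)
Your proof is correct and follows essentially the same route as the paper's: both reduce the problem to the graph-theoretic description of finite reduced models of $\ETL$ (Theorem~\ref{thm: gratzer-quackenbush} and Lemma~\ref{lemma: s star dual}, which the paper invokes in the packaged form of Theorem~\ref{thm: ext etl graph theoretically}), and both rest on the same combinatorial core, namely a surjective graph homomorphism onto $G$ from a disjoint union of copies of $G_{2}$ constructed from the hypothesis that every irreflexive vertex has a reflexive neighbor. The one blemish is the parenthetical claim that any factor of a model is a model, which is false in general (e.g.\ $\BDmatrix$ is a factor of the model $\CLmatrix \times \BDmatrix$ of $\ECQ_{\omega}$); it is harmless here because the specific refuting valuation for disjunctive syllogism on $\boldmu_{-}(H)$ does extend coordinatewise to the product, and because $H = \emptyset$ already follows from the paper's characterization of reduced models of $\ETL$ as those with $F = \{ \True \}$.
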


\begin{proof}
  Observe that $\Kminusmatrix = \boldmu_{+}(G_2)$, where $G_{2}$ is obtained by adding a loop to $K_{2}$, i.e.\ the graph $G_{2}$ consists of a reflexive and an irreflexive vertex which are neighbors. By Theorem~\ref{thm: completeness not below etl omega} and Fact~\ref{fact: graphs for k minus}, the logic $\Kminus$ is complete with respect to the class of all matrices $\boldmu_{+}(G)$ such that each irreflexive vertex of~$G$ has a reflexive neighbor. We must therefore show that if $\boldmu_{+}(G_{2})$ is a model of a super-Belnap logic $\logic{L}$, then so is each such matrix $\boldmu_{+}(G)$. By Theorem~\ref{thm: ext etl graph theoretically} it suffices to show that each such graph $G$ can be obtained from $G_{2}$ by means of the operations allowed by this theorem. Indeed, we can take a copy of $G_{2}$ for each irreflexive vertex, a reflexive singleton for each reflexive vertex, and consider their disjoint union $H$. A graph homomorphism from $H$ onto $G$ is easily constructed using the assumption that each irreflexive vertex has a reflexive neighbor.
\end{proof}


\end{document}